\setlist[enumerate,1]{label=(\roman*), font = \normalfont} 
\let\originalleft\left
\let\originalright\right
\renewcommand{\left}{\mathopen{}\mathclose\bgroup\originalleft}
\renewcommand{\right}{\aftergroup\egroup\originalright}
\newlength{\bibitemsep}
\newlength{\bibparskip}\setlength{\bibparskip}{0pt}
\let\oldthebibliography\thebibliography
\renewcommand\thebibliography[1]{\oldthebibliography{#1}
	\setlength{\parskip}{\bibitemsep}
	\setlength{\itemsep}{\bibparskip}}
\newcommand{\N}{\mathbb{N}}
\newcommand{\R}{\mathbb{R}}
\newcommand{\C}{\mathbb{C}}
\newcommand{\F}{\mathbb{F}}
\newcommand{\HH}{\mathbb{H}}
\renewcommand{\P}{\mathbb{P}}
\newcommand{\E}{\mathbb{E}}
\newcommand{\1}{\mathbbm{1}}
\newcommand{\cC}{\mathcal{C}}
\newcommand{\cN}{\mathcal{N}}
\newcommand{\cR}{\mathcal{R}}
\newcommand{\cT}{\mathcal{T}}
\newcommand{\Ec}[1]{\mathbb{E} \left[#1\right]}
\newcommand{\Pp}[1]{\mathbb{P} \left(#1\right)}
\newcommand{\mc}[1]{\mathcal{#1}}
\newcommand{\ii}{\mathrm{i}}
\DeclareMathOperator{\re}{Re}
\DeclareMathOperator{\im}{Im}
\newcommand{\diff}{\mathop{}\mathopen{}\mathrm{d}}
\DeclareMathOperator{\Var}{Var}
\DeclareMathOperator{\Cov}{Cov}
\DeclareMathOperator{\supp}{supp}
\newcommand{\abs}[1]{\left\lvert#1\right\rvert}
\newcommand{\abso}[1]{\lvert#1\rvert}
\newcommand{\norme}[1]{\left\lVert#1\right\rVert}
\newcommand{\normeo}[1]{\lVert#1\rVert}
\newcommand{\blambda}{\boldsymbol\lambda}
\newcommand{\tend}[1]{\underset{#1}{\longrightarrow}}
\newcommand{\mrm}[1]{{\mathrm{#1}}}
\theoremstyle{plain}
\newtheorem{theorem}{Theorem}[section]
\newtheorem{proposition}[theorem]{Proposition}
\newtheorem{lemma}[theorem]{Lemma}
\newtheorem{corollary}[theorem]{Corollary}
\newtheorem{conjecture}[theorem]{Conjecture}
\theoremstyle{definition}
\newtheorem{definition}[theorem]{Definition}
\theoremstyle{remark}
\newtheorem{remark}[theorem]{Remark}
\title{CLT for $\beta$-ensembles with Freud weights, application to the KLS conjecture in Schatten balls}
\author{Charlie Dworaczek Guera\footnote{KTH Stockholm, \newline
		\textit{email:} chadg@kth.se}
		\and Ronan Memin\footnote{Institut de mathématiques de Toulouse, France \newline \textit{email:} ronan.memin@ens.fr}
		\and Michel Pain\footnote{Institut de Mathématiques de Toulouse (UMR5219), Université de Toulouse, CNRS; UPS, F-31062 Toulouse Cedex 9, France; \textit{Email:} michel.pain@math.univ-toulouse.fr}}
\begin{document}
	
	\maketitle
	
	\begin{abstract}
		In this paper, we are interested in the $\beta$-ensembles (or 1D log-gas) with Freud weights, namely with a potential of the form $\abs{x}^{p}$ with $p \geq 2$. Since this potential is not of class $\mc{C}^{3}$ when $p \in (2,3]$, most of the literature does not apply. In this singular setting, we prove a central limit theorem for linear statistics with general test-functions. Our strategy relies on establishing an optimal local law in the spirit of \cite{BouModPai2022}.
		
		Our results allow us to give a consistency check of the KLS conjecture for the uniform distributions on $p$-Schatten balls and the functions $f(X)=\mathrm{Tr}\left(X^r\right)^q$. 
		While the case $p>3$, $q=1$, $r=2$ was proven in \cite{DadFraGueZit2023}, we address in the present paper the case $p\geq2$, $q\geq1$ and $r\geq2$ an even integer. The proofs are based on a link between the moments of norms of uniform laws on $p$-Schatten balls and the $\beta$-ensembles with Freud weights.
	\end{abstract}
	
	{
		\hypersetup{linkcolor=black}
		\tableofcontents
	}
	
	\section{Introduction}
	
	\subsection{Setting of the problem}
	
	Let $\beta>0$, $p>0$ and $V(x) \coloneqq c_p \abs{x}^p$ for a constant $c_p>0$ to be fixed later. For any $N\geq 1$, the \textit{$\beta$-ensembles with Freud weights} is the probability distribution on $\R^N$ given by:
	\begin{equation} \label{eq:beta_ensembles_density}
		\diff \P(\lambda_1,\dots,\lambda_N)
		\coloneqq \frac{1}{Z_{N}}
		\prod_{1 \leq i < j \leq N} 
		\abs{\lambda_i - \lambda_j}^\beta \cdot
		e^{- \frac{\beta N}{2} \sum_{i=1}^N V(\lambda_i)} 
		\diff \lambda_1 \cdots \diff \lambda_N,
	\end{equation}
	where the \textit{partition function} $Z_N$ is defined by
	\begin{equation}\label{eq:fonction}
		Z_{N}\coloneqq\int_{\R^N}\prod_{1 \leq i < j \leq N} 
		\abs{\lambda_i - \lambda_j}^\beta \cdot
		e^{- \frac{\beta N}{2} \sum_{i=1}^N V(\lambda_i)} 
		\diff \lambda_1 \cdots \diff \lambda_N.
	\end{equation}
	Note that we omit the dependence on $N$ in the notation $\P$.
	Results in this paper cover only the case $p \geq 2$.
	For $\beta=1,2,4$ and quadratic $V$, this law is realized by the (renormalized by $\sqrt{N}$) spectrum of the classical random matrix ensembles GOE, GUE and GSE. For general $\beta>0$, it is realized as the joint law of the eigenvalues of certain tridiagonal random matrices \cite{dumitriu2002matrix,krishnapur2016universality}. This model, for general $V$ and $\beta$, has been extensively studied for decades. A central object in the study of the Gibbs measure $\P_N$ is the empirical measure $\mu_N$ defined by
	\begin{equation}
		\label{def:mesure_empirique}
		\mu_N\coloneqq\dfrac{1}{N}\sum_{i=1}^N\delta_{\lambda_i}.
	\end{equation}
 	For any continuous potential $V$ such that $V(x) \geq (1+\varepsilon)\log \abs{x}$ for $\abs{x}$ large enough,  the empirical measure converges weakly a.s.\@ towards a deterministic limit $\mu_V$, called the \textit{equilibrium measure} which is the unique minimizer among all probability measures $\mu$ of the energy functional:
	\begin{equation}\label{eq:functional E}
		\mc{E}(\mu)\coloneqq\int_\R V(x)\diff \mu(x)-\iint_{\R^2}\log\abs{x-y}\diff \mu(x)\diff \mu(y),
	\end{equation}
	see \cite{BouPasShc1995,johansson}, as well as \cite{arous1997large,AndGuiZei10} for a large deviation approach.
	Under the Freud potential $V(x)=c_p\abs{x}^{p}$ with $p>0$, the measure $\mu_V$ is known, see \cite[Lemma 4.1]{DadFraGueZit2023} which is slightly rewritten from \cite[Theorem IV.6.1]{saff2024logarithmic}: it is supported on $[-1,1]$, provided we choose
	\begin{equation}\label{def:cp}
		c_p\coloneqq\dfrac{\Gamma(\frac{p}{2})\Gamma(\frac{1}{2})}{\Gamma(\frac{p+1}{2})},
	\end{equation}
	and it is absolutely continuous with respect to the Lebesgue measure, with density given by: 
	\begin{equation}\label{def:mu_p}
		\dfrac{\diff \mu_V}{\diff x}(x) \coloneqq \1_{\abs{x}\leq1}\dfrac{p\abs{x}^{p-1}}{\pi}\int_{\abs{x}}^1\dfrac{y^{-p}}{\sqrt{1-y^2}}\diff y.
	\end{equation}
	This measure is known as the \textit{Ullman} distribution, see \cite[IV.5.1]{saff2024logarithmic} or \cite{KabProTha2020a}.

	A natural question, following the convergence of $\mu_N$, is to investigate the nature of the fluctuations of the empirical measure around its limit $\mu_V$, specifically by establishing a \textit{central limit theorem} (CLT). The relevant object for this purpose is:
	\begin{equation}\label{def: LN}
		L_N\coloneqq N\left(\mu_N-\mu_V\right).
	\end{equation}
	It can be shown that this object is asymptotically Gaussian in the following sense: for any smooth bounded function $f$:
	\begin{equation}\label{introeq:clt}
		L_N(f) \xrightarrow[N\rightarrow\infty]{\mrm{law}} \mrm{Gaussian}.
	\end{equation}
	Such a CLT has been first established in \cite{johansson} and has been then extended to various settings (with different assumptions on $V$ and $f$, or $f,\beta$ dependent on $N$), see Subsection \ref{subsec:literature}.
	However, all known results in the literature rely on regularity assumptions on $V$. For instance, the most refined result \cite{BekLebSer2018} requires $V\in\mc{C}^6$ and $f\in\mc{C}^4$. 
	
	In our setting, \textit{i.e.\@} the Freud potential with $p\geq 2$, $V$ is in general only $\mc{C}^{2}$ and not $\mc{C}^{3}$ at 0.
	The main goal of this article is to establish such a CLT for this model, for a general test-function $f$ and for this potential that is smooth everywhere except at one point, where it presents a singularity (whose third derivative is locally integrable). This classically yields the subleading-order term of the \textit{free energy} $\log Z_N$. One of the main ingredients of the proof of the CLT is to establish an optimal \textit{local law}. 
	Our motivation is twofold. Firstly, we aim at developing a new approach to prove CLT for more singular potentials. 
	Secondly, the particular choice of Freud weights has applications in asymptotic convex geometry: indeed, proving a central limit theorem for $f(x)=x^r$ suffices to give a check on the so-called \textit{Kannan--Lovász--Simonovits conjecture} for \textit{$p$-Schatten} unit balls with $p\geq2$, as detailed below in Subsection \ref{sec:KLSconjecture}. 
	
	This CLT has already been obtained when $p>3$ and with $f(x)=x^2$ in \cite{DadFraGueZit2023}, motivated by applications in convex geometry. 
	Their proof follows closely \cite{BekLebSer2018} with some local improvements required because $V$ is only $\cC^3$ in their case: the fact that the equilibrium measure is explicit shows that it has two additional levels of regularity compared to what is proved for general $\cC^3$ potential in \cite[Lemma~3.1]{BekLebSer2018}, and the fact that the test function $f$ considered is even results in gaining one level of regularity for the inverse of $f$ via the master operator.
	This strategy would not work for the case $p \in (2,3]$ considered here, or even for $p \in (3,4]$ but $f$ not even.
	Our approach is more self-contained, importing mainly the \textit{a priori} bounds and controls of outliers stated in Section~\ref{sec:a_priori}.
	
	\subsection{Main results}
	
	\subsubsection{About \texorpdfstring{$\beta$}{beta}-ensembles}
	Our main result is a central limit theorem, in the sense of the convergence of moments, for the linear statistics of $\mathcal{C}^3$ functions having at most exponential growth.
	\begin{theorem}[CLT]
		\label{thm:CLT}
		Let $p\geq 2$, and $f\in \mathcal{C}^3(\R)$ with at most exponential growth, \emph{i.e.\@} there exists $C>0$ such that $\abs{f(x)} \leq Ce^{C\abs{x}}$ for any $x \in \R$. Then, the following convergence holds in terms of moments:
		\begin{equation}\label{eq:thmclt}
			L_N(f) \xrightarrow[N\rightarrow\infty]{} \mc{N}\left(m_V(f),\sigma^2(f)\right) 
		\end{equation}
		where the mean and the variance of the limiting Gaussian distribution are given by:
		\begin{align*}
			m_V(f)&\coloneqq\left(\dfrac{1}{2}-\dfrac{1}{\beta}\right)\int_{-1}^{1}\Xi^{-1}[f]'(x)\diff\mu_V(x)\\
			\sigma^2(f)&\coloneqq\dfrac{1}{\beta\pi^2}\iint_{[-1,1]^2}\left(\dfrac{f(x)-f(y)}{x-y}\right)^2 \cdot \dfrac{1-xy}{\sqrt{1-x^2}\sqrt{1-y^2}}\diff x\diff y
		\end{align*}
		where the operator $\Xi^{-1}$ is given explicitly in Lemma~\ref{lem:inverse_master_op} (taking $\alpha=1$ for $p>2$ and $\alpha=0$ for $p=2$).
	\end{theorem}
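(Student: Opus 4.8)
The plan is to run the classical loop-equation/change-of-variables scheme, taking as black boxes the optimal local law, the rigidity estimates, the confinement of eigenvalues and the concentration of linear statistics proved in Section~\ref{sec:a_priori}. First, since $f$ has at most exponential growth while the eigenvalues lie in a fixed neighbourhood of $[-1,1]$ with overwhelming probability and exponential tails on outliers, $L_N(f(1-\chi)) = N\mu_N(f(1-\chi))$ is negligible in every $L^k$ for a smooth cutoff $\chi$ equal to $1$ near $[-1,1]$; so we may assume that $f\in\mathcal{C}^3$ has compact support. The same confinement, together with the a priori bound on $\E L_N(f)$ and the concentration of linear statistics, also yields for some $s_0>0$ a uniform bound $\sup_N\big(\E[e^{s_0 L_N(f)}]+\E[e^{-s_0 L_N(f)}]\big)<\infty$, which will let us upgrade convergence of the Laplace transform to convergence of all moments at the end.

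\textbf{Inverting the master operator.} Using the Euler--Lagrange relation $V'(x)=2\,\mathrm{PV}\!\int\tfrac{\diff\mu_V(y)}{x-y}$ on $\supp\mu_V=[-1,1]$, the master operator $\Xi$ acts there, up to sign and normalisation, as the finite Hilbert transform $h\mapsto\mathrm{PV}\!\int\tfrac{h(y)}{x-y}\diff\mu_V(y)$. Put $\psi:=\Xi^{-1}[f]$, given by the explicit formula of Lemma~\ref{lem:inverse_master_op}; $\psi$ extends to $\R$ with the regularity required below, the delicate point being its behaviour at $0$, where the non-$\mathcal{C}^3$ singularity of $V$ --- hence the reduced smoothness of the Ullman density \eqref{def:mu_p} --- is transferred to $\psi$. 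This is precisely what the parameter $\alpha$ records: $\alpha=0$ (full regularity) in the semicircle case $p=2$, and $\alpha=1$ (a mild singularity at $0$) when $p>2$. From Lemma~\ref{lem:inverse_master_op} we retain that $\psi$ is $\mathcal{C}^1$ with $\psi'$ and $\psi^2 V''$ integrable against $\mu_V$, and that $(x,y)\mapsto\tfrac{\psi(x)-\psi(y)}{x-y}$ is regular enough for the bilinear bounds below.

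\textbf{Change of variables.} Apply the diffeomorphism $\lambda_i\mapsto\lambda_i+\tfrac{t}{N}\psi(\lambda_i)$, for $\abs{t}$ small, in the integral defining $Z_N$. Expanding the Vandermonde factor, the Jacobian and $V\circ(\mathrm{id}+\tfrac{t}{N}\psi)$ to second order in $t$, and using the Euler--Lagrange relation to cancel the leading $N^2$-order contribution, one obtains an identity $1=\E\big[\exp\big(-\tfrac{\beta t}{2}L_N(f)+d_1(t)+d_2(t)+\mathrm{Err}_N(t)\big)\big]$, where $d_1(t)$ is deterministic and proportional to $t\int\psi'\diff\mu_V$, $d_2(t)$ is deterministic and proportional to $t^2\big(\iint\big(\tfrac{\psi(x)-\psi(y)}{x-y}\big)^2\diff\mu_V(x)\diff\mu_V(y)+\int\psi^2 V''\diff\mu_V\big)$, and $\mathrm{Err}_N(t)$ collects the replacement of $\mu_N$ by $\mu_V$ in the $t$- and $t^2$-terms, the bilinear piece $\tfrac{\beta t}{2N}\iint\tfrac{\psi(x)-\psi(y)}{x-y}\diff L_N\,\diff L_N$, and genuinely lower-order contributions. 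The one point requiring care, relative to the smooth-potential case, is the Taylor remainder of $V$ along the transport: although $V$ is only $\mathcal{C}^2$, this remainder is $O(t^3/N^2)$, since by rigidity only $O(1)$ eigenvalues lie within $N^{-1}$ of $0$, while the dyadic shells $\{\abs{x}\sim\delta\}$ with $N^{-1}\lesssim\delta\lesssim 1$, on which $\abs{V'''(x)}\asymp\abs{x}^{p-3}$, contribute a geometric sum converging precisely because $p>2$.

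\textbf{Conclusion and main obstacle.} Using the optimal local law, rigidity and the a priori fluctuation bounds of Section~\ref{sec:a_priori} together with the regularity of $\psi$, one shows that $\mathrm{Err}_N(t)\to 0$ in probability with exponential tails, uniformly for $\abs{t}\le t_0$; hence $\E[e^{-\frac{\beta t}{2}L_N(f)}]\to e^{-d_1(t)-d_2(t)}$, which combined with the uniform exponential-moment bound of the first step gives convergence of all moments of $L_N(f)$, i.e.\ \eqref{eq:thmclt}. It then remains to identify the constants: a computation using the explicit density \eqref{def:mu_p} and the explicit inverse of Lemma~\ref{lem:inverse_master_op} turns the $t$-coefficient into $m_V(f)$ and the $t^2$-coefficient into (half of) $\sigma^2(f)$; in particular $\iint\big(\tfrac{\psi(x)-\psi(y)}{x-y}\big)^2\diff\mu_V\diff\mu_V+\int\psi^2 V''\diff\mu_V$ reduces, after integrating out, to $\tfrac{1}{\pi^2}\iint_{[-1,1]^2}\big(\tfrac{f(x)-f(y)}{x-y}\big)^2\tfrac{1-xy}{\sqrt{1-x^2}\sqrt{1-y^2}}\diff x\,\diff y$. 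Apart from the optimal local law itself (established in Section~\ref{sec:a_priori}), the main obstacle is the treatment of $\Xi^{-1}$ near the singular point $0$: producing the explicit formula for $\psi$, controlling its regularity there, and verifying that the resulting mild loss of smoothness remains compatible with the transport argument (finiteness of $\int\psi^2 V''\diff\mu_V$, control of the $L_N\otimes L_N$ term by rigidity, negligibility of the Taylor remainder). This is exactly the input not supplied by the smooth-potential literature.
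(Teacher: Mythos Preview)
Your outline is the paper's approach: truncate, apply the transport $\lambda_i\mapsto\lambda_i+\tfrac{1}{N}\psi(\lambda_i)$ with $\psi=\Xi^{-1}[f]$, Taylor expand, and identify $m_V(f),\sigma^2(f)$ via Lemma~\ref{lem:variance formula}. Two points differ, and the second is the crux.

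\emph{Taylor remainder of $V$.} You invoke rigidity and a dyadic count of eigenvalues near $0$; the paper uses a purely deterministic bound. From \eqref{eq:g_Taylor_order_3} and $\abs{x}\vee\abs{y}\ge\tfrac12\abs{x-y}$ (when $p\le3$), the third-order remainder of $V$ along the transport is pointwise $\le C\abs{\psi(\lambda)}^p/N^p$, so summing gives $O(N^{1-p})\to0$ with no probabilistic input.

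\emph{The anisotropy.} You write ``control of the $L_N\otimes L_N$ term by rigidity'' and claim exponential tails on $\mathrm{Err}_N$. This is not the mechanism, and it is not clear it would work: the kernel $(x,y)\mapsto\tfrac{\psi(x)-\psi(y)}{x-y}$ has derivatives blowing up like $\abs{\lambda}^{p-3}$ at $0$ (through $\psi''$, Lemma~\ref{lem:inverse_master_op}\ref{it:inverse_C3}), so Lipschitz-type rigidity estimates do not apply directly. The paper keeps $A_N(\psi)$ explicit, obtaining
\[
\E\bigl[\exp\bigl(\beta L_N(f)+\tfrac{\beta}{2N}A_N(\psi)\bigr)\bigr]\xrightarrow[N\to\infty]{} e^{\beta m_V(f)+\beta^2\sigma^2(f)/2}
\]
(the replacement of the random second-order term $I_N'$ by its limit uses the large deviation principle for $\mu_N$, not rigidity), and then shows separately that all moments of $\tfrac1N A_N(\psi)$ vanish. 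That last step is the whole point of the local law machinery: one first regularizes $\psi$ via Lemma~\ref{lem:regularize} so that $\psi'''$ exists, then applies the Helffer--Sj\"ostrand representation of $A_N$ (Lemma~\ref{lem:HS_A_N}) together with the optimal local law (Theorem~\ref{thm:local_law_alpha}); the resulting bound requires only $\norme{\psi''}_1<\infty$, which holds precisely because $p>2$. The moment bounds so obtained are polynomial in $q$ (cf.~\eqref{eq:second_bound_anisotropy}), not exponential, but convergence of the Laplace transform already implies convergence of all moments, so exponential tails are neither proved nor needed.
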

	
	\begin{remark}\label{rem:CLT}
		\begin{itemize}
			\item Theorem \ref{thm:CLT} is a special case of Theorem \ref{thm:CLTalpha} when specializing the interpolating parameter $\alpha=1$ for $p>2$ and $\alpha=0$ for $p=2$.
			This result is of course already known when $p=2$.
			\item The hypothesis on the growth of $f$ in Theorem \ref{thm:CLT} will be used in Lemma \ref{lem:truncation} to replace $f$ by a truncated version, while preserving the convergence of moments. However one can deduce from Theorem \ref{thm:CLT} the weak convergence of the linear statistics $L_N(f)$ for any $f\in \mathcal{C}^3(\R)$ with arbitrary growth at infinity by truncating $f$ outside of the support of the equilibrium measure.
			\item The mean $m_V(f)$ can be rewritten, by Lemma \ref{lem:variance formula}, \begin{multline}\label{eq:moyenne}
                m_V(f) = \left(\frac{1}{2}-\frac{1}{\beta}\right)\bigg(\frac{1}{\pi^2}\int_{-1}^1 \frac{r'(x)}{r(x)}\int_{-1}^1\frac{f(t)-f(x)}{t-x}\frac{\diff t}{\sqrt{1-t^2}}\sqrt{1-x^2}\diff x\\ - \frac{f(1)+f(-1)}{2}+\frac{1}{\pi}\int_{-1}^1f(x)\frac{\diff x}{\sqrt{1-x^2}}\bigg).
            \end{multline}
            where $r(x)$ is defined in \eqref{def:r_generale} with $\alpha=1$.
        \end{itemize}
	\end{remark}
	
	It is classical that such a central limit theorem is enough deduce the subleading-order term for the partition function, see e.g.\@ \cite[Theorem 4.28]{Gui2019}. We state this consequence below and, to this end, we define the entropy of $\mu$, a probability measure absolutely continuous with respect to the Lebesgue measure, by
	\begin{equation} \label{eq:def_entropie}
	    \mrm{Ent}[\mu]\coloneqq-\int_\R\log\frac{\diff\mu(x)}{\diff x}\diff\mu(x).
	\end{equation}
	However, as observed in \cite{Sonn} shortly after the first version of this paper appeared, the following expansion can be deduced directly from \cite[Corollary 1.1]{LebléSerf} for any $p\geq 3/2$.
	
	\begin{corollary}[Next-order expansion of the free energy]\label{thm:subleadingZN}
		Let $p\geq2$, then as $N\rightarrow\infty$,
		\begin{equation}
		\dfrac{1}{N^{2}\beta}\log Z_N=-\dfrac{1}{2}\left(\log 2+\dfrac{3}{2p}\right)+\dfrac{\log N}{2N}+\dfrac{F^{\{-1\}}}{N}+o\left(\dfrac{1}{N}\right) 
		\end{equation}
		with
		\begin{align*}
		F^{\{-1\}} 
		& \coloneqq F_G^{\{-1\}}+\left(\dfrac{1}{\beta}-\dfrac{1}{2}\right)\left(\mrm{Ent}[\mu_V]-\log\pi+\dfrac{1}{2}\right), \\
		F^{\{-1\}}_G&\coloneqq\left(\dfrac{1}{2}-\dfrac{1}{\beta}\right)\left(\log\dfrac{\beta}{2}+\log 2\right)-\dfrac{1}{2\beta}-\dfrac{1}{4} +\dfrac{1}{\beta}\log\dfrac{2\pi}{\Gamma\left(\beta/2\right)}.
		\end{align*}
	\end{corollary}
	
	\begin{remark}
		In \cite{BorGui2013,borot2024asymptotic}, the authors proved a full $1/N$ expansion for the free energy in the case of an analytic potential $V$. Due to our singular setting, we are only able to access the subleading order term. 
	\end{remark}
	
	To state the local law, we introduce the Stieltjes transforms:
	\begin{equation}\label{def: s_N, s_V}
		s_N(z)=\int_\R\dfrac{\diff\mu_N(\lambda)}{\lambda-z} = \dfrac{1}{N}\sum_{k=1}^N\dfrac{1}{\lambda_k-z},\hspace{1cm}s_{V}(z)=\int_\R\dfrac{\diff\mu_V(\lambda)}{\lambda-z},
	\end{equation}
	\noindent which is a well defined expression as soon as $z\in \C\setminus\R$ for $s_N(z)$ and $z\in \C\setminus[-1,1]$ for $s_V(z)$. 
	The $N$-dependent function $s_N$ is called the empirical Stieltjes transform and $s_V$ is called the equilibrium Stieltjes transform.
	\begin{theorem}[Optimal local law] \label{thm:local_law}
	    Let $p \geq 2$.
		There exist $C,\delta_0>0$ such that the following holds, with $\cR \coloneqq [-1-\delta_0,1+\delta_0]+\ii(0,\delta_0]$: 
		for any $N \geq 1$, $q\geq1$ and $z=x+\ii y\in \cR$,
		\[
		\E \left[\left|s_N(z)-s_{V}(z)\right|^q\right] \leq \frac{(Cq^{64})^{q}}{(Ny)^q},
		\]
		and, if moreover $\abs{x} \leq 1 + y$,
		\[
		\E \left[\left|s_N(z)-s_{V}(z)\right|^q\right] \leq \frac{(Cq)^{q/2}}{(Ny)^q} + \frac{(Cq^{32})^{q}}{(N \abs{z} \abs{z^2-1}^{1/2})^q}.
		\]
	\end{theorem}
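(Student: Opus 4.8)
\medskip
\noindent\emph{Strategy of proof.}
Write $\Delta(z)\coloneqq s_N(z)-s_V(z)$. The plan is to derive the first loop (Dyson--Schwinger) equation, rearrange it into a self-consistent equation for $\Delta$, and then run a self-improving bootstrap in the spirit of \cite{BouModPai2022}, starting from the \emph{a priori} bounds of Section~\ref{sec:a_priori} and carefully handling the loss of $\mc{C}^3$-regularity of $V$ at the origin. Integration by parts in \eqref{eq:beta_ensembles_density} against the test function $\lambda\mapsto(\lambda-z)^{-1}$ yields the loop equation
\begin{equation*}
	\Ec{\, s_N(z)^2+\int_\R\frac{V'(\lambda)}{\lambda-z}\diff\mu_N(\lambda)+\Big(\frac{2}{\beta}-1\Big)\frac{s_N'(z)}{N}\,}=0 .
\end{equation*}
Writing $\int\frac{V'(\lambda)}{\lambda-z}\diff\mu_N(\lambda)=V'(z)s_N(z)+\mc{P}_N(z)$ with $\mc{P}_N(z)\coloneqq\int\frac{V'(\lambda)-V'(z)}{\lambda-z}\diff\mu_N(\lambda)$, and subtracting the algebraic identity $s_V(z)^2+V'(z)s_V(z)+\mc{P}_V(z)=0$ satisfied by the equilibrium Stieltjes transform (Euler--Lagrange relation), one obtains a self-consistent equation
\begin{equation*}
	b(z)\,\Ec{\Delta(z)}=-\Ec{\Delta(z)^2}-\Ec{\mc{P}_N(z)-\mc{P}_V(z)}-\Big(\frac{2}{\beta}-1\Big)\frac{\Ec{s_N'(z)}}{N},
\end{equation*}
where $b(z)\coloneqq 2s_V(z)+V'(z)$ and $\mc{P}_V$ denotes $\mc{P}_N$ with $\mu_N$ replaced by $\mu_V$; analogous higher-order loop equations (equivalently, iterating this identity against powers of $\Delta(z)$ and $\overline{\Delta(z)}$) control the higher moments $\Ec{\abs{\Delta(z)}^q}$ for all $q\geq1$. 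Since $b$ equals, up to a sign, the square root $\sqrt{V'(z)^2-4\mc{P}_V(z)}$ from the algebraic equation for $s_V$, it is bounded below on $\cR$ away from $\{-1,1\}$ and vanishes like $\abs{z^2-1}^{1/2}$ at the edges; inverting it, i.e.\@ applying the master operator $\Xi^{-1}$ of Lemma~\ref{lem:inverse_master_op}, costs at most a factor $\abs{z^2-1}^{-1/2}$ near $\pm1$.

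The heart of the matter is to estimate the right-hand side. The delicate term is $\mc{P}_N-\mc{P}_V=\frac1N L_N(\phi_z)$, a linear statistic of $\mu_N-\mu_V$ against $\phi_z(\lambda)=\frac{V'(\lambda)-V'(z)}{\lambda-z}$, which is only $\mc{C}^1$ near $\lambda=0$ because $V'\notin\mc{C}^2$ there; the key structural fact is that $V'''\in L^1_{\mathrm{loc}}$, so $\phi_z$ still carries enough regularity --- through a Cauchy/Stieltjes-type representation, or a quantified smooth approximation --- to bound $L_N(\phi_z)$ by $\Ec{\abs{\Delta(\cdot)}^q}$ evaluated at imaginary part of order one, where sharp control is already available, at the price of a loss near $z=0$ that is integrable in $z$ and dominated, uniformly in $p\geq2$, by $\abs{z}^{-1}$. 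I expect this to be the main obstacle: one must make precise how the missing derivative degrades this estimate and, through $\Xi^{-1}$, the reconstruction of $\Ec{\Delta}$ near the origin, and verify that the degradation costs only polynomial-in-$q$ factors, sub-optimal only in neighbourhoods of $\{0,\pm1\}$ --- which is exactly what the factors $\abs{z}$ and $\abs{z^2-1}^{1/2}$ in the second bound encode. The remaining error terms are routine: $\frac1N s_N'(z)=\frac1N(\Delta'(z)+s_V'(z))$ is handled by a Cauchy estimate bounding $\Delta'$ by $\Delta$ on a slightly enlarged contour together with the explicit bound $\abs{s_V'(z)}\lesssim\abs{z^2-1}^{-1/2}$, while the genuinely random terms appearing when one does not take expectations (such as $s_N(z)^2-\Ec{s_N(z)^2}$) are controlled by the concentration and eigenvalue-confinement estimates of Section~\ref{sec:a_priori}, which also supply the crude initial local law.

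Finally comes the bootstrap. Starting from $\Ec{\abs{\Delta(z)}^q}^{1/q}\leq Cq^{c}(Ny)^{-1+\e_0}$ on $\cR$ and inserting it into the inverted self-consistent equation, the quadratic term $\Ec{\Delta(z)^2}$ contributes of order $(Ny)^{-2+2\e_0}$, negligible against $(Ny)^{-1}$ in the range of interest, so each pass roughly halves the excess exponent $\e_k$ while multiplying the constant by a fixed power of $q$ (and by bounded powers of $\abs{z^2-1}^{-1/2}$ and $\abs{z}^{-1}$, absorbed into the stated denominators). After finitely many passes $\e_k=0$, which gives the first bound $\Ec{\abs{\Delta(z)}^q}\leq(Cq^{64})^q(Ny)^{-q}$, the large exponent $64$ being an artefact of the accumulated polynomial losses over these finitely many iterations. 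In the region $\abs{x}\leq 1+y$ a last, sharper pass --- reflecting that there $\Delta(z)$ is sub-Gaussian at scale $(Ny)^{-1}$, so $\Ec{\abs{\Delta(z)}^q}^{1/q}$ grows like $\sqrt q$ rather than $q$, up to the deterministic $1/N$-correction --- upgrades the leading term to $(Cq)^{q/2}(Ny)^{-q}$, the remainder $(Cq^{32})^q\,(N\abs{z}\abs{z^2-1}^{1/2})^{-q}$ being the magnitude of that $1/N$-correction to $\Ec{\Delta}$ after a single application of $\Xi^{-1}$.
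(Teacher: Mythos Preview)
Your overall bootstrap architecture (loop equation $\to$ self-consistent equation for $\Delta$ $\to$ iterate) matches the paper's, and you have correctly located the real difficulty: the test function $\phi_z(\lambda)=\frac{V'(\lambda)-V'(z)}{\lambda-z}$ is only $\cC^1$ at the origin when $p\in(2,3]$. But your handling of this point is where the proposal breaks.

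\medskip
\textbf{The missing idea.} You assert that ``$V'''\in L^1_{\mathrm{loc}}$, so $\phi_z$ still carries enough regularity\dots''. This is not sufficient to close the bootstrap. The improvement step from $L_N(\phi_z)=O(\sqrt{N\log N})$ to $L_N(\phi_z)=O(1)$ requires the master operator: one must control $A_N(\Xi^{-1}[\phi_z])$. But the inversion $\Xi^{-1}$ (Lemma~\ref{lem:inverse_master_op}) requires its input to be at least $\cC^2(\R)$ to produce a $\cC^1$ output, and to bound the anisotropy via Helffer--Sj\"ostrand (Lemma~\ref{lem:HS_A_N}) one needs $\norme{(\Xi^{-1}[\phi_z])''}_1<\infty$. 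With $\phi_z''(\lambda)\sim\abs{\lambda}^{p-3}$ already singular, passing through $\Xi^{-1}$ costs one more derivative and the resulting second derivative behaves like $\abs{\lambda}^{p-4}$, which is \emph{not} integrable for $p\leq 3$. The paper states explicitly (Section~\ref{sec:strategy} and Remark~\ref{rem:loop_eq}) that the bootstrap would not work with $\phi_z$ in the range $p\in(2,3]$.

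The paper's fix is not a ``smooth approximation'' of $\phi_z$ but a structural change of test function: combine the loop equation for $(\lambda-z)^{-1}$ with the one for the constant $1$, divided by $z$. This replaces $V'(\lambda)$ by $g(\lambda)=\lambda V'(\lambda)=pc_p\abs{\lambda}^p$, and $\phi_z$ by $f_z(\lambda)=\frac{g(\lambda)-g(z)}{\lambda-z}$, which is genuinely $\cC^2(\R)$ (Lemma~\ref{lem:f_z}). The price is a singular prefactor $1/z$, and \emph{that} is the origin of the factor $\abs{z}$ in the denominator of the second bound --- not, as you write, an integrable loss coming from the regularity deficit of $\phi_z$ itself. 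This single extra derivative is exactly what makes $\Xi^{-1}[f_z]$ have an $L^1$ second derivative (Lemma~\ref{lem:psi_z}, Corollary~\ref{cor:psi_z_tilde}) and allows the master-operator/Helffer--Sj\"ostrand step to go through.

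\medskip
\textbf{Secondary issues.} You write $V'(z)$ and $b(z)=2s_V(z)+V'(z)$ for complex $z$, but $V(x)=c_p\abs{x}^p$ has no analytic extension; the paper must build a pseudo-analytic extension of $g$ (Definition~\ref{def:g,g analytic, f_z, Palpha, halpha}), and the quadratic equation \eqref{eq:quadratic} for $s_V$ involves $g(z)/z$, not $V'(z)$. You also conflate two distinct objects: dividing by $b(z)$ in the self-consistent equation is the stability lemma (Lemma~\ref{lem:stability}), while $\Xi^{-1}$ is the operator that inverts the master operator on test functions --- these play different roles (the former in Proposition~\ref{prop:local_law_machinery}, the latter in Lemma~\ref{lem:better_bound}). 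Finally, the paper's bootstrap is more elaborate than ``finitely many passes halving the exponent'': it alternates between improving the $N$-dependence (via the master operator) and the $z$-dependence (via Helffer--Sj\"ostrand applied directly to $L_N(f_z)$, Lemma~\ref{lem:HS_z^2}), running the local law machinery five times in total (Section~\ref{sec:opt_local_law}).
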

	
	\begin{remark}
		\begin{itemize}
			\item As $\im{z}$ goes to zero, the imaginary part of the empirical Stieltjes transform at $z$ can be seen as a smooth approximation to the empirical measure in an interval centered at $\re(z)$ of size $\sim\im{z}$.
			\item If $p$ is even, then $V$ is analytic and this local law was already obtained in \cite{BouModPai2022}, with better bounds.
			\item 
			A local law is an important tool providing strong concentration results: via the use of the Hellfer--Sjöstrand formula one can deduce bounds on linear statistics $L_N(f)$ (see Section~\ref{sec:HS}) or rigidity estimates 
			(see e.g.\@ \cite[Lemma 3.8]{BouModPai2022}). 
			A rigidity estimate states that, with overwhelming probability, for each $1\leq k \leq N$, the $k$-th smallest particle is at distance at most $a_N N^{-2/3} [k\wedge(N-k+1)]^{-1/3}$ of the $k$-th $N$-quantile of $\mu_V$, where $a_N \to \infty$ is the precision of the rigidity estimate.
			Such a rigidity estimate was established for $V \in \cC^4(\R)$ and $a_N = N^\varepsilon$ for arbitrary $\varepsilon>0$ in \cite{BouErdYau2014}.
			Following the argument in \cite[Lemma 3.8]{BouModPai2022}, Theorem~\ref{thm:local_law} yields a rigidity estimate with $a_N = (\log N)^\ell$ for $\ell$ large enough.
			
			\item This local law is called optimal in the sense that it proves $s_N(z)-s_{V}(z) = O(1/(Ny))$, with a numerator which does not include a factor diverging as $N \to \infty$. We do not claim that the dependence of the numerator on $q$ is optimal, and do not try to optimize it in the proof. Some improvements in this direction could be made, but to the cost of additional technicalities that we choose to avoid.
		\end{itemize}
	\end{remark}
	\subsubsection{About the KLS conjecture for \texorpdfstring{$p$}{p}-Schatten balls}\label{sec:KLSconjecture}
	As briefly mentioned above, the CLT for the $\beta$-ensembles allows us to give a consistency check on the Kannan--Lovász--Simonovits (KLS) conjecture for the $p$-Schatten balls, $p\geq2$, that we describe below. 
	
	In convex geometry, and more specifically in the context of the study of high-dimensional convex bodies $K\subset\R^d$, this conjecture has been open for three decades \cite{kannan1995isoperimetric} and takes its roots from the study of high-dimensional sampling algorithm, see \cite{alonso2015approaching,lee2018kannan} for a review. The conjecture posits that:
	
	\begin{conjecture}[KLS conjecture]\label{conj:KLS}
		There exists a universal constant $\mathbf{C}_\mrm{univ}>0$, such that for all $d\geq 1$, $f$ smooth on $\R^d$ and $\mu$ an absolutely continuous with respect to the Lebesgue measure and log-concave probability measure on $\R^d$:
		\begin{equation}\label{eq:varianceconj}
			\Var_\mu f\leq \mathbf{C}_\mrm{univ}\cdot\lambda_\mu \cdot \mathbb{E}_\mu\left[ \|\nabla f\|^{2}_2\right]
		\end{equation}
		where $\lambda_\mu=\sup_{\theta\in\mathbb{S}^{d-1}}\Var_\mu[\braket{\theta,X}]=\|\Cov(\mu)\|_\mrm{op}$ denotes the largest eigenvalue of the covariance matrix of $\mu$, given by $\Cov(\mu)\coloneqq\left(\int_{\R^d}x_ix_j\diff\mu(x)-\int_{\R^d}x_i\diff\mu(x)\int_{\R^d}x_j\diff\mu(x)\right)_{1\leq i,j\leq d}$.
	\end{conjecture}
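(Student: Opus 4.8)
The statement above is the full Kannan--Lov\'asz--Simonovits conjecture, which has been open since \cite{kannan1995isoperimetric}; a complete proof is out of reach and is not the aim of this paper, so I will describe both (a) the genuine lines of attack on the conjecture itself, and (b) what the tools developed above actually allow, namely the consistency check announced in the abstract. For (a): one first reduces to $\mu$ in isotropic position, since both sides of \eqref{eq:varianceconj} transform compatibly under invertible affine maps and $\lambda_\mu$ then becomes a fixed constant, so the conjecture is equivalent to a dimension-free upper bound on the Poincar\'e constant of isotropic log-concave measures. The current state of the art proceeds via Eldan's stochastic localization: one runs the martingale of log-concave measures $(\mu_t)_{t\geq0}$ that tilts $\mu$ toward needles, controls the covariance process $A_t=\Cov(\mu_t)$ through its stochastic differential equation, and converts a bound on $\int_0^\infty\normeo{A_t}_{\mrm{op}}\diff t$ into a Poincar\'e inequality; combined with Eldan's reduction of KLS to the thin-shell (variance) conjecture, this has produced the successive estimates $O(d^{1/3}\,\mathrm{polylog}\,d)$, $O(\mathrm{polylog}\,d)$ and $O(\sqrt{\log d})$ due respectively to Lee--Vempala, Chen, and Klartag--Lehec. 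The main obstacle for the full conjecture is removing this residual polylogarithmic factor: one would need a dimension-free control of $\int_0^\infty\normeo{A_t}_{\mrm{op}}\diff t$, equivalently an $O(1)$ bound on the thin-shell constant, and no such argument is currently available.

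For (b), the version actually carried out here, take $\mu$ to be the uniform probability measure on the unit ball of the self-adjoint $p$-Schatten space $(S_p^N,\normeo{\cdot}_p)$ with $p\geq2$ (real symmetric, $\beta=1$, or complex Hermitian, $\beta=2$), so that the dimension is $d\asymp N^2$, and $f(X)=\mathrm{Tr}(X^r)^q$ with $r\geq2$ an even integer and $q\geq1$. The starting point is the correspondence between moments of norms of such uniform laws and the $\beta$-ensemble \eqref{eq:beta_ensembles_density} with Freud potential $V(x)=c_p\abs{x}^p$, announced in the abstract and established for $r=2$ in \cite{DadFraGueZit2023}: writing $X=\rho\,U$ in polar form relative to the Schatten norm and using a Schechtman--Zinn-type representation of uniform laws on $\ell_p$-type balls, the eigenvalues of the angular part $U$, after the appropriate deterministic rescaling, are distributed according to \eqref{eq:beta_ensembles_density}, while the radius $\rho=\normeo{X}_p\in[0,1]$ has density $\propto\rho^{d-1}$ and hence concentrates near $1$ at scale $1/d$. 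Consequently $\mathrm{Tr}(X^r)$ equals, up to the radial factor and an explicit power of the normalization, $N\mu_N(x^r)=L_N(x^r)+N\mu_V(x^r)$; and since $\nabla f(X)=q\,r\,\mathrm{Tr}(X^r)^{q-1}X^{r-1}$, one has $\norme{\nabla f}_2^2=q^2r^2\,\mathrm{Tr}(X^r)^{2q-2}\mathrm{Tr}(X^{2r-2})$, so that both $\Var_\mu(\mathrm{Tr}(X^r)^q)$ and $\mathbb{E}_\mu[\norme{\nabla f}_2^2]$ reduce, after expanding around the deterministic $O(N)$ quantities $N\mu_V(x^r)$ and $N\mu_V(x^{2r-2})$, to joint moments of the linear statistics $L_N(x^r)$, $L_N(x^{2r-2})$ and of the radial variable $\rho$. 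Theorem~\ref{thm:CLT} applied to the polynomials $x^r$ and $x^{2r-2}$ gives convergence \emph{of all moments} to explicit Gaussians (joint convergence following from linearity of $L_N$ and $m_V$ in $f$ together with the bilinear form associated with $\sigma^2$), hence the precise leading and relevant subleading asymptotics in $N$ of both quantities; the radial and $\ell_p$-normalization factors are controlled to the required precision by the a priori bounds of Section~\ref{sec:a_priori} and the local law of Theorem~\ref{thm:local_law}.

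It remains to evaluate $\lambda_\mu=\normeo{\Cov(\mu)}_{\mrm{op}}$. Since $\mu$ is invariant under $X\mapsto UXU^{*}$ for every unitary $U$, and the conjugation action on traceless self-adjoint matrices is irreducible for $N$ large, Schur's lemma forces $\Cov(\mu)$ to be one scalar on the traceless part of the space of self-adjoint matrices and another scalar on the one-dimensional trace direction; thus $\lambda_\mu$ is the larger of these two scalars, each an explicit combination of second moments of matrix entries (respectively of $\mathrm{Tr}X$) that is again computed from the $\beta$-ensemble --- the first from the one-point function controlled by Theorem~\ref{thm:local_law}, the second from $L_N(x)$, whose limiting Gaussian has variance $1/\beta$ by symmetry of the Ullman law. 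Comparing the three explicit asymptotics shows that the ratio $\Var_\mu f/(\lambda_\mu\,\mathbb{E}_\mu[\norme{\nabla f}_2^2])$ stays bounded --- indeed converges to an explicit finite constant --- as $N\to\infty$, which is the claimed consistency check for this family; the companion large-$N$ expansion of $\log\abs{B_E(S_p^N)}$ up to $o(N)$ follows from Theorem~\ref{thm:subleadingZN} via the polar-coordinates formula for the volume. The genuinely delicate point in (b) is not any single asymptotic but the \emph{joint} control of the radial and angular fluctuations with enough precision to capture the subleading terms of $\mathbb{E}_\mu[\mathrm{Tr}(X^r)^q]$ and of $\mathbb{E}_\mu[\norme{\nabla f}_2^2]$, and to make the three constants line up exactly rather than merely match up to order --- which is precisely why convergence of moments rather than merely in law in Theorem~\ref{thm:CLT}, and the loss-free numerator in the local law of Theorem~\ref{thm:local_law}, are needed.
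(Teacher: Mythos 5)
You are right that this statement is the full KLS conjecture, recorded in the paper only as Conjecture~\ref{conj:KLS}: the paper neither proves it nor claims to, so there is no proof to compare against, and treating it as an open problem while describing the consistency check of Theorem~\ref{thm:KLS} is the appropriate response. Your part~(a) is a fair survey of the genuine lines of attack but plays no role in the paper.

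Your sketch in part~(b) captures the right inputs (Weyl-type integration formula, the moment CLT of Theorem~\ref{thm:CLT} for $L_N(x^r)$) but differs from Section~\ref{subsec:checkKLS} in three concrete ways. First, the paper never passes through a polar decomposition $X=\rho U$ or a Schechtman--Zinn representation: the radial integration is absorbed once and for all into the Gamma-function prefactor of Weyl's formula (Lemma~\ref{lem:straymond}), applied to the homogeneous functions $\mrm{Tr}(X^r)^q$ and $\mrm{Tr}(X^r)^{2q-2}\mrm{Tr}(X^{2r-2})$, so the ``joint radial/angular control'' you flag as the delicate point reduces to Stirling asymptotics of ratios $\Gamma(u)/\Gamma(u+v)$ with $u=1+d_N/p$; no local law is needed there, and the only probabilistic inputs are the law of large numbers for $\mu_N$ and the limiting variance $\sigma^2(x^r)$ extracted in Lemma~\ref{lem:variancemunxr}. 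Second, the paper does not compute $\lambda_\mu$ exactly: since $\lambda_\mu$ sits in the denominator of the ratio, the crude bound $\lambda_\mu=\lambda_{\max}(\Cov(\mu))\geq \mrm{Tr}(\Cov(\mu))/d_N=\E[\normeo{X}_{\mrm{HS}}^2]/d_N$ suffices and is again evaluated by Weyl's formula with $r=2$, $q=1$ (Lemma~\ref{lem:boundratio}). Your Schur-lemma observation --- conjugation invariance forces $\Cov(\mu)$ to act as one scalar on the trace line and another on the traceless part --- is correct and would yield the exact value of $\lambda_\mu$, but it is unnecessary for a one-sided check and is not what the paper does. Third, the conclusion is an upper bound $4+o(1)$ on the ratio obtained from a chain of one-sided estimates (Lemmas~\ref{lem:gradiantf}, \ref{lem:boundratio}, \ref{lem:variancemunxr}), not a convergence to an explicit limit as you assert; since the gradient term and $\lambda_\mu$ are both only bounded from below, the exact limit of the ratio is not identified.
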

	
	The KLS conjecture implies the variance (or thin-shell) conjecture (\textit{i.e.\@} the previous conjecture in the special case $f(X)=\|X\|_{\mrm{HS}}^2$) and the hyperplane conjecture (or Bourgain's slicing problem). These two conjectures have been studied a lot and finally established in the recent papers \cite{klartag2024affirmative,klartag2025thin}.
	
	The thin-shell conjecture takes its roots in the papers \cite{anttila2003central,bobkov2003central} where the authors studied the rate of convergence of convex bodies around the normal distribution, known as the central limit problem for isotropic convex bodies. It can also be shown that a log-concave probability measure $\mu$ satisfying the variance conjecture satisfies a strong concentration inequality, namely:
	\[
	\mu\left( \left\{ x\in\R^d : \abs{\|x\|-(\E_\mu\|X\|^2)^{1/2}}>t(\E_\mu\|X\|^2)^{1/2} \right\}\right)
	\leq 2 \exp \left( -C\sqrt{t}\frac{(\E_\mu\|X\|^2)^{1/4}}{\sqrt{\lambda_\mu}} \right)
	\]
	for a constant $C>0$ and for all $t>0$.
	
	Even though, the KLS conjecture is not known in general, it has been established in certain cases, \textit{e.g.\@} for the uniform measures on the simplex and $\ell_N^p$ balls, see \cite{MR2532220} but also on the so-called Orlicz balls \cite{kolesnikov2018kls,barthe2023volume} defined as
	\[
	\left\{x\in\R^d : \sum_{i=1}^pf_i(\|x\|)\leq1\right\}
	\]
	where the $f_i$'s are called Young functions, \textit{i.e.\@} $f\colon\R_+\rightarrow\R_+$ increasing convex functions verifying $f(0)=0$. Our goal here is to study the case of $p$-Schatten balls in the spirit of \cite{DadFraGueZit2023}.
	
	Let $\mathbb{F}\coloneqq\R,\C$ or $\HH$ and $M\in\mc{M}_N(\F)$ be a $N\times N$ matrix with entries in $\F$. Denote its singular values by $(s_1(M),\dots,s_N(M))$, \textit{i.e.\@} the eigenvalues of $\sqrt{M^{*}M}$. We define its $p$-Schatten norm for $p\in[1,\infty)$, denoted $\sigma_p(M)$ by
	\begin{equation}\label{eq:schattennorm}
		\sigma_p(M)\coloneqq\left(\sum_{i=1}^Ns_i(M)^p\right)^{1/p}.
	\end{equation}
	On the space $E=\mc{M}_N(\F)_{\mrm{s.a.}}$ of self-adjoint matrices, $\sigma_p(M)$ reduces to the $\ell^p$ norm of the eigenvalues.
	The couple $S_p^N\coloneqq\left(E,\sigma_p\right)$ defines a normed-vector space and we denote by $B_E(S_p^N)$ its unit ball
	$$B_E(S_p^N)\coloneqq\left\{M\in\mc{M}_N(\F)_{\mrm{s.a.}}, \sigma_p(M)\leq1\right\}.$$
	Note that when $p=2$, $\sigma_2(M)=\|M\|_{\mrm{HS}}\coloneqq\sqrt{\mrm{tr}(M^{*}M)}$ and we recover the Euclidean structure.
	
	A link between the $\beta$-ensembles and the unit balls for the $p$-Schatten norms exists and is encoded in the following lemma.
	\begin{lemma}[Weyl's integration formula, \cite{raymond1984volume,guedon2007concentration}]\label{lem:straymond}
		For any $p\geq1$ and any $F\colon \R^N\rightarrow\R_+$ symmetric and positively homogeneous of degree $k$, we have:
		\begin{multline*}
			\int_{B_E(S_p^N)}F\left(s_1(M),\dots,s_N(M)\right) \diff M
			\\=\dfrac{c_N}{\Gamma\left(1+\frac{d_N+k}{p}\right)}\left(\frac{N\beta}{2}c_p\right)^{\frac{d_N+k}{p}}\int_{\R^N}F(x_1,\dots,x_N)\prod_{i<j}^N|x_i-x_j|^\beta \cdot \prod_{i=1}^Ne^{-\frac{N\beta}{2}c_p|x_i|^p}\diff x_i,
		\end{multline*}
		where $c_p$ is defined in \eqref{def:cp}, $c_N\coloneqq \abs{\mc{U}_N(\F)}/(N!\abs{\mc{U}_1(\F)}^N)$ with $\mc{U}_N(\F)$ the unitary group over $\F$, $\beta=1,2,4$ according to $\F=\R,\C,\HH$, and $d_N\coloneqq\frac{\beta}{2}N(N-1)+N$.
	\end{lemma}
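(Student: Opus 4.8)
The plan is to combine Weyl's integration formula on $\mc{M}_N(\F)_{\mrm{s.a.}}$ with the classical polar-integration (``Gamma'') identity that turns an integral over the unit ball of a norm into a Gaussian-type integral over the whole space.

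\textbf{Step 1: a polar-coordinates identity.} First I would record the following elementary fact: if $\normeo{\cdot}$ is a norm on $\R^n$ with unit ball $K$ of finite volume and $G\colon\R^n\to\R_+$ is measurable and positively homogeneous of degree $m$, then
\[
\int_K G(x)\diff x=\frac{1}{\Gamma\!\big(1+\tfrac{n+m}{p}\big)}\int_{\R^n}G(x)\,e^{-\normeo{x}^p}\diff x .
\]
To prove it, disintegrate Lebesgue measure along the rays $x=t\omega$ with $t=\normeo{x}$, $\omega\in\partial K$, so that $\diff x$ factors as $t^{n-1}\diff t$ against the cone measure on $\partial K$. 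Integrating out $t$ against $e^{-t^p}$, respectively against $\1_{\{t\le1\}}$, and using $G(t\omega)=t^mG(\omega)$ together with $\int_0^\infty t^{m+n-1}e^{-t^p}\diff t=\tfrac1p\Gamma\!\big(\tfrac{m+n}{p}\big)$ (substitute $u=t^p$), both sides come out proportional to $\int_{\partial K}G\,\diff\mu_K$ with explicit constants; taking the ratio eliminates the cone measure and gives the identity. The hypothesis $p\ge1$ ensures $\normeo{\cdot}$ is a genuine norm and $K$ convex; positivity of $G$ makes both sides well defined in $[0,\infty]$.

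\textbf{Step 2: apply this in matrix space, then use Weyl and rescale.} I would apply Step 1 with $n=d_N=\dim_\R\mc{M}_N(\F)_{\mrm{s.a.}}$, $\normeo{\cdot}=\sigma_p$ (so $K=B_E(S_p^N)$), and $G(M)=F(s_1(M),\dots,s_N(M))$, which is positively homogeneous of degree $m=k$ since $s_i(tM)=t\,s_i(M)$ for $t>0$ and $F$ has degree $k$. This gives
\[
\int_{B_E(S_p^N)}F(s_1(M),\dots,s_N(M))\diff M=\frac{1}{\Gamma\!\big(1+\tfrac{d_N+k}{p}\big)}\int_{\mc{M}_N(\F)_{\mrm{s.a.}}}F(s_1(M),\dots,s_N(M))\,e^{-\sigma_p(M)^p}\diff M .
\]
Now I would apply Weyl's integration formula to the right-hand side: writing $M=U\Lambda U^*$ and integrating out the angular variables $U$ produces the Vandermonde Jacobian $\prod_{i<j}\abs{\lambda_i-\lambda_j}^\beta$ and the constant $c_N=\abs{\mc{U}_N(\F)}/(N!\,\abs{\mc{U}_1(\F)}^N)$; since $M$ is self-adjoint, $s_i(M)=\abs{\lambda_i}$ and $\sigma_p(M)^p=\sum_i\abs{\lambda_i}^p$, so this integral equals $c_N\int_{\R^N}F(\abs{\lambda_1},\dots,\abs{\lambda_N})\prod_{i<j}\abs{\lambda_i-\lambda_j}^\beta e^{-\sum_i\abs{\lambda_i}^p}\diff\lambda$. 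Finally I would substitute $\lambda_i=\big(\tfrac{N\beta}{2}c_p\big)^{1/p}x_i$: the weight becomes $e^{-\frac{N\beta}{2}c_p\sum_i\abs{x_i}^p}$, while the Jacobian, the homogeneity of $F$, and the Vandermonde each contribute a power of $\big(\tfrac{N\beta}{2}c_p\big)^{1/p}$, with exponents $N$, $k$, and $\tfrac\beta2 N(N-1)$ respectively, summing to $N+k+\tfrac\beta2N(N-1)=d_N+k$ by the definition of $d_N$. Hence the overall prefactor is $\big(\tfrac{N\beta}{2}c_p\big)^{(d_N+k)/p}$, and collecting everything yields the stated formula, with $F(x_1,\dots,x_N)$ on the right read as $F(\abs{x_1},\dots,\abs{x_N})$ — equivalently $F$ may be taken even in each coordinate, which is automatic in the applications, where $F$ is evaluated at the (nonnegative) singular values.

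\textbf{Main obstacle.} There is no substantial difficulty: the statement is classical, see \cite{raymond1984volume,guedon2007concentration}. The only two points that need care are the justification of the disintegration in Step 1 via the cone measure on $\partial K$ (and its compatibility with homogeneity), and the bookkeeping of scaling exponents in Step 2. The single arithmetic fact making the latter close up is that $\dim_\R\mc{M}_N(\F)_{\mrm{s.a.}}$ equals $N$ plus the degree of homogeneity $\tfrac\beta2N(N-1)$ of the Vandermonde determinant, which is precisely $d_N$ — equivalently, this is forced by matching the homogeneity of Lebesgue measure on both sides of Weyl's formula.
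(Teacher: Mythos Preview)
Your proof is correct and follows the standard approach found in the cited references \cite{raymond1984volume,guedon2007concentration}; the paper itself does not prove this lemma but merely quotes it. Your observation at the end about $F(x_1,\dots,x_N)$ versus $F(\abs{x_1},\dots,\abs{x_N})$ is a valid minor point about the statement's phrasing, and your resolution (that in all applications $F$ is even in each coordinate, being a function of singular values) is exactly right.
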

	
	As an application with $F \equiv 1$, one gets the following relationship between the partition function $Z_N$ introduced in \eqref{eq:fonction} and the volume of $B_E(S_p^N)$:
	\begin{equation} \label{eq:ball_and_Z}
	    \abso{B_E(S_p^N)} = \dfrac{c_N}{\Gamma\left(1+\frac{d_N}{p}\right)} \left(\frac{N\beta}{2}c_p\right)^{\frac{d_N}{p}} Z_N.
	\end{equation}
	Hence, the following asymptotic expansion of $\log \abso{B_E(S_p^N)}$, whose proof is given in Section \ref{subsec:corballs}, follows directly from Corollary \ref{thm:subleadingZN}.
	The volume of Schatten balls, both in $\mc{M}_N(\F)$ and in $\mc{M}_N(\F)_{\mrm{s.a.}}$, was first studied in \cite{raymond1984volume} which identified the first two orders of the expansion below, without identifying explicitly the constant of the 2nd order (except for $p=2$). This was done more recently in \cite{KabProTha2020a,KabProTha2020b}, and recovered in \cite{DadFraGueZit2023}. 
	The following expansion, up to the 4th order, was also obtained in \cite{Sonn} for any $p \geq 3/2$ shortly after the first version of this paper appeared.
	
	
\begin{corollary}[Volume of $p$-Schatten balls]\label{cor:volume_balls}
		As $N\xrightarrow[]{}\infty$, $\beta=1,2,4$, $p\geq2$ it holds that:
		
		$$ \log\abso{B_E(S_p^N)} = aN^2\log N + bN^2 +cN\log N + dN +o(N),$$
		where
		$$ a=-\frac{\beta}{2}\left(\frac{1}{p}+\frac{1}{2}\right),\quad b=\beta\left(\frac{\log(pc_p)}{2p}+\frac{1}{4}\left(\frac{3}{2}+\log\frac{\pi}{\beta}\right)-\frac{1}{4p} \right),\quad c = \frac{\beta-2}{2}\left(\frac{1}{p}+\frac{1}{2}\right),$$
		$$d=\frac{2-\beta}{2p}\log(pc_p)+\beta F^{\{-1\}}+\log\Gamma(\beta/2) + \frac{1}{2} +\frac{\beta}{4}\left(1-\log(\pi\beta)\right)-\dfrac{1}{2}\log\frac{4\pi}{\beta},$$
		where $c_p$ is given by \eqref{def:cp} and $F^{\{-1\}}$ by Corollary \ref{thm:subleadingZN}.
	\end{corollary}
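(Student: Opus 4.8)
The plan is to start from the exact identity \eqref{eq:ball_and_Z}, namely
\[
\log\abso{B_E(S_p^N)} = \log c_N - \log\Gamma\!\left(1+\tfrac{d_N}{p}\right) + \tfrac{d_N}{p}\log\!\left(\tfrac{N\beta}{2}c_p\right) + \log Z_N,
\]
and to expand each of the four terms up to order $o(N)$, then collect coefficients according to the scales $N^2\log N$, $N^2$, $N\log N$, $N$. The term $\log Z_N$ is handled directly by Theorem \ref{thm:subleadingZN}: since $\log Z_N = \beta N^2\big(-\tfrac12(\log 2 + \tfrac{3}{2p}) + \tfrac{\log N}{2N} + \tfrac{F^{\{-1\}}}{N} + o(1/N)\big)$, it contributes $\tfrac{\beta}{2}N\log N + \beta F^{\{-1\}}N$ to the $N\log N$ and $N$ coefficients, contributes $-\tfrac{\beta}{2}(\log 2 + \tfrac{3}{2p})N^2$ to the $N^2$ coefficient, and nothing to $N^2\log N$.

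Next I would expand the $\Gamma$ term. Recall $d_N = \tfrac{\beta}{2}N(N-1)+N = \tfrac{\beta}{2}N^2 + (1-\tfrac{\beta}{2})N$, so $\tfrac{d_N}{p} = \tfrac{\beta}{2p}N^2 + \tfrac{2-\beta}{2p}N$. Writing $M_N \coloneqq 1 + \tfrac{d_N}{p}$ and using Stirling in the form $\log\Gamma(M) = M\log M - M - \tfrac12\log M + \tfrac12\log(2\pi) + o(1)$ as $M\to\infty$, one gets $\log\Gamma(M_N) = \tfrac{d_N}{p}\log\tfrac{d_N}{p} - \tfrac{d_N}{p} - \tfrac12\log\tfrac{d_N}{p} + \tfrac12\log(2\pi) + o(1)$, and then $\log\tfrac{d_N}{p} = 2\log N + \log\tfrac{\beta}{2p} + \tfrac{2-\beta}{\beta}\tfrac1N + O(1/N^2)$ must be substituted and the product $\tfrac{d_N}{p}\cdot\log\tfrac{d_N}{p}$ expanded carefully to order $N$. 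The term $\tfrac{d_N}{p}\log(\tfrac{N\beta}{2}c_p) = (\tfrac{\beta}{2p}N^2 + \tfrac{2-\beta}{2p}N)(\log N + \log\tfrac{\beta c_p}{2})$ is straightforward. The combination $\tfrac{d_N}{p}\log(\tfrac{N\beta}{2}c_p) - \tfrac{d_N}{p}\log\tfrac{d_N}{p}$ simplifies because the $\log N$ pieces partially cancel against each other and against the $\log Z_N$ contribution; I expect the $N^2\log N$ coefficient to come out as $a = -\tfrac{\beta}{2}(\tfrac1p + \tfrac12)$ from the single uncancelled $-\tfrac{\beta}{2p}N^2\log N$ (from $-\tfrac{d_N}{p}\log\tfrac{d_N}{p}$) plus the $-\tfrac{\beta}{4}N^2\log N$ hidden in $\log c_N$ (see below). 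Finally, $c_N = \abs{\mc{U}_N(\F)}/(N!\,\abs{\mc{U}_1(\F)}^N)$ requires the known asymptotics of the volume of the unitary/orthogonal/symplectic group; for $\beta=2$ this is the classical formula $\abs{U(N)} = \prod_{k=1}^{N}\tfrac{(2\pi)^k}{\Gamma(k)}$ (times a normalization), and in general $\log c_N$ has an expansion of the form $-\tfrac{\beta}{4}N^2\log N + (\text{const})N^2 + (\tfrac{\beta-2}{4})N\log N + (\text{const})N + o(N)$ obtainable from the Barnes $G$-function asymptotics; I would cite or quote this expansion (it is implicit already in \cite{raymond1984volume} and \cite{KabProTha2020a}).

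The main obstacle is bookkeeping: assembling the $N$-coefficient $d$ correctly requires tracking several $o(N)$-level contributions simultaneously — the $\tfrac12\log(2\pi)$ and $-\tfrac12\log\tfrac{d_N}{p}$ Stirling corrections, the cross term from $\tfrac{2-\beta}{2p}N\cdot\log\tfrac{\beta c_p}{2}$, the subleading $\tfrac{2-\beta}{\beta}\tfrac1N$ correction inside $\log\tfrac{d_N}{p}$ multiplied by the $\tfrac{\beta}{2p}N^2$ prefactor (which produces an order-$N$ term), the order-$N$ part of $\log c_N$, and the $\beta F^{\{-1\}}N$ from Theorem \ref{thm:subleadingZN}. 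No single step is deep, but a sign error or a dropped cross-term anywhere propagates into $d$. I would organize the computation by first writing every term as (coefficient)$\cdot N^2\log N$ + (coeff)$\cdot N^2$ + (coeff)$\cdot N\log N$ + (coeff)$\cdot N$ + $o(N)$ in a single aligned display, then sum columnwise; the $\log(pc_p)$ that appears in both $b$ and $d$ is the tell-tale sign that the $\log c_p$ pieces from $\tfrac{d_N}{p}\log(\tfrac{N\beta}{2}c_p)$ survive, combining with the $\log p$ produced when $-\tfrac{d_N}{p}\log\tfrac{d_N}{p}$ is rewritten via $\log\tfrac{d_N}{p} = \log\tfrac{\beta N^2}{2p} + \cdots$, and it is a useful consistency check that the final $a,b,c,d$ reduce, at $p=2$, to the known expansion of the volume of the Hilbert–Schmidt ball.
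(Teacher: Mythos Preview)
Your proposal follows essentially the same route as the paper's proof: start from \eqref{eq:ball_and_Z}, invoke Theorem~\ref{thm:subleadingZN} for $\log Z_N$, expand $\log\Gamma(1+d_N/p)$ via Stirling, expand $\tfrac{d_N}{p}\log(\tfrac{N\beta c_p}{2})$ directly, and derive the asymptotics of $\log c_N$ from the explicit product formula for $\abs{\mc{U}_N(\F)}$ combined with Stirling and the elementary sums $\sum k\log k$, $\sum\log k$. The paper does exactly this in Section~\ref{subsec:corballs}.

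One small caution: your parenthetical guess that the $N\log N$ coefficient in $\log c_N$ is $\tfrac{\beta-2}{4}$ is off; the paper obtains $-\bigl(\tfrac{\beta}{4}+\tfrac12\bigr)$, and it is this value that combines with the other three $N\log N$ contributions to produce $c=\tfrac{\beta-2}{2}\bigl(\tfrac1p+\tfrac12\bigr)$. Since you explicitly plan to derive (not guess) this expansion from the group-volume formula, this is not a gap in the strategy, only a reminder that the bookkeeping you flag as the main obstacle is indeed delicate.
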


	
	Our main motivation concerning convex geometry is the following result, which provides a consistency check of the KLS conjecture in the context of $p$-Schatten balls for a specific class of functions $f$ and with an explicit constant.
	The CLT in Theorem~\ref{thm:CLT} is the main ingredient of the proof and it does not seem that this result is deducible only from the free energy expansion stated in Corollary~\ref{thm:subleadingZN} or in \cite{Sonn}.
	
	\begin{theorem}\label{thm:KLS}
		For all $p\geq2$, $r\geq 2$ an even integer and $q\in [1,\infty)$, the KLS conjecture, see Conjecture \ref{conj:KLS}, holds for $\mu$ uniform laws on $B_E(S_p^N)$ with $\F=\R, \C,\HH$, $N$ large enough (depending on $p,q,r$) and $f(X)=\mrm{Tr}(X^r)^q$ with constant $4$, \emph{i.e.\@}
	\begin{equation*}
			\Var_\mu f\leq 4\cdot\lambda_\mu \cdot \mathbb{E}_\mu \left[\|\nabla f\|^{2}_2\right].
		\end{equation*}
	\end{theorem}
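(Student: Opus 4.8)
The strategy is to reduce the verification of the KLS inequality \eqref{eq:varianceconj} for the specific functional $f(X)=\mrm{Tr}(X^r)^q$ on $\mc{M}_N(\F)_{\mrm{s.a.}}$ to a statement about the $\beta$-ensemble with Freud weight, and then to extract the required asymptotics from Theorem \ref{thm:CLT}. First, writing $X$ for a uniform random matrix in $B_E(S_p^N)$ and letting $(\lambda_1,\dots,\lambda_N)$ be its eigenvalues, one has $f(X) = (\sum_i \lambda_i^r)^q = (N\,\mu_N(x^r) + N\mu_V(x^r))^{q}$ after centering, so $f(X)$ is a polynomial (of degree $q$) in the single linear statistic $L_N(x^r)$. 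By Lemma \ref{lem:straymond} with the homogeneous $F$ equal to powers of $\sum_i x_i^p$ and of $\sum_i x_i^r$, the moments of $f(X)$ and of $\|X\|_{\mrm{HS}}^2 = \sum_i \lambda_i^2$ under the uniform law on $B_E(S_p^N)$ are expressed, up to explicit $\Gamma$-factor ratios, through expectations under the Freud $\beta$-ensemble \eqref{eq:beta_ensembles_density}. This is exactly the mechanism already used in \cite{DadFraGueZit2023}; the point is that it applies verbatim for general even $r$ and general $q\geq 1$.

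Next I would compute each of the three quantities in \eqref{eq:varianceconj} to leading order in $N$. For $\Var_\mu f$: since $f(X)$ is a degree-$q$ polynomial in $L_N(x^r) = N(\mu_N-\mu_V)(x^r)$ plus the deterministic $N\mu_V(x^r)$, expanding gives $f(X) = (N\mu_V(x^r))^q + q (N\mu_V(x^r))^{q-1} L_N(x^r) + \dots$, so $\Var_\mu f \sim q^2 (N\mu_V(x^r))^{2(q-1)} \cdot \Var(L_N(x^r))$, and Theorem \ref{thm:CLT} with $f_0(x)=x^r$ gives $\Var(L_N(x^r)) \to \sigma^2(x^r)$, a finite explicit constant. (One must check convergence of the second moment, not just convergence in law, but the CLT in Theorem \ref{thm:CLT} is stated as convergence of moments, which covers this; products of several linear statistics of polynomials can be handled the same way by a standard Wick-type argument, or simply by noting that higher cumulants vanish.) For $\lambda_\mu = \|\Cov(\mu)\|_{\mrm{op}}$: by symmetry under conjugation by $\mc{U}_N(\F)$, the covariance of the uniform law on the Schatten ball is proportional to the identity on $E$, with proportionality constant $\frac{1}{\dim E}\,\E_\mu[\|X\|_{\mrm{HS}}^2]$; and $\E_\mu[\|X\|_{\mrm{HS}}^2] = \E_\mu[\sum_i\lambda_i^2]$ is computed from Lemma \ref{lem:straymond} and the law of large numbers $\mu_N \to \mu_V$, giving $\E_\mu[\|X\|_{\mrm{HS}}^2] \sim N\mu_V(x^2)\cdot(\text{normalization})$; dividing by $\dim E \asymp \tfrac{\beta}{2}N^2$ yields the order of $\lambda_\mu$. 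For $\E_\mu[\|\nabla f\|_2^2]$: since $\nabla(\mrm{Tr}(X^r)^q) = q\,\mrm{Tr}(X^r)^{q-1}\cdot r X^{r-1}$, one gets $\|\nabla f\|_2^2 = q^2 r^2 \mrm{Tr}(X^r)^{2(q-1)} \mrm{Tr}(X^{2(r-1)})$, whose expectation is again read off from Lemma \ref{lem:straymond} and $\mu_N\to\mu_V$, giving $\E_\mu[\|\nabla f\|_2^2]\sim q^2 r^2 (N\mu_V(x^r))^{2(q-1)} N\mu_V(x^{2r-2})$.

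Assembling these three asymptotics, the claimed inequality $\Var_\mu f \leq 4\lambda_\mu \E_\mu[\|\nabla f\|_2^2]$ becomes, to leading order, a comparison of explicit constants built from $\sigma^2(x^r)$, $\mu_V(x^2)$, $\mu_V(x^{2r-2})$, $\mu_V(x^r)$, $r$, $p$, $\beta$ and $\dim E$; since the two sides scale with the same power of $N$ (all the $(N\mu_V(x^r))^{2(q-1)}$ factors cancel, one factor of $N$ cancels between $\lambda_\mu\sim c/N$ and $\E_\mu\|\nabla f\|^2\sim c'N$), it suffices to verify the resulting $N$-free inequality and then invoke "$N$ large enough" to absorb lower-order corrections. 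I expect the main obstacle to be precisely this last dimensional-constant comparison: one needs an effective bound $\sigma^2(x^r) \leq (\text{const}) \cdot \frac{1}{\dim E}\mu_V(x^2)\cdot r^2 \mu_V(x^{2r-2})$ with the universal constant $4$ (really, after the $N$-scaling shakes out, a constant comfortably below $4$ so that the error terms do not spoil it), which requires an honest estimate of the double integral defining $\sigma^2(f)$ in Theorem \ref{thm:CLT} for $f(x)=x^r$ against moments of the Ullman distribution. Bounding $\left(\frac{x^r-y^r}{x-y}\right)^2 \leq r^2 \max(|x|,|y|)^{2r-2} \cdot \frac{(1-xy)}{(1-x^2)^{1/2}(1-y^2)^{1/2}}$-weighted and integrating should give $\sigma^2(x^r) \leq \frac{C r^2}{\beta}\mu_V(x^{2r-2})$ for an explicit $C$, and since $\dim E \asymp \frac{\beta}{2}N^2$ while $\lambda_\mu \E_\mu\|\nabla f\|^2 \asymp \frac{r^2}{\dim E}\mu_V(x^2)\mu_V(x^{2r-2}) N^2 \cdot(\dots)$, the $\beta$'s and $N$'s match and one is left with a clean numerical inequality in $r$ and $p$ that holds with room to spare, the factor $4$ being deliberately generous. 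The remaining care is bookkeeping: tracking the $\Gamma$-factor normalizations from Lemma \ref{lem:straymond} consistently across numerator and denominator (they cancel), and justifying that moment convergence in Theorem \ref{thm:CLT} upgrades to convergence of $\Var_\mu f$, $\lambda_\mu$ and $\E_\mu\|\nabla f\|^2$ with controlled error, which follows from the rigidity/local-law inputs (Theorem \ref{thm:local_law}) already available.
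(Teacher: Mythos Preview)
Your overall strategy coincides with the paper's: convert every moment to the Freud $\beta$-ensemble via Weyl's formula (Lemma~\ref{lem:straymond}), extract the fluctuation of $\langle\mu_N,x^r\rangle^q$ from the CLT of Theorem~\ref{thm:CLT}, lower-bound $\lambda_\mu$ and $\E_\mu\|\nabla f\|_2^2$ via moment computations, and compare constants. Two steps of your plan, however, are genuine gaps.

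\textbf{The $\Gamma$-factors do not simply cancel.} The variance $\Var_\mu f=\E_\mu[f^2]-\E_\mu[f]^2$ compares two homogeneous functionals of \emph{different} degrees ($2rq$ and $rq$), so Weyl's formula attaches two different prefactors $\Gamma(u)/\Gamma(u+2v_0)$ and $(\Gamma(u)/\Gamma(u+v_0))^2$, with $u=1+d_N/p$ and $v_0=rq/p$. Their mismatch contributes a term of the \emph{same order} as the $\beta$-ensemble variance $\Var_\beta(\langle\mu_N,x^r\rangle^q)$ (both scale like $1/d_N$ relative to the leading part, since $u\sim d_N/p$). The paper isolates this as the decomposition $(1)+(2)$ in the proof of Theorem~\ref{thm:KLS} and shows $(2)<0$, so dropping it yields an upper bound; but your asymptotic $\Var_\mu f\sim q^2(N\mu_V(x^r))^{2(q-1)}\Var(L_N(x^r))$ is not correct as an equivalence, and your dismissal of the $\Gamma$-bookkeeping as cancellation is misleading even if ultimately harmless.

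\textbf{The constant $4$ is tight, not generous.} The paper's final upper bound on the KLS ratio is
\[
\frac{2}{r}\cdot\frac{p+2}{p}\cdot\frac{p+2r-2}{p}\,(1+o(1)),
\]
which equals exactly $4$ at $p=2$ for every even $r\geq 2$. There is therefore no slack, and the bound on $\sigma^2(x^r)$ must be done sharply. Your proposed estimate $\bigl(\tfrac{x^r-y^r}{x-y}\bigr)^2\leq r^2\max(|x|,|y|)^{2r-2}$ is too crude: it produces a factor $r^2$ where only a factor $r$ is affordable. The paper instead writes the variance integral as $\E[(\sum_{i=0}^{r-1}A^{r-1-i}B^i)^2(1-AB)]$ for independent arcsine variables $A,B$, expands the square, uses that odd moments vanish so at most $r$ cross-terms survive per even power, and evaluates the resulting sum of central-binomial products explicitly (Lemma~\ref{lem:variancemunxr}). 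With your cruder bound the final constant would be $\frac{(p+2)(p+2r-2)}{p^2}$, which at $p=2$ equals $2r$ and exceeds $4$ as soon as $r\geq 4$.

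Two minor remarks. Your matrix-gradient formula $\nabla f(X)=qr\,\mrm{Tr}(X^r)^{q-1}X^{r-1}$ is not literally the Euclidean gradient on $\R^{d_N}$; the paper computes the latter coordinate by coordinate (Lemma~\ref{lem:gradiantf}) and obtains $\|\nabla f\|_2^2\geq(qr)^2\mrm{Tr}(X^r)^{2q-2}\mrm{Tr}(X^{2r-2})$, which is the direction you need. Your isotropy argument for $\lambda_\mu$ is fine; the paper uses the weaker inequality $\lambda_{\max}(\Sigma)\geq\mrm{Tr}(\Sigma)/d_N$, which suffices.
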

	
	\subsection{Connection with the literature}\label{subsec:literature}
	
	\paragraph{Central limit theorems in $\beta$-ensembles.}
	The study of the fluctuations of the $\beta$-ensembles was initiated in the seminal paper \cite{johansson} for polynomial $V$. These results were later extended to the case of analytic $V$ in the one cut regime for $f$ analytic \cite{BorGui2013} (see also \cite{PasShc2011} for a partial result) and in the multi-cut setting for $f$ analytic \cite{borot2024asymptotic} and for for $f\in\cC^6$ \cite{Shc2}. 
	The regularity assumptions on $V$ and $f$ were relaxed in \cite{BekLebSer2018}, which covers the case $V \in \cC^6$ and $f\in\cC^4$ in the multi-cut setting, relying on the master operator method which is one of the two main ingredients in this paper.
	They also cover some critical cases for a restricted class of test functions.
	The currently best-known CLT in terms of $f$, requiring $f\in\mathsf{H}^{1/2+\varepsilon}$, has been established for Wigner matrices in \cite{landon2022almost}.
	Quantitative CLTs were obtained, via Stein's method, for $V \in \cC^8$ and $f\in\cC^9$ in \cite{LamLedWeb2019}, and for $V \in \cC^7$ and $f \in \cC^{1,\varepsilon}$ \cite{angst2024sharp}.
	In \cite{BouModPai2022}, a CLT is proved with analytic $V$ for singular test functions of the type $f(x) = \1_{x \leq x_0}$ or $f(x) = \log \abs{x-x_0}$, which require an additional renormalization compared to previously mentioned cases.
	A key tool in their paper is an optimal local law proved via loop equations: this method is our second main ingredient here.
	CLTs at mesoscopic scales have also been developed, notably in \cite{bekerman2018mesoscopic,peilen2024local}. Additionally, the high-temperature regime ($\beta=\beta_N\rightarrow0$) has been studied in \cite{benaych2015poisson,nakano2018gaussian,hardy2021clt,dworaczek2024clt,MazzucaMeminCLT}.

	
	\paragraph{Transition for bulk statistics at $p=1$.}
	For $\beta$-ensembles with Freud weights, it has been shown that the local behavior of the particles $\{\lambda_i\}_{i=1}^N$ under the distribution $\P_N$ undergoes a transition at $p=1$, at least in the case $\beta=2$ \cite{claeys2023weak}. This confirms earlier conjectures \cite{canali1995nonuniversality}, suggesting a dichotomy in the local statistics. 
	When $p>1$ called the \textit{strong confinement} regime, the equilibrium density \eqref{def:mu_p} is still bounded and the local statistics are described asymptotically by the Sine-$\beta$ process. In contrast, for $p<1$ called the \textit{weak confinement} regime, the density diverges at the origin and the local statistics are governed by a different, non-universal kernel.
	Finally, in the critical case $p=1$, the density diverges logarithmically at the origin but surprisingly the local limit is still the Sine-$\beta$ process.

	\paragraph{Multicriticality and quantum chromodynamics.}
	The study of random matrix models at criticality, \textit{i.e.\@} when the equilibrium density vanishes in the bulk, has attracted considerable interest in quantum field theories, see \cite{di19952d,verbaarschot2000random}. Owing to universality, understanding the microscopic behaviour near such critical points can lead to several predictions regarding some features of the spectrum of the Dirac operator in quantum chromodynamics in the critical universality class. It is well known that polynomial potentials $V_\mrm{pol}$ can generate  critical $\beta$-ensembles in which the equilibrium density vanishes at the origin with an even integer exponent $\abs{\lambda}^{2m}$ with $m$ an integer, see \textit{e.g.\@} \cite{bleher2003double,claeys2006universality,BekLebSer2018}. However, this construction only yields polynomial vanishing with integer powers. To obtain a richer family of critical exponents, specifically real non-integer powers, a mixture of Freud weight and polynomial potentials was proposed in \cite{akemann2002new,janik2002new}. Indeed taking $V(x)=a\abs{x}^{p}-bx^2$ for $2<p<3$ and constants $a,b$ finely tuned, the equilibrium density vanishes as $\abs{\lambda}^{p-1}$ thus exhibiting a real exponent. By further adding polynomial terms and increasing the power of the Freud weight, one can in fact generate arbitrary exponent.
	
	\subsection{Definitions and strategy}
	\label{sec:strategy}
	
	Our goal is twofold. We want to obtain the CLT and the subleading terms in the asymptotic expansion of $\log Z_N$. 
	Because of Lemma \ref{lem:interpolZN}, by working with the general potential $V_\alpha$ defined by linearly interpolating between $V$ and the quadratic potential as follows, for $p>2$,
	\begin{equation}\label{def: Valpha}
		V_\alpha(x)\coloneqq \alpha c_p\abs{x}^p+(1-\alpha)2x^2,
	\end{equation}
	and denoting 
	\begin{equation} \label{def:P_alpha}
		\diff \P_\alpha(\lambda_1,\dots,\lambda_N)
		\coloneqq \frac{1}{Z_{N,\alpha}}
		\prod_{1 \leq i < j \leq N} 
		\abs{\lambda_i - \lambda_j}^\beta \cdot
		e^{- \frac{\beta N}{2} \sum_{i=1}^N V_\alpha(\lambda_i)} 
		\diff \lambda_1 \cdots \diff \lambda_N,
	\end{equation}
	we can obtain the two results at once. Indeed, obtaining the CLT under $V_\alpha$ for general $\alpha\in[0,1]$ (in terms of the convergence of moments) implies Theorem \ref{thm:CLT} by taking $\alpha=1$; and one obtains Corollary \ref{thm:subleadingZN} by integrating over $\alpha$ the limits of $\E_\alpha\left[L_N(\partial_\alpha V_\alpha)\right]$ and using the asymptotic for the Selberg integral for $\alpha=1$, see Section \ref{sec:logZN}. Prior to explaining the strategy, we give the definitions that are going to be the main tools of our analysis.
	
	\paragraph{The equilibrium measure.}
	We denote by $\mu_{V_\alpha}$ the equilibrium measure under $V_\alpha$ (recall it is defined as the unique minimizer of \eqref{eq:functional E} with $V_\alpha$ instead of $V$).
	By \cite[Theorems~I.1.5 and I.3.3]{saff2024logarithmic}, this measure is characterized by the existence of a constant $C_\alpha \in \R$ such that the associated \textit{effective potential} is nonnegative outside of the support:
	\begin{equation}\label{eq:characterization muValpha}
		V_{\mrm{eff},\alpha}(x)\coloneqq \dfrac{V_\alpha(x)}{2}-\int_\R\log\abs{x-y}\diff\mu_{V_\alpha}(y)-C_\alpha
		\begin{cases}
			=0 & \text{on } \supp \mu_{V_\alpha}, \\ 
			\geq 0 & \text{elsewhere}.
		\end{cases}
	\end{equation}
	Recall that, for $\alpha=1$, $\mu_{V_1} = \mu_V$ is given in \eqref{def:mu_p}. On the other hand, for $\alpha=0$, the equilibrium measure is the semi-circular distribution on $[-1,1]$.
	Since they have the same support, it follows from the characterization above that $\mu_{V_\alpha}$ is the linear interpolation between these two measures: hence, it is absolutely continuous with respect to Lebesgue measure, supported on $[-1,1]$ and its density is given for all $x\in[-1,1]$ by:
	\begin{equation}\label{def: mu_Valpha}
		\dfrac{\diff\mu_{V_\alpha}}{\diff x}(x)\coloneqq \alpha\dfrac{\diff\mu_V}{\diff x}(x)+\dfrac{2(1-\alpha)}{\pi}\sigma(x),
	\end{equation}
	where we defined, for $x \in [-1,1]$,
	\begin{equation}
	    \sigma(x)\coloneqq\sqrt{1-x^2}.
	\end{equation}
	We now gather some basic facts on $\mu_{V_\alpha}$.
	Differentiating the relationship \eqref{eq:characterization muValpha} on $[-1,1]$, one gets the following formula (see e.g.\@ \cite[Theorem~11.2.1]{PasShc2011})
	\begin{equation} \label{eq:equilibrium_relation}
	    \frac{V_\alpha'(x)}{2} 
	    - \fint_{-1}^1 \frac{1}{x-y} \diff\mu_{V_\alpha}(y) = 0,
	\end{equation}
	where $\fint$ denotes the Cauchy principal value of the integral defined in \eqref{def:PV}.
    Integrating this equality w.r.t.\@ $h(x)\diff\mu_{V_\alpha}(x)$ for some Hölder continuous function $h \colon \R \to \R$ and symmetrizing the second term, one can deduce the following integral identity that we will use several times:
	\begin{equation}\label{eq:identiteintegralmuvalpha}
		\int_{-1}^1 V'_\alpha(x)h(x)\diff\mu_{V_\alpha}(x)
		-\iint_{[-1,1]^2}\dfrac{h(x)-h(y)}{x-y}\diff\mu_{V_\alpha}(y)\diff\mu_{V_\alpha}(x)=0.
	\end{equation}
	Another expression for the equilibrium measure is the following (see \cite[Theorem 11.2.4]{PasShc2011})
	\begin{equation}\label{eq:mu=rsigma}
		\dfrac{\diff\mu_{V_\alpha}}{\diff x}(x)=\dfrac{\sigma(x)r_\alpha(x)}{\pi} \1_{[-1,1]}(x),
	\end{equation}
	where, for all $\alpha\in[0,1]$ and $x\in \R$, we set
	\begin{equation}
		\label{def:r_generale}
		r_\alpha(x)\coloneqq \frac{1}{2\pi}\int_{-1}^1\frac{V_\alpha'(t)-V_\alpha'(x)}{t-x}\frac{\diff t}{\sigma(t)}.
	\end{equation}
	From the formula \eqref{def:mu_p} for the equilibrium measure at $\alpha=1$, and expanding $(1-y^2)^{-1/2}$, one gets for $x\in[-1,1]$ the expression, valid for all $p>0$
	\begin{equation}\label{def:r}
		r_\alpha(x)
		= \frac{\alpha p}{\sigma(x)} \left( \abs{x}^{p-1}\big(A_p - B_p\log|x|\big)-\sum_{n=0}^{+\infty}a_n(p)x^{2n}\1_{\{2n+1\neq p\}}\right)
		+2(1-\alpha),
	\end{equation}
	where 
	\[
	    A_p \coloneqq \sum_{n=0}^{+\infty}\1_{\{2n+1\neq p\}}\frac{\Gamma(n+1/2)}{n!(2n+1-p)\Gamma(1/2)} ,\qquad 
	    B_p \coloneqq \frac{\Gamma(p/2)}{((p-1)/2)!\Gamma(1/2)}\1_{\{p\in 2\N+1\}},
	\]
	\[
	    a_n(p)=\frac{\Gamma(n+1/2)}{n!(2n-p+1)\Gamma(1/2)}.
	\]
	Finally, we emphasize that $\mu_{V_\alpha}$ is symmetric w.r.t.\@ 0 because $V_\alpha$ is even (see \eqref{def: mu_Valpha}.

	\paragraph{Strategy.}
	We use the master operator approach already used to prove CLTs in 
	\cite{BorGui2013,Shc2,borot2024asymptotic,BekLebSer2018}.
	The \textit{master operator} $\Xi_\alpha$ acts on functions $\psi \in \cC^1(\R)$ and is defined as
	\begin{equation}\label{eq:def_master_op}
		\Xi_\alpha[\psi](x)
		\coloneqq - \frac{1}{2} \psi(x) V_\alpha'(x) + \int_{-1}^1 \frac{\psi(x)-\psi(t)}{x-t} \diff \mu_{V_\alpha}(t),
		\qquad x \in \R.
	\end{equation}
	This operator is invertible (see \cite[Lemma~3.2]{BekFigGui2015}, \cite[Lemma~3.3]{BekLebSer2018} and Lemma~\ref{lem:inverse_master_op} here) \textit{via} Tricomi's formula \cite{tricomi1957integral}, and the understanding of the function $\psi_\alpha\coloneqq\Xi_\alpha^{-1}[f]$ and its derivatives (for $f$ a smooth function) is of prime importance. Indeed, using a classic trick based on the change of variables 
	$\lambda_k=y_k+\psi(y_k)/N$ for $1\leq k \leq N$ in the partition function with respect to $V_\alpha$ and a Taylor expansion, one can get the following estimate:
	\begin{equation}\label{eq:introeq laplace}
		\E_\alpha\left[\exp\left(L_N(f)+\dfrac{A_N(\psi_\alpha)}{N}+\mrm{error}\right)\right]=\exp\left(m_{V_\alpha}(f)+\dfrac{\sigma^2(f)}{2}\right).
	\end{equation}
	Above
	$m_{V_\alpha}(f)$ and $\sigma^2(f)$ are  respectively the mean and the variance of the limiting Gaussian distribution defined in Theorem \ref{thm:CLTalpha}, the \textit{anisotropy} term $A_N(\psi_\alpha)$ is given by
	\[
	A_N(\psi_\alpha)=N^2\iint_{\R^2} \frac{\psi_\alpha(\lambda)-\psi_\alpha(\lambda')}{\lambda-\lambda'} 
	\diff (\mu_N-\mu_{V_\alpha})(\lambda) \diff (\mu_N-\mu_{V_\alpha})(\lambda'),
	\]
	and the error term can be showed to be negligible by applying the law of large numbers, \textit{i.e.\@} the convergence of $\mu_N$ towards $\mu_{V_\alpha}$.
    Thus, the following convergence in terms of moments holds
	\begin{equation}
		L_N(f)+\dfrac{A_N(\psi_\alpha)}{N} \xrightarrow[N\rightarrow\infty]{} \mc{N}\left(m_{V_\alpha}(f),\sigma^2(f)\right).
	\end{equation}
	Therefore, proving that $A_N(\psi_\alpha)/N$ converges to 0 in terms of moments is enough to conclude on the CLT.

	
	Proofs of CLTs via this method usually start with some \textit{a priori} concentration bounds obtained by studying the energy functional in \eqref{eq:functional E}, see \textit{e.g.\@} \cite[Corollary 2.5]{BekLebSer2018} or \cite[Corollary~4.16]{Gui2019} that we use here. This bounds show that $L_N(f) = O(\sqrt{N})$ and therefore $A_N(f) = O(N)$ for smooth enough $f$ (with sometimes additional logarithmic factors that we neglect in this discussion), and therefore are not enough to conclude.
	A one-step bootstrap is necessary: with this input in hand, coming back to \eqref{eq:introeq laplace} shows that $L_N(f) = O(1)$ and so $A_N(f) = O(1)$, which is enough to conclude that $A_N(\psi_\alpha)/N$ converges to 0. The improved concentration bounds and then the CLT usually require more and more regularity on the test function $f$, because the function $\psi_\alpha = \Xi_\alpha^{-1}[f]$ is less regular than $f$.
	
	In our case, the main difficulty is that, if $p \in (2,3)$, the function $\psi_\alpha$ is only $\cC^2$, with $\psi_\alpha'(x)$ exploding as $\abs{x}^{p-3}$ at the origin, even for $f \in \cC^\infty$ because this singularity is due to the singularity at 0 of the density of the equilibrium measure. This does not provide enough regularity even for applying the \textit{a priori} bound on $A_N(\psi_\alpha)$, which requires $\psi_\alpha''$ bounded in \cite[Corollary~4.16]{Gui2019} and could be improved to require only $\norme{\psi_\alpha''}_2 < \infty$ which does not hold either if $p \leq 5/2$.
	Hence, we need a way to reduce the regularity needed for our concentration bounds: this is done by proving local laws, namely an estimate on the moments of $s_N(z)-s_{V_\alpha}(z)$ where we recall \eqref{def: s_N, s_V}.
	Indeed, if we know that $s_N(z)-s_{V_\alpha}(z) = O(M/Ny)$ for some $M$ depending on $N$ and $z=x+\ii y$, then it follows from Helffer--Sjöstrand formula that $A_N(\psi_\alpha) = O(M^2)$ provided $\norme{\psi_\alpha''}_1 < \infty$, see Lemma~\ref{lem:HS_A_N} and Remark~\ref{rem:HS_A}.
	
	The proof of local laws relies on loop equations, which are relationships between moments of linear statistics obtained by integration by part.
	In \cite[Theorem 1.1]{BouModPai2022}, it has been observed that these loop equations can be combined in a way which is almost self-contained in terms of $s_N(z)$, up to a linear statistics $L_N(\varphi_z)$ where $\varphi_z(\lambda) = (V'(\lambda)-V'(z))/(\lambda - z)$ (note that this behaves well when $z$ approaches the axis, unlike the Stieltjes transform). 
	Their argument has two independent parts: first, using a contour integral trick which works if $V$ is analytic, they show that $L_N(\varphi_z) = O(1)$, then they analyze the rest of the loop equation to prove that $s_N(z)$ almost solves the same quadratic equation as $s_V(z)$ and therefore they are close from each other.
	The second part does not rely heavily on analyticity of $V$ and can be adapted to our context.
	An argument similar to the first part has already been applied to a potential $V \in \cC^4$ in \cite[Lemma~6.6]{BouErdYau2014} by proceeding by induction on the scales to obtain a (sub-optimal) local law, but it does not seem easy to adapt this approach to our less regular $V$.
	Instead, we simply replace the first part of the argument by bounds on $L_N(h_z)$ obtained by other means: \textit{a priori} bounds first, and then bounds obtained via the master operator approach.
	To summarize, this \textit{local law machinery} provides a tool which improves concentration bound, by lowering the regularity required for the test function, without improving the dependence on $N$.
	
	One subtlety in the local law argument that we would like to emphasize is that the function $\varphi_z$ mentioned above is also not regular enough to apply the \textit{a priori} bounds or the master operator approach to $L_N(\varphi_z)$. 
	To overcome this issue, we modify the loop equations in order to replace $\varphi_z$ by a function $f_z$ with an additional level of regularity, in exchange of a singular $1/z$ factor, see Remark~\ref{rem:loop_eq} for details. This argument is crucial to be able to cover the case $p\in(2,3]$ and the bootstrap argument would not work in that case without this regularization of the local law.
	
	Overall, the local law machinery is applied successively five times: a first time to obtain a local law from the \textit{a priori} bounds (Corollary \ref{cor:a_priori_local_law}) and then four times in the proof of the optimal local law in Section~\ref{sec:opt_local_law}. 
	Between each application, the bound on $L_N(f_z)$ is improved, either by applying the master operator approach and using the previous local law to bound $A_N(\psi_\alpha)$ to improve the dependence on $N$, or by applying the local law directly to bound $L_N(f_z)$ to improve the dependence on $z$.

	\subsection{Organization of the paper}
	
	In Section \ref{sec:LE and LL}, we state the loop equations and prove that from a certain concentration estimate, we can deduce a local law. We prove in Section \ref{sec:HS} that by using the Helffer--Sjöstrand formula, a local law translates into estimates on the moments of linear statistics and of the anisotropy. In Section \ref{sec:bounds_test_functions}, we state several bounds on some functions of interest and on a certain regularization of the test-functions considered. Section \ref{sec:bootstrap} is dedicated to establishing the first concentration bound from which we deduce a first local law. We then improve this local law iteratively, by establishing better and better concentration bounds, until obtaining an optimal local law. Finally, we prove respectively the CLT (Theorem \ref{thm:CLT}), the third order asymptotic expansion for the free energy (Corollary \ref{thm:subleadingZN}) and the check on the KLS conjecture (Theorem \ref{thm:KLS}) and the computation of the volume of the Schatten balls (Corollary \ref{cor:volume_balls}) in Sections \ref{sec:CLT}, \ref{sec:logZN} and \ref{sec:convex}. We gather in Appendix~\ref{app:technical estimate function} the proofs of the estimates stated in Section \ref{sec:bounds_test_functions}, and in Appendix \ref{app:eq_stieltjes} the proof of the stability lemma (Lemma \ref{lem:stability}), which is a key ingredient for proving the local laws of Section \ref{sec:LE and LL}.

    \subsection{Notation}

	Throughout the paper, $C$ and $c$ denote positive constants that can change between occurrences. They can depend on some other parameters in a way which is made clear in the statement of each result; in the proof of such a result allowed dependencies are the same as in the statement.
	
	We always consider the principal branch of the logarithm, extended to the negative real numbers by continuity from above, that is $\log(re^{\ii\theta}) = \log(r) + \ii\theta$ for any $r>0$ and $\theta \in (-\pi,\pi]$. We also consider the associated square root $\sqrt{z} = \exp(\frac{1}{2} \log z)$ for $z \in \C^*$ with continuous extension at 0. 
	For $f:\R\to \R$ with a singularity at $x_0$, we write 
	\begin{equation}
	    \label{def:PV}
	    \fint_\R f(x)\diff x \coloneqq \lim_{\varepsilon\to 0}\left(\int_{-\infty}^{x_0-\varepsilon}f(x)\diff x + \int_{x_0+\varepsilon}^{\infty}f(x)\diff x\right)
	\end{equation}
	for the Cauchy principal value, whenever the previous limit makes sense.
	
	We write $\R^* = \R \setminus \{0\}$ and, for $a \leq b$ integers, $\llbracket a,b \rrbracket = \{a,a+1,\dots,b\}$. For $x \in \R$, we write $x_+ = x \vee 0$.
	For $E \subset \R$, we denote by $\cC^k(E)$ the set of class $\cC^k$ real-valued functions $f$ on $E$ and write $\norme{f}_{\cC^k(E) }\coloneqq \sum_{i=0}^k \sup_{x\in E} \abso{f^{(i)}(x)}$, which is not a norm if $E$ is non compact because then it can be infinite.
	We simply write $\norme{f}_{\cC^k}$ if $E=\R$.
	Finally, we write $\cC^k_c(\R)$ for the set of functions in $\cC^k(\R)$ with compact support.
	
    \subsection*{Acknowledgements}
    The authors are grateful to Oliver Guédon, Alice Guionnet and Karol Kozlowski for suggesting this problem and sharing their first insights. They also thank Franck Barthe for helpful discussions on convex geometry, and Mathias Sonnleitner for pointing out a sign error in an earlier version of Corollary~\ref{thm:subleadingZN}. \\
	This project was partly funded by the LabEx CIMI - ANR-11-LABX-0040. \@C.D.G. acknowledges the support of the starting grant 2022-04882 from the Swedish Research Council and of the starting grant from the Ragnar Söderbergs Foundation. R.M. has received support under the Major Research Program of PSL Research University "Statistical Physics and Mathematics" launched by PSL Research University and implemented by ANR-10-IDEX-0001. 
	
	\section{Loop equations and local laws}
	\label{sec:LE and LL}
	
	\subsection{Loop equations}
	\label{subsec:loop_equations}
	
	Before stating the loop equations, we need to define some functions that are going to be central in our analysis.
	
	\begin{definition}\label{def:g,g analytic, f_z, Palpha, halpha}
		Let $p>2$.
		We define the function $g$ as follows:
		\begin{equation} \label{eq:def_g_R}
			\forall x \in \R, \quad 
			g(x) \coloneqq p c_p \abs{x}^p = xV'(x).
		\end{equation}
		We extend it pseudo-analytically on $\C$ as follows:
		\begin{equation} \label{eq:def_g_C}
			\forall z = x+\ii y \in \C, \quad
			g(z) \coloneqq g(x) + \ii y g'(x) - \frac{y^2}{2} g''(x) - \ii \frac{y^3}{6} g'''(x) \chi \left( \frac{y}{x} \right),
		\end{equation}
		where $\chi\in \cC_c^\infty(\R)$ is an even function such that $\1_{[-1/2,1/2]} \leq \chi \leq \1_{[-1,1]}$ and where the last term is meant to be 0 when $x=0$.
		We also define for all $\alpha\in[0,1]$ and $z\in\C$, the functions 
		\begin{equation}
		    g_\alpha(z)\coloneqq \alpha g(z)+4(1-\alpha)z^2
		\end{equation} 
		and $f_{\alpha,z}$ by setting:
		\begin{equation}\label{def:fz}
			\forall \lambda \in \R, \quad 
			f_{\alpha,z}(\lambda) \coloneqq \frac{g_\alpha(\lambda)-g_\alpha(z)}{\lambda-z}.
		\end{equation}
		Finally, we define the random variable $P_\alpha(z)$ for all $\alpha\in[0,1]$ and $z \in \C\setminus \R$ by:
		\begin{align} \label{eq:def_P}
			P_\alpha(z) & \coloneqq s_N(z)^2 + \frac{g_\alpha(z)}{z} s_N(z) + \frac{h_\alpha(z)}{z}, \\
			h_\alpha(z) & \coloneqq \int_{-1}^1 f_{\alpha,z}(t) \diff \mu_{V_\alpha}(t),
		\end{align}
		where $s_N$ is defined in \eqref{def: s_N, s_V} and $\mu_{V_\alpha}$ is defined in \eqref{def: mu_Valpha}.
	\end{definition}
	
	\begin{remark}\label{rem:g}
		Note that the pseudo-analytic extension of $g$ is well-defined since $p>2$ implies that $g\in\cC^2(\R) \cap \cC^\infty(\R^*)$.
		The truncation involving the function $\chi$ is needed for the last term because $g'''(x)$ diverges at 0 when $p \in (2,3)$. This definition makes $g$ a continuous function on $\C$; indeed, to prove the continuity at 0, one can note that, for any compact set $K \subset \C$, there exists $C>0$ such that, for any $z \in K$,
		\begin{equation} \label{eq:last_term_g}
			\abs{y^3 g'''(x) \chi \left( \frac{y}{x} \right)}
			\leq c_p p^4 \abs{y}^3 \abs{x}^{p-3} \1_{\abs{y}\leq\abs{x}} 
			\leq C \abs{y}^{p\wedge 3},
		\end{equation}
		using $\abs{x}^{p-3} \leq \abs{y}^{p-3}$ if $p<3$ and $\abs{x}^{p-3} \leq C$ if $p\geq 3$, and the RHS of \eqref{eq:last_term_g} vanishes as $z \to 0$.
	\end{remark}
	
	We are now ready to state the loop equations that are the main ingredient to derive the local laws. 	
	Denote by $\E_\alpha$ the expectation with respect to $\P_\alpha$, where we recall \eqref{def:P_alpha}.
	
	\begin{proposition}[Loop equations]\label{prop:loop equations}
		Let $p>2$. For all $N,q\geq1$, $\alpha\in[0,1]$, $z\in\C\setminus \R$, 
		\begin{align} 
			& \E_\alpha \left[ \left( 
			P_\alpha(z) + \frac{L_N(f_{\alpha,z})}{Nz} 
			+ \frac{1}{N}\left( \frac{2}{\beta}-1 \right) s_N'(z)
			\right) 
			P_\alpha(z)^{q-1} P_\alpha(\overline{z})^q \right] \nonumber \\
			& {} + \frac{2(q-1)}{\beta N^2 z} 
			\E_\alpha \left[ \left( 2s_N(z)+\frac{g_\alpha(z)}{z} \right)
			\left( \frac{1}{N}\sum_{k=1}^N \frac{\lambda_k}{(\lambda_k - z)^3} \right)
			P_\alpha(z)^{q-2} P_\alpha(\overline{z})^q \right] \nonumber \\
			& {} + \frac{2q}{\beta N^2 z} 
			\E_\alpha \left[ \left( 2s_N(\overline{z}) + \frac{g_\alpha(\overline{z})}{\overline{z}} \right)
			\left( \frac{1}{N} \sum_{k=1}^N \frac{\lambda_k}{(\lambda_k - \overline{z})^2(\lambda_k-z)} \right)
			P_\alpha(z)^{q-1} P_\alpha(\overline{z})^{q-1} \right]
			= 0. \label{eq:LE}
		\end{align}
	\end{proposition}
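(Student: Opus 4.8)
\textbf{Proof proposal for the loop equations (Proposition~\ref{prop:loop equations}).}

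The plan is to derive the loop equations from the standard integration-by-parts identity for the density $\diff\P_\alpha$, applied to a well-chosen vector field, and then reorganize the resulting terms until they collapse into the structure displayed in \eqref{eq:LE}. Concretely, for a test function $\phi\colon\R\to\C$ sufficiently regular with enough decay, the invariance of the integral $\int_{\R^N}\diff\P_\alpha$ under the infinitesimal change of variables $\lambda_i\mapsto\lambda_i+t\,\phi(\lambda_i)$ yields
\begin{equation*}
\E_\alpha\!\left[\sum_{i=1}^N\phi'(\lambda_i) + \frac{\beta}{2}\sum_{i\neq j}\frac{\phi(\lambda_i)-\phi(\lambda_j)}{\lambda_i-\lambda_j} - \frac{\beta N}{2}\sum_{i=1}^N V_\alpha'(\lambda_i)\phi(\lambda_i)\right]=0.
\end{equation*}
The standard choice here is $\phi(\lambda)=\phi_z(\lambda)=\dfrac{1}{\lambda-z}$ (times whatever extra factors we need to generate the higher powers of $P_\alpha$). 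First I would record this basic identity for $\phi=\phi_z$ alone, which gives the familiar Schwinger--Dyson relation: the double sum becomes $N^2 s_N(z)^2$ up to the diagonal term $-\sum_i(\lambda_i-z)^{-2}=N s_N'(z)$, the term with $\phi'$ produces another $s_N'(z)$, and the potential term produces $\int V_\alpha'(\lambda)(\lambda-z)^{-1}\diff\mu_N(\lambda)$. Using $g_\alpha(\lambda)=\lambda V_\alpha'(\lambda)$ (which holds because $g_\alpha(z)=\alpha g(z)+4(1-\alpha)z^2$ and $V_\alpha(x)=\alpha c_p|x|^p+2(1-\alpha)x^2$, so $xV_\alpha'(x)=\alpha p c_p|x|^p+4(1-\alpha)x^2=g_\alpha(x)$), one writes $V_\alpha'(\lambda)=g_\alpha(\lambda)/\lambda$ and then
\begin{equation*}
\int\frac{V_\alpha'(\lambda)}{\lambda-z}\diff\mu_N(\lambda) = \frac{1}{z}\int\frac{g_\alpha(\lambda)}{\lambda-z}\diff\mu_N(\lambda) - \frac{1}{z}\int\frac{g_\alpha(\lambda)}{\lambda}\diff\mu_N(\lambda),
\end{equation*}
the first piece of which is $\frac{1}{z}\big(g_\alpha(z)s_N(z)+\int f_{\alpha,z}\diff\mu_N\big)$ by the partial-fraction identity $\frac{g_\alpha(\lambda)}{\lambda-z}=\frac{g_\alpha(z)}{\lambda-z}+f_{\alpha,z}(\lambda)$. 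The key cancellation is that $\int f_{\alpha,z}\diff\mu_N = h_\alpha(z) + \frac{1}{N}L_N(f_{\alpha,z})$ by definition of $h_\alpha$ in \eqref{eq:def_P}, and $\frac{1}{z}\int\frac{g_\alpha(\lambda)}{\lambda}\diff\mu_N$ — which equals $\int V_\alpha'\diff\mu_N$ — is handled using the equilibrium identity to leading order; tracking this carefully is exactly what assembles $P_\alpha(z)=s_N(z)^2+\frac{g_\alpha(z)}{z}s_N(z)+\frac{h_\alpha(z)}{z}$ together with the correction $\frac{L_N(f_{\alpha,z})}{Nz}$ and the $\frac1N(\frac2\beta-1)s_N'(z)$ term that comes from combining the $\phi'$ and diagonal contributions (the coefficient $\frac2\beta-1$ appears after dividing by $\frac{\beta N^2}{2}$).

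For the general $q\geq1$ version, I would iterate: take $\phi(\lambda)=\phi_z(\lambda)\cdot\Psi(\lambda_1,\dots,\lambda_N)$ where $\Psi = N^{2q-1}P_\alpha(z)^{q-1}P_\alpha(\overline z)^q$ is treated as a prefactor depending on all the variables. Then $\phi'(\lambda_i)$ produces both the $\phi_z'(\lambda_i)\Psi$ term (giving the $s_N'(z)$ contribution, now multiplied by $P_\alpha(z)^{q-1}P_\alpha(\overline z)^q$) and a term $\phi_z(\lambda_i)\,\partial_{\lambda_i}\Psi$. Since $P_\alpha(z)$ depends on the $\lambda_k$ only through $s_N(z)$ and $s_N$'s derivatives, $\partial_{\lambda_i}P_\alpha(z) = \frac{1}{N}\big(2s_N(z)+\frac{g_\alpha(z)}{z}\big)\cdot\frac{-1}{(\lambda_i-z)^2}$ plus a contribution from $h_\alpha$ which is deterministic hence zero. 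Carrying $\sum_i\phi_z(\lambda_i)\partial_{\lambda_i}P_\alpha(z)^{q-1}$ and $\sum_i\phi_z(\lambda_i)\partial_{\lambda_i}P_\alpha(\overline z)^q$ through and simplifying $\sum_i\frac{1}{(\lambda_i-z)(\lambda_i-z)^2} = \sum_i\frac{1}{(\lambda_i-z)^3}$ (respectively the mixed $z,\overline z$ sum) produces precisely the second and third lines of \eqref{eq:LE}, with the combinatorial factors $q-1$ and $q$ coming from differentiating the respective powers, and the $\frac{2}{\beta N^2}$ prefactor from the $\frac{\beta N^2}{2}$ normalization. The $\lambda_k$ in the numerators of lines two and three arises from the same $V_\alpha'=g_\alpha/\lambda$ substitution applied inside these correction terms — i.e.\ one keeps track of $\lambda_k/(\lambda_k-z)^3$ rather than $1/(\lambda_k-z)^3$ because the natural object after the substitution is $g_\alpha$-weighted. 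Throughout, one must check the integrability/decay needed to justify the integration by parts: since $P_\alpha(z)$ and its building blocks are polynomially bounded in the $\lambda_i$ and the Gibbs weight has the super-polynomial decay $e^{-\frac{\beta N}{2}\sum V_\alpha(\lambda_i)}$ with $V_\alpha(x)\gtrsim x^2$, no boundary terms appear; this is routine but should be stated.

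The main obstacle — and the only genuinely delicate point — is bookkeeping: making sure that (a) the leading ``bulk'' part of $\int V_\alpha'\diff\mu_N$ is correctly absorbed into $P_\alpha$ via $h_\alpha$ and the equilibrium relation, rather than left as a stray term, and (b) the chain-rule terms from differentiating $P_\alpha(z)^{q-1}P_\alpha(\overline z)^q$ are matched with the correct powers and the correct $z$ versus $\overline z$ structure, including the asymmetry between the exponents $q-1$ and $q$ (which traces back to the choice to conjugate-symmetrize with $P_\alpha(\overline z)^q$ fixed while varying $P_\alpha(z)$). I expect there is no conceptual difficulty beyond this: the computation is the standard loop-equation/Schwinger--Dyson derivation, adapted to the specific regrouping into $P_\alpha$ that was engineered (in the spirit of \cite{BouModPai2022}) so that the right-hand side is ``almost closed'' in $s_N$ up to the manageable linear statistic $L_N(f_{\alpha,z})$. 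One should also remark why $p>2$ is assumed: it guarantees $g_\alpha\in\cC^2(\R)$ so that $f_{\alpha,z}$ and $h_\alpha$ are well-defined and the manipulations above (in particular the finiteness of $h_\alpha(z)$ for $z$ near the real axis, used implicitly) make sense; for $p=2$ the potential is quadratic and classical.
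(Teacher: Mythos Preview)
Your overall strategy --- integration by parts against the Gibbs weight, then algebraic reorganization into $P_\alpha$ --- is correct and matches the paper. But there is a genuine gap in how you dispose of the term $\frac{1}{z}\int V_\alpha'\,\diff\mu_N$. You say it ``is handled using the equilibrium identity to leading order'' and later that the bookkeeping challenge is to absorb ``the leading `bulk' part of $\int V_\alpha'\diff\mu_N$ \dots\ via $h_\alpha$ and the equilibrium relation.'' That would only give an \emph{approximate} identity, whereas \eqref{eq:LE} is exact. In fact $\E_\alpha\bigl[(\frac{1}{N}\sum_k V_\alpha'(\lambda_k))\,G\bigr]$ with $G=P_\alpha(z)^{q-1}P_\alpha(\overline z)^q$ does not vanish; it equals $\frac{2}{\beta N^2}\E_\alpha\bigl[\sum_k\partial_{\lambda_k}G\bigr]$ by a \emph{second} integration by parts, this time with the constant test function $f\equiv1$. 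The paper makes exactly this point (see Remark~\ref{rem:loop_eq}): the identity \eqref{eq:LE} comes from adding the loop equation for $f(\lambda)=1/(\lambda-z)$ to the one for $f\equiv1$ divided by $z$.

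This also means your explanation for the $\lambda_k$ factors in lines two and three is off. They do not come from ``the same $V_\alpha'=g_\alpha/\lambda$ substitution applied inside these correction terms.'' They come from combining the two right-hand sides: $\frac{1}{\lambda_k-z}\,\partial_{\lambda_k}G$ (from $f=1/(\lambda-z)$) plus $\frac{1}{z}\,\partial_{\lambda_k}G$ (from $f\equiv1$) gives $\frac{\lambda_k}{z(\lambda_k-z)}\,\partial_{\lambda_k}G$, and then computing $\partial_{\lambda_k}G$ produces the displayed sums $\frac{1}{N}\sum_k\lambda_k/(\lambda_k-z)^3$ and $\frac{1}{N}\sum_k\lambda_k/((\lambda_k-\overline z)^2(\lambda_k-z))$. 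Without the second loop equation you would be left either with a stray $-\frac{1}{Nz}L_N(V_\alpha')$ term or with the wrong weights $1/(\lambda_k-z)^3$ in place of $\lambda_k/(\lambda_k-z)^3$.
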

	
	\begin{proof}
		Let $f \colon \R \to \R$ and $G \colon \R^N \to \R$ be $\cC^1$ functions with at most polynomial growth at infinity.
		We write $\blambda = (\lambda_1,\dots,\lambda_N)$. Integrating by parts the factor $\partial_{\lambda_k} (f(\lambda_k) G(\blambda))$, we get
		\begin{align*}
			& \frac{2}{\beta N^2} \E_\alpha \left[ 
			\sum_{k=1}^N \left( 
			f'(\lambda_k) G(\blambda) 
			+ f(\lambda_k) \partial_{\lambda_k} G(\blambda) \right)
			\right] \\
			& = \frac{2}{\beta N^2} \frac{1}{Z_{N,\alpha}} 
			\int_{\R^N} \partial_{\lambda_k} \left( f(\lambda_k) G(\blambda) \right)
			\exp \left( \beta \sum_{1 \leq i < j \leq N} \log \abs{\lambda_i-\lambda_j} 
			- \frac{\beta N}{2} \sum_{i=1}^N V_\alpha(\lambda_i) \right)
			\diff \lambda_1 \cdots \diff \lambda_N \\
			& = \E_\alpha\left[ \left( 
			\frac{1}{N} \sum_{k=1}^N f(\lambda_k) V_\alpha'(\lambda_k)
			- \frac{1}{N^2} \sum_{k\neq\ell} \frac{f(\lambda_k)-f(\lambda_\ell)}{\lambda_k-\lambda_\ell} 
			\right) G(\blambda)\right],
		\end{align*}
		where the second term has been symmetrized noting that $\sum_{k\neq\ell} \frac{f(\lambda_k)}{\lambda_k-\lambda_\ell} = \frac{1}{2} \sum_{k\neq\ell} \frac{f(\lambda_k)-f(\lambda_\ell)}{\lambda_k-\lambda_\ell}$.
		Applying this to $f(\lambda) = 1/(\lambda-z)$, we get, for any $z \in \C \setminus \R$,
		\begin{equation} \label{eq:LE1}
			\E_\alpha \left[ \left( 
			s_N(z)^2
			+ \frac{1}{N} \sum_{k=1}^N \frac{V_\alpha'(\lambda_k)}{\lambda_k - z}
			+ \frac{1}{N}\left( \frac{2}{\beta}-1 \right) s'_N(z)
			\right) 
			G(\blambda) 
			\right] = \frac{2}{\beta N^2} \E_\alpha \left[ 
			\sum_{k=1}^N \frac{\partial_{\lambda_k} G(\blambda)}{\lambda_k-z} 
			\right].
		\end{equation}
		On the other hand, with $f(\lambda) = 1$, we get
		\begin{equation} \label{eq:LE2}
			\E_\alpha \left[ \left( 
			\frac{1}{N} \sum_{k=1}^N V_\alpha'(\lambda_k)
			\right) 
			G(\blambda) 
			\right] 
			= \frac{2}{\beta N^2} \E_\alpha \left[ 
			\sum_{k=1}^N \partial_{\lambda_k} G(\blambda)
			\right].
		\end{equation}
		Summing \eqref{eq:LE1} with \eqref{eq:LE2} divided by $z$, we get
		\begin{equation} \label{eq:LE3}
			\E_\alpha \left[ \left( 
			s_N(z)^2
			+ \frac{1}{N} \sum_{k=1}^N \frac{\lambda_k V_\alpha'(\lambda_k)}{z(\lambda_k - z)}
			+ \frac{1}{N}\left( \frac{2}{\beta}-1 \right) s_N'(z)
			\right) 
			G(\blambda) \right] 
			= \frac{2}{\beta N^2} \E_\alpha \left[ 
			\sum_{k=1}^N \frac{\lambda_k \partial_{\lambda_k} G(\blambda)}{z(\lambda_k - z)} 
			\right].
		\end{equation}
		Recalling the definition of $P_\alpha$ and $f_{\alpha,z}$ in Definition \ref{def:g,g analytic, f_z, Palpha, halpha} and of $L_N$ in \eqref{def: LN}, this can be then rewritten as
		\begin{equation} \label{eq:LE3'}
			\E_\alpha \left[ \left( 
			P_\alpha(z) + \frac{L_N(f_{\alpha,z})}{Nz} 
			+ \frac{1}{N}\left( \frac{2}{\beta}-1 \right) s_N'(z)
			\right) 
			G(\blambda) \right] 
			= \frac{2}{\beta N^2} \E_\alpha \left[ 
			\sum_{k=1}^N \frac{\lambda_k \partial_{\lambda_k} G(\blambda)}{z(\lambda_k - z)} 
			\right].
		\end{equation}
		Finally, we choose $G(\blambda) = P_\alpha(z)^{q-1} P_\alpha(\overline{z})^q$, and a direct computation on the sum on the right-hand side yields
		\begin{align*}
			\sum_{k=1}^N \frac{\lambda_k \partial_{\lambda_k} [P_\alpha(z)^{q-1} P_\alpha(\overline{z})^q]}{z(\lambda_k - z)}
			& = \frac{(q-1)}{z} P_\alpha(z)^{q-2} P_\alpha(\overline{z})^q 
			\left[-2s_N(z)-\dfrac{g_\alpha(z)}{z}\right]
			\frac{1}{N}\sum_{k=1}^N \frac{\lambda_k}{(\lambda_k - z)^3}
			\\ & \quad + \frac{q}{z} P_\alpha(z)^{q-1} P_\alpha(\overline{z})^{q-1} 
			\left[-2s_N(\overline{z})-\dfrac{g_\alpha(\overline{z})}{\overline{z}}\right]
			\frac{1}{N} \sum_{k=1}^N \frac{\lambda_k}{(\lambda_k - \overline{z})^2(\lambda_k-z)},
		\end{align*}
		and the result follows.
	\end{proof}
	
	\begin{remark} \label{rem:loop_eq}
		The loop equations used in \cite{BouModPai2022} to prove their local law are the one obtained by taking $f(\lambda) = 1/(\lambda-z)$ as in \eqref{eq:LE1} and $G(\blambda) = P_\alpha(z)^{q-1} P_\alpha(\overline{z})^q$. 
		Therefore, they differ from the loop equation above in \eqref{eq:LE} by the fact that $L_N(f_{\alpha,z})/(Nz)$ is replaced by 
		\[
			\frac{1}{N} L_N\left( \lambda \mapsto \frac{V_\alpha'(\lambda)-V_\alpha'(z)}{\lambda-z} \right),
		\]
		where $V_\alpha'(z)$ naturally makes sense in their context because the potential is analytic, as well as other differences in the 2nd and 3rd line of \eqref{eq:LE} which are not significant for the analysis. 
		Hence, the fact of combining the loop equations \eqref{eq:LE1} and \eqref{eq:LE2} to get \eqref{eq:LE3} allows us to change the test function appearing in the linear statistics, in order to gain regularity ($g_\alpha$ is $\cC^2$ whereas $V_\alpha'$ is only $\cC^1$) in exchange of a singular $1/z$ factor.
		This idea, which plays a key role in our proof, is inspired by an argument of a similar taste used in \cite{ForRahWit2017} in the context of Laguerre ensembles (see how (3.3) and (3.4) are combined into (3.8) there).
	\end{remark}
	
	\subsection{Local laws}
	\label{subsec:locallaws}
	
	In this section, our goal is to prove Proposition \eqref{prop:local_law_machinery}, which turns a bound on moments of $L_N(f_{\alpha,z})$ into a local law. 
	To this end, we adapt to our setting the arguments of \cite{BouModPai2022} that lead to their local law.
	
	Before stating the local law, we need the following lemmas on the equilibrium Stieltjes transform $s_{V_\alpha}$, whose proofs can be found in Appendix \ref{app:eq_stieltjes}. We recall that $s_N$ and $s_{V_\alpha}$ are defined in \eqref{def: s_N, s_V}.
	
	\begin{lemma} \label{lem:stieltjes}
		Let $p>2$.
		\begin{enumerate}
			\item\label{lemsta:point1} For all $z\in \C\setminus[-1,1]$, $s_{V_\alpha}(z)$ solves the quadratic equation
			\begin{equation} \label{eq:quadratic}
				X^2+\frac{g_\alpha(z)}{z}X + \frac{h_\alpha(z)}{z}=0,
			\end{equation}
			where $g_\alpha$ and $h_\alpha$ are defined in Definition \ref{def:g,g analytic, f_z, Palpha, halpha}.
			\item\label{lemsta:point2} Define for $z\in \C\setminus \{0\}$,
			\begin{equation}\label{eq:r'} 
				r_\alpha(z)=\dfrac{1}{2\pi z}\int_{-1}^{1}\dfrac{g_\alpha(t)-g_\alpha(z)}{t-z}\dfrac{\diff t}{\sigma(t)}.
			\end{equation} 
			This definition is consistent with the previous definition of $r_\alpha$ on $\R$ given in \eqref{def:r_generale}.
			\item\label{lemsta:point3} For all $z\in\C\setminus[-1,1]$,
			\begin{equation}
				\label{eq:s_formula}
				s_{V_\alpha}(z)=r_\alpha(z)b(z)-\dfrac{g_\alpha(z)}{2z},
			\end{equation}
			where $b(z) \coloneqq \sqrt{z+1} \sqrt{z-1}$.
			\item\label{lemsta:point4} For all $z\in\C\setminus[-1,1]$, the other solution of \eqref{eq:quadratic} is given by 
			\begin{equation}
				\label{def:s_tilde}
				\widetilde s_{V_\alpha}(z) \coloneqq -r_\alpha(z)b(z) - \frac{g_\alpha(z)}{2z}.
			\end{equation}
		\end{enumerate}
	\end{lemma}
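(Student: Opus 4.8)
The plan is to prove Lemma~\ref{lem:stieltjes} directly from the definitions, treating the four points essentially in reverse order of logical dependence: first establish the consistency of the two expressions for $r_\alpha$ (point~\ref{lemsta:point2}), then derive the quadratic equation and the explicit formula for $s_{V_\alpha}$ simultaneously (points~\ref{lemsta:point1} and \ref{lemsta:point3}), and finally identify the second root (point~\ref{lemsta:point4}).

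\emph{Consistency of $r_\alpha$ (point~\ref{lemsta:point2}).} For real $x \notin [-1,1]$ I would check that \eqref{eq:r'} reduces to \eqref{def:r_generale}. Writing $g_\alpha(t) = tV_\alpha'(t)$ for $t \in [-1,1]$ (valid there by \eqref{eq:def_g_R} and the definition of $g_\alpha$, $V_\alpha$), one has $\frac{g_\alpha(t)-g_\alpha(z)}{z(t-z)}$; a partial-fraction manipulation $\frac{1}{z(t-z)} = \frac{1}{z}\left(\frac{1}{t-z} \cdot \frac{t}{t}\right)$ together with the identity $\frac{tV_\alpha'(t) - zV_\alpha'(z)}{t-z} = V_\alpha'(t) + z\frac{V_\alpha'(t)-V_\alpha'(z)}{t-z}$ should, after using that $\int_{-1}^1 V_\alpha'(t)\frac{\diff t}{\sigma(t)}$ combines with the remaining pieces, reproduce \eqref{def:r_generale}. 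The point here is bookkeeping with principal values and the fact that $\int_{-1}^1 \frac{\diff t}{(t-z)\sigma(t)}$ has a known closed form (it equals $0$ for $z$ outside $[-1,1]$ after the correct branch is chosen, or more precisely relates to $1/b(z)$); I would invoke the standard resolvent identity for the arcsine/semicircle kernel. This is routine but needs care with the branch of $b(z) = \sqrt{z+1}\sqrt{z-1}$ as specified in the Notation section.

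\emph{Quadratic equation and explicit formula (points~\ref{lemsta:point1}, \ref{lemsta:point3}).} The natural route is to start from the equilibrium relation \eqref{eq:equilibrium_relation}, which via \eqref{eq:mu=rsigma} says the density is $\sigma(x) r_\alpha(x)/\pi$ on $[-1,1]$. The Stieltjes transform $s_{V_\alpha}(z) = \int_{-1}^1 \frac{\sigma(t) r_\alpha(t)}{\pi(t-z)}\diff t$ can be computed by a contour-deformation / Plemelj argument: using that $s_{V_\alpha}$ is the unique analytic function on $\C\setminus[-1,1]$ vanishing at $\infty$ whose boundary values have imaginary part $-\pi \times$(density), and checking that the candidate $r_\alpha(z) b(z) - \frac{g_\alpha(z)}{2z}$ has exactly these properties — analyticity off $[-1,1]$ (noting the apparent pole at $z=0$ is removable because $g_\alpha(0)=0$ and the numerator in \eqref{eq:r'} vanishes appropriately), decay $\sim -1/z$ at infinity (since $g_\alpha(z)/z \to \infty$ but cancels against $r_\alpha(z)b(z)$ — this asymptotic matching is a key check), and the correct jump across $[-1,1]$ because $b(x\pm i0) = \pm i\sigma(x)$ there. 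Once \eqref{eq:s_formula} is established, squaring and using $b(z)^2 = z^2-1$ gives $s_{V_\alpha}^2 + \frac{g_\alpha}{z} s_{V_\alpha} = r_\alpha^2(z^2-1) - \frac{g_\alpha^2}{4z^2}$, and one must recognize the right-hand side as $-h_\alpha(z)/z$; this last identification amounts to showing $r_\alpha(z)^2(z^2-1) - \frac{g_\alpha(z)^2}{4z^2} = -\frac{h_\alpha(z)}{z}$, which I would verify either by another contour computation of $h_\alpha(z) = \int_{-1}^1 f_{\alpha,z}(t)\diff\mu_{V_\alpha}(t)$ or by matching both sides as analytic functions (same singularities, same decay).

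\emph{Second root (point~\ref{lemsta:point4}).} This is immediate once the quadratic is in hand: the sum of the two roots of $X^2 + \frac{g_\alpha(z)}{z}X + \frac{h_\alpha(z)}{z} = 0$ is $-g_\alpha(z)/z$, so the companion root is $-g_\alpha(z)/z - s_{V_\alpha}(z) = -r_\alpha(z)b(z) - \frac{g_\alpha(z)}{2z}$ by \eqref{eq:s_formula}, which is exactly \eqref{def:s_tilde}. No work beyond Vieta's formula.

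\emph{Main obstacle.} I expect the genuine difficulty to lie in the branch/analyticity analysis at the two special points: near $z=0$, where $g_\alpha(z)/z$ is singular but must cancel, and in the $z\to\infty$ asymptotics, where the leading divergent terms of $r_\alpha(z)b(z)$ and $\frac{g_\alpha(z)}{2z}$ must cancel to leave $s_{V_\alpha}(z) \sim -1/z$; getting the signs and branches of $\sqrt{z+1}\sqrt{z-1}$ consistent with the principal-logarithm convention fixed in the Notation is the error-prone part. Since $g_\alpha$ involves $\abs{x}^p$ with non-integer $p$, one cannot simply say $g_\alpha$ is entire, so the argument must genuinely go through the Ullman-measure density \eqref{def:mu_p}–\eqref{def:r} rather than naive analytic continuation, and I would lean on \cite[Theorem~11.2.4]{PasShc2011} and \cite[Theorem~IV.6.1]{saff2024logarithmic} for the structural input while checking the $z$-dependence by hand.
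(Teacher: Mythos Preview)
Your overall plan is reasonable and Part~\ref{lemsta:point4} is indeed just Vieta, but the route you choose for Parts~\ref{lemsta:point1} and~\ref{lemsta:point3} differs from the paper's and runs into a conceptual issue you have not addressed. You propose to establish \eqref{eq:s_formula} first, by a Plemelj/uniqueness argument: check that the candidate $r_\alpha(z)b(z)-g_\alpha(z)/(2z)$ is analytic on $\C\setminus[-1,1]$, has the right jump, and decays at infinity. The problem is that $g_\alpha(z)$ is \emph{not} analytic anywhere off the real axis --- it is the pseudo-analytic extension \eqref{eq:def_g_C}, built from real-line derivatives of $\lvert x\rvert^p$ and a cutoff $\chi$ --- so neither $r_\alpha(z)$ (defined through $g_\alpha(z)$ in \eqref{eq:r'}) nor $g_\alpha(z)/(2z)$ is analytic individually. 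Your list of worries (removable pole at $0$, asymptotics at $\infty$) misses this. The way out is that the non-analytic parts cancel: separating $r_\alpha(z)$ via the arcsine Stieltjes transform $\frac{1}{\pi}\int_{-1}^1\frac{1}{t-z}\frac{\diff t}{\sigma(t)}=-1/b(z)$ gives $r_\alpha(z)b(z)-\tfrac{g_\alpha(z)}{2z}=\tfrac{b(z)}{2\pi}\int_{-1}^1\tfrac{V_\alpha'(t)}{t-z}\tfrac{\diff t}{\sigma(t)}$, which is manifestly analytic. But once you have this identity you are essentially at the paper's computation for Part~\ref{lemsta:point3}, which verifies $s_{V_\alpha}(z)=\tfrac{b(z)}{2\pi}\int_{-1}^1\tfrac{V_\alpha'(t)}{(t-z)\sigma(t)}\diff t$ by inserting the density \eqref{eq:mu=rsigma} and switching integration order.

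For Part~\ref{lemsta:point1}, your plan to square \eqref{eq:s_formula} and then recognize $r_\alpha(z)^2(z^2-1)-g_\alpha(z)^2/(4z^2)=-h_\alpha(z)/z$ ``by matching both sides as analytic functions'' has the same defect: both sides involve $g_\alpha(z)$ and are not analytic, so a Liouville-type matching is not available. The paper avoids this entirely by proving the quadratic equation \emph{directly} and \emph{first}: integrate the equilibrium relation \eqref{eq:equilibrium_relation} against $\frac{1}{x-z}\diff\mu_{V_\alpha}(x)$, symmetrize the double integral to obtain $\int\frac{V_\alpha'(x)}{x-z}\diff\mu_{V_\alpha}(x)+s_{V_\alpha}(z)^2=0$, and then add $\frac{1}{z}\int V_\alpha'\diff\mu_{V_\alpha}=0$ to replace $V_\alpha'$ by $g_\alpha/z$ --- this yields \eqref{eq:quadratic} with no analyticity needed. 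Your approach can be made to work, but only after you see the cancellation above, at which point it collapses into the paper's argument; the paper's order (Part~\ref{lemsta:point1} directly, then Part~\ref{lemsta:point3} directly) is shorter and sidesteps the pseudo-analytic subtlety altogether.
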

	
	Relying on the fact that $s_{V_\alpha}(z)$ and $\widetilde{s}_{V_\alpha}(z)$ are the roots of \eqref{eq:quadratic}, we can state the following lemma which is crucial to derive the local laws. This lemma states, among other technical results, that: for all $z$ in a rectangular region $\cR$ above an enlargement of the spectrum, whenever $u\in\C$ almost solves \eqref{eq:quadratic}, then $u$ is close to either $s_{V_\alpha}(z)$ or $\widetilde{s}_{V_\alpha}(z)$. 
	Moreover when $z$ is in a trapezoid region $\cT$ above the spectrum and $u$ is such that $\im (u)>0$, then we can argue $u$ is indeed close to $s_{V_\alpha}(z)$, using that $\im(s_{V_\alpha}(z)) > 0$ whereas $\im(\widetilde{s}_{V_\alpha}(z))<0$.
	This result is reminiscent of various results in random matrix theory, and is in particular the analogue of \cite[Lemma B.1]{BouModPai2022} proven in the case where $V$ is analytic. The proof for the singular potential $V_\alpha$ requires some additional steps and can be found in Appendix \ref{subsec:stability}.
	
	\begin{lemma}[Stability lemma] \label{lem:stability}
		There exist $\delta_0,C>0$ depending only on $p>2$ such that the following bounds hold true for any $\alpha \in [0,1]$. 
		Let 
		\begin{equation} \label{eq:def_R_and_T}
			\cR \coloneqq [-1-\delta_0,1+\delta_0]+\ii(0,\delta_0] 
			\qquad \text{and} \qquad 
			\cT \coloneqq \{ x + \ii y \in \cR : \abs{x} \leq 1+y \}.
		\end{equation}
		Let $\zeta\in \C$, $z\in\cR$ and $u\in \C$ such that
		\[
		u^2+\frac{g_\alpha(z)}{z}u+\frac{h_\alpha(z)}{z}=\zeta.
		\]
		Then, recalling \eqref{def:s_tilde},
		\begin{equation}
			\label{eq:Stab1}
			\abs{u-s_{V_\alpha}(z)} \wedge \abs{u-\widetilde{s}_{V_\alpha}(z)} 
			\leq C\left( \frac{\abs{\zeta}}{\abs{b(z)}} \wedge  \abs{\zeta}^{1/2} \right).
		\end{equation}
		If $\im(u)>0$, then
		\begin{equation}
			\label{eq:Stab2}
			\abs{\im(u-s_{V_\alpha}(z))}
			\leq C\left( \abs{u-s_{V_\alpha}(z)} \wedge \abs{u-\widetilde{s}_{V_\alpha}(z)} \right).
		\end{equation}
		If $\im(u)>0$ and $z \in \cT$, then
		\begin{equation}
			\label{eq:Stab3}
			\abs{u-s_{V_\alpha}(z)} 
			\leq C\left( \frac{\abs{\zeta}}{\abs{b(z)}} \wedge \abs{\zeta}^{1/2} \right).
		\end{equation}
	\end{lemma}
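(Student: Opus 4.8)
The plan is to reduce everything to the algebraic structure of the quadratic $X^2 + (g_\alpha(z)/z) X + h_\alpha(z)/z = \zeta$, whose two roots at $\zeta = 0$ are $s_{V_\alpha}(z)$ and $\widetilde s_{V_\alpha}(z)$ by Lemma~\ref{lem:stieltjes}. First I would note that if $u$ solves the perturbed quadratic and $v$ denotes either root of the unperturbed one, then $(u - s_{V_\alpha}(z))(u - \widetilde s_{V_\alpha}(z)) = \zeta$, since the product of the two factors is exactly the monic quadratic polynomial evaluated at $u$. Hence $\abs{u - s_{V_\alpha}(z)} \wedge \abs{u - \widetilde s_{V_\alpha}(z)} \leq \abs{\zeta}^{1/2}$ for free, and also $\leq \abs{\zeta}/\abs{u - (\text{the far root})}$. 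So the content of \eqref{eq:Stab1} is a \emph{lower bound} on the larger of the two distances: I need that, on $\cR$, at least one of $\abs{u - s_{V_\alpha}(z)}$, $\abs{u - \widetilde s_{V_\alpha}(z)}$ is $\gtrsim \abs{b(z)}$. By the triangle inequality this follows once I show the gap between the roots satisfies $\abs{s_{V_\alpha}(z) - \widetilde s_{V_\alpha}(z)} \geq c \abs{b(z)}$ on $\cR$; but $s_{V_\alpha} - \widetilde s_{V_\alpha} = 2 r_\alpha(z) b(z)$ from \eqref{eq:s_formula} and \eqref{def:s_tilde}, so this reduces to a uniform lower bound $\abs{r_\alpha(z)} \geq c > 0$ on $\cR$ (after shrinking $\delta_0$).

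The uniform lower bound on $\abs{r_\alpha}$ is where the real work lies, and I expect it to be the main obstacle because of the singularity of $V$ at $0$. On the real axis $r_\alpha(x) = \sigma(x)^{-1} \cdot (\text{density-related nonneg.\ quantity}) + 2(1-\alpha)$ by \eqref{def:r} and \eqref{eq:mu=rsigma}, and one checks from \eqref{def:r} that $r_\alpha$ extends continuously to $[-1,1]$ with $r_\alpha(\pm 1) > 0$ (the bracketed series and the $\abs{x}^{p-1}$ term are controlled there) and $r_\alpha > 0$ on all of $[-1,1]$ since the equilibrium density is positive in the open interval and the boundary values are positive; the constant $2(1-\alpha)$ only helps. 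Then I would use that $r_\alpha$ is continuous on a neighborhood of $[-1,1]$ in $\C$ (this needs a short argument near $x=0$ when $p\in(2,3)$: the term $\abs{x}^{p-1}(A_p - B_p \log\abs{x})$ is Hölder-continuous, hence so is the complex extension defined by the contour integral in \eqref{eq:r'}, uniformly on compacts) — so by compactness of $[-1,1]$ and continuity, $\abs{r_\alpha} \geq c$ on a complex neighborhood, and I shrink $\delta_0$ to fit $\cR$ inside it. Crucially, all constants can be taken uniform in $\alpha \in [0,1]$ since $r_\alpha$ depends affinely on $\alpha$ and both endpoints $r_0$ (semicircle) and $r_1$ (Freud) are bounded below.

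For \eqref{eq:Stab2}: write $u - s_{V_\alpha}(z) = \zeta / (u - \widetilde s_{V_\alpha}(z))$. If $\abs{u - s_{V_\alpha}(z)} \leq \abs{u - \widetilde s_{V_\alpha}(z)}$ there is nothing to prove (the imaginary part is bounded by the modulus). Otherwise, $u$ is close to $\widetilde s_{V_\alpha}(z)$; but $\im(u) > 0$ while $\im(\widetilde s_{V_\alpha}(z)) \leq 0$ on $\cR$ (this sign fact I would record as part of the stability lemma's setup, coming from $\widetilde s_{V_\alpha} = -r_\alpha b - g_\alpha/(2z)$ and the sign of $\im(r_\alpha b)$, which can be read off from the Plemelj/Sokhotski formula for $s_{V_\alpha}$ on $\cR$), so $\abs{\im(u - \widetilde s_{V_\alpha}(z))} \geq \im(u) > 0$ and in particular $u$ cannot be \emph{too} close to $\widetilde s_{V_\alpha}(z)$ relative to its distance to $s_{V_\alpha}(z)$ — more precisely, since $\abs{s_{V_\alpha}(z) - \widetilde s_{V_\alpha}(z)} \asymp \abs{b(z)}$ and $\abs{u - \widetilde s_{V_\alpha}(z)} \leq \abs{u - s_{V_\alpha}(z)}$, the point $u$ lies roughly midway, forcing $\abs{\im(u - s_{V_\alpha}(z))} \gtrsim \abs{\im(u - \widetilde s_{V_\alpha}(z))}$ up to comparing with $\abs{b(z)}$; combined with $\abs{u - s_{V_\alpha}(z)} \geq \frac12 \abs{b(z)} \cdot c$ this yields \eqref{eq:Stab2}. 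Finally \eqref{eq:Stab3}: on $\cT$ we have the stronger fact $\im(s_{V_\alpha}(z)) \geq c \min(\text{dist to endpoints})^{1/2}$-type behavior, or more simply $\abs{s_{V_\alpha}(z) - \widetilde s_{V_\alpha}(z)} \geq c \abs{b(z)}$ \emph{and} $\im(s_{V_\alpha}(z)) > 0 \geq \im(\widetilde s_{V_\alpha}(z))$, so an $u$ with $\im(u) > 0$ must be the one close to $s_{V_\alpha}(z)$ (the distance to $\widetilde s_{V_\alpha}$ is bounded below by a fixed multiple of $\abs{b(z)}$, hence cannot be the minimizer once $\abs{\zeta}$-governed distances are small; and when they are not small, $\cR$ being bounded makes \eqref{eq:Stab3} trivial after enlarging $C$). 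Thus \eqref{eq:Stab3} follows by feeding "$u$ is close to $s_{V_\alpha}(z)$, not $\widetilde s_{V_\alpha}(z)$" into \eqref{eq:Stab1}. The only genuinely delicate point throughout is the $\alpha$-uniform, $\C$-neighborhood lower bound on $\abs{r_\alpha}$ near the origin for $p \in (2,3)$; everything else is bookkeeping with the explicit formulas of Lemma~\ref{lem:stieltjes}.
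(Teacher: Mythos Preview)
Your overall strategy matches the paper's: factor $(u - s_{V_\alpha})(u - \widetilde s_{V_\alpha}) = \zeta$ and reduce everything to bounds on $r_\alpha$. You correctly single out the uniform lower bound $\abs{r_\alpha(z)} \geq c$ on $\cR$ as a key ingredient, and your continuity-plus-positivity argument for it is essentially what the paper does (Lemma~\ref{lem:estimees_r}, Parts~\ref{it:r_alpha_2}--\ref{it:r_alpha_3}).

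There is, however, a genuine gap in your treatment of \eqref{eq:Stab2} and \eqref{eq:Stab3}. Knowing only the signs $\im s_{V_\alpha}(z) > 0 \geq \im \widetilde s_{V_\alpha}(z)$ is not enough. What is actually needed is the \emph{quantitative} bound $\im(r_\alpha(z) b(z)) \geq \abs{\im(g_\alpha(z)/z)}$ on all of $\cR$ (this is \eqref{eq:lb_Im(rb)} in Lemma~\ref{lem:prelim_stability}), from which one deduces that $\im s_{V_\alpha}(z)$ and $-\im \widetilde s_{V_\alpha}(z)$ are comparable up to a constant; and on $\cT$ the further bound $\im(r_\alpha(z) b(z)) \geq c\abs{r_\alpha(z) b(z)}$ (this is \eqref{eq:lb_Im(rb)_trapez}), which gives $-\im \widetilde s_{V_\alpha}(z) \geq c\abs{b(z)}$ and is the actual reason the trapezoid $\cT$ is special. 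Both of these rely on the estimate $\abs{\im r_\alpha(z)} \leq C\abs{\im z}$ (together with its analog $\abs{\im(g_\alpha(z)/z)} \leq C\abs{\im z}$), which you never invoke. Continuity of $r_\alpha$ alone gives two-sided bounds on $\abs{r_\alpha}$ but says nothing about the \emph{argument} of $r_\alpha(z)$; without controlling that argument you cannot conclude that $r_\alpha(z) b(z)$ points into the upper half-plane on $\cT$, nor that $-\im \widetilde s_{V_\alpha}$ is bounded below by a fixed multiple of $\im(r_\alpha b)$. The estimate $\abs{\im r_\alpha(z)} \leq C\abs{\im z}$ is genuinely nontrivial for non-even $p$, because $r_\alpha$ is not analytic and the pseudo-analytic extension of $g$ is only $\cC^0$ near the origin; the paper's Lemma~\ref{lem:estimees_r}.\ref{it:r_alpha_1} establishes it by splitting the defining integral into three pieces near $z = 0$. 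Finally, your written argument for \eqref{eq:Stab2} has a reversed inequality (you want $\lesssim$, not $\gtrsim$) and the ``roughly midway'' heuristic does not yield what you claim.
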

	
	We now have all the ingredients needed to state and prove the local law machinery via the loop equations and stability lemma. 
	This results establishes a local law provided that we have a bound on moments of the linear statistics $L_N(f_{\alpha,z})$.
	
	\begin{proposition}[Local law machinery] \label{prop:local_law_machinery}
    	Let $p>2$. 
		Let $\delta_0$, $\cR$ and $\cT$ be as in Lemma \ref{lem:stability}. 
		Then there exists $C>0$ such that the following holds, for any $N \geq 1$ and $\alpha\in[0,1]$.
		Assume that there exist $a \geq 1$ and $B_N \colon \cR \to (0,\infty)$ such that, for any $z \in \cR$ and $q \geq 1$,
		\begin{equation} \label{eq:ass_L_N(f_alpha,z)}
			\E_\alpha\left[\abs{L_N(f_{\alpha,z})}^q\right] \leq (q^aB_N(z))^q.
		\end{equation}
		Then,
		\begin{enumerate}
			\item\label{it:in_the_trapezoid} \emph{[Inside the trapezoid]} For any $z \in \cT$ and $q \geq 1$,
			\[
			\E_\alpha \left[\abs{s_N(z)-s_{V_\alpha}(z)}^q\right] 
			\leq \frac{(Cq)^{q/2}}{(Ny)^q} + \frac{(Cq^aB_N(z))^q}{(N \abs{z} \abs{z^2-1}^{1/2})^q}.
			\]
			\item\label{it:out_the_trapezoid} \emph{[Outside the trapezoid]} 
			Let $B_{\max} \coloneqq 1 \vee \sup_{z \in \overline{\cR \setminus \cT}} B_N(z)$.
			Then, for any $z=\cR \setminus \cT$ and $q \geq 1$,
			\[
			\E_\alpha \left[\abs{s_N(z)-s_{V_\alpha}(z)}^q\right] \leq \frac{(Cq)^{2aq} B_{\max}^{2q}}{(Ny)^{q}}.
			\]
		\end{enumerate}
	\end{proposition}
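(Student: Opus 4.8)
The plan is to control the moments of the ``defect'' $P_\alpha(z)$ via the loop equation \eqref{eq:LE}, and then to convert such a bound into the desired estimate through Lemma~\ref{lem:stability}. The starting point is that, by Lemma~\ref{lem:stieltjes}, the monic quadratic $X^2+\frac{g_\alpha(z)}{z}X+\frac{h_\alpha(z)}{z}$ has roots $s_{V_\alpha}(z)$ and $\widetilde s_{V_\alpha}(z)$, so Vieta's formulas give the factorisation $P_\alpha(z)=\big(s_N(z)-s_{V_\alpha}(z)\big)\big(s_N(z)-\widetilde s_{V_\alpha}(z)\big)$. In particular $P_\alpha(z)$ is exactly the value of that quadratic at $u=s_N(z)$, which satisfies $\im(u)>0$; hence Lemma~\ref{lem:stability} with this $u$ and $\zeta=P_\alpha(z)$ gives, for $z\in\cT$, $\abs{s_N(z)-s_{V_\alpha}(z)}\leq C\big(\abs{P_\alpha(z)}/\abs{b(z)}\wedge\abs{P_\alpha(z)}^{1/2}\big)$, and, for general $z\in\cR$, the weaker bounds $\abs{s_N(z)-s_{V_\alpha}(z)}\wedge\abs{s_N(z)-\widetilde s_{V_\alpha}(z)}\leq C(\cdots)$ and $\abs{\im(s_N(z)-s_{V_\alpha}(z))}\leq C(\cdots)$. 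Everything thus reduces to estimating $\E_\alpha[\abs{P_\alpha(z)}^q]$ for all $q$.

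For this I would rearrange \eqref{eq:LE}. Since the $\lambda_k$ are real and $g_\alpha(\overline z)=\overline{g_\alpha(z)}$, $h_\alpha(\overline z)=\overline{h_\alpha(z)}$, one has $P_\alpha(\overline z)=\overline{P_\alpha(z)}$, so $P_\alpha(z)^qP_\alpha(\overline z)^q=\abs{P_\alpha(z)}^{2q}$ and $P_\alpha(z)^{q-1}P_\alpha(\overline z)^q=\abs{P_\alpha(z)}^{2q-2}\overline{P_\alpha(z)}$; isolating the leading term in \eqref{eq:LE} thus expresses $\E_\alpha[\abs{P_\alpha(z)}^{2q}]$ as minus the expectation of the remaining terms. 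Writing $T\coloneqq\E_\alpha[\abs{P_\alpha(z)}^{2q}]^{1/(2q)}$, I would bound each of these by Hölder: the term carrying $L_N(f_{\alpha,z})/(Nz)$ is at most $\frac{(2q)^aB_N(z)}{N\abs{z}}T^{2q-1}$ by \eqref{eq:ass_L_N(f_alpha,z)}; every other term only involves $s_N(z),s_N'(z),s_N''(z),g_\alpha(z)/z$ and the sums $\frac1N\sum_k\lambda_k(\lambda_k-z)^{-3}$, $\frac1N\sum_k\lambda_k(\lambda_k-\overline z)^{-2}(\lambda_k-z)^{-1}$, which through the elementary identities $\frac1N\sum_k\abs{\lambda_k-z}^{-2}=\im s_N(z)/y$, $\abs{s_N^{(j)}(z)}\leq j!\,y^{-j}\im s_N(z)$, $\abs{s_N(z)}^2\leq\im s_N(z)/y$ and the splitting $\lambda_k=(\lambda_k-z)+z$ are controlled by powers of $\im s_N(z)$, $1/y$ and $1/\abs{z}\leq 1/y$ (using that $g_\alpha(z)/z$ and $h_\alpha(z)/z$ are bounded on $\overline\cR$, since $g_\alpha(0)=h_\alpha(0)=0$, the latter by symmetry of $\mu_{V_\alpha}$). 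The crucial step is to close the loop: one bounds $\im s_N(z)\leq\im s_{V_\alpha}(z)+\abs{\im(s_N(z)-s_{V_\alpha}(z))}\leq C(1+\abs{P_\alpha(z)}^{1/2})$ using Lemma~\ref{lem:stability} and the boundedness of $\im s_{V_\alpha}$ on $\overline\cR$ (Lemma~\ref{lem:stieltjes}), the crude a priori bound $\abs{P_\alpha(z)}\leq Cy^{-2}$ on $\cR$ ensuring $T<\infty$. Collecting terms, this yields a self-consistent inequality of the schematic form $T^2\leq \frac{Cq^aB_N(z)}{N\abs{z}}T+\frac{C}{Ny}(1+T^{1/2})T+\frac{Cq^{O(1)}}{N^2y^2}(1+T^{1/2})^3$, the $q$ in the last term coming from the combinatorial factors $q-1,q$ in the second and third lines of \eqref{eq:LE}.

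Every power of $T$ on the right is strictly less than $2$, so the inequality closes: a first pass via Young's inequality gives a crude $T\lesssim\mathrm{poly}(q)(1+B_N(z))/(Ny)$, and re-inserting this — so that now $\im s_N(z)=O(1)$, the $T^{1/2}$ factors are negligible, and one may use $2s_N(z)+g_\alpha(z)/z=2r_\alpha(z)b(z)+2(s_N(z)-s_{V_\alpha}(z))$, hence $\abs{2s_N(z)+g_\alpha(z)/z}\lesssim\abs{b(z)}$ — produces the sharp bound $T\lesssim\frac{\sqrt q\,\abs{b(z)}}{Ny}+\frac{q^aB_N(z)}{N\abs{z}}$ (the $\sqrt q$ arising from solving $T^2\lesssim q\abs{b(z)}^2/(N^2y^2)$). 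Dividing by $\abs{b(z)}\asymp\abs{z^2-1}^{1/2}$ and invoking Lemma~\ref{lem:stability} on $\cT$ gives the first assertion. For the second assertion, the sign argument $\im(s_N(z))>0>\im(\widetilde s_{V_\alpha}(z))$ is unavailable, so I would run the same loop-equation analysis on $\cR\setminus\cT$ (replacing $B_N$ by $B_{\max}$, and using $\abs{z}\geq\abs{x}\geq1$ there) to bound $\E_\alpha[\abs{P_\alpha(z)}^{2q}]$, and combine it with a continuity argument: $z\mapsto s_N(z)$ is analytic on $\cR$, coincides with $s_{V_\alpha}(z)$ up to a small error on the top edge $\im(z)=\delta_0$ (which lies in $\cT$, so the first assertion applies), and the two roots stay separated by $2\abs{r_\alpha(z)b(z)}$ along any vertical segment of $\cR\setminus\cT$ connecting such a point down to a point of $\partial\cT$; hence $s_N(z)$ remains in the $s_{V_\alpha}$-branch throughout $\cR\setminus\cT$, and Lemma~\ref{lem:stability} then upgrades the weak control to $\abs{s_N(z)-s_{V_\alpha}(z)}\leq C\abs{P_\alpha(z)}^{1/2}$, whence the second assertion after inserting the moment bound on $P_\alpha$.

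The main obstacle I expect is the self-consistent analysis of the loop equation: one must bound \emph{every} error term in \eqref{eq:LE} using only the assumed control \eqref{eq:ass_L_N(f_alpha,z)} on $L_N(f_{\alpha,z})$ and the deterministic identities for $s_N$ and its derivatives, close the loop through the stability lemma so that $\im s_N$ is governed by $\abs{P_\alpha}$ and not the reverse, and then extract from the resulting polynomial inequality the correct $\sqrt q$ and $\abs{b(z)}$ dependence rather than a cruder bound — the second iteration, exploiting the identities $2s_{V_\alpha}(z)+g_\alpha(z)/z=s_{V_\alpha}(z)-\widetilde s_{V_\alpha}(z)=2r_\alpha(z)b(z)$ and $P_\alpha=(s_N-s_{V_\alpha})(s_N-\widetilde s_{V_\alpha})$, is what produces the extra $\abs{b(z)}$ factor needed for the sharp estimate inside $\cT$. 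Controlling the degeneracy of $b(z)$ near the edges $\pm1$ in the continuity argument and tracking the many $q$-powers are the remaining delicate points.
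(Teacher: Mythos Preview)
Your treatment of Part~\ref{it:in_the_trapezoid} is essentially the paper's argument. The paper organises the self-consistent step slightly differently — it introduces the auxiliary quantity $\Lambda(z)\coloneqq \im s_N(z)\vee\abs{2s_N(z)+g_\alpha(z)/z}$, bounds $\Lambda(z)\leq C(\abs{P_\alpha(z)}^{1/2}+\abs{b(z)})$ via the stability lemma and \eqref{eq:s_formula}, and closes for $\E_\alpha[\Lambda^{2q}]$ in one pass using Young rather than Hölder — but your two-pass iteration leads to the same place, and your identification of $2s_N+g_\alpha/z\approx 2r_\alpha b$ as the source of the extra $\abs{b(z)}$ factor is exactly right.

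Part~\ref{it:out_the_trapezoid} is where your proposal diverges from the paper, and where there is a genuine gap. Your branch-tracking continuity argument is not how the paper proceeds, and it is not clear it can be made rigorous here: you only have \emph{moment} bounds on $P_\alpha(z)$, not pathwise or high-probability control, so the statement ``$s_N$ stays in the $s_{V_\alpha}$-branch along the vertical segment'' is a random event whose probability you have not estimated; moreover, converting a high-probability branch identification back into the claimed $q$-uniform moment bound would require additional work you have not outlined. The paper instead avoids branch-tracking entirely. It first proves two intermediate results valid on $\cR\setminus\cT$: Lemma~\ref{lem:local_law_outside}, which re-runs the loop-equation analysis using $\Upsilon(z)\coloneqq\abs{s_N-s_{V_\alpha}}\wedge\abs{s_N-\widetilde s_{V_\alpha}}$ in place of $\abs{s_N-s_{V_\alpha}}$ (so that \eqref{eq:Stab1}--\eqref{eq:Stab2} suffice and no sign argument is needed), and Lemma~\ref{lem:nb particles edge}, which turns this into a moment bound $\E_\alpha[\cN([1,\infty))^q]\leq (Cq)^{2aq}B_{\max}^{2q}$ on the number of outliers via a dyadic covering of $[1,1+\delta_0]$. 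The proof of Part~\ref{it:out_the_trapezoid} then combines these through the deterministic inequality (from \cite[Proposition~3.5]{BouModPai2022}, using \eqref{eq:Stab5})
\[
\abs{s_N(z)-s_{V_\alpha}(z)}\leq C\Bigl(\Upsilon(z)+\re\bigl(s_N(z_0)-s_N(z)\bigr)+\abs{s_N(z_0)-s_{V_\alpha}(z_0)}\Bigr),
\]
with $z_0=\pm(1+y)+\ii y\in\partial\cT$, together with $\re(s_N(z_0)-s_N(z))\leq \tfrac{2}{Ny}\cN([1,\infty))$. The outlier-count lemma is the key missing idea in your sketch; it is what replaces the continuity argument and is itself a nontrivial piece of work.
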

	
	\begin{remark} \label{rem:even_moments}
		\begin{itemize}
			\item In the proof below as in many other places in the paper, it is convenient to restrict ourselves to bound even moments. 
			This is enough because the odd case can then be deduced via the inequality $\Ec{\abs{X}^q} \leq \E[\abs{X}^{2q}]^{1/2}$ up to a change of the constant $C$ in the bound. 
			\item We will start with the initial $B_N(z)=C\sqrt{N\log N}$ given by Corollary \ref{cor:a_priori_f_z}. The local law that we will deduce will allow us, through the use of the Helffer--Sjöstrand formula (see Section \ref{sec:HS}) and the master operator $\Xi_\alpha$ (see Section \ref{sec:bounds_test_functions}) to obtain in Section \ref{sec:bootstrap} the better (in $N$ but worse in $q$) estimate:
			$$\mathbb{E}_\alpha\left[|L_N(f_{\alpha,z})|^q\right]\leq \left(Cq^2(\log N)^2\dfrac{1}{\abs{z}}\right)^q.$$
			We will then use $B_N(z)=C(\log N)^2/\abs{z}$ and $a=2$. We will then obtain a local law from which we will deduce the improved estimate:
			$$\E_\alpha\left[\abs{L_N(f_{\alpha,z})}^q\right] \leq \left( Cq^4(\log N)^4 \right)^q.$$
			Applying the same strategy we will get the further improved bound
			$$ \E_\alpha\left[\abs{L_N(f_{\alpha,z})}^q\right] \leq (Cq^{32})^q,$$
			Allow us to obtain an optimal local law (in $N$) from which we will deduce Theorem~\ref{thm:CLT} and Corollary~\ref{thm:subleadingZN}.
		\end{itemize}
	\end{remark}
	
	We first prove the local law inside the trapezoid region $\mathcal{T}$ (recall \eqref{eq:def_R_and_T}) using the same approach as \cite[Theorem 1.1]{BouModPai2022}. Recall that the constants $C>0$ appearing in our bounds can depend on $\beta$ and $p$, can be increased from occurrence to occurrence, but are always uniform in $N$, $q$, $z$ and $\alpha$.
	
	\begin{proof}[Proof of Proposition \ref{prop:local_law_machinery}.\ref{it:in_the_trapezoid}] 
		We first work with $z\in \cR$ and restrict ourselves to $z \in \cT$ later in the proof. 
		First note that by definition of $g$ in \eqref{eq:def_g_C} and the fact that $\chi$ is chosen to be an even function, we have $g(\overline{z}) = \overline{g(z)}$ and therefore $P_\alpha(\overline{z}) = \overline{P_\alpha(z)}$.
		Then, it follows from the loop equation in Proposition \ref{prop:loop equations} together with the triangle inequality that 
		\begin{align}
			\E_\alpha\left[ \abs{P_\alpha(z)}^{2q} \right] 
			& \leq \frac{1}{N\abs{z}}
			\E_\alpha\left[\abs{L_N(f_{\alpha,z})} \cdot \abs{P_\alpha(z)}^{2q-1} \right]
			+ \frac{\abs{1-2/\beta}}{N} \E_\alpha\left[\abs{s_N'(z)}\cdot \abs{P_\alpha(z)}^{2q-1}\right] \nonumber \\
			& \qquad {} + \frac{4q-2}{\beta N^2 \abs{z}} 
			\E_\alpha \left[ \abs{ 2s_N(z)+\frac{g_\alpha(z)}{z} }
			\cdot \left( \frac{1}{N}\sum_{k=1}^N \frac{\abs{\lambda_k}}{\abs{\lambda_k - z}^3} \right)
			\cdot \abs{P_\alpha(z)}^{2q-2} \right].
			\label{eq:first_equality}
		\end{align}
		Recall Young's inequality: if $a^{-1}+b^{-1}=1$, then $xy\leq\frac{x^a}{a}+\frac{y^b}{b}$ for any $x,y \geq 0$.
		We apply Young's inequality to each term on the RHS of \eqref{eq:first_equality}, by introducing a factor $\lambda>0$ and $1/\lambda$, and taking $a=2q$ and $b=2q/(2q-1)$ for the first two terms, and $a=q$ and $b=q/(q-1)$ for the third term. 
		Thus, the RHS of \eqref{eq:first_equality} is at most
		\begin{align}
			& \left(2 \cdot \frac{2q-1}{2q\lambda^{\frac{2q}{2q-1}}}
			+ \frac{q-1}{q\lambda^{\frac{q}{q-1}}}\right) \E_\alpha\left[\abs{P_\alpha(z)}^{2q}\right]
			+ \frac{\lambda^{2q}}{2q (N\abs{z})^{2q}}
			\E_\alpha\left[\abs{L_N(f_{\alpha,z})}^{2q} \right]
			+ \frac{(\lambda \abs{1-2/\beta})^{2q}}{2q N^{2q}}
			\E_\alpha\left[\abs{s_N'(z)}^{2q}\right] \nonumber \\
			& {} + \frac{\lambda^q (4q-2)^q}{q(\beta N^2 \abs{z})^q} 
			\E_\alpha \left[ \abs{ 2s_N(z)+\frac{g_\alpha(z)}{z} }^q
			\cdot \left( \frac{1}{N}\sum_{k=1}^N \frac{\abs{\lambda_k}}{\abs{\lambda_k - z}^3} \right)^q
			\right].
			\label{eq:after_Young}
		\end{align}
		Now, we can choose $\lambda = 6$ so that
		\[
		2 \cdot \frac{2q-1}{2q\lambda^{\frac{2q}{2q-1}}}
		+ \frac{q-1}{q\lambda^{\frac{q}{q-1}}}
		\leq \frac{2}{\lambda} + \frac{1}{\lambda}
		= \frac{1}{2},
		\]
		and subtracting the $\frac{1}{2}\E_\alpha[\abs{P_\alpha(z)}^{2q}]$ term to the LHS of \eqref{eq:first_equality}, we get
		\begin{align}
			\E_\alpha\left[ \abs{P_\alpha(z)}^{2q} \right] 
			& \leq \frac{C^q}{(N\abs{z})^{2q}}
			\E_\alpha\left[\abs{L_N(f_{\alpha,z})}^{2q} \right]
			+ \frac{C^q}{N^{2q}}
			\E_\alpha\left[\abs{s_N'(z)}^{2q}\right] \nonumber \\
			& \qquad {} + \frac{(Cq)^q}{(N^2 \abs{z})^q} 
			\E_\alpha \left[ \abs{ 2s_N(z)+\frac{g_\alpha(z)}{z} }^q
			\cdot \left( \frac{1}{N}\sum_{k=1}^N \frac{\abs{\lambda_k}}{\abs{\lambda_k - z}^3} \right)^q
			\right].
		\end{align}
		Then, using the assumption \eqref{eq:ass_L_N(f_alpha,z)} on $L_N(f_{\alpha,z})$ and the following bounds
		\begin{equation*}
			\abs{s_N'(z)}
			\leq \frac{1}{N}\sum_{k=1}^N \frac{1}{\abs{\lambda_k-z}^2}
			= \frac{\im s_N(z)}{y},
		\end{equation*}
		and, using $\abs{\lambda_k} \leq \abs{\lambda_k-z} + \abs{z}$,
		\begin{equation*}
			\frac{1}{N}\sum_{k=1}^N \frac{\abs{\lambda_k}}{\abs{\lambda_k-z}^3}
			\leq \frac{1}{N}\sum_{k=1}^N \frac{1}{\abs{\lambda_k-z}^2}
			+ \frac{\abs{z}}{N}\sum_{k=1}^N \frac{1}{\abs{\lambda_k-z}^3}
			\leq \frac{\im s_N(z)}{y} 
			\left( 1+ \frac{\abs{z}}{y} \right),
		\end{equation*}
		we obtain:
		\begin{equation}\label{eq:intermboundP}
			\hspace{-0,15cm}\E_\alpha\left[\abs{P_\alpha(z)}^{2q} \right] 
			\leq \frac{(C q^a B_N(z))^{2q}}{(N\abs{z})^{2q}}
			+ \frac{(Cq)^q}{(Ny)^{2q}} 
			\E_\alpha\left[ \abs{\im s_N(z)}^{2q}
			+ \abs{ 2s_N(z)+\frac{g_\alpha(z)}{z} }^q
			\cdot \abs{\im s_N(z)}^q \right].
		\end{equation}
		We now restrict ourselves to $z \in \cT$.
		By \eqref{eq:Stab3}, we know that, since $z \in \cT$ and $\im s_N(z)>0$,
		\begin{equation} \label{eq:application_stability}
			\abs{s_N(z)-s_{V_\alpha}(z)}
			\leq C\left( \frac{\abs{P_\alpha(z)}}{\abs{b(z)}} \wedge \abs{P_\alpha(z)}^{1/2} \right).
		\end{equation}
		We introduce
		\begin{equation*}
			\Lambda(z) \coloneqq \im s_N(z) \vee \abs{2s_N(z)+\frac{g_\alpha(z)}{z}}
		\end{equation*}
		and we bound
		\begin{equation*}
			\Lambda(z) \leq 2 \abs{s_N(z)-s_{V_\alpha}(z)} + \left( \im s_{V_\alpha}(z) \vee \abs{2s_{V_\alpha}(z)+\frac{g_\alpha(z)}{z}} \right)
			\leq C\left( \abs{P_\alpha(z)}^{1/2}+\abs{b(z)}\right),
		\end{equation*}
		where, for the 1st term, we used \eqref{eq:application_stability} and, for the 2nd term, we recall that $s_{V_\alpha}(z) = r_\alpha(z)b(z)-\frac{g_\alpha(z)}{2z}$ (see Lemma~\ref{lem:stieltjes}.\ref{lemsta:point1}) and used \eqref{eq:lb_Im(rb)} and \eqref{eq:lb_r}.
		Hence,  we obtain:
		\begin{align*}
			\E_\alpha\left[\abs{\Lambda(z)}^{2q} \right]
			& \leq C^q\E_\alpha\left[\abs{P_\alpha(z)}^{q}  \right]+C^{q}\abs{b(z)}^{2q} \\
			& \leq \frac{(C q^a B_N(z))^{q}}{(N\abs{z})^{q}}
			+ \frac{(Cq)^{q/2}}{(Ny)^{q}}  \E_\alpha\left[\abs{\Lambda(z)}^{2q}\right]^{1/2}
			+ C^{q}\abs{b(z)}^{2q},
		\end{align*}
		where we used $\E_\alpha[\abs{P_\alpha(z)}^{q}] \leq \E_\alpha[\abs{P_\alpha(z)}^{2q}]^{1/2}$ and then \eqref{eq:intermboundP}.
		Using that $x\leq a\sqrt{x}+b\Rightarrow x\leq a^2+b$ for $x,a,b>0$, we obtain:
		\begin{equation} \label{eq:borneLambdaTrapeze}
			\E_\alpha\left[\abs{\Lambda(z)}^{2q} \right]
			\leq C^{q}\abs{b(z)}^{2q} 
			+ \frac{(C q^a B_N(z))^{q}}{(N\abs{z})^{q}}
			+ \frac{(Cq)^{q}}{(Ny)^{2q}}.
		\end{equation}
		Using this bound in \eqref{eq:intermboundP} yields
		\begin{align}
			\E_\alpha\left[\abs{P_\alpha(z)}^{2q} \right] 
			& \leq \frac{(C q^a B_N(z))^{2q}}{(N\abs{z})^{2q}}
			+ \frac{(Cq)^q}{(Ny)^{2q}}\abs{b(z)}^{2q}
			+ \frac{(Cq)^q}{(Ny)^{2q}} \frac{(q^aB_N(z))^{q}}{(N\abs{z})^{q}}
			+ \frac{(Cq)^{2q}}{(Ny)^{4q}} \nonumber \\
			& \leq \frac{(C q^a B_N(z))^{2q}}{(N\abs{z})^{2q}}
			+ \frac{(Cq)^q}{(Ny)^{2q}}\abs{b(z)}^{2q}
			+ \frac{(Cq)^{2q}}{(Ny)^{4q}}, \label{ineq:lastboundP}
		\end{align}
		where we neglected one term by using $ab\leq2a^2+2b^2$. We now distinguish according to which term is the largest in the RHS of \eqref{ineq:lastboundP}:
		\begin{itemize}
			\item If the largest term is the first or second one, then, up to an increasing of $C$, we get 
			\begin{equation*}
				\E_\alpha\left[\abs{P_\alpha(z)}^{2q} \right] 
				\leq \frac{(C q^a B_N(z))^{2q}}{(N\abs{z})^{2q}}
				+ \frac{(Cq)^q}{(Ny)^{2q}}\abs{b(z)}^{2q},
			\end{equation*}
			and, using the inequality $\abs{s_N(z)-s_{V_\alpha}(z)} \leq \frac{C}{\abs{b(z)}} \abs{P_\alpha(z)}$ coming from \eqref{eq:application_stability}, we deduce
			\begin{equation*}
				\E_\alpha\left[\abs{s_N(z)-s_{V_\alpha}(z)}^{2q}  \right]
				\leq \frac{(C q^a B_N(z))^{2q}}{(N\abs{zb(z)})^{2q}}
				+ \frac{(Cq)^q}{(Ny)^{2q}}.
			\end{equation*}
			\item If the largest term is the third one, then we get
			\begin{equation*}
				\E_\alpha\left[\abs{P_\alpha(z)}^{2q} \right] 
				\leq\dfrac{C^qq^{2q}}{(Ny)^{4q}}.
			\end{equation*}
			Thus, by using $\left|s_N(z)-s_{V_\alpha}(z)\right| \leq C \left|P_\alpha(z)\right|^{1/2}$, we obtain
			\begin{equation*}
				\E_\alpha\left[\abs{s_N(z)-s_{V_\alpha}(z)}^{2q}  \right]
				\leq \frac{(Cq)^q}{(Ny)^{2q}}
				\leq \frac{(C q^a B_N(z))^{2q}}{(N\abs{zb(z)})^{2q}}
				+ \frac{(Cq)^q}{(Ny)^{2q}}.
			\end{equation*}
		\end{itemize}
		In all cases, we obtained the desired bound so this concludes the proof of Part \ref{it:in_the_trapezoid} of Proposition~\ref{prop:local_law_machinery} (recall from Remark~\ref{rem:even_moments} that it is enough to bound even moments).
	\end{proof}
	
	Before proving Part \ref{it:out_the_trapezoid} in Proposition \ref{prop:local_law_machinery}, we need the following result which is the analogue of \cite[Proposition 2.5]{BouModPai2022}. It consists in a bound on the imaginary part of the differences between the two Stieltjes transforms $s_N$ and $s_{V_\alpha}$, which will be necessary to extend the local law \ref{it:in_the_trapezoid} outside the trapezoid region.

	
	\begin{lemma} \label{lem:local_law_outside}
	    Let $p>2$. 
		Let $\delta_0$, $\cR$ and $\cT$ be as in Lemma \ref{lem:stability}. 
		Then there exists $C,C'>0$ such that the following holds, for any $N \geq 1$ and $\alpha\in[0,1]$.
		Assume that there exist $a>0$ and $B_N \colon \cR \to (0,\infty)$ such that, for any $z \in \cR$ and $q \geq 1$,
		\begin{equation} \label{eq:ass_L_N(f_alpha,z)'}
			\E_\alpha\left[\abs{L_N(f_{\alpha,z})}^q\right] \leq (q^aB_N(z))^q.
		\end{equation}
		Then, for any $z \in \cR \setminus \cT$ and any $q \geq 1$, setting $\kappa \coloneqq \abs{x-1} \wedge \abs{x+1}$,
		\[
		\E_\alpha\left[\abs{\im(s_N(z)-s_{V_\alpha}(z))}^q\right] 
		\leq 
		\begin{cases}
			\dfrac{(Cq)^{q/2}}{N^{q}(y\kappa)^{q/2}}
			+ \dfrac{(Cq)^{q}}{(Ny)^{2q} \kappa^{q/2}}
			+ \dfrac{(Cq^aB_N(z))^q}{N^q \kappa^{q/2}}, 
			& \text{if } y\geq \sqrt{C'q}/(N\sqrt{\kappa}), \\
			\dfrac{(Cq)^{q/2}}{(Ny)^{q}}+\dfrac{(Cq^aB_N(z))^{q/2}}{N^{q/2}},
			& \text{if } y\leq \sqrt{C'q}/(N\sqrt{\kappa}).
		\end{cases}
		\]
		The same bounds hold for the moments of $\abs{s_N(z)-s_{V_\alpha}(z)}\wedge\abs{s_N(z)-\widetilde{s}_{V_\alpha}(z)}$.
	\end{lemma}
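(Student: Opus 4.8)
The plan is to mimic the strategy of Proposition~\ref{prop:local_law_machinery}.\ref{it:in_the_trapezoid}, but with the estimate \eqref{eq:Stab2} from the stability lemma playing the role that \eqref{eq:Stab3} played inside the trapezoid. The starting point is the same bound \eqref{eq:intermboundP} on $\E_\alpha[\abs{P_\alpha(z)}^{2q}]$, which holds for all $z \in \cR$ (not just in $\cT$) since it was derived from the loop equation before restricting to the trapezoid. The key point is that outside $\cT$ we can no longer conclude that $s_N(z)$ is close to $s_{V_\alpha}(z)$ rather than $\widetilde s_{V_\alpha}(z)$; however, since $\im s_N(z)>0$, inequality \eqref{eq:Stab2} still tells us that $\abs{\im(s_N(z)-s_{V_\alpha}(z))}$ is controlled by $\abs{s_N(z)-s_{V_\alpha}(z)}\wedge\abs{s_N(z)-\widetilde s_{V_\alpha}(z)}$, which by \eqref{eq:Stab1} is at most $C(\abs{P_\alpha(z)}/\abs{b(z)} \wedge \abs{P_\alpha(z)}^{1/2})$. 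Note that for $z\in\cR\setminus\cT$ we have $\abs{b(z)} = \abs{z-1}^{1/2}\abs{z+1}^{1/2} \asymp \kappa^{1/2}$, which is why $\kappa$ replaces $\abs{b(z)}$ and $\abs{z^2-1}^{1/2}$ in the final bounds.

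First I would set $\Lambda(z) \coloneqq \im s_N(z) \vee \abs{2s_N(z)+g_\alpha(z)/z}$ as in the trapezoid case. The subtlety is that now we cannot bound $\Lambda(z)$ directly by $\abs{s_N(z)-s_{V_\alpha}(z)}$ plus a deterministic term, because we don't know $s_N$ is near $s_{V_\alpha}$. Instead one uses the crude a priori bound $\im s_N(z) \leq \im s_N(z)/1$ together with the fact that $\abs{2s_N(z)+g_\alpha(z)/z} \leq 2\abs{s_N(z)-s_{V_\alpha}(z)}\wedge\abs{s_N(z)-\widetilde s_{V_\alpha}(z)}$-type control — actually, more carefully, one should observe that $2s_{V_\alpha}(z)+g_\alpha(z)/z = \pm 2r_\alpha(z)b(z)$ so $\abs{2s_N(z)+g_\alpha(z)/z}$ is, up to the deterministic quantity $\abs{r_\alpha(z)b(z)}\asymp\kappa^{1/2}$, controlled by $\min(\abs{s_N-s_{V_\alpha}},\abs{s_N-\widetilde s_{V_\alpha}})$, and similarly $\im s_N(z) \leq \abs{\im(s_N-s_{V_\alpha})} + \im s_{V_\alpha}(z)$ with $\im s_{V_\alpha}(z) \lesssim \kappa^{1/2}$ as well (or possibly smaller). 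This yields, via \eqref{eq:Stab1}–\eqref{eq:Stab2}, a self-bounding inequality of the form $\E_\alpha[\Lambda(z)^{2q}] \leq C^q\kappa^q + C^q\E_\alpha[\abs{P_\alpha(z)}^q]$, which combined with \eqref{eq:intermboundP} and the inequality "$x\leq a\sqrt x + b \Rightarrow x \leq a^2+b$" gives a closed bound on $\E_\alpha[\abs{P_\alpha(z)}^{2q}]$ analogous to \eqref{ineq:lastboundP} but with $\abs{b(z)}$ replaced by $\kappa^{1/2}$.

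Then I would feed this back through \eqref{eq:Stab1}–\eqref{eq:Stab2}: using $\abs{\im(s_N-s_{V_\alpha})} \leq C(\abs{P_\alpha(z)}/\kappa^{1/2} \wedge \abs{P_\alpha(z)}^{1/2})$. The two regimes in the statement correspond to which of these two minima is active, equivalently to whether $\abs{P_\alpha(z)}$ is small or large, which in turn is governed by comparing $y$ with $\sqrt{C'q}/(N\sqrt\kappa)$: when $y$ is large, the terms $(Cq)^{q/2}/(Ny)^{2q}\cdot(\cdots)$ dominate and dividing by $\kappa^{1/2}$ (from the $\abs{P_\alpha}/\kappa^{1/2}$ branch) produces the first case; when $y$ is small one is forced into the $\abs{P_\alpha}^{1/2}$ branch, producing the $1/(Ny)^q$ and $B_N(z)^{q/2}/N^{q/2}$ terms of the second case. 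The same bound for $\min(\abs{s_N-s_{V_\alpha}},\abs{s_N-\widetilde s_{V_\alpha}})$ follows identically from \eqref{eq:Stab1}, dropping the use of \eqref{eq:Stab2}.

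The main obstacle I anticipate is the bookkeeping of the deterministic quantities $\abs{b(z)}$, $\abs{r_\alpha(z)}$, $\im s_{V_\alpha}(z)$ and $\abs{2s_{V_\alpha}(z)+g_\alpha(z)/z}$ in the regime $z\in\cR\setminus\cT$, i.e.\ near (and slightly above the extension of) the edges $\pm1$: one needs the correct powers of $\kappa$ and $y$ in each of them, presumably via the formula \eqref{eq:s_formula} and the bounds on $r_\alpha$ near the edge (the analogues of \eqref{eq:lb_r}, \eqref{eq:lb_Im(rb)} used in the trapezoid proof), and a clean case split on $y \lessgtr \sqrt{q}/(N\sqrt\kappa)$ to get exactly the two stated forms. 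The loop-equation input and the self-bounding trick are routine once this edge asymptotics is in place.
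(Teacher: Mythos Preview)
Your plan is correct and follows essentially the same route as the paper: start from \eqref{eq:intermboundP} (valid on all of $\cR$), replace the trapezoid estimate \eqref{eq:Stab3} by \eqref{eq:Stab1}--\eqref{eq:Stab2}, bound the deterministic edge quantities via $\abs{b(z)}\asymp\kappa^{1/2}$ for $z\in\cR\setminus\cT$, run the self-closing argument, and split cases at the threshold $y\asymp\sqrt{q}/(N\sqrt{\kappa})$. The only cosmetic differences are that the paper works directly with $\Upsilon(z)\coloneqq\abs{s_N-s_{V_\alpha}}\wedge\abs{s_N-\widetilde s_{V_\alpha}}$ rather than your $\Lambda(z)$, and it invokes the sharper bound \eqref{eq:Stab4} (namely $\abs{\im s_{V_\alpha}(z)}\vee\abs{\im\widetilde s_{V_\alpha}(z)}\leq Cy/\abs{b(z)}$) for the edge imaginary part rather than the cruder $\lesssim\kappa^{1/2}$ you mention---this is exactly the ``possibly smaller'' you flagged, and it is what produces the $y^q$ (rather than $\kappa^q$) intermediate term in the analogue of \eqref{eq:bound P Upsilon}.
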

	
	\begin{proof}
		Let $z \in \cR \setminus \cT$.
		We recall that $\widetilde{s}_{V_\alpha}(z)$, the other solution of \eqref{eq:quadratic}, is given by \eqref{def:s_tilde}. 
		It follows from \eqref{eq:intermboundP} and the fact that $\abs{z}$ is bounded away from 0 for $z \in \cR \setminus \cT$ that 
		\begin{equation}\label{ineq:P(ii)}
			\hspace{-0,1cm}\E_\alpha\left[\abs{P_\alpha(z)}^{2q} \right] 
			\leq \frac{(C q^a B_N(z))^{2q}}{N^{2q}}
			+ \frac{(Cq)^q}{(Ny)^{2q}} 
			\E_\alpha\left[ \abs{\im s_N(z)}^{2q}
			+ \abs{ 2s_N(z)+\frac{g_\alpha(z)}{z} }^q
			\cdot \abs{\im s_N(z)}^q \right].
		\end{equation}
		We now define $\Upsilon(z) \coloneqq \abs{s_N(z)-s_{V_\alpha}(z)}\wedge\abs{s_N(z)-\widetilde{s}_{V_\alpha}(z)}$ where $\widetilde{s}_{V_\alpha}(z)$ is the other solution of $P_\alpha(z)$. From \eqref{eq:Stab2}, a bound on moments of $\Upsilon(z)$ will be enough to conclude. Using \eqref{eq:Stab4}, we have
		\begin{equation*}
			\abs{\im s_N(z)}
			\leq \Upsilon(z) + \abs{\im {s_{V_\alpha}(z)}} \vee \abs{\im\widetilde{s}_{V_\alpha}(z)}
			\leq \Upsilon(z) + \frac{Cy}{\abs{b(z)}}.
		\end{equation*}
		We also have by the expressions \eqref{eq:s_formula} and \eqref{def:s_tilde} for $s_{V_\alpha}(z)$ and $\widetilde{s}_{V_\alpha}(z)$,
		\begin{align*}
			\left|2s_N(z)+\dfrac{g_\alpha(z)}{z}\right|
			& \leq 2\Upsilon(z) + \abs{2s_{V_\alpha}(z)+\dfrac{g_\alpha(z)}{z}} \vee \abs{2\widetilde{s}_{V_\alpha}(z)+\dfrac{g_\alpha(z)}{z}} \\
			& = 2\Upsilon(z) + 2\abs{\widetilde{r}_\alpha(z)b(z)} \\
			& \leq 2\Upsilon(z) + C\abs{b(z)},
		\end{align*}
		using in the last inequality that $\abs{\widetilde{r}_\alpha(z)} \leq C$ by \eqref{eq:lb_r}.
		Plugging this in \eqref{ineq:P(ii)} and using the fact that $\abs{b(z)}\leq C\sqrt{\kappa}$ and therefore $y^q/\abs{b(z)}^{2q}\leq C$ (because $y\leq\kappa$), we obtain:
		\begin{equation}\label{eq:bound P Upsilon}
			\E\left[\abs{P_\alpha(z)}^{2q} \right] 
			\leq\frac{(C q^a B_N(z))^{2q}}{N^{2q}}
			+ \frac{(Cq)^q}{(Ny)^{2q}} \E\left[\Upsilon(z)^{2q}+y^{q}+\kappa^{q/2}\Upsilon(z)^q \right].
		\end{equation}
		On the other hand, by the stability lemma (see \eqref{eq:Stab1}) and the inequality $\abs{b(z)}\leq C\sqrt{\kappa}$, we have
		\begin{equation} \label{eq:stability_for_Upsilon}
			\Upsilon(z) \leq C \left( \frac{\abs{P_\alpha(z)}}{\sqrt{\kappa}} \wedge \abs{P_\alpha(z)}^{1/2} \right).
		\end{equation}
		We now distinguish according to which term is the largest in the RHS of \eqref{eq:bound P Upsilon}:
		\begin{itemize}
			\item If the largest term is the second one, then
			by following the arguments of the proof of \cite[Proposition 2.5]{BouModPai2022} (see after (2.44) in their proof), there exists $C'>0$ such that, if $y\geq C'\sqrt{q}/(N\sqrt{\kappa})$, by using $\Upsilon(z)\leq C\abs{P_\alpha(z)}/\sqrt{\kappa}$, we get
			\begin{equation*}
				\E_\alpha\left[\Upsilon(z)^{2q}\right]\leq\dfrac{(Cq)^q}{N^{2q}y^q\kappa^q}+\dfrac{(Cq)^{2q}}{(Ny)^{4q}\kappa^{q}}
			\end{equation*}
			and, if $y\leq C'\sqrt{q}/(N\sqrt{\kappa})$, by using $\Upsilon(z)\leq C|P_\alpha(z)|^{1/2}$ instead, we get
			\begin{equation*}
				\E_\alpha\left[\Upsilon(z)^{2q}\right]\leq\dfrac{(Cq)^q}{(Ny)^{2q}}.
			\end{equation*}
			\item If the largest term is the first one, then by \eqref{eq:stability_for_Upsilon}, we have
			\begin{equation*}
				\E_\alpha\left[\Upsilon(z)^{2q}\right]
				\leq C^{q} \left[\dfrac{(q^aB_N(z))^{2q}}{N^{2q}\kappa^{q}}\wedge\dfrac{(q^aB_N(z))^{q}}{N^{q}}\right],
			\end{equation*}
			and we choose to use the first part of the bound when $y\geq C'\sqrt{q}/(N\sqrt{\kappa})$ and the second part otherwise.
		\end{itemize}
		Summing over these two cases yields the result.
	\end{proof}
	
	We also need the following lemma which is the analogue of \cite[Lemma 3.6]{BouModPai2022}. It is a bound on the moments of the number of outliers, \textit{i.e.\@} the number of particles outside of the limiting spectrum $(-1,1)$. Let $I$ be an interval, we define 
	\begin{equation}
		\mc{N}(I) \coloneqq \#\left\{j\in\llbracket1,N\rrbracket : \lambda_i\in I\right\},
	\end{equation}
	which is the number of particles in $I$.
	
	\begin{lemma} \label{lem:nb particles edge}
	    Let $p>2$. 
		Let $\delta_0$, $\cR$ and $\cT$ be as in Lemma \ref{lem:stability}. 
		Then there exists $C>0$ such that the following holds, for any $N \geq 1$ and $\alpha\in[0,1]$.
		Assume that there exist $a\geq 1$ and $B_N \colon \cR \to (0,\infty)$ such that, for any $z \in \cR$ and $q \geq 1$,
		\begin{equation} \label{eq:ass_L_N(f_alpha,z)''}
			\E_\alpha\left[\abs{L_N(f_{\alpha,z})}^q\right] \leq (q^aB_N(z))^q.
		\end{equation}
		Let $B_{\max} \coloneqq 1 \vee \sup_{z \in \overline{\cR \setminus \cT}} B_N(z)$.
		Then, for any $q \geq 1$,
		\begin{equation}\label{ineq:N(I)}
			\E_\alpha\left[\mathcal{N}([1,\infty))^{q}\right]
			\leq (Cq)^{2aq} B_{\max}^{2q}.
		\end{equation}
	\end{lemma}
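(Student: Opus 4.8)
The plan is to bound the number of particles $\mathcal{N}([1,\infty))$ by relating it to the imaginary part of the empirical Stieltjes transform at a carefully chosen point, then to apply the estimate on $\im(s_N - s_{V_\alpha})$ from Lemma~\ref{lem:local_law_outside} together with the fact that $\im s_{V_\alpha}$ vanishes outside $[-1,1]$. Concretely, for any $x_0 \geq 1+\delta_0/2$ (say) and $y_0 \in (0,\delta_0]$, one has the elementary lower bound
\[
	\im s_N(x_0 + \ii y_0)
	= \frac{1}{N}\sum_{k=1}^N \frac{y_0}{(\lambda_k - x_0)^2 + y_0^2}
	\geq \frac{c}{N}\, \mathcal{N}([x_0 - y_0, x_0 + y_0]) \cdot \frac{1}{y_0},
\]
for a universal $c>0$. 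Summing such estimates over a net of points $x_0$ covering $[1,\infty)$ at scale $y_0$, or equivalently choosing $y_0$ of order the distance to the edge and using that particles far in the tail are controlled by the \emph{a priori} confinement, one gets that $\mathcal{N}([1,\infty))$ is essentially dominated by $N y_0 \, \im s_N(z)$ for a suitable $z = x_0 + \ii y_0 \in \cR \setminus \cT$, up to tail contributions that are negligible.

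The key step is then to invoke Lemma~\ref{lem:local_law_outside}: at a point $z = x_0 + \ii y_0$ with $x_0$ slightly larger than $1$, we have $\kappa = \abs{x_0 - 1} \asymp 1$, so $\im s_{V_\alpha}(z)$ is $O(y_0)$ and the estimate reads
\[
	\E_\alpha\big[\abs{\im s_N(z)}^q\big]
	\leq \Big(\frac{C y_0}{\abs{b(z)}}\Big)^q + \E_\alpha\big[\abs{\im(s_N(z) - s_{V_\alpha}(z))}^q\big]
	\leq \frac{(Cq)^{q/2}}{N^q y_0^{q/2}} + \frac{(Cq)^q}{(N y_0)^{2q}} + \frac{(Cq^a B_N(z))^q}{N^q} + (C y_0)^q,
\]
in the regime $y_0 \gtrsim \sqrt{q}/N$. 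Choosing $y_0$ of order a large constant times $\sqrt{q} \, B_{\max} / N$ (so that $N y_0 \asymp \sqrt{q} B_{\max}$), every term on the right is $\leq ((Cq)^{a} B_{\max}/N)^{q}$ up to adjusting constants, using $B_{\max} \geq 1$ and $a \geq 1$. Multiplying by $(N y_0)^q \asymp (\sqrt{q} B_{\max})^q$ to undo the lower bound above yields $\E_\alpha[\mathcal{N}([x_0 - y_0, x_0 + y_0])^q] \leq (Cq)^{(a + 1/2)q} B_{\max}^{2q} \leq (Cq)^{2aq} B_{\max}^{2q}$, which after covering $[1,\infty)$ by finitely many such windows (plus the far tail, controlled by the \emph{a priori} estimates of Section~\ref{sec:a_priori} which show no particle lies beyond some fixed $R$ with overwhelming probability, contributing negligibly) gives the claimed bound.

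I expect the main obstacle to be bookkeeping: one must carefully track how the choice of $y_0$ interacts with the two regimes $y \gtrless \sqrt{C'q}/(N\sqrt{\kappa})$ in Lemma~\ref{lem:local_law_outside}, ensure that the window $[x_0 - y_0, x_0+y_0]$ actually reaches down to the edge $x=1$ (so that it is not disjoint from the region where particles can accumulate, while still having $\kappa \asymp 1$ at the center), and handle the contribution of particles at macroscopic distance from the spectrum --- these are excluded with probability bounds much stronger than any polynomial in $q$, by the confinement provided by the \emph{a priori} bounds, so their contribution to the moment is subleading. The argument is the direct analogue of \cite[Lemma~3.6]{BouModPai2022}, and the only genuinely new point is that the input local law on $\im(s_N - s_{V_\alpha})$ now carries the $B_N$-dependent error term and the $q$-powers, which propagate transparently through the above computation.
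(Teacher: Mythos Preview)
Your proposal contains a genuine gap. You want a window $[x_0 - y_0, x_0 + y_0]$ with $y_0 \asymp \sqrt{q}\,B_{\max}/N$ that both reaches down to the edge $x=1$ and has $\kappa = x_0 - 1 \asymp 1$; these two requirements are incompatible, since reaching the edge forces $\kappa \leq y_0 = o(1)$. If instead you keep $\kappa \asymp 1$ and tile $[1,1+\delta_0]$ by windows of width $y_0 \sim 1/N$, you need order $N$ windows, not finitely many, and Minkowski's inequality blows up the bound by a factor $N$. If on the other hand you let $y_0$ be of constant order so that a single window covers $[1,1+\delta_0]$, the contribution of $\im s_{V_\alpha}(z)$ (which is of order $y_0/\sqrt{\kappa} \asymp 1$) produces, after multiplying by $(Ny_0)^q$, a useless $(CN)^q$.

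The paper's proof is substantially more delicate than your sketch. It splits $[1,\infty)$ into three pieces. The interval $(1+\delta_0,\infty)$ is handled by the large deviation estimate as you suggest. The interval $[1,1+KN^{-2/3}]$, with $K = (qL^3)^{4a/3}$ and $L$ an explicit constant involving $B_{\max}$, is covered by a \emph{single} window centered at $z = 1+y+\ii y$ with $y = KN^{-2/3}$; this point lies on the boundary of $\cT$, so it is the \emph{trapezoid} local law of Proposition~\ref{prop:local_law_machinery}.\ref{it:in_the_trapezoid} that is applied here, not Lemma~\ref{lem:local_law_outside}. The middle interval $(1+KN^{-2/3},1+\delta_0]$ is the heart of the argument: it is covered by infinitely many intervals $I_j = (x_j, x_j + y_j]$ with $\kappa_j = a_j N^{-2/3}$, $y_j = a_j^{1/4} N^{-2/3}/L$, and crucially with \emph{varying} moment exponents $q_j \sim a_j^{3/(4a)}/L^3$ growing with $j$. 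Applying Lemma~\ref{lem:local_law_outside} at each $z_j = x_j + \ii y_j$ with exponent $q_j$ gives $\E_\alpha[\mathcal{N}(I_j)^{q_j}] \leq 3 e^{-q_j}$, and since $q_j \geq q$ and $\mathcal{N}(I_j)$ is integer-valued, $\E_\alpha[\mathcal{N}(I_j)^q]^{1/q} \leq \E_\alpha[\mathcal{N}(I_j)^{q_j}]^{1/q}$; the sum $\sum_j e^{-q_j/q}$ then converges to something of order $(Cq)^a B_{\max}$. This dyadic-type covering with escalating moments is the idea you are missing.
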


	\begin{proof}
		We follow the proof of \cite[Lemma 3.6]{BouModPai2022}, but we have to keep track of the role of the factor $(q^aB_N(z))^q$ appearing in the bound of Lemma~\ref{lem:local_law_outside} and which is not present in \cite{BouModPai2022}. 
		
		\textbf{Preliminaries.} We first gather some tools for the proof.
		First note that, for any $z = x+\ii y$ with $y > 0$,
		\begin{equation} \label{eq:N(I)_et_Im_s}
			\mc{N}([x-y,x+y])
			= \sum_{i=1}^{N}\1_{\abs{\lambda_i-x}\leq y}
			\leq \sum_{i=1}^{N} \dfrac{2 y^{2}}{\abs{z-\lambda_i}^2}
			= 2 y N \cdot \im s_N(z).
		\end{equation}
		On the other hand, by \eqref{eq:Stab4}, there exists $C_0>0$ such that for any $\alpha\in[0,1]$ and $z \in \cR\setminus \cT$, we have $\im s_{V_\alpha}(z)\leq (C_0^2 y)/(4\sqrt{\kappa})$, where we used $\abs{b(z)} \geq \sqrt{\kappa}$ with $\kappa \coloneqq \abs{x-1} \wedge \abs{x+1}$. 
		Therefore, for $z \in \cR\setminus \cT$ such that $y\leq\kappa^{1/4}/(C_0\sqrt{N})$, we have $\im s_{V_\alpha}(z)\leq1/(4 y N)$ and, using that $\mc{N}(I)$ is an integer, it follows from \eqref{eq:N(I)_et_Im_s} that
		\[
		\mc{N}([x-y,x+y]) \leq 4 y N \cdot \abs{\im (s_N(z)-s_{V_\alpha}(z))}.
		\]
		Combining this with Lemma~\ref{lem:local_law_outside}, there exists $C_1,C_2>0$ such that, for any $q \geq 1$ and $z \in \cR\setminus \cT$ such that $\sqrt{C_1q}/(N\sqrt{\kappa}) \leq y\leq\kappa^{1/4}/(C_0\sqrt{N})$,
		\begin{equation} \label{eq:bound_nb_particles}
			\E_\alpha \left[ \mc{N}([x-y,x+y])^q \right]
			\leq \left(\frac{C_2 qy}{\kappa} \right)^{q/2} 
			+ \left(\frac{C_2 q}{Ny\sqrt{\kappa}} \right)^q
			+ \left(\frac{C_2 q^a B_{\max} y}{\kappa} \right)^q,
		\end{equation}
		where we bounded $\sqrt{\kappa} \geq \kappa \delta_0^{-1/2}$ in the denominator fo the last term.
		Note that this inequality cannot be applied if $x >1+\delta_0$ because then $z \notin \cR$. Moreover, the condition on $y$ is empty if $\kappa < (C_1C_0^2q)^{2/3} N^{-2/3}$.
		Therefore, the proof proceeds by cutting the interval $[1,\infty)$ as follows
		\begin{equation} \label{eq:interval_decompo}
			[1,\infty) = [1,1+KN^{-2/3}] \cup (1+KN^{-2/3},1+\delta_0] \cup (1+\delta_0,\infty),
		\end{equation}
		for some $K \geq (C_1C_0^2q)^{2/3}$ chosen explicitly below.
		The main task is to control the number of particles in the second interval, which is done by cutting it into many pieces and applying \eqref{eq:bound_nb_particles} to each of them.
		The first and third interval have to be treated differently.
		In order to make the argument on the second interval work, we need to take $K$ larger than the constraint mentioned above, more precisely, we choose
		\[
		K \coloneqq (qL^3)^{4a/3}
		\qquad \text{with} \qquad
		L \coloneqq C_0 \vee C_1 \vee \left( e^2 C_2 \right)^{1/4} \vee \left( e C_2 \right)^{1/2} \vee \left( e C_2 B_{\max} \right)^{1/(1+3a)}.
		\]
		We can assume w.l.o.g. that $C_0,C_1 \geq 1$, so that $K,L \geq 1$.
		
		\textbf{A first case.} We need to treat separately the case where $K N^{-2/3} > \delta_0$, in which case the decomposition \eqref{eq:interval_decompo} is not meaningful. 
		In that case, we have $N \leq (K/\delta_0)^{3/2} \leq Cq^{2a} L^{6a}$ so that
		\[
		\E_\alpha\left[\mathcal{N}([1,\infty))^{q}\right]
		\leq N^q 
		\leq C^q q^{2aq} L^{6aq}
		\leq C^q q^{2aq} B_{\max}^{2q},
		\]
		using that $L \leq C B_{\max}^{1/(1+3a)} \leq C B_{\max}^{1/3a}$ because $B_{\max} \geq 1$.
		
		Therefore, we can now restrict ourselves to the case $K N^{-2/3} \leq \delta_0$ and successively deal with the intervals in the decomposition \eqref{eq:interval_decompo}.
		
		\textbf{First interval: $(1,1+KN^{-2/3}]$.} We define $y=KN^{-2/3}$ and $z=1+ y+\ii y$, thus by \eqref{eq:N(I)_et_Im_s}:
		\begin{align*}
			\E\left[\mc{N}\left((1,1+KN^{-2/3}]\right) ^q\right] 
			& \leq (2 y N)^{q}\E\left[\abs{\im s_N(z)}^{q}\right] 
			\leq (CyN)^q \left( \E\left[\abs{s_N(z)-s_{V_\alpha}(z)}^{q}\right] + \abs{\im s_{V_\alpha}(z)}^q \right).
		\end{align*}
		Note that $z \in \cT$ (recall $y \leq \delta_0$ because we ruled out the opposite case before), so we can apply Proposition \ref{prop:local_law_machinery}.\ref{it:in_the_trapezoid} to the first term. 
		On the other hand, by \eqref{eq:Stab4} (and continuity to apply it on the boundary of the trapezoid region), we have $\abs{\im s_{V_\alpha}(z)} \leq Cy/\abs{b(z)} \leq C \sqrt{y}$.
		Thus, we obtain, using in the second inequality $y \leq \delta_0$ and $y = (qL^3)^{4a/3} N^{-2/3}$,
		\begin{align*}
			\E\left[\mc{N}\left((1,1+KN^{-2/3}]\right)^q\right]
			& \leq (Cq)^{q/2} 
			+ (Cq^aB_{\max})^q y^{q/2}
			+ (CN)^q y^{3q/2} \\
			& \leq (Cq)^{q/2} 
			+ (Cq^aB_{\max})^q 
			+ C^q (qL^3)^{2aq}
			\leq (Cq^{2a}B_{\max}^{2})^q,
		\end{align*}
		using $a \geq 1$, $B_{\max} \geq 1$ and $L \leq C B_{\max}^{1/(1+3a)} \leq C B_{\max}^{1/3a}$.
		
		\textbf{Second interval: $I \coloneqq (1+KN^{-2/3},1+\delta_0]$.} We split this interval into several smaller intervals.
		Let $a_0 \coloneqq K$ and $a_{j+1} \coloneqq a_j+a_j^{1/4}/L$ for $j \geq 0$.
		We define
		\[
		\kappa_j \coloneqq a_j N^{-2/3}, \qquad
		x_j \coloneqq 1+\kappa_j, \qquad 
		y_j \coloneqq \dfrac{a_j^{1/4}}{L}N^{-2/3}, \qquad
		q_j \coloneqq \left\lfloor \frac{a_j^{3/(4a)}}{L^3} \right\rfloor,
		\]
		and consider the intervals $I_j \coloneqq (x_j,x_j+y_j]$. Note that $x_0=1+KN^{-2/3}$ and $x_{j+1} = x_j+y_j$ so that $I = [1+KN^{-2/3},1+\delta_0]\subset\bigcup_{j=0}^{j_\mrm{\max}}I_j$ with  $j_{\max}\coloneqq\max\{j\geq0,\kappa_j\leq\delta_0\}$.
		Using Minkowski's inequality first and then that $q_j\geq q_0=q$ (together with the fact that $\mc{N}(I_j)$ is integer valued), we have
		\begin{equation} \label{eq:sum_on_intervals}
			\E\left[\mc{N}(I)^{q}\right]^{1/q}
			\leq\sum_{j=1}^{j_{\max}}\E\left[\mc{N}(I_j)^{q}\right]^{1/q}
			\leq\sum_{j=1}^{j_{\max}}\E\left[\mc{N}(I_j)^{q_j}\right]^{1/q}.
		\end{equation}
		Now, in order to apply \eqref{eq:bound_nb_particles} with $(x,y) = (x_j,y_j)$ for $j \leq j_{\max}$, we need to check that the conditions are satisfied. We have $x_j+\ii y_j \in \cR\setminus \cT$ because $y_j \leq \kappa_j$ (using here $a_j \geq a_0 = K \geq 1$ and $L\geq1$) and $\kappa_j \leq \delta_0$ by definition of $j_{\max}$.
		The condition $y_j \leq \kappa_j^{1/4}/(C_0\sqrt{N})$ holds because $L \geq C_0$. Finally, $y_j \geq \sqrt{C_1q_j}/(N\sqrt{\kappa_j})$ is implied by $a_j^{3/4-3/(8a)} L^{1/2} \geq \sqrt{C_1}$ which holds because $a \geq 1$, $a_j \geq 1$ and $L \geq C_1$.
		Therefore, \eqref{eq:bound_nb_particles} yields
		\begin{equation}
			\E\left[\mc{N}(I_j)^{q_j}\right]
			\leq \left(\frac{C_2 a_j^{3/(4a)-3/4}}{L^4} \right)^{q_j/2} 
			+ \left(\frac{C_2 a_j^{3/(4a)-3/4}}{L^2} \right)^{q_j}
			+ \left(\frac{C_2 B_{\max} }{L^{3a+1}} \right)^{q_j}
			\leq 3 e^{-q_j},
		\end{equation}
		using $a_j^{3/(4a)-3/4} \leq 1$ and the definition of $L$.
		Combining this with \eqref{eq:sum_on_intervals} and using $q_j\geq a_j^{3/4a}/L^3-1$, $a_j\geq K+jK^{1/4}/L$ and then $K=(L^3q)^{4a/3}$ together with a sum-integral comparison, we obtain:
		\begin{align*}
			\E\left[\mc{N}(I)^{q}\right]^{1/q}
			& \leq 3 \sum_{j=0}^{\infty} e^{-q_j/q}
			\leq 3 \sum_{j=0}^{\infty} 
			\exp \left( - \frac{(K+jK^{1/4}/L)^{3/4a}}{qL^3} + \frac{1}{q} \right) \\
			& \leq 3e^{1/q} \left( 1 + \int_0^{\infty} e^{-(1+x/(LK^{3/4}))^{3/4a}} \diff x \right)
			= 3e^{1/q} LK^{3/4} \int_{0}^{\infty} e^{-(1+y)^{3/4a}} \diff y \\
			& \leq Ca q^{a} L^{1+3a}
			\leq (Cq)^{a} B_{\max},
		\end{align*}
		where we used in the last inequality that $L \leq C B_{\max}^{1/(1+3a)}$.
		
		\textbf{Third interval: $(1+\delta_0,\infty)$.} Here, we simply use the fact that the probability of having an eigenvalue larger than $1+\delta_0$ is exponentially small, see Lemma~\ref{lem:outliers}.\ref{it:ldp estimate}.
		This yields
		\[
		\E\left[\mc{N}((1+\delta_0,\infty))^{q}\right]
		\leq N^q \Pp{\exists k \in \llbracket1,N\rrbracket, \abs{\lambda_k} \geq 1+\delta_0}
		\leq CN^{q}e^{-cN}
		\leq (Cq)^{q},
		\]
		using here that $e^{cN} \geq (cN)^q/q! \geq N^q/(Cq)^q$.
		This concludes the proof.
	\end{proof}

    \begin{proof}[Proof of Proposition~\ref{prop:local_law_machinery}.\ref{it:out_the_trapezoid}]
    	Let $z = x + \ii y \in \cR \setminus \cT$. By symmetry, we can assume that $x \geq 0$, which necessarily means that $x > 1+y$. 
    	We define $z_0=1+y+\ii y$. 
    	Following the same argument as in \cite[Proposition 3.5]{BouModPai2022} (this uses \eqref{eq:Stab5}), we have
    	\begin{multline*}
    		\abs{s_N(z)-s_{V_\alpha}(z)}
    		\leq C\bigl(	\abs{s_N(z)-s_{V_\alpha}(z)}\wedge	\abs{s_N(z)-\widetilde{s}_{V_\alpha}(z)} 
    		+ \re \left(s_N(z_0)-s_N(z)\right) 
    		\\ + \abs{s_N(z_0)-s_{V_\alpha}(z_0)} \bigr),
    	\end{multline*}
    	as well as $\re(s_N(z_0)-s_N(z)) \leq \frac{2}{Ny} \cN((1,\infty))$.
    	We thus obtain:
    	\begin{align}
    		\E_\alpha\left[\abs{s_N(z)-s_{V_\alpha}(z)}^{q}\right]
    		\leq C^q \Biggl( \E_\alpha\left[ \abs{s_N(z)-s_{V_\alpha}(z)}^{q}\wedge	\abs{s_N(z)-\widetilde{s}_{V_\alpha}(z)}^{q} \right]
    		+ \frac{\E_\alpha[\cN((1,\infty))^q]}{(Ny)^q} \nonumber \\
    		+\E_\alpha\left[\abs{s_N(z_0)-s_{V_\alpha}(z_0)}^{q}\right] \Biggr). \label{eq:three_terms}
    	\end{align}
    	For the first term on the RHS of \eqref{eq:three_terms}, we use Lemma \ref{lem:local_law_outside} to obtain
    	\begin{equation}
    		\E[\abs{s_N(z)-s_{V_\alpha}(z)}^{q}\wedge	\abs{s_N(z)-\widetilde{s}_{V_\alpha}(z)}^{q}]
    		\leq \frac{(Cq^aB_{\max})^q}{(Ny)^q},
    	\end{equation}
    	by distinguishing the two cases in Lemma \ref{lem:local_law_outside} as follows.
    	For the first case, $y\geq \sqrt{C'q}/(N\sqrt{\kappa})$, we have, using that $\kappa \geq y$ for $z \in \cR \setminus \cT$,
    	\begin{align*}
    		\frac{(Cq)^{q/2}}{N^{q}(y\kappa)^{q/2}}
    		+ \frac{(Cq)^{q}}{(Ny)^{2q} \kappa^{q/2}}
    		+ \frac{(Cq^aB_N(z))^q}{N^q \kappa^{q/2}}
    		& \leq 
    		\frac{(Cq)^{q/2}}{(N y)^q} +\dfrac{C^qq^{q}}{(Ny)^{2q}\kappa^{q/2}} \cdot \left( \frac{Ny\sqrt{\kappa}}{(C'q)^{1/2}} \right)^{q/2} + \frac{(Cq^aB_{\max})^q}{N^q y^{q/2}} \\
    		& \leq \frac{(Cq)^{q/2}}{(N y)^q} + \frac{(Cq^aB_{\max})^q}{N^q y^{q/2}} \\
    		& \leq  \frac{(Cq^aB_{\max})^q}{(Ny)^q},
    	\end{align*}
    	using $y \leq \delta_0$ and $a,B_{\max} \geq 1$ in the last inequality.
    	In the second case, $y\leq \sqrt{C'q}/(N\sqrt{\kappa})$, we can bound the second term of the bound as follows
    	\[
    	\frac{(Cq^aB_N(z))^{q/2}}{N^{q/2}}
    	\leq \frac{(Cq^aB_{\max})^{q/2}}{N^{q/2}} \cdot \left( \frac{(C'q)^{1/2}}{Ny\sqrt{\kappa}} \right)^{q/2}
    	= \frac{(C q^{a/2+1/4} B_{\max}^{1/2})^q}{N^q y^{q/2} \kappa^{q/4}}
    	\leq \frac{(Cq^aB_{\max})^q}{(Ny)^q}.
    	\]
    	Then, using Lemma~\ref{lem:nb particles edge} for the second term on the RHS of \eqref{eq:three_terms} and Proposition~\ref{prop:local_law_machinery}.\ref{it:in_the_trapezoid} for the third term (note that $z_0 \in \cT$), we get
    	\begin{align*}
    		\E_\alpha\left[\abs{s_N(z)-s_{V_\alpha}(z)}^{q}\right]
    		& \leq \frac{(Cq^aB_{\max})^q}{(Ny)^q}
    		+ \frac{(Cq)^{2aq} B_{\max}^{2q}}{(Ny)^q}
    		+ \frac{(Cq)^{q/2}}{(Ny)^q} + \frac{(Cq^aB_N(z_0))^q}{(N \abs{z_0} \abso{z_0^2-1}^{1/2})^q} \\
    		& \leq \frac{(Cq)^{2aq} B_{\max}^{2q}}{(Ny)^q},
    	\end{align*}
    	using in particular that $\abs{z_0} \abso{z_0^2-1}^{1/2} \geq cy$.
    	This concludes the proof.
    \end{proof}
	
	\section{Applications of the Helffer--Sjöstrand formula}
	\label{sec:HS}
	
	In this section, we show how the \textit{Helffer--Sjöstrand} formula can be used to deduce, from a local law, bounds on linear statistics and on the anisotropy introduced below. 
	Let $f \colon \R \to \R$ be a test function, with sufficient integrability and regularity conditions so that the following quantities make sense.
	Recall our notation for linear statistics:
	\begin{equation} \label{eq:def_linear_statistics}
		L_N(f) \coloneqq \sum_{k=1}^N f(\lambda_k) - N \int_\R f(x) \diff \mu_{V_\alpha}(x)
		= N \int_\R f(x) \diff (\mu_N-\mu_{V_\alpha})(x).
	\end{equation}
	We also introduce the following quantity, sometimes called the \textit{anisotropy} (see e.g.\@ \cite{BekLebSer2018}), 
	\begin{equation} \label{eq:def_anisotropy}
		A_N(f) \coloneqq N^2\iint_{\R^2} \frac{f(\lambda)-f(\lambda')}{\lambda-\lambda'} 
		\diff (\mu_N-\mu_{V_\alpha})(\lambda) \diff (\mu_N-\mu_{V_\alpha})(\lambda'),
	\end{equation}
	where we take the convention $\frac{f(\lambda)-f(\lambda')}{\lambda-\lambda'}  = f'(\lambda)$ when $\lambda = \lambda'$. This quantity arises as an error term in the study of the convergence of $L_N(f)$ towards a Gaussian law and showing that this term in indeed negligible is often one of the main obstacle. Note however, that these quantities are scaled by appropriate factors of $N$ so that they are theoretically of order 1 for regular enough $f$.
	In this section, we show how to use Helffer--Sjöstrand formula to control these quantities.

	First recall Helffer--Sjöstrand formula, see e.g.\@ \cite[Proposition C.1]{BenKno2017} for a reference.
	\begin{proposition} \label{prop:helffer--Sjostrand}
		Let $n \in \N$ and $f \in \cC_c^{n+1}(\R)$. We define the pseudo-analytic extension of $f$ of degree $n$ through
		\[
		\widetilde{f}_n (x+\ii y) 
		\coloneqq \sum_{k=0}^n \frac{1}{k!} (\ii y)^k f^{(k)}(x).
		\]
		Let $\chi \in \cC_c^\infty(\R)$ such that $\chi(0) = 1$.
		Then, for any $\lambda \in \R$,
		\[
		f(\lambda) = \frac{1}{\pi} \iint_{\R^2} \frac{\overline{\partial}(\widetilde{f}_n (z) \chi(y))}{\lambda - z} \diff x \diff y,
		\]
		where $z = x+\ii y$ in the integral and $\overline{\partial} \coloneqq \frac{1}{2} (\partial_x + \ii \partial_y)$ is the antiholomorphic derivative.
	\end{proposition}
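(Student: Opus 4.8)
The plan is to recognize the formula as an instance of the Cauchy--Pompeiu formula, i.e.\ of the fact that $1/(\pi z)$ is a fundamental solution of the Cauchy--Riemann operator $\overline{\partial} = \tfrac12(\partial_x + \ii\partial_y)$, applied to the compactly supported $\cC^1$ function $g(x+\ii y) \coloneqq \widetilde{f}_n(x+\ii y)\,\chi(y)$.

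First I would check that $g$, viewed as a function on $\R^2$, is indeed of class $\cC^1$ with compact support: since $f \in \cC_c^{n+1}(\R)$ and $\chi \in \cC_c^\infty(\R)$, the support of $g$ is contained in $(\supp f)\times(\supp\chi)$, while $\partial_x g$ involves only $f',\dots,f^{(n+1)}$ and $\partial_y g$ involves only $f,\dots,f^{(n)}$ and $\chi'$, all of which are continuous. Telescoping the $\partial_x$ and $\ii\partial_y$ contributions moreover yields the explicit identity
\[
\overline{\partial}\,\widetilde{f}_n(x+\ii y) = \frac{(\ii y)^n}{2\,n!}\,f^{(n+1)}(x),
\qquad
\overline{\partial}\,g(x+\ii y) = \chi(y)\,\frac{(\ii y)^n}{2\,n!}\,f^{(n+1)}(x) + \widetilde{f}_n(x+\ii y)\,\frac{\ii}{2}\,\chi'(y),
\]
which shows in particular that $\overline{\partial} g$ is bounded with compact support; since $z\mapsto 1/\abs{\lambda-z}$ is locally integrable on $\R^2$, the double integral in the statement converges absolutely.

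Next I would fix $\lambda \in \R$ and apply Green's theorem, in the form $\iint_\Omega \overline{\partial} h\,\diff x\,\diff y = \frac{1}{2\ii}\oint_{\partial\Omega} h\,\diff z$ valid for $h\in\cC^1(\overline\Omega)$, on the domain $\Omega_{R,\e}\coloneqq B(0,R)\setminus\overline{B(\lambda,\e)}$ (with $R$ large enough that $\supp g\subset B(0,R)$ and $\e>0$ small), to $h(z)\coloneqq g(z)/(\lambda-z)$. On $\Omega_{R,\e}$ the factor $1/(\lambda-z)$ is holomorphic, so $\overline{\partial} h = \frac{\overline{\partial} g}{\lambda-z}$ there, and the left-hand side equals $\iint_{\Omega_{R,\e}}\frac{\overline{\partial} g(z)}{\lambda-z}\,\diff x\,\diff y$. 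On the right-hand side, the outer circle $\{\abs{z}=R\}$ contributes $0$ since $g$ vanishes there, and the inner circle $\{\abs{z-\lambda}=\e\}$, traversed clockwise as part of $\partial\Omega_{R,\e}$ and parametrized by $z=\lambda+\e e^{\ii\theta}$, contributes $\frac{1}{2}\int_0^{2\pi} g(\lambda+\e e^{\ii\theta})\,\diff\theta$ to $\frac{1}{2\ii}\oint_{\partial\Omega_{R,\e}} h\,\diff z$. Letting $\e\to0$ --- using dominated convergence for the interior integral and continuity of $g$ for the boundary term --- gives $\iint_{\C}\frac{\overline{\partial} g(z)}{\lambda-z}\,\diff x\,\diff y = \pi\,g(\lambda)$. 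Finally $g(\lambda) = \widetilde{f}_n(\lambda)\chi(0) = f(\lambda)$ because only the $k=0$ term of $\widetilde{f}_n$ survives at $y=0$ and $\chi(0)=1$; dividing by $\pi$ yields the claim.

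I do not anticipate any substantial obstacle, since this is a classical lemma: the only points demanding a little care are the orientation bookkeeping on $\partial\Omega_{R,\e}$ and the (routine) justification that the integrand is absolutely integrable near $z=\lambda$, handled by $\overline{\partial} g$ being bounded and $1/\abs{\lambda-z}$ being locally integrable in the plane. Alternatively one could simply invoke the Cauchy--Pompeiu formula for compactly supported $\cC^1$ functions on $\C$ and substitute $g$, which is precisely the shape the hypothesis $f\in\cC_c^{n+1}(\R)$ is designed to guarantee.
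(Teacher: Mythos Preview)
Your proof is correct. The paper does not supply its own proof of this proposition: it simply cites \cite[Proposition C.1]{BenKno2017}. Your argument via Green's theorem on the punctured disc (equivalently, the Cauchy--Pompeiu formula applied to the compactly supported $\cC^1$ function $g(x+\ii y)=\widetilde f_n(x+\ii y)\chi(y)$) is the standard way this identity is established and is exactly what lies behind the cited reference.
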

	
	There are two useful applications here.
	Firstly, for a linear statistics $L_N(f)$ with $f\in \cC_c^2(\R)$, we take $n=1$ and get
	\begin{equation} \label{eq:HS_L_N(f)}
		L_N(f)
		= \frac{N}{2\pi} \iint_{\R^2}
		\left( \ii y f''(x) \chi(y) + \ii(f(x) + \ii y f'(x))\chi'(y) \right) (s_N-s_{V_\alpha})(z) \diff x \diff y.
	\end{equation}
	Secondly, for the anisotropy $A_N(f)$ with $f\in \cC_c^3(\R)$, assuming that $\chi$ is identically equal to 1 in a neighbourhood of 0 and applying Helffer--Sjöstrand formula with $n=2$ yields, for $\lambda \neq \lambda'$,
	\[
	\frac{f(\lambda)-f(\lambda')}{\lambda-\lambda'} 
	=  \frac{1}{\pi} \iint_{\R^2} \frac{\overline{\partial}(\widetilde{f}_n (z) \chi(y))}{(\lambda - z)(\lambda' - z)} \diff x \diff y.
	\]
	This is also true for $\lambda = \lambda'$ by taking the limit $\lambda' \to \lambda$ on both sides (on the RHS, domination is obtained by noting that $\abso{\overline{\partial}(\widetilde{f}_n (z) \chi(y))} \leq Cy^2$ for $z$ in a neighbourhood of $\lambda$, because $\chi'(y) = 0$ for $\abs{y}$ small enough).
	Therefore
	\begin{equation} \label{eq:HS_A_N(f)}
		A_N(f)
		= \frac{N^2}{2\pi} \iint_{\R^2}
		\bigg( - \frac{y^2}{2} f'''(x) \chi(y) 
		+ \ii \big[ f(x) + \ii y f'(x) - \frac{y^2}{2} f''(x) \big] \chi'(y) \bigg)
		(s_N-s_{V_\alpha})(z)^2 \diff x \diff y.
	\end{equation}
	As mentioned before, the following lemmas allow us to translate the local laws obtained in Section~\ref{subsec:locallaws} into estimates on the moments of $L_N(f)$ and $A_N(f)$.
	
	\begin{lemma} \label{lem:HS_L_N}
    	Let $p>2$.
		For any $\delta>0$ and any compact set $K_0 \subset \R$, there exists $C>0$ such that the following holds.
		For any $N,q \geq 1$ and $\alpha \in [0,1]$, if there exists $M>0$ such that, for any $x \in K_0$ and $y \in (0,\delta]$, with $z = x+\ii y$,
		\begin{equation} \label{eq:ass_LL}
			\E_\alpha\left[ \abs{s_N(z)-s_{V_\alpha}(z)}^q \right] 
			\leq \left( \frac{M}{Ny} \right)^q,
		\end{equation}
		then, for any $\eta \in (0,(\delta \wedge 1)/2]$ and $f \in \cC_c^2(\R)$ supported in $K_0$, we have
		\begin{equation} \label{eq:HS_bound_L_N}
			\E_\alpha\left[ \abs{L_N(f)}^q \right]
			\leq \left( CM \left( \norme{f}_1 + \log(1/\eta) \norme{f'}_1 + \eta \norme{f''}_1 \right) \right)^q.
		\end{equation}
	\end{lemma}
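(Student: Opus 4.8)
The plan is to start from the Helffer--Sjöstrand representation \eqref{eq:HS_L_N(f)} of $L_N(f)$ and bound the integral by splitting the domain according to the size of $y = \im z$, using the hypothesis \eqref{eq:ass_LL} on the small-$y$ part and the trivial bound $\abs{s_N(z)-s_{V_\alpha}(z)} \leq 2/y$ on the large-$y$ part. Concretely, since $f$ is supported in the compact set $K_0$ and $\chi$ has compact support, the $x$-integral runs over a fixed compact set and the $y$-integral over a bounded interval, say $(0,\delta]$ after choosing $\chi$ supported in $[-\delta,\delta]$ with $\chi \equiv 1$ near $0$. I would take the $L^q$ norm (w.r.t.\@ $\P_\alpha$) inside the integral via Minkowski's integral inequality:
\[
\E_\alpha\left[ \abs{L_N(f)}^q \right]^{1/q}
\leq \frac{N}{2\pi} \iint_{\R^2}
\left( \abs{y f''(x)\chi(y)} + \abs{(f(x)+\ii y f'(x))\chi'(y)} \right)
\E_\alpha\left[ \abs{s_N(z)-s_{V_\alpha}(z)}^q \right]^{1/q} \diff x \diff y.
\]

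Next I would handle the two pieces of the integrand separately. For the term carrying $\chi'(y)$: since $\chi' $ vanishes in a neighbourhood of $0$, this term is supported on $\abs{y} \in [c,\delta]$ for some $c>0$, where \eqref{eq:ass_LL} gives $\E_\alpha[\abs{s_N(z)-s_{V_\alpha}(z)}^q]^{1/q} \leq M/(Ny) \leq M/(Nc)$; integrating $\abs{f(x)+\ii y f'(x)}$ against $\abs{\chi'(y)}$ over the compact domain produces a bound of the form $C M (\norme{f}_1 + \norme{f'}_1)/N$, hence after multiplying by $N$ a contribution $\leq CM(\norme{f}_1 + \norme{f'}_1)$. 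Absorbing $\norme{f'}_1$ into $\log(1/\eta)\norme{f'}_1$ is harmless since $\eta \leq 1/2$. For the term carrying $\chi(y)$, namely $N \abs{y f''(x)}$ times $M/(Ny) = \abs{f''(x)} M$: integrating $\abs{f''(x)}$ in $x$ gives $\norme{f''}_1$, and the $y$-integral over $(0,\delta]$ contributes $O(1)$; this yields $\leq CM\norme{f''}_1$, which is too crude — I would recover the sharp $\eta\norme{f''}_1 + \log(1/\eta)\norme{f'}_1$ bound by an integration-by-parts refinement, as follows.

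The refinement is the standard trick: for the part of the $\chi(y)$-term with $y \leq \eta$, bound $\E_\alpha[\abs{s_N-s_{V_\alpha}}^q]^{1/q} \leq M/(Ny)$ and estimate $N \int_0^\eta y \,\frac{M}{Ny}\diff y \cdot \norme{f''}_1 = M\eta\norme{f''}_1$; for the part with $\eta \leq y \leq \delta$, I integrate by parts in $y$ (using $y f''(x) = \partial_x(y f'(x))$ is not quite it — rather, write the $y$-integral $\int_\eta^\delta y f''(x)(s_N-s_{V_\alpha})(x+\ii y)\chi(y)\diff y$ and integrate by parts in $x$ after noting $f''(x) = \partial_x f'(x)$, moving the derivative onto $(s_N-s_{V_\alpha})$ and using $\partial_x(s_N-s_{V_\alpha})(z) = \partial_x$ of a Stieltjes transform $= \ii^{-1}\partial_y(\cdots)$, which is $O(1/y^2)$ in $L^q$ by \eqref{eq:ass_LL} applied at a slightly smaller height — giving $N\int_\eta^\delta y \cdot \frac{M}{Ny^2}\diff y \cdot \norme{f'}_1 = M\log(\delta/\eta)\norme{f'}_1 \leq CM\log(1/\eta)\norme{f'}_1$). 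Summing the three contributions $CM(\norme{f}_1 + \log(1/\eta)\norme{f'}_1 + \eta\norme{f''}_1)$ and raising to the power $q$ gives \eqref{eq:HS_bound_L_N}.

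The main obstacle is the integration-by-parts step: one must justify that $\partial_x(s_N - s_{V_\alpha})(x+\ii y)$ satisfies a bound of order $M/(Ny^2)$ in $L^q(\P_\alpha)$, which requires re-running the hypothesis \eqref{eq:ass_LL} at height $y/2$ (say) together with a Cauchy-type estimate $\abs{\partial_x g(x+\ii y)} \leq \frac{1}{y}\sup_{\abs{w - (x+\ii y)} \leq y/2}\abs{g(w)}$ for $g$ holomorphic off $\R$ — but $s_N - s_{V_\alpha}$ is holomorphic in the upper half-plane, so this is fine provided $K_0$ is enlarged slightly and $\delta$ shrunk slightly, which is permitted since the constant $C$ is allowed to depend on $\delta$ and $K_0$. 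The boundary terms at $y=\eta$ and $y=\delta$ are controlled directly by \eqref{eq:ass_LL} and contribute at the same order. Everything else is routine bookkeeping of $L^1$ norms of $f, f', f''$ over the compact support.
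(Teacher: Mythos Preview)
Your overall strategy is the paper's: apply the Helffer--Sj\"ostrand representation, split the $y f''(x)\chi(y)$ term at height $\eta$, and for the region $\abs{y}\geq\eta$ integrate by parts in $x$ to trade $f''$ for $f'$. Your use of Minkowski's integral inequality is equivalent to the paper's device of expanding the $q$-th power as a $q$-fold integral and applying H\"older.

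The difference, and the place where your argument is not complete, is what happens \emph{after} the integration by parts in $x$. You propose to estimate $\partial_x(s_N-s_{V_\alpha})(z)$ in $L^q(\P_\alpha)$ by a Cauchy-type bound $\abs{g'(z)}\leq \frac{2}{y}\sup_{\abs{w-z}\leq y/2}\abs{g(w)}$. As written this is a pointwise inequality with a supremum \emph{inside} the expectation, and there is no general way to pull the sup out of $\norme{\cdot}_{L^q}$. You can repair this by using the Cauchy \emph{integral formula} together with Minkowski, but then you need the hypothesis \eqref{eq:ass_LL} at all points $w$ on the circle $\abs{w-z}=y/2$, whose real parts may fall outside $K_0$. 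Your remark that one can ``enlarge $K_0$ slightly'' does not help: the hypothesis is assumed only on $K_0$, and the lemma must be proved as stated.

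The paper avoids both issues by a second integration by parts. You already observe the key identity $\partial_x(s_N-s_{V_\alpha})=\frac{1}{\ii}\partial_y(s_N-s_{V_\alpha})$ from holomorphicity; the paper simply integrates this by parts in $y$ over $\abs{y}\geq\eta$, which removes the derivative entirely and leaves only $(s_N-s_{V_\alpha})(z)$ itself, evaluated at points with $x\in\supp f'\subset K_0$ and $\abs{y}\in[\eta,\delta]$ (plus boundary terms at $y=\pm\eta$). No derivative estimate and no enlargement of $K_0$ are needed. This yields the four pieces $I_1,\dots,I_4$ that are then bounded exactly as you describe.
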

	
	\begin{remark} \label{rem:HS_L}
		The parameter $\eta$ has to be thought of as small enough so that the term involving $\norme{f'}_1$ dominates.
		More precisely, in the sequel, we typically want to apply this result to functions $f$ where $f'(x)$ explodes at 0 like $\abs{x}^{p-3}$, therefore $\norme{f'}_1$ is finite because $p>2$.
		After some local $N$-dependent regularization of $f$, we can have a finite $\norme{f''}_1$ but which grows polynomially fast in $N$, and therefore we choose $\eta=N^{-k}$ with $k$ large enough to compensate.
		
		Having a control of $L_N(f)$ in terms of $\norme{f'}_1$ is the main interest of using the local laws together with Helffer--Sjöstrand formula. 
		This has to be compared with other methods to obtain estimates on linear statistics, which provide bounds in terms of norms exploding in the case mentioned above: for example, $\norme{f'}_2$ for the \textit{a priori} bound $L_N(f) = O(\sqrt{N\log N})$ in \cite[Corollary~4.16]{Gui2019} or $\normeo{f^{(4)}}_\infty$ in the sharper bound $L_N(f) = O(1)$ in \cite[Corollary~4.5]{BekLebSer2018}.
	\end{remark}

	\begin{proof}
		Let $\chi\in \cC_c^\infty(\R)$ be such that $\1_{[-\delta/2,\delta/2]} \leq \chi \leq \1_{[-\delta,\delta]}$.
		We start from \eqref{eq:HS_L_N(f)} and rewrite the term involving $f''(x)$, by first splitting it into two parts,
		\begin{align*}
			\iint_{\R^2} y f''(x) \chi(y) (s_N(z)-s_{V_\alpha}(z)) \diff x \diff y
			& = \iint_{\abs{y} \geq \eta} y f''(x) \chi(y) (s_N(z)-s_{V_\alpha}(z)) \diff x \diff y \\
			& \quad {} + \iint_{\abs{y} < \eta} y f''(x) \chi(y) (s_N(z)-s_{V_\alpha}(z)) \diff x \diff y,
		\end{align*}
		and then, for the part $\abs{y} \geq \eta$, we first integrate by part $f''(x)$ w.r.t.\@ $x$ and then $s_N'(z)-s_{V_\alpha}'(z)$ w.r.t.\@~$y$:
		\begin{align*}
			\iint_{\abs{y} \geq \eta} f''(x) y\chi(y) (s_N(z)-s_{V_\alpha}(z)) \diff x \diff y
			& = - \iint_{\abs{y} \geq \eta} f'(x) y\chi(y) (s_N'(z)-s_{V_\alpha}'(z)) \diff x \diff y \\
			& = \iint_{\abs{y} \geq \eta} f'(x) \partial_y(y \chi(y)) \frac{1}{\ii}(s_N(z)-s_{V_\alpha}(z)) \diff x \diff y \\
			& \quad {} + \int_{\R} f'(x) \eta \frac{1}{\ii} (s_N(x+\ii\eta)-s_{V_\alpha}(x+\ii\eta)) \diff x \\
			& \quad {} - \int_{\R} f'(x) \eta \frac{1}{\ii} (s_N(x-\ii\eta)-s_{V_\alpha}(x-\ii\eta)) \diff x,
		\end{align*}
		where we used that $\chi(\eta) = 1$ because $\eta \leq \delta/2$.
		Note that the two last term have the same modulus.
		Therefore, we can bound $\abs{L_N(f)}^q \leq (\frac{2}{\pi})^q (I_1^q + I_2^q + I_3^q + I_4^q)$, where
		\begin{align*}
			I_1 & \coloneqq N \iint_{\R^2} \left(\abs{f(x)}+ \abs{yf'(x)}\right) \abs{\chi'(y)} \cdot \abs{s_N(z)-s_{V_\alpha}(z)} \diff x\diff y, \\
			I_2 & \coloneqq N \iint_{\abs{y} <\eta} \abs{ y f''(x)} \cdot \abs{s_N(z)-s_{V_\alpha}(z)} \diff x\diff y, \\
			I_3 & \coloneqq N\iint_{\abs{y} \geq \eta} \abs{\partial_y(y\chi(y)) f'(x)} \cdot \abs{s_N(z)-s_{V_\alpha}(z)} \diff x \diff y, \\
			I_4 & \coloneqq 2N\eta \int_{\R} \abs{f'(x)} \cdot \abs{s_N(x+\ii\eta)-s_{V_\alpha}(x+\ii\eta)} \diff x.
		\end{align*}
		These terms are bounded using a similar method. 
		We detail it for $I_3$. Writing $I_3^q$ as an integral over $x_1,\dots,x_q,y_1,\dots,y_q$ and bounding $\abs{\partial_y(y\chi(y))} \leq C \1_{\abs{y} \leq \delta}$, we have
		\begin{equation*}
			\E_\alpha\left[ I_3^q \right]
			\leq (CN)^q \idotsint_{\forall k, \abs{y_k} \in [\eta,\delta]} \abs{f'(x_1)} \cdots \abs{f'(x_q)} \cdot
			\E_\alpha\left[ \prod_{k=1}^q \abs{s_N(z_k)-s_{V_\alpha}(z_k)}\right] 
			\diff x_1\cdots\diff x_q \diff y_1 \cdots \diff y_q.
		\end{equation*}
		Then, by Hölder's inequality, we have
		\begin{equation} \label{eq:Holder_for_HS}
			\E_\alpha\left[ \prod_{k=1}^q \abs{s_N(z_k)-s_{V_\alpha}(z_k)}\right] 
			\leq \prod_{k=1}^q \E_\alpha\left[ \abs{s_N(z_k)-s_{V_\alpha}(z_k)}^q\right]^{1/q}
			\leq \left( \frac{M}{N} \right)^q \prod_{k=1}^q \frac{1}{\abs{y_k}},
		\end{equation}
		by the assumption \eqref{eq:ass_LL}, noting that only points $z_k$ with $x_k \in K_0$ and $\abs{y_k} \leq \delta$ matter in the previous integral and also using $(s_N-s_{V_\alpha})(\overline{z_k}) = \overline{(s_N-s_{V_\alpha})(z_k)}$ to apply \eqref{eq:ass_LL} in the case $y_k < 0$.
		Therefore, we get 
		\begin{equation*}
			\E_\alpha[I_3^q] 
			\leq (CM)^q \left( \int_\R \abs{f'(x)} \diff x \right)^q 
			\left( \int_\eta^\delta \frac{1}{y} \diff y \right)^q 
			\leq (CM \norme{f'}_1 \log(1/\eta))^q.
		\end{equation*}
		Proceeding similarly, we get the bounds (for $I_1$ note that $\abs{\chi'(y)} \leq C \1_{\abs{y} \in [\delta/2,\delta]}$)
		\begin{equation*}
			\E_\alpha[I_1^q] 
			\leq (CM (\norme{f}_1+\norme{f'}_1))^q,
			\qquad 
			\E_\alpha[I_2^q] 
			\leq (CM \eta \norme{f''}_1)^q,
			\qquad 
			\E_\alpha[I_4^q] 
			\leq (CM \norme{f'}_1)^q,
		\end{equation*}
		which together yield \eqref{eq:HS_bound_L_N}.
	\end{proof}
	
	An analogue of the previous lemma also holds for the anisotropy term.
	
	\begin{lemma} \label{lem:HS_A_N}
	    Let $p>2$.
		For any $\delta>0$ and any compact set $K_0 \subset \R$, there exists $C>0$ such that the following holds.
		For any $N,q \geq 1$ and $\alpha \in [0,1]$, if there exists $M>0$ such that, for any $x \in K_0$ and $y \in (0,\delta]$, with $z = x+\ii y$,
		\begin{equation} \label{eq:ass_LL_2}
			\E_\alpha\left[\abs{s_N(z)-s_{V_\alpha}(z)}^{2q}\right] 
			\leq \left( \frac{M}{Ny} \right)^{2q},
		\end{equation}
		then, for any $\eta \in (0,(\delta \wedge 1)/2)$ and $f \in \cC_c^3(\R)$ supported in $K_0$, we have
		\begin{equation} \label{eq:HS_bound_A_N}
			\E_\alpha\left[\abs{A_N(f)}^q\right]
			\leq \left( CM^2 \left( \norme{f}_1 + \norme{f'}_1 + \log(1/\eta) \norme{f''}_1 + \eta \norme{f'''}_1 \right) \right)^q.
		\end{equation}
	\end{lemma}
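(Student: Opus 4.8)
The plan is to mimic closely the proof of Lemma~\ref{lem:HS_L_N}, starting from the Helffer--Sj\"ostrand representation \eqref{eq:HS_A_N(f)} of $A_N(f)$ instead of \eqref{eq:HS_L_N(f)}. Fix $\chi\in\cC_c^\infty(\R)$ with $\1_{[-\delta/2,\delta/2]}\leq\chi\leq\1_{[-\delta,\delta]}$, which is identically $1$ near $0$ so that \eqref{eq:HS_A_N(f)} applies; note that the term with $\chi'(y)$ is supported on $\abs{y}\in[\delta/2,\delta]$, away from the singularity, and involves only $f,f',f''$, while the $\chi(y)$ term carries the $f'''(x)$ factor which is the one creating integrability trouble near $y=0$. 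So I would split the $f'''$ term into the region $\abs{y}\geq\eta$ and $\abs{y}<\eta$, and on $\abs{y}\geq\eta$ perform an integration by parts, exactly as in the proof of Lemma~\ref{lem:HS_L_N}: first integrate $f'''(x)$ by parts in $x$ to move one derivative off $f$, then integrate $(s_N-s_{V_\alpha})^2$ by parts in $y$. The $y$-antiderivative of $(s_N(z)-s_{V_\alpha}(z))^2$ in the direction $\partial_y$ is handled by $\partial_y[(s_N-s_{V_\alpha})(z)] = \ii\,\partial_z[(s_N-s_{V_\alpha})(z)]$, hence $\partial_y[(s_N-s_{V_\alpha})(z)^2] = 2\ii (s_N-s_{V_\alpha})(z)\,\partial_z[(s_N-s_{V_\alpha})(z)]$, so integrating by parts in $y$ against $y^2\chi(y)/2$ transfers the $y$-derivative onto $\partial_y(y^2\chi(y)/2)$ (bounded by $C\abs{y}\1_{\abs{y}\leq\delta}$), at the cost of a boundary term at $\abs{y}=\eta$ of the form $\eta^2\int_\R f''(x)(s_N-s_{V_\alpha})(x\pm\ii\eta)^2\diff x$ — and the two boundary terms have the same modulus, as before.

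Once this rewriting is done, the bound $\abs{A_N(f)}^q\leq C^q\sum_j I_j^q$ splits into a handful of terms: the $\chi'(y)$ term $I_1$ (involving $\norme{f}_1,\norme{f'}_1,\norme{f''}_1$, all on $\abs{y}\in[\delta/2,\delta]$), the small-$y$ truncation $I_2\coloneqq N^2\iint_{\abs{y}<\eta}\abs{y^2 f'''(x)}\abs{s_N-s_{V_\alpha}}^2$, the main bulk term $I_3\coloneqq N^2\iint_{\abs{y}\geq\eta}\abs{\partial_y(y^2\chi(y))f''(x)}\abs{s_N-s_{V_\alpha}}\abs{\partial_z(s_N-s_{V_\alpha})}$, and the boundary term $I_4\coloneqq 2N^2\eta^2\int_\R\abs{f''(x)}\abs{s_N-s_{V_\alpha}}^2(x+\ii\eta)\diff x$. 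Each is estimated by writing $I_j^q$ as a $q$-fold spatial integral, bounding the $x$-cutoffs, then applying H\"older's inequality in the product of $2q$ (or $q$) Stieltjes differences and finally invoking the hypothesis \eqref{eq:ass_LL_2}. For $I_2$: H\"older gives $\prod_k\abs{s_N-s_{V_\alpha}}(z_k)$ controlled by $(M/N)^{2q}\prod_k\abs{y_k}^{-2}$, and with the factor $\prod_k\abs{y_k^2}$ the $y_k$-integral over $[0,\eta]$ yields $\eta^q$, so $\E_\alpha[I_2^q]\leq (CM^2\eta\norme{f'''}_1)^q$. For $I_4$: the weight $\eta^2$ cancels against $\abs{s_N-s_{V_\alpha}}(x+\ii\eta)^2\lesssim (M/N\eta)^2$ after H\"older, so $\E_\alpha[I_4^q]\leq(CM^2\norme{f''}_1)^q$. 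For $I_1$: the analog of $I_1$ in Lemma~\ref{lem:HS_L_N} but with $(s_N-s_{V_\alpha})^2$ on $\abs{y}\sim\delta$, giving $\E_\alpha[I_1^q]\leq(CM^2(\norme{f}_1+\norme{f'}_1+\norme{f''}_1))^q$.

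The one genuinely new ingredient, and the step I expect to be the main obstacle, is controlling $I_3$, because it contains a factor $\partial_z(s_N-s_{V_\alpha})(z_k)$ in addition to $(s_N-s_{V_\alpha})(z_k)$; the hypothesis \eqref{eq:ass_LL_2} is stated only for $s_N-s_{V_\alpha}$, not its derivative. The fix is to trade the $y$-derivative back into an extra power of $1/y$: instead of integrating by parts in $y$ once more, keep only the first integration by parts in $x$ and write the $f''$ term as $-N^2\iint f'(x)\,\partial_x[\,y^2\chi(y)(s_N-s_{V_\alpha})(z)^2/2\,]\diff x\diff y = -N^2\iint f'(x)\,y^2\chi(y)(s_N-s_{V_\alpha})(z)\partial_x(s_N-s_{V_\alpha})(z)\diff x\diff y$, and then use $\partial_x(s_N-s_{V_\alpha}) = -\ii\,\partial_y(s_N-s_{V_\alpha})$ followed by a further integration by parts in $y$, landing on $\partial_y(y^2\chi(y))\lesssim\abs{y}$ — this is the route taken above and it never needs a bound on $\partial_z$ of the Stieltjes difference, only on the difference itself, because the final $y$-integration by parts leaves the weight $\partial_y(y^2\chi(y))(s_N-s_{V_\alpha})(z)$ times $f'(x)$ (not $f''$). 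Thus the correct main term is really $I_3\coloneqq N^2\iint_{\abs{y}\geq\eta}\abs{\partial_y(y^2\chi(y))f'(x)}\abs{s_N-s_{V_\alpha}}\cdot$ (one more factor of $\abs{s_N-s_{V_\alpha}}$, via $(s_N-s_{V_\alpha})^2$), which by H\"older and \eqref{eq:ass_LL_2} gives $(M/N)^{2q}\prod_k\abs{y_k}^{-2}$; multiplying by $\prod_k\abs{y_k}$ (from $\partial_y(y^2\chi(y))$) and integrating $\int_\eta^\delta y^{-1}\diff y=\log(1/\eta)$ yields $\E_\alpha[I_3^q]\leq(CM^2\log(1/\eta)\norme{f''}_1)^q$ (the $\norme{f''}_1$ coming from relabelling — one derivative was moved off $f'''$, but one should track carefully that it is $f''$ that multiplies the $\log(1/\eta)$ as in the statement). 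Summing the four bounds gives exactly \eqref{eq:HS_bound_A_N}. The bookkeeping of which norm of $f$ multiplies $\log(1/\eta)$ versus $\eta$ versus $1$ is the part that requires care, but it is forced by the structure of the integration by parts and matches the pattern of Lemma~\ref{lem:HS_L_N} shifted up by one derivative.
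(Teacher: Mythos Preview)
Your approach is correct and matches the paper's exactly: start from \eqref{eq:HS_A_N(f)}, split the $f'''$ term at $\abs{y}=\eta$, and on $\abs{y}\geq\eta$ perform the double integration by parts (first in $x$, then in $y$) just as in Lemma~\ref{lem:HS_L_N}. The four pieces and their bounds are precisely the paper's $J_1,\dots,J_4$.

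However, your exposition contains a self-inflicted confusion. In the first paragraph you describe the double IBP correctly: after integrating $f'''$ by parts in $x$ and then transferring the $y$-derivative of $(s_N-s_{V_\alpha})^2$ onto $\partial_y(y^2\chi(y))$, the resulting bulk term involves $f''(x)\,(s_N-s_{V_\alpha})(z)^2$ with \emph{no derivative} on the Stieltjes difference. But in the second paragraph you write $I_3$ with a spurious factor $\abs{\partial_z(s_N-s_{V_\alpha})}$, which then prompts the unnecessary ``fix'' in the third paragraph (where you additionally slip and write $f'(x)$ instead of $f''(x)$). There is no obstacle here: the correct term is
\[
J_3 = N^2 \iint_{\abs{y}\geq\eta} \abs{\partial_y(y^2\chi(y))}\,\abs{f''(x)}\,\abs{s_N(z)-s_{V_\alpha}(z)}^2\,\diff x\,\diff y,
\]
and since $\abs{\partial_y(y^2\chi(y))}\leq C\abs{y}\1_{\abs{y}\leq\delta}$, H\"older and \eqref{eq:ass_LL_2} give $\E_\alpha[J_3^q]\leq (CM^2\log(1/\eta)\norme{f''}_1)^q$ directly. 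Delete the third paragraph and correct the definition of $I_3$; the rest of your write-up is fine.
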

	
	\begin{remark} \label{rem:HS_A}
		The parameter $\eta$ has to be thought of as small enough so that the term involving $\norme{f''}_1$ dominates (instead of $\norme{f'}_1$ in the case of $L_N(f)$ treated in Lemma \ref{lem:HS_L_N}).
		However, we typically want to apply this result to functions $f$ where $f''(x)$ explodes at 0 like $\abs{x}^{p-3}$, instead of $f'(x)$ exploding like $\abs{x}^{p-3}$ for linear statistics, see Remark \ref{rem:HS_L}.
		Similarly, a regularisation step will be required first to apply Lemma~\ref{lem:HS_A_N} to a function $f$ with $\norme{f'''}_1$ finite, although diverging with $N$.
	\end{remark}
	
	\begin{proof}
		The proof is similar to the one of Lemma \ref{lem:HS_L_N}:
		starting from \eqref{eq:HS_A_N(f)} and using the same integration by part tricks for the term involving $f'''(x)$, we get $\abs{A_N(f)}^q \leq (\frac{2}{\pi})^q (J_1^q + J_2^q + J_3^q + J_4^q)$, where
		\begin{align*}
			J_1 & \coloneqq N^2 \iint_{\R^2} \left(\abs{f(x)}+ \abs{yf'(x)} + y^2 \abs{f''(x)} \right) \abs{\chi'(y)} \cdot \abs{s_N(z)-s_{V_\alpha}(z)}^2 \diff x\diff y, \\
			J_2 & \coloneqq N^2 \iint_{\abs{y} <\eta} y^2 \abs{f'''(x)} \cdot \abs{s_N(z)-s_{V_\alpha}(z)}^2 \diff x\diff y, \\
			J_3 & \coloneqq N^2 \iint_{\abs{y} \geq \eta} \abs{\partial_y(y^2\chi(y)) f''(x)} \cdot \abs{s_N(z)-s_{V_\alpha}(z)}^2 \diff x \diff y, \\
			J_4 & \coloneqq 2N^2\eta^2 \int_{\R} \abs{f''(x)} \cdot \abs{s_N(x+\ii\eta)-s_{V_\alpha}(x+\ii\eta)}^2 \diff x.
		\end{align*}
		Then, the same method as before yields the bounds
		\begin{align*}
			\E_\alpha[J_1^q] 
			& \leq (CM^2 (\norme{f}_1+\norme{f'}_1+\norme{f''}_1))^q,
			\qquad 
			\E_\alpha[J_2^q] 
			\leq (CM^2 \eta \norme{f'''}_1)^q, \\
			\E_\alpha[J_3^q] 
			& \leq (CM^2 \norme{f''}_1 \log(1/\eta))^q,
			\hspace{2.25cm}
			\E_\alpha[J_4^q] 
			\leq (CM^2 \norme{f''}_1)^q,
		\end{align*}
		which concludes the proof.
	\end{proof}
	

When performing the master operator approach and the local law machinery to improve the bounds on the linear statistics, 
one of the intermediary versions of the local law contains a bound which is not of the form of the assumption \eqref{eq:ass_LL} in Lemma \ref{lem:HS_L_N}, because it contains an additional term proportional to $1/\abs{z}^2$.
The next lemma, also using the Helffer--Sjöstrand formula, will allow us to convert this intermediary local law into a moment estimate for $L_N(f)$ in the specific case where $f$ is a truncated version of $f_{\alpha,w}$ defined in \eqref{def:fz}.

\begin{lemma}\label{lem:HS_z^2}
Let $p>2$.
For any compact set $K\subset \C$, any $\delta>0$ and any $\phi\in \mathcal{C}^\infty_c(\R)$ supported in $[-1-\delta,1+\delta]$, there exists $C>0$ such that the following holds.
For any $N,q \geq 1$ and $\alpha \in [0,1]$, if there exists $M>0$ such that, for any $x \in [-1-\delta,1+\delta]$ and $y \in (0,\delta]$, with $z = x+\ii y$,
\begin{equation} \label{eq:ass_LL_z^2}
	\E_\alpha\left[ \abs{s_N(z)-s_{V_\alpha}(z)}^q \right] 
	\leq \frac{M^q}{N^q}\left( \frac{1}{y}+\frac{1}{\abs{z}^2} \right)^q,
\end{equation}
then, for any $w\in K$,
\begin{equation} \label{eq:HS_bound_L_N_fz}
	\E_\alpha\left[\abs{L_N(f_{\alpha,w}\phi)}^q\right]
	\leq C^qM^q.
\end{equation}
\end{lemma}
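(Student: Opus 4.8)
The plan is to apply the Helffer--Sjöstrand formula \eqref{eq:HS_L_N(f)} to the test function $f \coloneqq f_{\alpha,w}\phi$, following the proof of Lemma~\ref{lem:HS_L_N}, but keeping track of the additional $1/\abs{z}^2$ term present in the hypothesis \eqref{eq:ass_LL_z^2}. The key point that makes this possible is that, for fixed $w$, the function $f$ is a \emph{fixed} function all of whose relevant norms are bounded uniformly in $w \in K$: using the explicit bounds on $f_{\alpha,w}$ and its derivatives established in Section~\ref{sec:bounds_test_functions} (which for real $w$ follow from the representation $f_{\alpha,w}(\lambda)=\int_0^1 g_\alpha'(w+s(\lambda-w))\diff s$ together with $g_\alpha^{(k)}(x)=O(\abs{x}^{p-k})$ near $0$, and which are trivial for non-real $w$ since then $\lambda\mapsto f_{\alpha,w}(\lambda)$ is smooth), one checks that $f\in\cC^1_c(\R)$, that $f$ is $\cC^2$ on $\R^*$ with $f''\in L^1(\R)$, and that $\norme{f}_{\cC^1}$, $\norme{f''}_1$ and $\int_\R\abs{f''(x)}\log(1+\delta^2/x^2)\diff x$ are all bounded by a constant depending only on $K,\delta,\phi$ (and $p$), uniformly over $w\in K$ and $\alpha\in[0,1]$. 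Since $f$ is only $\cC^1$ at $0$ when $p\in(2,3)$, one first replaces $f$ by a smoothing $f_\e\in\cC^\infty_c(\R)$ with $f_\e\to f$ in $\cC^1(\R)$ and with the three quantities above converging, proves the bound for each $f_\e$, and then passes to the limit using $L_N(f_\e)\to L_N(f)$.

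Pick $\chi\in\cC^\infty_c(\R)$ with $\1_{[-\delta/2,\delta/2]}\leq\chi\leq\1_{[-\delta,\delta]}$ and apply \eqref{eq:HS_L_N(f)} to $f_\e$. Exactly as in the proof of Lemma~\ref{lem:HS_L_N}, the triangle inequality gives $\abs{L_N(f_\e)}^q\leq C^q(J_a^q+J_b^q)$, where $J_a$ collects the bulk term carrying the factor $yf_\e''(x)\chi(y)$ and $J_b$ collects the term carrying $\chi'(y)$ (so that $\abs{y}\in[\delta/2,\delta]$ on its support). Writing each $J^q$ as a $q$-fold integral over $(x_k,y_k)_{1\le k\le q}$, applying the generalized Hölder inequality and then \eqref{eq:ass_LL_z^2} (using $(s_N-s_{V_\alpha})(\overline{z})=\overline{(s_N-s_{V_\alpha})(z)}$ to handle the points with $y_k<0$) turns each factor $\E_\alpha[\abs{s_N(z_k)-s_{V_\alpha}(z_k)}^q]^{1/q}$ into $\frac{M}{N}\big(\frac1{y_k}+\frac1{\abs{z_k}^2}\big)$, which cancels the powers of $N$ and yields
\begin{align*}
\E_\alpha[J_b^q] &\leq \Big(CM\iint_{\R^2}\big(\abs{f_\e(x)}+\abs{yf_\e'(x)}\big)\abs{\chi'(y)}\diff x\diff y\Big)^q, \\
\E_\alpha[J_a^q] &\leq \Big(M\iint_{\R^2}\abs{yf_\e''(x)\chi(y)}\Big(\frac1y+\frac1{\abs{z}^2}\Big)\diff x\diff y\Big)^q.
\end{align*}

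On the support of $\chi'$ one has $1/y+1/\abs{z}^2\leq C$, hence $\E_\alpha[J_b^q]\leq (CM(\norme{f_\e}_1+\norme{f_\e'}_1))^q\leq (CM)^q$. For $J_a$, the $1/y$ part contributes $\norme{\chi}_1\norme{f_\e''}_1\leq C$, while the $1/\abs{z}^2$ part, after carrying out the $y$-integral first (which produces $\int_{-\delta}^{\delta}\frac{\abs{y}}{x^2+y^2}\diff y=\log(1+\delta^2/x^2)$), is bounded by $\int_\R\abs{f_\e''(x)}\log(1+\delta^2/x^2)\diff x\leq C$. Thus $\E_\alpha[J_a^q]\leq (CM)^q$, so $\E_\alpha[\abs{L_N(f_\e)}^q]\leq C^qM^q$, and letting $\e\to0$ proves \eqref{eq:HS_bound_L_N_fz}. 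Note that, contrary to Lemma~\ref{lem:HS_L_N}, no $N$-dependent regularization of $f$ is required here since $\norme{f''}_1$ does not grow with $N$; in particular the integration-by-parts step may be omitted altogether (or, if performed, the parameter $\eta$ may be taken to be a fixed constant).

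The only genuinely delicate input is the uniform bound on $\int_\R\abs{f_{\alpha,w}''(x)}\log(1+\delta^2/x^2)\diff x$ as $w$ approaches $0$ along the real axis, so that the segment joining $w$ to $\lambda$ crosses the singularity of $g_\alpha$ at the origin: the crude pointwise estimate coming out of the integral representation, of the order $\abs{f_{\alpha,w}''(\lambda)}\lesssim\abs{\lambda-w}^{-3}\int_{w\wedge\lambda}^{w\vee\lambda}(v-w)^2\abs{v}^{p-3}\diff v$, is not integrable uniformly in $w$, and one must exchange the order of integration to see that the (log-weighted) $L^1$ norm stays $O(1)$, crucially using $p>2$. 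This is exactly the type of estimate carried out in Section~\ref{sec:bounds_test_functions} and Appendix~\ref{app:technical estimate function}, which I would simply invoke; the remainder is a routine variant of the argument proving Lemma~\ref{lem:HS_L_N}.
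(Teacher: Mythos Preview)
Your proposal is correct and follows essentially the same route as the paper: apply the Helffer--Sjöstrand representation \eqref{eq:HS_L_N(f)} with $n=1$, use H\"older to feed in the assumed local law, and then check that the single integral $\iint \abs{y\,(f_{\alpha,w}\phi)''(x)}\bigl(\tfrac{1}{\abs{y}}+\tfrac{1}{\abs{z}^2}\bigr)\diff x\diff y$ plus the $\chi'$-boundary term are bounded by a constant, the key computation being $\int_{\abs{y}\le\delta}\frac{\abs{y}}{x^2+y^2}\diff y=\log(1+\delta^2/x^2)$.

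Two minor simplifications relative to your write-up. First, the regularization $f_\varepsilon$ is not needed: Lemma~\ref{lem:f_z} already gives $f_{\alpha,w}\in\cC^2(\R)$ for $w\in K\setminus\R$, so Helffer--Sjöstrand applies directly; real $w$ are then handled by continuity of $w\mapsto L_N(f_{\alpha,w}\phi)$ and of the bound. Second, your final paragraph's worry about uniform integrability as $w\to 0$ is unnecessary: Lemma~\ref{lem:f_z} gives the \emph{pointwise} bound $\abs{(f_{\alpha,w}\phi)''(\lambda)}\le C(\abs{\re w}\vee\abs{\lambda})^{-(3-p)_+}\le C\abs{\lambda}^{-(3-p)_+}$ uniformly in $w\in K$, which is directly in $L^1$ against the weight $\log(1+\delta^2/\lambda^2)$ since $p>2$. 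No exchange of integration order is required; this is exactly how the paper argues.
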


We make use in the proof of Lemma \ref{lem:f_z}, which is stated in the next section and contains bounds on $f_{\alpha,w}$ and its derivatives.

\begin{proof}
Let $\chi\in \cC_c^\infty(\R)$ be such that $\1_{[-\delta/2,\delta/2]} \leq \chi \leq \1_{[-\delta,\delta]}$.
First, by \eqref{eq:HS_L_N(f)}, denoting for $z=x+\ii y$
\[ 
F_{\alpha,w}(z) \coloneqq \ii y (f_{\alpha,w}\phi)''(x) \chi(y) + \ii(f_{\alpha,w}\phi)(x) + \ii y (f_{\alpha,w}\phi)'(x))\chi'(y),
\]
we get:
\begin{align}
	\left(\frac{2\pi}{N}\right)^q \E_\alpha[\abs{L_N(f_{\alpha,w}\phi)}^q]
	& = \E_\alpha\left[\abs{\iint_{\R^2}
		F_{\alpha,w}(z) (s_N-s_{V_\alpha})(z) \diff x \diff y}^q\right] \nonumber \\
	& \leq \int_{(\R^2)^q} 
	\left(\prod_{i=1}^q \abs{F_{\alpha,w}(z_i)}\right) 
	\E_\alpha\left[\prod_{i=1}^q \abs{s_N(z_i)-s_{V_\alpha}(z_i)} \right]\diff x_1 \diff y_1 \ldots \diff x_q \diff y_q \nonumber \\
	& \leq \left(\frac{M}{N} \iint_{\R^2} \abs{F_{\alpha,w}(z)} \left(\frac{1}{\abs{y}}+\frac{1}{\abs{z}^2}\right)\diff x \diff y\right)^q, \label{eq:HS_fz}
\end{align}
where in the last line we used Hölder's inequality as in \eqref{eq:Holder_for_HS} and the assumption \eqref{eq:ass_LL_z^2}.
It remains to bound the integral on the right-hand side.
Bounding $\chi(y) \leq \1_{\abs{y} \leq \delta}$ and $\abs{\chi'(y)} \leq C \1_{\abs{y} \in [\delta/2,\delta]}$, recalling that $\phi$ is supported in $[-1-\delta,1+\delta]$ with bounded derivatives and using Lemma~\ref{lem:f_z} (which gives controls on $f_{\alpha,w}$ and its derivatives), we get
\[ 
\abs{F_{\alpha,w}(z)} 
\leq C \1_{\abs{x} \leq 1+\delta} \left( \abs{y} \abs{x}^{-(3-p)_+} \1_{\abs{y} \leq \delta} + \1_{\abs{y} \in [\delta/2,\delta]} \right).
\]
Therefore, noting that $1/\abs{y} + 1/\abs{z}^2 \leq C$ for $\abs{y} \geq \delta/2$, we get
\begin{align} 
	& \iint_{\R^2} \abs{F_{\alpha,w}(z)} \left(\frac{1}{\abs{y}}+\frac{1}{\abs{z}^2}\right)
	\diff x \diff y \\
	& \leq C \int_{\abs{x}\leq 1+\delta} 
	\left( \abs{x}^{-(3-p)_+} \int_{\abs{y}\leq \delta} \left(1+\frac{\abs{y}}{x^2+y^2}\right) \diff y
	+ \int_{\abs{y} \in [\delta/2,\delta]} \diff y \right) \diff x \nonumber \\
	& \leq C \int_{\abs{x}\leq 1+\delta} 
	\left( \abs{x}^{-(3-p)_+} \left(1+ \log \frac{1}{x} \right)
	+ 1 \right) \diff x, \label{eq:int_F}
\end{align}
using that $\int_{\abs{y}\leq \delta} \frac{\abs{y}}{x^2+y^2} \diff y = \log \frac{x^2+\delta^2}{x^2}$.
The integral on the RHS of \eqref{eq:int_F} is a finite constant because $p>2$, so, coming back to \eqref{eq:HS_fz}, we get the result.
\end{proof}

\section{Bounds on the inverse of the master operator}
\label{sec:bounds_test_functions}

As explained in Section~\ref{sec:strategy}, the control of the linear statistics $L_N(f)$ via the master operator strategy requires to bound moments of $A_N\left(\Xi_\alpha^{-1}[f]\right)$, where we recall $\Xi_\alpha$ is the \textit{master operator} defined in \eqref{eq:def_master_op} as
\begin{equation*} 
\Xi_\alpha[\psi](x)
\coloneqq - \frac{1}{2} \psi(x) V_\alpha'(x) + \int_{-1}^1 \frac{\psi(x)-\psi(t)}{x-t} \diff \mu_{V_\alpha}(t).
\end{equation*}
In fact, this operator is invertible \textit{up to a constant}, meaning that for $f\in \mathcal{C}^2(\R)$, there is a constant $a$ (depending on $f$) such that the equation $\Xi_\alpha[\psi]=f-a$ has a unique solution $\psi$, see Lemma \ref{lem:inverse_master_op}. The presence of this constant does not matter since if two functions $f$ and $g$ differ by a constant, then the linear statistics coincide: $L_N(f)=L_N(g)$. 

This strategy will be used in Section~\ref{sec:bootstrap} to improve the bounds on $\E_\alpha\left[\abs{L_N(f_{\alpha,z})}^q\right]$ and in Section~\ref{sec:CLT} to prove the CLT for a generic function $f\in\cC^3(\R)$.
Therefore, in this section, we state bounds on the derivatives of $\Xi_\alpha^{-1}[f]$ both for generic $f \in \cC^3(\R)$ in Lemma~\ref{lem:inverse_master_op} and for $f = f_{\alpha,z}$ in Lemma~\ref{lem:psi_z} (note that $f_{\alpha,z} \notin \cC^3(\R)$, see Lemma~\ref{lem:f_z}).
In particular, as mentioned in Remark~\ref{rem:HS_A}, in order to control $A_N\left(\Xi_\alpha^{-1}[f]\right)$ \emph{via} the use of Helffer--Sjöstrand formula and local laws, we need to prove that $\Xi_\alpha^{-1}[f]''$ is locally integrable, which is indeed the case.
Finally, as explained in Remarks~\ref{rem:HS_L} and~\ref{rem:HS_A}, a regularization argument is needed before applying Helffer--Sjöstrand formula. The tool for this argument is proved in Lemma~\ref{lem:regularize}.
Most proofs are postponed to Appendix~\ref{app:technical estimate function}.

\subsection{Inverse of the master operator}

We state here the well-known inversion of the master operator $\Xi_\alpha$ and prove its continuity with respect to $\cC^k$-norms defined by $\norme{f}_{\cC^k}\coloneqq \sum_{i=0}^k \normeo{f^{(i)}}_\infty$. This type of results dates back to \cite[Eq.\@ (12), p.175]{tricomi1957integral} and \cite[Lemma 3.2]{BekFigGui2015}, but we rely here on arguments of \cite[Lemma 3.3]{BekLebSer2018}. 
The proof is postponed to Appendix \ref{subsec:inverse_master_op}.

\begin{lemma} \label{lem:inverse_master_op}
Let $p>2$.
There exists $C>0$ depending only on $p$ such that the following holds.
\begin{enumerate}
	\item\label{it:inverse_C2} Let $f \in \cC^2(\R)$ and $\alpha \in [0,1]$. Then, letting $a \coloneqq \int_{-1}^1 f(t) \frac{\diff t}{\sigma(t)}$, the function $\psi_\alpha$ defined by
	\begin{equation} \label{eq:def_psi}
		\psi_\alpha(\lambda) = 
		\begin{cases}
			\displaystyle
			-\frac{1}{\pi r_\alpha(\lambda)} 
			\int_{-1}^1 \frac{f(t)-f(\lambda)}{t-\lambda}
			\frac{\diff t}{\sigma(t)},
			& \text{if } \lambda \in [-1,1], \medskip \\
			\displaystyle 
			\frac{\displaystyle\int_{-1}^1 \frac{\psi_\alpha(t)}{\lambda-t} \diff \mu_{V_\alpha}(t)+f(\lambda)-a}{\displaystyle\int_{-1}^1 \frac{1}{\lambda-t} \diff \mu_{V_\alpha}(t) -\frac{1}{2} V_\alpha'(\lambda)},
			& \text{if } \lambda \in \R \setminus [-1,1],
		\end{cases}
	\end{equation}
	is $\cC^1$ on $\R$, satisfies $\Xi_\alpha[\psi_\alpha] = f-a$ and $\norme{\psi_\alpha}_{\cC^1} \leq C \norme{f}_{\cC^2}$. We write $\psi_\alpha = \Xi_\alpha^{-1}[f]$, with a slight abuse of notation.
	\item \label{it:inverse_C3}If moreover $f \in \cC^3(\R)$, then $\psi_\alpha \in \cC^2(\R^*)$ and, for any $\lambda \in \R$,
	\[
	\abs{\psi_\alpha''(\lambda)} \leq C \norme{f}_{\cC^3} \times
	\begin{cases}
	    (1+\abs{\lambda}^{p-3}) & \text{if } p < 3, \\
	    (1+(\log1/\abs{\lambda})_+) & \text{if } p = 3, \\
	    1 & \text{if } p>3.
	\end{cases}
	\]
\end{enumerate}
\end{lemma}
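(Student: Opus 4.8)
The plan is to treat the two regimes $\lambda\in[-1,1]$ and $\lambda\in\R\setminus[-1,1]$ separately, following the approach of \cite{tricomi1957integral,BekFigGui2015,BekLebSer2018}. For Part~\ref{it:inverse_C2}, I first observe that on $[-1,1]$ the equilibrium relation \eqref{eq:equilibrium_relation} simplifies the master operator: writing $\int_{-1}^1\frac{\psi(x)-\psi(t)}{x-t}\diff\mu_{V_\alpha}(t)=\psi(x)\fint_{-1}^1\frac{\diff\mu_{V_\alpha}(t)}{x-t}-\fint_{-1}^1\frac{\psi(t)}{x-t}\diff\mu_{V_\alpha}(t)$ and using $\tfrac12 V_\alpha'(x)=\fint_{-1}^1\frac{\diff\mu_{V_\alpha}(t)}{x-t}$, the potential term cancels and, by \eqref{eq:mu=rsigma}, $\Xi_\alpha[\psi](x)=-\frac1\pi\fint_{-1}^1\frac{\psi(t)\sigma(t)r_\alpha(t)}{x-t}\diff t$ on $[-1,1]$. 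Hence $\Xi_\alpha[\psi]=f-a$ on $[-1,1]$ is the airfoil equation for $\varphi\coloneqq\psi\,\sigma\,r_\alpha$, and Tricomi's inversion formula produces the first line of \eqref{eq:def_psi} while forcing the solvability condition $\int_{-1}^1\frac{f(t)-a}{\sigma(t)}\diff t=0$, i.e.\ the stated value of $a$; here one uses the classical identity $\fint_{-1}^1\frac{\diff t}{\sigma(t)(t-\lambda)}=0$ on $(-1,1)$, which also identifies $\int_{-1}^1\frac{f(t)-f(\lambda)}{t-\lambda}\frac{\diff t}{\sigma(t)}$ with $\fint_{-1}^1\frac{f(t)}{\sigma(t)(t-\lambda)}\diff t$. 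On $\R\setminus[-1,1]$ the integral defining $\Xi_\alpha$ is no longer singular, and solving $\Xi_\alpha[\psi](\lambda)=f(\lambda)-a$ algebraically for $\psi(\lambda)$ (using the already-defined values of $\psi_\alpha$ on $[-1,1]$ in the remaining integral) gives the second line of \eqref{eq:def_psi}; the denominator equals $-r_\alpha(\lambda)b(\lambda)$ by Lemma~\ref{lem:stieltjes}.\ref{lemsta:point3} together with $\lambda V_\alpha'(\lambda)=g_\alpha(\lambda)$, and it is non-zero since $b(\lambda)\neq0$ off $[-1,1]$ and $r_\alpha>0$ on $\R$ — on $[-1,1]$ because $r_\alpha=\pi\tfrac{\diff\mu_{V_\alpha}}{\diff x}/\sigma$ and the Ullman and semicircle densities are positive in the bulk with soft edges (so $r_\alpha(\pm1)>0$), and outside because $r_\alpha=\alpha r_1+2(1-\alpha)$ with $r_1(\lambda)=\frac1{2\pi\lambda}\int_{-1}^1\frac{g_1(t)-g_1(\lambda)}{t-\lambda}\frac{\diff t}{\sigma(t)}>0$, the integrand and $1/\lambda$ sharing the sign of $\lambda$.

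For the $\cC^1$ regularity and the bound $\norme{\psi_\alpha}_{\cC^1}\leq C\norme{f}_{\cC^2}$, I would use that near $0$ the function $r_\alpha$ is, by \eqref{def:r}, a $\cC^\infty$ function of $x^2$ plus a constant multiple of $|x|^{p-1}$ (and, if $p$ is an odd integer, of $|x|^{p-1}\log|x|$); since $p>2$ this makes $r_\alpha\in\cC^1(\R)$ with $r_\alpha'$ bounded and $r_\alpha$ bounded below on compacts, while near $\pm1$ and beyond $r_\alpha$ is smooth (the $\sqrt{1-x}$ hidden in $\sum a_n x^{2n}$ cancels the $1/\sigma$, as seen from $r_1(x)=\frac{p|x|^{p-1}}{\sigma(x)}\int_{|x|}^1\frac{y^{-p}}{\sqrt{1-y^2}}\diff y$). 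Then $\psi_\alpha\in\cC^1((-1,1))$ with the claimed bound on $[-1,1]$ follows by differentiating under the integral sign and using the identities $\frac{f(t)-f(\lambda)}{t-\lambda}=\int_0^1 f'(\lambda+s(t-\lambda))\diff s$ and $\partial_\lambda\frac{f(t)-f(\lambda)}{t-\lambda}=\int_0^1(1-s)f''(\lambda+s(t-\lambda))\diff s$, which exhibit these quantities as bounded (by $\normeo{f'}_\infty$ and $\tfrac12\normeo{f''}_\infty$) and continuous up to $\lambda=\pm1$, together with $\int_{-1}^1\frac{\diff t}{\sigma(t)}=\pi$ and the bounds on $1/r_\alpha$ and $r_\alpha'$. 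On $\R\setminus[-1,1]$ the formula for $\psi_\alpha$ is a ratio of $\cC^\infty$ functions with non-vanishing denominator, so $\psi_\alpha\in\cC^\infty(\R\setminus[-1,1])$, with $|\psi_\alpha(\lambda)|\lesssim\normeo{f}_\infty/|V_\alpha'(\lambda)|$ at infinity. It then remains to check the matching of $\psi_\alpha$ and $\psi_\alpha'$ across $\pm1$, and that $\Xi_\alpha[\psi_\alpha]=f-a$ holds on all of $\R$; this is where the endpoint regularity theory of the finite Hilbert transform is invoked, exactly as in \cite[Lemma~3.3]{BekLebSer2018}.

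For Part~\ref{it:inverse_C3}, since the formula is smooth off $[-1,1]$, only $[-1,1]$ matters. Writing $\psi_\alpha=-\frac{1}{\pi r_\alpha}I$ with $I(\lambda)\coloneqq\int_{-1}^1\frac{f(t)-f(\lambda)}{t-\lambda}\frac{\diff t}{\sigma(t)}$, Leibniz's rule gives
\[
\psi_\alpha''=-\frac1\pi\left[(1/r_\alpha)''\,I+2(1/r_\alpha)'\,I'+(1/r_\alpha)\,I''\right].
\]
Using $\partial_\lambda^2\frac{f(t)-f(\lambda)}{t-\lambda}=\int_0^1(1-s)^2 f'''(\lambda+s(t-\lambda))\diff s$ one gets that $I,I',I''$ are bounded on $[-1,1]$ by $C\norme{f}_{\cC^3}$; since $1/r_\alpha$ and $(1/r_\alpha)'$ are bounded on $[-1,1]$, the last two terms contribute $O(\norme{f}_{\cC^3})$, and everything reduces to controlling $(1/r_\alpha)''=-r_\alpha''/r_\alpha^2+2(r_\alpha')^2/r_\alpha^3$. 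From the structure of $r_\alpha$ near $0$ read off \eqref{def:r}, one has $|r_\alpha''(\lambda)|\leq C(1+|\lambda|^{p-3})$ for $p<3$, $|r_\alpha''(\lambda)|\leq C(1+(\log1/|\lambda|)_+)$ for $p=3$, and $r_\alpha''$ bounded for $p>3$, while $(r_\alpha')^2$ is bounded; since $r_\alpha$ is $\cC^2$ on $[-1,1]\setminus\{0\}$ and $I\in\cC^2([-1,1])$, this yields $\psi_\alpha\in\cC^2(\R^*)$ and the stated bound.

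The main obstacle is twofold: (a) the $\cC^1$-matching (and $\cC^2$-behaviour) of the two branches of $\psi_\alpha$ across the soft edges $\pm1$, which cannot be obtained by naive differentiation under the integral sign and requires the endpoint analysis of the finite Hilbert transform, closely following \cite[Lemma~3.3]{BekLebSer2018}; and (b) extracting the precise singular exponent of $r_\alpha''$ at the origin from the series representation \eqref{def:r} in the three ranges of $p$. Both are carried out in Appendix~\ref{subsec:inverse_master_op}, the first being the more delicate.
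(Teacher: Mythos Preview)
Your approach is essentially the same as the paper's: Tricomi inversion on $[-1,1]$, algebraic solution outside, Leibniz with the $1/r_\alpha$ factor for Part~\ref{it:inverse_C3}, and reading off the singular behaviour of $r_\alpha''$ at $0$ from the series~\eqref{def:r}. The representations $\partial_\lambda^k\frac{f(t)-f(\lambda)}{t-\lambda}=\int_0^1(1-s)^k f^{(k+1)}(\lambda+s(t-\lambda))\,\diff s$ you use are exactly the Taylor integral formulae that drive Lemma~\ref{lem:Theta}.\ref{it:lem_Theta_1} in the paper, so this part is fine.

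There is one genuine gap. Your deferral of the edge matching at $\pm1$ to ``exactly as in \cite[Lemma~3.3]{BekLebSer2018}'' is not sufficient here: that argument relies on \cite[Lemma~3.1]{BekLebSer2018}, which in turn needs $V\in\cC^4$, whereas $V_\alpha$ is only $\cC^2$. The paper does not import the result wholesale; it supplies an explicit argument. It extends $\psi_\alpha\!\!\restriction_{[-1,1]}$ to a $\cC^1$ function $\theta_\alpha$ by its Taylor polynomial at $\pm1$, rewrites the outer branch as $\psi_\alpha=\theta_\alpha+F/G$ with $F=f-a-\Xi_\alpha[\theta_\alpha]$ and $G(\lambda)=-r_\alpha(\lambda)\sqrt{\lambda^2-1}$, and then shows $F(\lambda)=o((\lambda-1)^{3/2})$, $F'(\lambda)=o((\lambda-1)^{1/2})$ via Lemma~\ref{lem:Theta}.\ref{it:lem_Theta_3} (the square-root edge bound on $\mu_{V_\alpha}$ is what makes this work). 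This is where the actual analysis is, and it is specific to the present regularity setting.

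A smaller point: in Part~\ref{it:inverse_C3} you say ``since the formula is smooth off $[-1,1]$, only $[-1,1]$ matters''. Smoothness alone does not give $|\psi_\alpha''(\lambda)|\leq C\norme{f}_{\cC^3}$ uniformly in $\alpha$ and $\lambda$ outside $[-1,1]$; the paper treats $[1,2]$ (via the same $F/G$ device, now using Lemma~\ref{lem:Thetav2} because $\theta_\alpha$ is not $\cC^2$ at $0$) and $\R\setminus[-2,2]$ (directly from \eqref{eq:def_psi} and \eqref{eq:rewriting_G}) separately. You should also note that $\psi_\alpha\in\cC^2(\R^*)$ includes $\cC^2$ across $\pm1$, which again needs the $F/G$ argument at one order higher.
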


\subsection{Bounds on \texorpdfstring{$f_{\alpha,z}$}{falpha,z} and its preimage via the master operator}

The following lemma establishes bounds on $f_{\alpha,z}$, defined in \eqref{def:fz}, and its derivatives on compact sets.
It is proved in Appendix~\ref{subsec:fz_and_psiz}.

\begin{lemma} \label{lem:f_z}
    Let $p>2$.
    For all $\alpha \in [0,1]$ and $z \in \C\setminus\R$,  $f_{\alpha,z}\in\cC^2(\R)\cap\cC^3(\R^*)$.
    Moreover, for any compact set $K \subset \C$, there exists $C>0$ such that, for any $\alpha \in [0,1]$, $z = x +\ii y  \in K \setminus \R$ and $\lambda \in K\cap\R$, 
    \[
    \abs{f_{\alpha,z}(\lambda)} \leq C, 
    \qquad 
    \abs{f_{\alpha,z}'(\lambda)} \leq C, 
    \qquad 
    \abs{f_{\alpha,z}''(\lambda)} \leq C (\abs{x} \vee \abs{\lambda})^{-(3-p)_+}, 
    \]
    and, if $\lambda \neq 0$,
    \[
    \abs{f_{\alpha,z}'''(\lambda)} \leq \frac{C}{\abs{z}}
    \abs{\lambda}^{-(3-p)_+}.
    \]
\end{lemma}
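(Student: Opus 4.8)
\textbf{Proof strategy for Lemma~\ref{lem:f_z}.}

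The plan is to write $f_{\alpha,z} = \alpha f^{g}_z + 4(1-\alpha) f^{(2)}_z$ where $f^{g}_z(\lambda) = (g(\lambda)-g(z))/(\lambda-z)$ and $f^{(2)}_z(\lambda) = (\lambda^2-z^2)/(\lambda-z) = \lambda+z$. The polynomial part $f^{(2)}_z$ is $\cC^\infty$ with uniformly bounded derivatives of all orders on $K$, and its second and third derivatives vanish, so all the work is for $f^{g}_z$. First I would record the regularity: since $g \in \cC^2(\R) \cap \cC^\infty(\R^*)$ (Remark~\ref{rem:g}), a divided-difference argument shows $f^{g}_z \in \cC^2(\R) \cap \cC^3(\R^*)$; near $\lambda = 0$ the only obstruction to $\cC^3$ is that $g'''$ blows up like $\abs{\lambda}^{p-3}$, which survives the division by $(\lambda-z)$ (a smooth nonvanishing factor away from $\lambda = z \notin \R$), hence $f^{g}_z \notin \cC^3$ exactly at the origin when $p<3$.

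For the quantitative bounds I would use the integral (Hermite--Genocchi / Taylor-with-remainder) representations of divided differences. Writing $\Delta g(\lambda,z) = (g(\lambda)-g(z))/(\lambda-z) = \int_0^1 g'(z + t(\lambda-z)) \diff t$ gives $\abs{f^{g}_z(\lambda)} \leq C$ on $K$ since $g'$ is continuous hence bounded on the (bounded, and staying a fixed distance from nothing problematic) segment. Differentiating under the integral, $f^{g}_z{}'(\lambda) = \int_0^1 t\, g''(z+t(\lambda-z)) \diff t$, bounded by $\sup_K \abs{g''} \leq C$ since $g'' \in \cC^0(\R)$ (here $p>2$ is used). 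For the second derivative, $f^{g}_z{}''(\lambda) = \int_0^1 t^2 g'''(z+t(\lambda-z)) \diff t$ (valid because the integrand is locally integrable in $t$ even when the segment passes near $0$, as $p>2$). Now $\abs{g'''(w)} \leq C \abs{w}^{-(3-p)_+}$ for $w$ in a bounded set, so $\abs{f^{g}_z{}''(\lambda)} \leq C \int_0^1 t^2 \abs{z+t(\lambda-z)}^{-(3-p)_+} \diff t$; bounding $\abs{z + t(\lambda-z)} \geq \abs{\re(z+t(\lambda-z))} = \abs{(1-t)x + t\lambda}$ and noting this real linear combination, when $x$ and $\lambda$ have the same sign, has modulus at least $\abs{x}\wedge\abs{\lambda}$... but the claimed bound is $(\abs{x}\vee\abs{\lambda})^{-(3-p)_+}$, so I instead split: if $\abs{\lambda} \geq \abs{x}$ use $\abs{z + t(\lambda-z)} \geq t\abs{\lambda} - \abs{z} $ won't directly work either, so the cleaner route is to bound $\abs{g'''}$ on the segment by $C(\abs{x}\vee\abs{\lambda})^{-(3-p)_+}$ directly: since $g'''$ is decreasing in $\abs{w}$, and the real part of the segment point lies between $x$ and $\lambda$, one has $\abs{g'''(z+t(\lambda-z))} \leq C \bigl(\min_{s\in[0,1]}\abs{(1-s)x+s\lambda}\bigr)^{-(3-p)_+}$; when $x,\lambda$ same sign this min is $\abs{x}\wedge\abs{\lambda}$, but one then needs a separate argument (using $\abs{\im} = \abs{y} \wedge$ something, or $\abs{x}\vee\abs{\lambda} \sim \abs{x}+\abs{\lambda}$ and both appear) — \emph{this is the step requiring the most care}. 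The third-derivative bound follows the same way: $f^{g}_z{}'''(\lambda) = \int_0^1 t^3 g^{(4)}(z+t(\lambda-z))\diff t$, with $\abs{g^{(4)}(w)} \leq C\abs{w}^{-(4-p)_+} \leq C \abs{w}^{1-(3-p)_+} / \abs{w}$, and pulling out a factor $1/\abs{z}$ from the integral (using $\abs{z+t(\lambda-z)} \geq$ comparable to $\abs{z}$ when $\lambda$ is not too large, or more carefully integrating) produces the $C\abs{z}^{-1}\abs{\lambda}^{-(3-p)_+}$ bound.

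The main obstacle I anticipate is precisely pinning down the constants in the second- and third-derivative estimates so that the \emph{maximum} $(\abs{x}\vee\abs{\lambda})^{-(3-p)_+}$ appears rather than the naive minimum — this requires carefully exploiting that the segment from $z$ to $\lambda$ either stays away from $0$ (giving the needed bound from the endpoint with larger modulus) or, when it passes near $0$, leveraging that $\abs{y}>0$ keeps $\abs{z+t(\lambda-z)}$ bounded below, combined with splitting the $t$-integral near the value $t_*$ minimizing $\abs{(1-t)x+t\lambda}$. Since the exponent $-(3-p)_+$ is strictly greater than $-1$ (as $p>2$), all the $t$-integrals converge and the argument goes through; the rest is bookkeeping. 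Note that $z \notin \R$ and $z \in K$ together ensure $\lambda - z$ is bounded below in modulus only if $\abs{y}$ is, which it need not be — but the divided-difference representations avoid dividing by $\lambda - z$ altogether, which is why I would use them throughout rather than differentiating the quotient directly.
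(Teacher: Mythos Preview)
Your approach has a fundamental gap: the Hermite--Genocchi representation
\[
\frac{g(\lambda)-g(z)}{\lambda-z} = \int_0^1 g'\bigl(z+t(\lambda-z)\bigr)\,\diff t
\]
and its differentiated versions require $g$ to be holomorphic on the segment joining $z$ and $\lambda$. But $g$ on $\C$ is \emph{not} holomorphic: it is the pseudo-analytic extension \eqref{eq:def_g_C}, a function of the two real variables $(x,y)$ whose $\bar\partial$-derivative is merely small near the real axis, not zero. The symbol $g'(z+t(\lambda-z))$ has no meaning as a complex derivative for $t<1$, and if you interpret it as $\partial g$ the identity acquires an unwanted $\bar\partial g\cdot\overline{(\lambda-z)}/(\lambda-z)$ term. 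The higher formulas $f^{(k)}_z(\lambda)=\int_0^1 t^k g^{(k+1)}(\cdots)\,\diff t$ fail for the same reason, and for $k\geq 2$ would additionally need $g'''$, $g^{(4)}$ at complex points --- objects not supplied by the order-$3$ extension with its $\chi$-truncation.

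The paper instead uses the purely algebraic identity
\[
f_{1,z}^{(k)}(\lambda) = \frac{(-1)^k k!}{(z-\lambda)^{k+1}}\bigl(g(z)-T_k[g,\lambda](z)\bigr),
\]
where $T_k[g,\lambda]$ is the Taylor polynomial of the \emph{real} function $g$ at $\lambda$, formally evaluated at the complex number $z$. Plugging in the explicit definition \eqref{eq:def_g_C} of $g(z)$ reduces the parenthesis to a combination of \emph{real} Taylor remainders $g^{(j)}(x)-T_{k-j}[g^{(j)},\lambda](x)$ times powers of $\ii y$, plus the small $\chi$-term bounded via \eqref{eq:last_term_g}. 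These real remainders are controlled by Lemma~\ref{lem:borneABC}, whose direct case analysis of $\int_\lambda^x \abs{u}^{p-3}\diff u$ is exactly what produces the factor $(\abs{x}\vee\abs{\lambda})^{p-3}$ you flagged as ``the step requiring the most care''. The $1/\abs{z}$ in the third-derivative bound comes from a further case split: if $\abs{\lambda-x}\geq\abs{x}/2$ or $\abs{x}\leq 2\abs{y}$ then $\abs{z-\lambda}\gtrsim\abs{z}$ and the $k=2$ bound divided by $\abs{z-\lambda}$ suffices; otherwise one expands one order further and controls $g^{(4)}$ on the short real segment near $x$.
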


As mentioned at the beginning of the section, in order to control the linear statistics $L_N(f_{\alpha,z})$ appearing in the loop equations, we need to study the function 
\begin{equation}
\psi_{\alpha,z} \coloneqq \Xi_\alpha^{-1}[f_{\alpha,z}],
\end{equation}
which is well-defined by Lemma~\ref{lem:inverse_master_op} because $f_{\alpha,z}\in\cC^2(\R)$.
The next lemma establishes bounds on $\psi_{\alpha,z}$ and its first two derivatives. 
It is also proved in Appendix~\ref{subsec:fz_and_psiz}.

\begin{lemma} \label{lem:psi_z}
    Let $p>2$.
    For all $\alpha \in [0,1]$ and $z \in \C\setminus\R$,  $\psi_{\alpha,z}\in\cC^1(\R)\cap\cC^2(\R^*)$.
    Moreover, for any compact set $K \subset \C$, there exists $C>0$ such that, for any $\alpha \in [0,1]$, $z = x +\ii y  \in K \setminus \R$ and $\lambda \in K\cap\R$, 
    \[
    \abs{\psi_{\alpha,z}(\lambda)} \leq C, 
    \qquad 
    \abs{\psi_{\alpha,z}'(\lambda)} \leq C, 
    \]
    and, if $\lambda \neq 0$,
    \[
    \abs{\psi_{\alpha,z}''(\lambda)} \leq \frac{C}{\abs{z}}
    \times
	\begin{cases}
	    (1+\abs{\lambda}^{p-3}) & \text{if } p < 3, \\
	    (1+(\log1/\abs{\lambda})_+) & \text{if } p = 3, \\
	    1 & \text{if } p>3.
	\end{cases}.
    \]
\end{lemma}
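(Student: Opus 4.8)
The plan is to work directly from the explicit inversion formula \eqref{eq:def_psi}, after cutting off to reduce to a compact problem, and to treat separately $\lambda\in[-1,1]$ and $\lambda\in(K\cap\R)\setminus[-1,1]$. Fix a compact $K\subset\C$, choose $M\geq2$ with $K\cap\R\subset[-M,M]$ and $\phi\in\cC^\infty_c(\R)$ with $\1_{[-M-1,M+1]}\leq\phi\leq\1_{[-M-2,M+2]}$. By Lemma~\ref{lem:f_z}, $f_{\alpha,z}\in\cC^2(\R)$, hence so are $f_{\alpha,z}\phi$ and $f_{\alpha,z}(1-\phi)$, and since the formula \eqref{eq:def_psi} (including the constant $a$) is linear in the test function, $\psi_{\alpha,z}=\Xi_\alpha^{-1}[f_{\alpha,z}\phi]+\Xi_\alpha^{-1}[f_{\alpha,z}(1-\phi)]$. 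As $f_{\alpha,z}(1-\phi)$ vanishes on $[-1,1]\cup[-M-1,M+1]$, its associated constant $a$ is $0$, the first line of \eqref{eq:def_psi} gives $\Xi_\alpha^{-1}[f_{\alpha,z}(1-\phi)]\equiv0$ on $[-1,1]$, and the second line then gives the same on $[-M-1,M+1]$ (its numerator being identically zero there). Hence on $K\cap\R$ we may replace $f_{\alpha,z}$ by the compactly supported $\tilde f\coloneqq f_{\alpha,z}\phi$, which coincides with $f_{\alpha,z}$ on $[-1,1]$ and whose norms on $[-M-2,M+2]$ are controlled by Lemma~\ref{lem:f_z}: $\norme{\tilde f}_{\cC^1}\leq C$, $\abs{\tilde f''(\lambda)}\leq C(\abs{x}\vee\abs{\lambda})^{-(3-p)_+}$, $\abs{\tilde f'''(\lambda)}\leq C\abs{z}^{-1}\abs{\lambda}^{-(3-p)_+}$ for $\lambda\neq0$, uniformly in $\alpha\in[0,1]$, $z\in K\setminus\R$. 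We also record that $r_\alpha$ is bounded below on $[-1,1]$ by a positive constant uniform in $\alpha$ and, since $p>2$, lies in $\cC^1([-1,1])$ with $r_\alpha'$ bounded and $r_\alpha''(\lambda)=O(\abs{\lambda}^{-(3-p)_+})$ near $0$ (for $p<3$; logarithmic for $p=3$, bounded for $p>3$) --- all read off from \eqref{def:r}.

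For the $\cC^0$ and $\cC^1$ bounds, on $[-1,1]$ I would estimate the integral $I(\lambda)\coloneqq\int_{-1}^1\frac{f_{\alpha,z}(t)-f_{\alpha,z}(\lambda)}{t-\lambda}\frac{\diff t}{\sigma(t)}$ appearing in $\psi_{\alpha,z}=-(\pi r_\alpha)^{-1}I$, using the identities $f_{\alpha,z}(t)-f_{\alpha,z}(\lambda)=\int_\lambda^tf_{\alpha,z}'$ and $\partial_\lambda\frac{f_{\alpha,z}(t)-f_{\alpha,z}(\lambda)}{t-\lambda}=\int_0^1(1-s)f_{\alpha,z}''(\lambda+s(t-\lambda))\,\diff s$. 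The first, with $\abs{f_{\alpha,z}'}\leq C$, gives $\abs{I(\lambda)}\leq C$, hence $\abs{\psi_{\alpha,z}(\lambda)}\leq C$. For $I'(\lambda)$ one splits the $t$-integral at $\abs{t-\lambda}=\abs{\lambda}/2$: for $\abs{t-\lambda}\leq\abs{\lambda}/2$ one has $\abs{\partial_\lambda(\cdots)}\leq C(\abs{x}\vee\abs{\lambda})^{-(3-p)_+}$ and the factor $(\abs{x}\vee\abs{\lambda})^{-(3-p)_+}\abs{\lambda}\leq\abs{\lambda}^{p-2}$ absorbs the singularity; for $\abs{t-\lambda}>\abs{\lambda}/2$ one has $\abs{\partial_\lambda(\cdots)}\leq C\abs{t-\lambda}^{-(3-p)_+}$, integrable in $t$ since $p>2$; together with $\abs{r_\alpha'}\leq C$ this gives $\abs{\psi_{\alpha,z}'(\lambda)}\leq C$ on $(-1,1)$. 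For $\lambda\in(K\cap\R)\setminus(-1,1)$ one uses the second line of \eqref{eq:def_psi}, whose denominator equals $-V_{\mrm{eff},\alpha}'(\lambda)$ by \eqref{eq:characterization muValpha}; it is bounded away from $0$ once $\lambda$ is bounded away from $\pm1$, so the bounds on $\psi_{\alpha,z}\big|_{[-1,1]}$ and on $\tilde f$ propagate, while the thin strips around $\pm1$ (where the denominator vanishes like $\sqrt{\abs{\lambda}-1}$) are handled by the global $\cC^1$-regularity of $\psi_{\alpha,z}$ from Lemma~\ref{lem:inverse_master_op}.\ref{it:inverse_C2} together with continuity at $\pm1$.

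The $\cC^2$ bound is obtained by differentiating $\psi_{\alpha,z}=-(\pi r_\alpha)^{-1}I$ twice on $(-1,1)$, and two mechanisms generate the two features of the claimed bound. First, the term containing $(r_\alpha'/r_\alpha^2)'$ brings in $r_\alpha''(\lambda)$, which by the above behaves like $\abs{\lambda}^{-(3-p)_+}$ near $0$; it is multiplied by the bounded quantity $I(\lambda)$, producing the factor $1+\abs{\lambda}^{p-3}$ (resp.\@ $1+(\log1/\abs{\lambda})_+$, resp.\@ $1$) with no $\abs{z}^{-1}$. Second, the term $(\pi r_\alpha)^{-1}I''(\lambda)$ involves $f_{\alpha,z}'''$ through $I''(\lambda)=\int_{-1}^1\int_0^1(1-s)^2f_{\alpha,z}'''(\lambda+s(t-\lambda))\,\diff s\,\frac{\diff t}{\sigma(t)}$, and inserting $\abs{f_{\alpha,z}'''(u)}\leq C\abs{z}^{-1}\abs{u}^{-(3-p)_+}$ --- the kernel being integrable near $0$ since $p>2$, with the same splitting at $\abs{t-\lambda}=\abs{\lambda}/2$ --- bounds this term by $C\abs{z}^{-1}$; the remaining cross-terms involving $I'(\lambda)$ are uniformly bounded. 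Summing, and using that $\abs{z}$ is bounded on $K$ so that $\abs{z}^{-1}\geq c$, gives $\abs{\psi_{\alpha,z}''(\lambda)}\leq C\abs{z}^{-1}(1+\abs{\lambda}^{p-3})$ and the analogous bounds for $p=3$, $p>3$, on $(-1,1)$. For $\lambda\in(K\cap\R)\setminus(-1,1)$ one differentiates the second line of \eqref{eq:def_psi} twice: no singularity of $r_\alpha$ or $f_{\alpha,z}'''$ is met there, the $\abs{z}^{-1}$ factor is inherited from $\tilde f$ in the numerator, and the degeneracy at $\pm1$ is absorbed using the local $\cC^2(\R^*)$-regularity of $\psi_{\alpha,z}$.

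The main obstacle will be keeping the $\cC^0$ and $\cC^1$ bounds uniform in $z\in K\setminus\R$ as $z\to0$: one cannot simply invoke $\norme{\psi_{\alpha,z}}_{\cC^1}\leq C\norme{f_{\alpha,z}\phi}_{\cC^2}$ from Lemma~\ref{lem:inverse_master_op}.\ref{it:inverse_C2}, because $\norme{f_{\alpha,z}\phi}_{\cC^2}$ blows up as $x\to0$ when $p<3$; the uniformity must instead be extracted by hand from the Taylor representations of the divided differences, exploiting the cancellation in the $\abs{t-\lambda}\leq\abs{\lambda}/2$ piece. The secondary difficulty is the band-edge analysis at $\pm1$ for $\lambda\notin[-1,1]$, where the denominator in \eqref{eq:def_psi} degenerates and one patches the recursive formula with the interior regularity of $\psi_{\alpha,z}$. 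All the remaining estimates are routine, if lengthy, and are carried out in Appendix~\ref{subsec:fz_and_psiz}.
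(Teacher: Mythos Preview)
Your approach is correct and essentially the same as the paper's: the paper packages the bounds on your $I$ (denoted $\pi\varphi_{\alpha,z}$ there) into a separate Lemma~\ref{lem:varphi_z} showing $\abs{\varphi_{\alpha,z}},\abs{\varphi_{\alpha,z}'}\leq C$ and $\abs{\varphi_{\alpha,z}''}\leq C/\abs{z}$ on $[-1,1]$, and then simply reruns the proof of Lemma~\ref{lem:inverse_master_op}.\ref{it:inverse_C3} with every occurrence of $\norme{f}_{\cC^3}$ replaced by $C/\abs{z}$, the band-edge patching being exactly the $F/G$ decomposition you allude to. Your initial truncation by $\phi$ is correct but unnecessary, since on $K\cap\R$ the formula \eqref{eq:def_psi} only involves $f_{\alpha,z}\big|_{[-1,1]}$ and the pointwise value $f_{\alpha,z}(\lambda)$.
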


Let $\phi\in \mathcal{C}^\infty_c$ such that 
\[
\1_{[-1-\delta_0/2,1+\delta_0/2]}\leq \phi \leq \1_{[-1-\delta_0,1+\delta_0]},
\]
where $\delta_0$ is as in Lemma \ref{lem:stability}. From Lemma \ref{lem:psi_z}, we deduce the following regularity properties for
\begin{equation}
\label{def:widetilde_psi}
\widetilde\psi_{\alpha,z} \coloneqq \Xi_{\alpha}^{-1}[f_{\alpha,z}\phi].
\end{equation} In contrast with the estimates of Lemma \ref{lem:psi_z}, the regularity bounds for $\widetilde\psi_{\alpha,z}(\lambda)$ are valid for $\lambda\in \R$.
\begin{corollary} \label{cor:psi_z_tilde}
    Let $p>2$.
    For all $\alpha \in [0,1]$ and $z \in \C\setminus\R$,  $\widetilde\psi_{\alpha,z}\in\cC^1(\R)\cap\cC^2(\R^*)$.
    Moreover, for any compact set $K \subset \C$, there exists $C>0$ such that, for any $\alpha \in [0,1]$, $z = x +\ii y \in K \setminus \R$ and $\lambda \in \R$, 
    \[
    \abs{\widetilde\psi_{\alpha,z}(\lambda)} \leq \frac{C}{1+\abs{V_\alpha'(\lambda)}}, 
    \qquad 
    \abs{\widetilde\psi_{\alpha,z}'(\lambda)} \leq C, 
    \]
    and, if $\lambda \neq 0$,
    \[
    \abs{\widetilde\psi_{\alpha,z}''(\lambda)} \leq \frac{C}{\abs{z}}
    \times
	\begin{cases}
	    (1+\abs{\lambda}^{p-3}) & \text{if } p < 3, \\
	    (1+(\log1/\abs{\lambda})_+) & \text{if } p = 3, \\
	    1 & \text{if } p>3.
	\end{cases}.
    \]
\end{corollary}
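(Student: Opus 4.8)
The strategy is to combine the explicit formula for the inverse master operator from Lemma~\ref{lem:inverse_master_op} with the bounds on $f_{\alpha,z}$ and its derivatives from Lemma~\ref{lem:f_z}, paying attention to the fact that the test function is now $f_{\alpha,z}\phi$, which is compactly supported in $[-1-\delta_0,1+\delta_0]$. First I would note that $f_{\alpha,z}\phi \in \cC^2(\R)\cap\cC^3(\R^*)$ because $\phi$ is smooth and $f_{\alpha,z}$ enjoys that regularity by Lemma~\ref{lem:f_z}; hence Lemma~\ref{lem:inverse_master_op}.\ref{it:inverse_C2} applies and gives $\widetilde\psi_{\alpha,z} = \Xi_\alpha^{-1}[f_{\alpha,z}\phi] \in \cC^1(\R)$, while Lemma~\ref{lem:inverse_master_op}.\ref{it:inverse_C3} gives $\widetilde\psi_{\alpha,z}\in\cC^2(\R^*)$. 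The second derivative bound is then essentially immediate: by Lemma~\ref{lem:inverse_master_op}.\ref{it:inverse_C3},
\[
\abs{\widetilde\psi_{\alpha,z}''(\lambda)} \leq C\norme{f_{\alpha,z}\phi}_{\cC^3(\R)}\times
\begin{cases}
(1+\abs{\lambda}^{p-3}) & p<3,\\
(1+(\log 1/\abs{\lambda})_+) & p=3,\\
1 & p>3,
\end{cases}
\]
but $\norme{f_{\alpha,z}\phi}_{\cC^3(\R)}$ is not bounded uniformly; rather, since $\phi$ has bounded derivatives supported on a fixed compact set, Leibniz's rule and Lemma~\ref{lem:f_z} give $\norme{f_{\alpha,z}\phi}_{\cC^3(\R)}\leq C(1 + \abs{z}^{-1}) \leq C/\abs{z}$ for $z\in K\setminus\R$ (using that $K$ is compact, and the $\abs{\lambda}^{-(3-p)_+}$ factors in the derivative bounds for $f_{\alpha,z}$ are absorbed into the stated $\lambda$-dependence, since $\phi$ vanishes outside $[-1-\delta_0,1+\delta_0]$ so $\abs{\lambda}$ stays bounded on the support). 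Actually, one should be slightly careful: the $\cC^3$-norm as literally defined involves a supremum of $\abs{(f_{\alpha,z}\phi)'''}$, which blows up like $\abs{\lambda}^{-(3-p)_+}$ near $0$ and is \emph{not} finite if $p<3$. So instead of invoking Lemma~\ref{lem:inverse_master_op}.\ref{it:inverse_C3} as a black box, I would re-run its proof (in Appendix~\ref{subsec:inverse_master_op}) with the pointwise bounds on $\abs{(f_{\alpha,z}\phi)^{(k)}(\lambda)}$, $k\leq 3$, from Lemma~\ref{lem:f_z} tracking the $\abs{\lambda}$- and $\abs{z}$-dependence; equivalently, the same argument that proves Lemma~\ref{lem:psi_z} (whose $\psi_{\alpha,z}''$ bound is exactly the one claimed here) applies verbatim to $f_{\alpha,z}\phi$ in place of $f_{\alpha,z}$, because multiplying by $\phi$ does not worsen any of the relevant local estimates. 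This yields the stated $\widetilde\psi_{\alpha,z}''$ bound with the $1/\abs{z}$ prefactor.

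The genuinely new content compared to Lemma~\ref{lem:psi_z} is the decay estimate $\abs{\widetilde\psi_{\alpha,z}(\lambda)} \leq C/(1+\abs{V_\alpha'(\lambda)})$, valid for \emph{all} $\lambda\in\R$, including large $\lambda$. On the compact region $[-1-\delta_0,1+\delta_0]$ this is just $\abs{\widetilde\psi_{\alpha,z}(\lambda)}\leq C$, which follows from the $\cC^1$-bound already noted. For $\abs{\lambda} > 1+\delta_0$, I would use the second line of the explicit formula \eqref{eq:def_psi}: since $f_{\alpha,z}\phi$ vanishes identically outside $[-1-\delta_0,1+\delta_0]$, for such $\lambda$ the constant $a = \int_{-1}^1 f_{\alpha,z}(t)\phi(t)\sigma(t)^{-1}\diff t$ is $O(1)$ uniformly in $z\in K$, $f_{\alpha,z}\phi$ itself is $0$ at $\lambda$, and
\[
\widetilde\psi_{\alpha,z}(\lambda) = \frac{\int_{-1}^1 \frac{\widetilde\psi_{\alpha,z}(t)}{\lambda-t}\diff\mu_{V_\alpha}(t) - a}{\int_{-1}^1 \frac{1}{\lambda-t}\diff\mu_{V_\alpha}(t) - \tfrac12 V_\alpha'(\lambda)}.
\]
The numerator is bounded uniformly (using $\norme{\widetilde\psi_{\alpha,z}}_\infty\leq C$ from the $\cC^1$-bound and $\abs{\lambda - t}\geq \delta_0$ for $t\in[-1,1]$), and the denominator: the term $\int_{-1}^1 (\lambda-t)^{-1}\diff\mu_{V_\alpha}(t) = -s_{V_\alpha}(\lambda)$ is bounded, while $\abs{V_\alpha'(\lambda)}\to\infty$ as $\abs{\lambda}\to\infty$ and $V_\alpha'(\lambda)$ has constant sign for $\abs{\lambda}$ large (it is increasing, odd-ish), so the denominator has modulus $\geq c(1+\abs{V_\alpha'(\lambda)})$ once $\abs{\lambda}$ is large enough; a compactness argument on the remaining bounded range $1+\delta_0 \leq \abs{\lambda}\leq R$ (where one uses the characterization \eqref{eq:characterization muValpha}, i.e.\ that $\tfrac12 V_\alpha'(\lambda) - \int(\lambda-t)^{-1}\diff\mu_{V_\alpha}(t) = V_{\mrm{eff},\alpha}'(\lambda) \neq 0$ off the support, so the denominator never vanishes) closes the gap. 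Combining gives $\abs{\widetilde\psi_{\alpha,z}(\lambda)}\leq C/(1+\abs{V_\alpha'(\lambda)})$ on $\abs{\lambda}>1+\delta_0$, and together with the bound on the compact piece this is the claim. The derivative bound $\abs{\widetilde\psi_{\alpha,z}'(\lambda)}\leq C$ for all $\lambda\in\R$ follows by differentiating the same quotient formula (or directly from $\norme{\widetilde\psi_{\alpha,z}}_{\cC^1}\leq C\norme{f_{\alpha,z}\phi}_{\cC^2}\leq C$, noting that unlike the $\cC^3$-norm the $\cC^2$-norm of $f_{\alpha,z}\phi$ \emph{is} finite and $O(1)$ since $\abs{f_{\alpha,z}''(\lambda)}\leq C(\abs{x}\vee\abs{\lambda})^{-(3-p)_+}$ is bounded away from the origin but — hmm — not globally; so again one re-runs the relevant part of the proof of Lemma~\ref{lem:inverse_master_op}.\ref{it:inverse_C2} with the pointwise bounds).

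The main obstacle I anticipate is precisely this bookkeeping issue: the $\cC^k$-norms of $f_{\alpha,z}$ (and hence of $f_{\alpha,z}\phi$) are infinite when $p<3$ because of the $\abs{\lambda}^{-(3-p)_+}$ singularities at the origin, so Lemma~\ref{lem:inverse_master_op} cannot be applied off-the-shelf for the $\cC^1$- and $\cC^2$-derivative bounds; one must instead revisit the proofs in Appendix~\ref{subsec:inverse_master_op} and check that they only ever use \emph{locally integrable} bounds on the derivatives of the test function (which is exactly why Lemma~\ref{lem:psi_z} already gets away with $f_{\alpha,z}\notin\cC^3$). Since Lemma~\ref{lem:psi_z} has already done this work for $f_{\alpha,z}$, the corollary amounts to observing that the cutoff $\phi$ is harmless for the interior estimates and \emph{helpful} for the exterior decay estimate, the latter being where the improved $1/(1+\abs{V_\alpha'(\lambda)})$ bound genuinely comes from.
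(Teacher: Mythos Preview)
Your approach is correct and follows the same interior/exterior split as the paper, but you miss a key simplification that eliminates all the ``re-running'' you worry about. Since $\phi = 1$ on $[-1-\delta_0/2, 1+\delta_0/2]$ and the formula \eqref{eq:def_psi} for $\Xi_\alpha^{-1}[f](\lambda)$ on this region only involves values of $f$ at points where $\phi=1$ (namely $t\in[-1,1]$ and $\lambda$ itself), one has $\widetilde\psi_{\alpha,z} = \psi_{\alpha,z}$ \emph{identically} on $[-1-\delta_0/2, 1+\delta_0/2]$. The interior bounds are therefore a direct citation of Lemma~\ref{lem:psi_z}, with no need to revisit any proofs or worry about the infinite $\cC^2$- or $\cC^3$-norms of $f_{\alpha,z}\phi$.

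For $\abs{\lambda} \geq 1+\delta_0/2$, the paper proceeds via the quotient formula as you do, but handles the denominator more directly: from \eqref{eq:rewriting_G} one has the exact identity
\[
\int_{-1}^1 \frac{\diff\mu_{V_\alpha}(t)}{\lambda-t} - \tfrac12 V_\alpha'(\lambda) = -r_\alpha(\lambda)\sqrt{\lambda^2-1},
\]
and the two-sided bounds on $r_\alpha$ from Lemma~\ref{lem:bound on ralpha} give $\abs{G(\lambda)} \geq c\abs{\lambda}(1+\alpha\abs{\lambda}^{p-2}) \geq c(1+\abs{V_\alpha'(\lambda)})$ immediately and uniformly in $\alpha$, bypassing your separate asymptotic-plus-compactness argument. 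The first- and second-derivative bounds outside then follow by differentiating the quotient, noting the integrals $\int_{-1}^1 \widetilde\psi_{\alpha,z}(t)(\lambda-t)^{-1}\diff\mu_{V_\alpha}(t)$ and $\int_{-1}^1 (\lambda-t)^{-1}\diff\mu_{V_\alpha}(t)$ are $\cC^\infty$ with bounded derivatives for $\abs{\lambda}\geq 1+\delta_0/2$; since $\abs{z}$ is bounded on $K$, a bound $\abs{\widetilde\psi_{\alpha,z}''(\lambda)}\leq C$ in this region is compatible with the stated $C/\abs{z}$ prefactor.
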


\begin{proof}
In view of \eqref{eq:def_psi}, and since $\phi=1$ on $[-1-\delta_0/2,1+\delta_0/2]$, we have $\widetilde\psi_{\alpha,z}=\psi_{\alpha,z}$ on $[-1-\delta_0/2,1+\delta_0/2]$. The claimed bounds for $\lambda\in (-1-\delta_0/2,1+\delta_0/2)$ follow from Lemma \ref{lem:psi_z}. We now check that these bounds hold for $\lambda \in \R\setminus[-1-\delta_0/2,1+\delta_0/2]$.
The denominator in the definition of $\widetilde\psi_{\alpha,z}(\lambda)$ in \eqref{eq:def_psi} satisfies, by \eqref{eq:rewriting_G} and then Lemma~\ref{lem:bound on ralpha}, for any $\abs{\lambda} \geq 1+\delta_0/2$,
\begin{equation} \label{eq:LB_G}
    \abs{\int_{-1}^1 \frac{1}{\lambda-t} \diff \mu_{V_\alpha}(t) -\frac{1}{2} V_\alpha'(\lambda)}
    = r_\alpha(\lambda) \sqrt{\lambda^2-1}
    \geq c \abs{\lambda} \left(1+\alpha\abs{\lambda}^{p-2} \right)
    \geq c(1+\abs{V_\alpha'(\lambda)}).
\end{equation}
Now, denote 
\[
\mathcal{I}_{\alpha,z} \colon \lambda\mapsto \int_{-1}^1 \frac{\widetilde\psi_{\alpha,z}(t)}{\lambda-t}\diff\mu_{V_\alpha}(t), 
\qquad 
\mathcal{J}_{\alpha,z} \colon \lambda\mapsto\int_{-1}^1 \frac{1}{\lambda-t}\diff\mu_{V_\alpha}(t).
\]
By differentiation under the integral and a crude bound, one checks that both functions are twice differentiable (even $\mathcal{C}^\infty$) on $\R\setminus[-1-\delta_0/4,1+\delta_0/4]$, with $\abso{\mathcal{I}^{(n)}(\lambda)},\abso{\mathcal{J}^{(n)}(\lambda)} \leq C$ for $n\in \{0,1,2\}$ with some $C$ independent of $\alpha\in [0,1]$ and $z\in K\setminus \R$ (in fact, one checks by dominated convergence that $\abso{\mathcal{I}^{(n)}(\lambda)},\abso{\mathcal{J}^{(n)}(\lambda)} \leq C/\abs{\lambda}^n$).
Recalling the definition of $\widetilde\psi_{\alpha,z}$ in \eqref{eq:def_psi}, the result follows by direct computation of the derivatives of $\widetilde\psi_{\alpha,z}$, using \eqref{eq:LB_G} to lower bound the denominator.
\end{proof}

\subsection{Regularizing test functions}

As explained in Remark~\ref{rem:HS_A}, we need to regularize test functions before applying Helffer--Sjöstrand formula \emph{via} Lemma~\ref{lem:HS_A_N}.
This regularization argument is contained in the result below, the function $\varphi$ has to be thought as $\Xi_\alpha^{-1}[f]$ for either a generic $f \in \cC^3(\R)$ or $f = f_{\alpha,z}$.

\begin{lemma}[Regularization lemma] \label{lem:regularize}
Let $\gamma \in [0,1)$.
Let $M>0$ and $\varphi \in \cC^1(\R) \cap \cC^2(\R^*)$ such that $\abs{\varphi''(\lambda)} \leq M (1+\abs{\lambda}^{-\gamma})$ for any $\lambda \in \R$. 
Then, for any $\varepsilon \in (0,1]$, there exists $\varphi_\varepsilon \in \cC^\infty(\R)$ such that 
\[
\supp \varphi_\varepsilon \subset \supp \varphi + [-\varepsilon,\varepsilon], \qquad 
\norme{\varphi-\varphi_\varepsilon}_\infty \leq \varepsilon \norme{\varphi'}_\infty,
\qquad
\norme{\varphi'-\varphi_\varepsilon'}_\infty \leq C M \varepsilon^{1-\gamma}
\]
and, for any $\lambda \in \R$,
\[
\abs{\varphi_\varepsilon''(\lambda)} \leq C M \left(1+ (\abs{\lambda} \vee \varepsilon)^{-\gamma} \right)
\qquad \text{and} \qquad 
\abs{\varphi_\varepsilon'''(\lambda)} \leq \frac{C M}{\varepsilon} \left(1+ \abs{\lambda}^{-\gamma} \right),
\]
where $C>0$ depends only on $\gamma$.
\end{lemma}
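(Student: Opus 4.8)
The plan is to obtain $\varphi_\varepsilon$ by convolving $\varphi$ with a smooth mollifier at scale $\varepsilon$, but with a twist: near the origin the bound $\abs{\varphi''(\lambda)} \leq M(1+\abs{\lambda}^{-\gamma})$ blows up, so a naive convolution would only give $\abs{\varphi_\varepsilon'''(\lambda)} \lesssim M\varepsilon^{-1}\abs{\lambda}^{-\gamma}$ near $0$ — which is in fact exactly the claimed bound, so actually a plain convolution should work. Let me be precise. Fix $\rho \in \cC^\infty_c(\R)$ even, supported in $[-1,1]$, with $\int \rho = 1$, and set $\rho_\varepsilon(t) = \varepsilon^{-1}\rho(t/\varepsilon)$. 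Define $\varphi_\varepsilon \coloneqq \varphi * \rho_\varepsilon$. Then $\supp\varphi_\varepsilon \subset \supp\varphi + [-\varepsilon,\varepsilon]$ is immediate, and $\varphi_\varepsilon \in \cC^\infty(\R)$.

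First I would establish the $\cC^0$ and $\cC^1$ estimates. Writing $\varphi(\lambda) - \varphi_\varepsilon(\lambda) = \int \rho_\varepsilon(t)(\varphi(\lambda)-\varphi(\lambda-t))\diff t$ and using $\abs{\varphi(\lambda)-\varphi(\lambda-t)} \leq \abs{t}\norme{\varphi'}_\infty \leq \varepsilon\norme{\varphi'}_\infty$ on the support of $\rho_\varepsilon$ gives $\norme{\varphi-\varphi_\varepsilon}_\infty \leq \varepsilon\norme{\varphi'}_\infty$. Since $\varphi \in \cC^1$, we have $\varphi_\varepsilon' = \varphi' * \rho_\varepsilon$, and $\varphi'(\lambda) - \varphi_\varepsilon'(\lambda) = \int\rho_\varepsilon(t)(\varphi'(\lambda)-\varphi'(\lambda-t))\diff t$. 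To bound $\abs{\varphi'(\lambda)-\varphi'(\lambda-t)}$ for $\abs{t}\leq\varepsilon$, integrate $\varphi''$: $\abs{\varphi'(\lambda)-\varphi'(\lambda-t)} \leq \int_{\lambda-t}^{\lambda}\abs{\varphi''(s)}\diff s \leq M\int_{\lambda-\varepsilon}^{\lambda+\varepsilon}(1+\abs{s}^{-\gamma})\diff s$; the worst case is an interval around $0$, where $\int_{-\varepsilon}^{\varepsilon}\abs{s}^{-\gamma}\diff s = \frac{2}{1-\gamma}\varepsilon^{1-\gamma}$, so $\abs{\varphi'(\lambda)-\varphi'(\lambda-t)} \leq CM\varepsilon^{1-\gamma}$ (using $\varepsilon \leq 1$ to absorb the linear term), hence $\norme{\varphi'-\varphi_\varepsilon'}_\infty \leq CM\varepsilon^{1-\gamma}$.

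Next, the second derivative. For $\abs{\lambda} \geq 2\varepsilon$ one has $\varphi_\varepsilon''(\lambda) = (\varphi''*\rho_\varepsilon)(\lambda)$ since $\varphi \in \cC^2(\R^*)$ and the convolution kernel only sees points $s$ with $\abs{s} \geq \abs{\lambda}-\varepsilon \geq \abs{\lambda}/2$, so $\abs{\varphi_\varepsilon''(\lambda)} \leq \sup_{\abs{s}\geq\abs{\lambda}/2}\abs{\varphi''(s)} \leq M(1+2^\gamma\abs{\lambda}^{-\gamma}) \leq CM(1+\abs{\lambda}^{-\gamma})$, which matches $CM(1+(\abs{\lambda}\vee\varepsilon)^{-\gamma})$. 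For $\abs{\lambda} < 2\varepsilon$ I would instead write $\varphi_\varepsilon''(\lambda) = (\varphi'*\rho_\varepsilon')(\lambda) = \int \rho_\varepsilon'(t)\varphi'(\lambda-t)\diff t = \int\rho_\varepsilon'(t)(\varphi'(\lambda-t)-\varphi'(\lambda))\diff t$ using $\int\rho_\varepsilon' = 0$; then $\abs{\varphi_\varepsilon''(\lambda)} \leq \norme{\rho_\varepsilon'}_1 \sup_{\abs{t}\leq\varepsilon}\abs{\varphi'(\lambda-t)-\varphi'(\lambda)} \leq C\varepsilon^{-1}\cdot CM\varepsilon^{1-\gamma} = CM\varepsilon^{-\gamma} = CM(\abs{\lambda}\vee\varepsilon)^{-\gamma}$, again by the integral-of-$\varphi''$ bound above (the relevant interval has length $\leq 2\varepsilon$ and lies within $[-3\varepsilon,3\varepsilon]$). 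Combining the two ranges gives the stated bound on $\varphi_\varepsilon''$. For the third derivative, write $\varphi_\varepsilon'''(\lambda) = (\varphi''*\rho_\varepsilon')(\lambda)$ when $\abs{\lambda}\geq 2\varepsilon$: since $\int\rho_\varepsilon' = 0$, $\varphi_\varepsilon'''(\lambda) = \int\rho_\varepsilon'(t)(\varphi''(\lambda-t)-\varphi''(\lambda))\diff t$, and on $\abs{t}\leq\varepsilon$ with $\abs{\lambda}\geq 2\varepsilon$ we have $\abs{s}\geq\abs{\lambda}/2$ throughout, so by the $\cC^3(\R^*)$... actually $\varphi$ is only assumed $\cC^2$, so I cannot integrate $\varphi'''$; instead just bound crudely $\abs{\varphi_\varepsilon'''(\lambda)} \leq \norme{\rho_\varepsilon'}_1 \cdot 2\sup_{\abs{s}\geq\abs{\lambda}/2}\abs{\varphi''(s)} \leq C\varepsilon^{-1} M(1+\abs{\lambda}^{-\gamma})$, which is the claimed bound. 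And for $\abs{\lambda} < 2\varepsilon$, write $\varphi_\varepsilon'''(\lambda) = (\varphi'*\rho_\varepsilon'')(\lambda) = \int\rho_\varepsilon''(t)(\varphi'(\lambda-t)-\varphi'(\lambda))\diff t$ using $\int\rho_\varepsilon'' = 0$, so $\abs{\varphi_\varepsilon'''(\lambda)} \leq \norme{\rho_\varepsilon''}_1\cdot CM\varepsilon^{1-\gamma} \leq C\varepsilon^{-2}\cdot CM\varepsilon^{1-\gamma} = CM\varepsilon^{-1-\gamma} \leq \frac{CM}{\varepsilon}(1+\abs{\lambda}^{-\gamma})$ trivially when $\gamma>0$, and when $\gamma=0$ one has $\varepsilon^{-1-\gamma} = \varepsilon^{-1} = \frac{CM}{\varepsilon}(1+\abs{\lambda}^{0})$ up to constants — actually for $\gamma = 0$ we want $\abs{\varphi_\varepsilon'''}\leq CM/\varepsilon$, and $CM\varepsilon^{-1-0}$ is exactly that. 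Good; in all cases the bound holds.

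The main obstacle — and the only place requiring care — is the behavior near the origin, $\abs{\lambda} \lesssim \varepsilon$, where $\varphi$ is only $\cC^1$ (not $\cC^2$ up to $0$) and the bound on $\varphi''$ degenerates. The trick is to always move one or two derivatives onto the mollifier (using that $\rho_\varepsilon$, $\rho_\varepsilon'$, $\rho_\varepsilon''$ have integral zero for the higher ones, to subtract the value at $\lambda$) and then control the remaining difference of $\varphi'$ by integrating $\varphi''$ over a short interval of length $O(\varepsilon)$ straddling $0$, where $\int_{-C\varepsilon}^{C\varepsilon}\abs{s}^{-\gamma}\diff s = O(\varepsilon^{1-\gamma})$ exactly because $\gamma < 1$. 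This is precisely where the hypothesis $\gamma \in [0,1)$ is used, and it is what makes all the exponents come out as stated. One should also double-check the matching of the two regimes $\abs{\lambda}\gtrless 2\varepsilon$ produces the uniform bound $1+(\abs{\lambda}\vee\varepsilon)^{-\gamma}$ (it does, since for $\abs{\lambda}\geq 2\varepsilon$, $(\abs{\lambda}\vee\varepsilon)^{-\gamma} = \abs{\lambda}^{-\gamma}$, and for $\abs{\lambda}<2\varepsilon$, $(\abs{\lambda}\vee\varepsilon)^{-\gamma}$ is comparable to $\varepsilon^{-\gamma}$), and that all constants depend only on $\gamma$ through $\rho$ and the factor $\frac{1}{1-\gamma}$.
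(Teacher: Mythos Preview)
Your proof is correct and takes essentially the same approach as the paper: convolve $\varphi$ with a standard mollifier at scale $\varepsilon$ and exploit $\gamma<1$ to control $\int_{\lambda-\varepsilon}^{\lambda+\varepsilon}\abs{\varphi''(u)}\diff u$. The paper's version is slightly more streamlined in that it writes $\varphi_\varepsilon''=\varphi''*\xi_\varepsilon$ and $\varphi_\varepsilon'''=\varphi''*\xi_\varepsilon'$ uniformly on $\R$ (using that $\varphi''\in L^1_{\mathrm{loc}}$) and reduces all four bounds to that single integral estimate, whereas you handle the regime $\abs{\lambda}<2\varepsilon$ by shifting one or two extra derivatives onto the mollifier; your variant is marginally more elementary since it avoids invoking the distributional identity $\varphi_\varepsilon''=\varphi''*\xi_\varepsilon$ across the origin.
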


\begin{proof}
Let $\xi \in \cC^\infty(\R)$ be such that $\supp \xi \subset [-1,1]$, $0 \leq \xi \leq 1$ and $\int_{\R} \xi(x) \diff x = 1$.
For $\varepsilon \in (0,1]$, let $\xi_\varepsilon = \frac{1}{\varepsilon} \xi(\cdot/\varepsilon)$ and $\varphi_\varepsilon = \varphi * \xi_\varepsilon$.
Then, these are standard facts that $\varphi_\varepsilon \in \cC^\infty(\R)$, $\supp \varphi_\varepsilon \subset \supp \varphi + [-\varepsilon,\varepsilon]$, 
$\norme{\varphi-\varphi_\varepsilon}_\infty \leq \varepsilon \norme{\varphi'}_\infty$, $\varphi_\varepsilon' = \varphi' * \xi_\varepsilon$, $\varphi_\varepsilon'' = \varphi'' * \xi_\varepsilon$ and $\varphi_\varepsilon''' = \varphi'' * \xi_\varepsilon'$.


Let $\lambda \in \R$. We have
\[
\abs{\varphi'(\lambda)-\varphi_\varepsilon'(\lambda)}
= \abs{\int_{-\varepsilon}^\varepsilon (\varphi'(\lambda)-\varphi'(\lambda-x)) \xi_\varepsilon(x) \diff x}
\leq \int_{-\varepsilon}^\varepsilon \left( \int_{\lambda-\varepsilon}^{\lambda+\varepsilon} \abs{\varphi''(u)} \diff u \right) \xi_\varepsilon(x) \diff x,
\]
and, by assumption,
\begin{equation} \label{eq:bound_2nd_derivative_varphi}
	\int_{\lambda-\varepsilon}^{\lambda+\varepsilon} \abs{\varphi''(u)} \diff u 
	\leq \int_{\lambda-\varepsilon}^{\lambda+\varepsilon} M (1+\abs{u}^{-\gamma}) \diff u 
	\leq 2 \varepsilon M + \begin{cases}
		2 \varepsilon M \abs{\lambda/2}^{-\gamma} & \text{if } \abs{\lambda} \geq 2\varepsilon, \\
		\frac{2}{1-\gamma} M (3\varepsilon)^{1-\gamma} & \text{if } \abs{\lambda} < 2\varepsilon,
	\end{cases}
\end{equation}
bounding $\abs{u} \geq \abs{\lambda} - \varepsilon \geq \abs{\lambda/2}$ in the first case and computing explicitly the integral $\int_{-3\varepsilon}^{3\varepsilon} M \abs{u}^{-\gamma} \diff u$ in the second one. 
In particular, this proves $\norme{\varphi'-\varphi_\varepsilon'}_\infty \leq C M \varepsilon^{1-\gamma}$ because $\int_{-\varepsilon}^{\varepsilon} \xi_\varepsilon(x) \diff x = 1$.
For the 2nd derivative, using $\xi \leq 1$, we have
\[
\abs{\varphi_\varepsilon''(\lambda)}
= \abs{\int_{-\varepsilon}^\varepsilon \varphi''(\lambda-x) \xi_\varepsilon(x) \diff x}
\leq \frac{1}{\varepsilon} \int_{\lambda-\varepsilon}^{\lambda+\varepsilon} \abs{\varphi''(u)} \diff u
\leq C M \left(1+ (\abs{\lambda} \vee \varepsilon)^{-\gamma} \right),
\]
where the last inequality follows from \eqref{eq:bound_2nd_derivative_varphi}.
The bound on the third derivative follows similarly, by using $\varphi_\varepsilon''' = \varphi'' * \xi_\varepsilon'$ together with $\norme{\xi_\varepsilon'}_\infty \leq \norme{\xi'}_\infty/\varepsilon^2$.
\end{proof}

\section{The bootstrap argument to get the local law}
\label{sec:bootstrap}

We now have the main ingredients to run the local law machinery. We will state the first \textit{a priori} bounds on the moments of $L_N(f)$ and $A_N(f)$, defined in \eqref{eq:def_linear_statistics} and \eqref{eq:def_anisotropy}, and a truncation Lemma that will allow us to only consider compactly supported functions. Together these results will allow us to deduce a first bound $\Ec{\abs{L_N(f_{\alpha,z})}^q}$ and thus a first local law via Proposition \ref{prop:local_law_machinery}. We will see how to deduce a better \textit{a priori} bound on $\E_\alpha\left[\abs{L_N(f_{\alpha,z})}^q\right]$ in Lemma \ref{lem:better_bound} via the use of Helffer--Sjöstrand formula and the bounds proven in the previous section.

\subsection{A priori bounds}
\label{sec:a_priori}

We first gather known bounds to control outliers.
The first one bounds the tail of the one-point function $p_N^\alpha$, which is the density of the marginal distribution of $\lambda_1$ under $\P_\alpha$. 
The second one bounds the probability of having an outlier, hence providing a control closer to the limiting spectrum $[-1,1]$.

\begin{lemma} \label{lem:outliers}
Let $p>1$.
\begin{enumerate}
    \item \label{it:bound_one_point_function} There exist $X_1\geq 1$ and $c_1>0$
    such that, for any $N \geq 1$, $\alpha \in [0,1]$ and $\abs{x} \geq X_1$, 
    \begin{equation*}
        p_N^\alpha(x) \leq e^{-c_1 N V_{\alpha}(x)}.
    \end{equation*}
    \item \label{it:ldp estimate} For any $\varepsilon >0$, there exist $C,c_2>0$ such that, for any $N \geq 1$ and $\alpha \in [0,1]$,
    \begin{equation*}
    \P_\alpha ( \exists k \in \llbracket1,N\rrbracket, \abs{\lambda_k} \geq 1+\varepsilon )
    \leq Ce^{-c_2 N}.
    \end{equation*}
\end{enumerate}
\end{lemma}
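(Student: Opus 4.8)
\textbf{Proof proposal for Lemma~\ref{lem:outliers}.}

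Both estimates are essentially a balance of energy between a configuration with an outlier and the typical configuration, but they live at different scales, so I would treat them separately.

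\emph{Proof of \ref{it:bound_one_point_function}.} The plan is to use the explicit formula for the one-point function obtained by integrating out $\lambda_2,\dots,\lambda_N$ in \eqref{def:P_alpha}:
\[
p_N^\alpha(x)
= \frac{1}{Z_{N,\alpha}} e^{-\frac{\beta N}{2} V_\alpha(x)}
\int_{\R^{N-1}} \prod_{j=2}^N \abs{x-\lambda_j}^\beta
\prod_{2 \leq i < j \leq N} \abs{\lambda_i-\lambda_j}^\beta
\, e^{-\frac{\beta N}{2} \sum_{j=2}^N V_\alpha(\lambda_j)} \diff\lambda_2 \cdots \diff\lambda_N.
\]
I would bound $\prod_{j=2}^N \abs{x-\lambda_j}^\beta$ by first splitting, for $\abs{x}$ large, $\abs{x-\lambda_j} \leq (1+\abs{x})e^{\abs{\lambda_j}}$ or similar, more efficiently $\abs{x-\lambda_j}^\beta \leq e^{\frac{\beta N}{4} V_\alpha(\lambda_j)} \cdot C^\beta (1+\abs{x})^\beta$ using $V_\alpha(\lambda) \geq c\abs{\lambda}^2$ and Young's inequality (since the quadratic growth of $V_\alpha$ dominates the logarithmic factor $\log\abs{x-\lambda_j}$, uniformly in $N$ after absorbing a fraction of the confinement term). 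This absorbs the interaction with the outlier at the cost of a factor $(C(1+\abs{x}))^{\beta N}$ and leaves a copy of the $(N-1)$-particle partition function $Z_{N-1,\alpha}$ with potential $\frac{N}{N-1}\cdot\frac{1}{2}V_\alpha$ (up to constants), which I would compare to $Z_{N,\alpha}$. A crude lower bound $Z_{N,\alpha} \geq e^{-CN^2}$ and upper bound on the shifted $(N-1)$-partition function of the same type give $Z_{N-1,\alpha}/Z_{N,\alpha} \leq e^{CN^2}$; but actually I only need that this ratio, times $(C(1+\abs{x}))^{\beta N}$, is beaten by the remaining factor $e^{-\frac{\beta N}{4}V_\alpha(x)}$ once $\abs{x} \geq X_1$ for $X_1$ large enough depending on $p,\beta$ (not on $N$): indeed $\frac{\beta N}{4}V_\alpha(x) - \beta N\log(C(1+\abs{x})) - CN \geq c_1 N V_\alpha(x)$ for $\abs{x}\geq X_1$, since $V_\alpha(x) \geq c\abs{x}^2 \gg \log\abs{x}$. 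The main subtlety is getting the partition-function ratio under control uniformly in $\alpha\in[0,1]$; this is where I would be careful, but since $V_\alpha \geq \min(c_p\abs{x}^p, 2x^2) \wedge (\text{l.s.c. lower bound})$ with constants uniform in $\alpha$, the standard bounds for $Z_{N,\alpha}$ (see e.g.\ \cite{AndGuiZei10}) apply uniformly. This is the kind of estimate that appears in the literature (e.g.\ \cite{BouErdYau2014}) and I would cite it if a clean reference is available rather than redo it.

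\emph{Proof of \ref{it:ldp estimate}.} Here the plan is a large-deviations / energy argument. Fix $\varepsilon>0$. By symmetry and a union bound it suffices to bound $\P_\alpha(\lambda_1 \geq 1+\varepsilon)$. Integrate the one-point function: $\P_\alpha(\lambda_1 \geq 1+\varepsilon) = \int_{1+\varepsilon}^\infty p_N^\alpha(x)\diff x$. For $\abs{x}\geq X_1$ use part \ref{it:bound_one_point_function}, whose integral is $\leq e^{-c_1 N V_\alpha(1+\varepsilon)} \cdot C \leq Ce^{-cN}$. For $1+\varepsilon \leq x \leq X_1$, I would use the standard comparison: moving $\lambda_1$ from $x$ to the support $[-1,1]$ (say to the closest point, or integrating against $\mu_{V_\alpha}$), the change in the energy $\mathcal{E}$ is bounded below by $V_{\mathrm{eff},\alpha}(x) \geq c(\varepsilon)>0$ by the strict positivity of the effective potential \eqref{eq:characterization muValpha} outside the support, uniformly on the compact set $[1+\varepsilon,X_1]$ and uniformly in $\alpha$ (the density \eqref{def: mu_Valpha} and the characterization are explicit and vary continuously in $\alpha$, and on $[1+\varepsilon, X_1]$ one has $V_{\mathrm{eff},\alpha}(x) \geq c(\varepsilon)$ for all $\alpha$ since $V_{\mathrm{eff},\alpha}$ depends continuously on $\alpha$ with a uniform modulus and is $>0$ there). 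Quantitatively, from the formula for $p_N^\alpha$ one gets, for such $x$,
\[
p_N^\alpha(x) \leq e^{-\frac{\beta N}{2} V_\alpha(x)} \E_\alpha\!\left[\prod_{j=1}^{N-1}\abs{x-\lambda_j}^\beta\right] \frac{Z_{N-1,\alpha}'}{Z_{N,\alpha}},
\]
and using Jensen or the a priori convergence $\mu_N \to \mu_{V_\alpha}$ to control $\frac{1}{N}\sum_j \log\abs{x-\lambda_j}$ — more robustly, using the concentration bounds of \cite[Corollary~4.16]{Gui2019} cited in the text, or the LDP upper bound of \cite{AndGuiZei10,arous1997large} — one obtains $p_N^\alpha(x) \leq e^{-cN V_{\mathrm{eff},\alpha}(x) + o(N)} \leq e^{-cN}$. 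Integrating over $x \in [1+\varepsilon, X_1]$ and combining with the tail piece gives $\P_\alpha(\lambda_1 \geq 1+\varepsilon) \leq Ce^{-c_2 N}$, and the union bound over the $N$ particles costs only a factor $N \leq e^{c_2 N/2}$. The main obstacle is making all constants uniform in $\alpha \in [0,1]$: I would handle this by noting that $V_\alpha$, $\mu_{V_\alpha}$ and $V_{\mathrm{eff},\alpha}$ depend affinely (resp.\ continuously) on $\alpha$ with all relevant bounds uniform, so the compactness of $[0,1]$ does the rest. Since \ref{it:ldp estimate} is a classical statement (it is exactly of the form used repeatedly in \cite{BekLebSer2018,BouModPai2022}), I would present it briefly and lean on these references for the LDP input.

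Overall I expect the genuinely new work to be minimal: part \ref{it:bound_one_point_function} is a short self-contained computation with Young's inequality and crude partition-function bounds, and part \ref{it:ldp estimate} follows from \ref{it:bound_one_point_function} plus the strict positivity of the effective potential and a standard concentration/LDP estimate. The one point requiring genuine care — and the place I would spend most of the write-up — is the uniformity in the interpolation parameter $\alpha$, which is why I keep $X_1$, $c_1$, $c_2$ and the constants in the statement independent of $\alpha$ throughout.
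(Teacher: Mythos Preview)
Your overall plan is sound and matches the paper's treatment in spirit: the paper does not redo these estimates from scratch but cites \cite[Theorem~2.2(i)]{PasShc2008} for part~\ref{it:bound_one_point_function} and the LDP for extreme eigenvalues \cite[Proposition~2.1]{BorGui2013} for part~\ref{it:ldp estimate}, spending its effort exactly where you anticipated --- checking that the constants are uniform in $\alpha$ via continuity of $\alpha\mapsto V_{\mathrm{eff},\alpha}(1+\varepsilon)$ and compactness of $[0,1]$. Your argument for \ref{it:ldp estimate} is essentially the same as the paper's.

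There is, however, one genuine inconsistency in your sketch of \ref{it:bound_one_point_function}. You write that the crude bounds $Z_{N,\alpha}\ge e^{-CN^2}$ and $Z_{N-1,\alpha}'\le e^{CN^2}$ give $Z_{N-1,\alpha}'/Z_{N,\alpha}\le e^{CN^2}$, and then in the very next line use the inequality
\[
\tfrac{\beta N}{4}V_\alpha(x)-\beta N\log(C(1+\abs{x}))-CN\ge c_1 N V_\alpha(x),
\]
i.e.\ you silently replace $CN^2$ by $CN$. The $e^{CN^2}$ bound is genuinely too weak: it cannot be beaten by $e^{-\frac{\beta N}{4}V_\alpha(x)}$ for $x$ in any $N$-independent region $\abs{x}\ge X_1$. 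What you actually need is $Z_{N-1,\alpha}'/Z_{N,\alpha}\le e^{CN}$, which is true but requires the first-order free-energy asymptotics $N^{-2}\log Z_{N,\alpha}\to -\tfrac{\beta}{2}\mathcal{E}(\mu_{V_\alpha})$ (uniformly in $\alpha$), not just the crude two-sided $e^{\pm CN^2}$ bounds. This is exactly the input that \cite{PasShc2008} packages; if you want to be self-contained, insert this step explicitly rather than asserting the $-CN$ term.
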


\begin{proof}
Part \ref{it:bound_one_point_function}. This is proved in \cite[Theorem~2.2.(i)]{PasShc2008}: the fact that $X_1$ can be chosen uniformly for $\alpha \in [0,1]$ is explicitly stated there, and taking a look at their proof shows that this is also the case for $c_1$, which ultimately depends on the constants they call $\beta$, $\varepsilon$, $m$, $M$.

Part \ref{it:ldp estimate}. This follows from the large deviation principle for the extreme eigenvalues proved for example in \cite[Proposition 2.1]{BorGui2013}. Uniformity in $\alpha$ of $C$ and $c_2$ is not stated there, but it follows from the proof that the large deviation principle holds uniformly in $\alpha$: we deduce that, uniformly in $\alpha$,
\[
    \limsup_{N\to\infty} \frac{1}{N} \log \P_\alpha (\exists k \in \llbracket1,N\rrbracket, \abs{\lambda_k} \geq 1+\varepsilon)
    \leq - \inf_{\abs{x}\geq1+\varepsilon} V_{\mrm{eff},\alpha}(x)
    = - \inf_{x\geq1+\varepsilon} V_{\mrm{eff},\alpha}(x),
\]
where $V_{\mrm{eff},\alpha}$ was defined in \eqref{eq:characterization muValpha} and we used that it is even. Now by strict convexity of $V_\alpha$, $V_{\mrm{eff},\alpha}$ is strictly convex on $[1,\infty)$, but it is also non-negative and equal to 0 at 1, so it is increasing on $[1,\infty)$. 
Therefore, $\inf_{x\geq1+\varepsilon} V_{\mrm{eff},\alpha}(x) = V_{\mrm{eff},\alpha}(1+\varepsilon) > 0$.
Moreover, $\alpha \in [0,1] \mapsto V_{\mrm{eff},\alpha}(1+\varepsilon)$ is continuous so the following minimum is well-defined and positive:
\[
c_2 \coloneqq \frac{1}{2} \min_{\alpha\in[0,1]} V_{\mrm{eff},\alpha}(1+\varepsilon) > 0,
\]
and we deduce the result with this choice of $c_2$.
\end{proof}

We now state first rough \textit{a priori} bounds on moments of linear statistics and anisotropy terms. 
They are obtained as a consequence of \cite[Corollary~4.16]{Gui2019}, which is proved for any $\cC^1$ potential~$V$.

\begin{lemma}[\textit{A priori} bounds] \label{lem:a_priori}
Let $p>1$. Let $K \subset \R$ be a compact set.
There exists $C>0$ such that, for any $N\geq 2$, $\alpha\in[0,1]$ and $q \geq 1$, the following holds.
\begin{enumerate}
	\item\label{it:a_priori_L} For any $f \in \cC^1(\R)$ with $\supp f \subset K$,
	\begin{equation*} 
		\E_\alpha\left[\abs{L_N(f)}^q\right]
		\leq \left( C\sqrt{q N \log N} \norme{f}_{\cC^1} \right)^q.
	\end{equation*}
	\item\label{it:a_priori_A} For any $f \in \cC^2(\R)$ with $\supp f \subset K$,
	\begin{equation*} 
		\E_\alpha\left[\abs{A_N(f)}^q\right]
		\leq \left( Cq N \log N \norme{f}_{\cC^2} \right)^q.
	\end{equation*}
\end{enumerate} 
\end{lemma}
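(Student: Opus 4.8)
The plan is to deduce both estimates from \cite[Corollary~4.16]{Gui2019}, applied to the potential $V_\alpha$ for each fixed $\alpha\in[0,1]$, the only genuine work being to check that the constants produced there can be chosen uniformly in $\alpha$. Recall that \cite[Corollary~4.16]{Gui2019} is proved for an arbitrary $\cC^1$ potential $W$ (no analyticity, one-cut, or regularity of the equilibrium measure is assumed) and provides, for the associated $\beta$-ensemble, a concentration bound for linear statistics of $\cC^1$ test functions and for anisotropy terms of $\cC^2$ test functions; in our notation, it yields a constant $C_W$ such that $\E[|L_N(f)|^q]\le (C_W\sqrt{qN\log N}\,\norme{f}_{\cC^1})^q$ for $f\in\cC^1$ with support in a fixed compact, and $\E[|A_N(f)|^q]\le (C_W\,qN\log N\,\norme{f}_{\cC^2})^q$ for $f\in\cC^2$ with support in a fixed compact, where $L_N$ and $A_N$ are built from $\mu_W$. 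If that corollary is stated as a deviation inequality (or an exponential–moment bound) rather than directly as a moment bound, I would first pass to the moment form above by the routine estimate $\E[|X|^q]=\int_0^\infty qt^{q-1}\P(|X|>t)\diff t$, at the price of enlarging the constant; the $\log N$ factor is the crude remainder inherent to the general-potential setting (it enters, e.g., through the bound on $\E[L_N(f)]$ and the $1/N$-scale rigidity with logarithmic precision).

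The remaining step is to show that for $W=V_\alpha$, $\alpha\in[0,1]$, one may take $C_{V_\alpha}$ independent of $\alpha$, which parallels the uniformity discussion in the proof of Lemma~\ref{lem:outliers}. The constants in \cite[Corollary~4.16]{Gui2019} depend on $W$ only through a small number of quantities: a confinement lower bound $W(x)\ge(1+\varepsilon)\log|x|$ for large $|x|$ with explicit $\varepsilon$, an upper bound on $\norme{W}_{\cC^1}$ on a fixed compact neighbourhood of the spectrum, and geometric data of the equilibrium measure (notably the location of its support). For the family $V_\alpha=\alpha c_p|x|^p+(1-\alpha)2x^2$, all of these are uniform: $V_\alpha(x)\ge\min(c_p|x|^p,2x^2)$ gives a single confinement constant since $p>1$; the map $\alpha\mapsto V_\alpha$ is continuous for the $\cC^1$-norm on any compact (indeed affine in $\alpha$), so $\sup_{\alpha\in[0,1]}\norme{V_\alpha}_{\cC^1(K)}<\infty$ for every compact $K$; and, by \eqref{def: mu_Valpha}, every $\mu_{V_\alpha}$ is supported exactly on $[-1,1]$. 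Combining this with the compactness of $[0,1]$ gives a single constant $C$ valid for all $\alpha$, and one finally enlarges $C$ if needed to cover the finitely many values of $N$ below any fixed threshold (so that the statement holds for all $N\ge 2$).

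I expect the main obstacle to be exactly this tracking of the $\alpha$-uniformity inside the proof of \cite[Corollary~4.16]{Gui2019}: while the confinement and the $\cC^1$-control of $V_\alpha$ are visibly uniform, one must also ensure that the remaining $\alpha$-dependence of the constants there — which enters through the equilibrium measure $\mu_{V_\alpha}$, whose density degenerates at $0$ when $\alpha>0$ and $p\in(1,2)$ and vanishes at the edges $\pm1$ — is controlled uniformly on $[0,1]$; since the relevant quantities depend continuously on $\alpha$ and $[0,1]$ is compact, this is not conceptually hard, but it is the part requiring the most care. As a fallback, should \cite[Corollary~4.16]{Gui2019} not isolate the anisotropy bound in the form above, I would instead derive the $A_N(f)$ estimate from the $L_N$ one by writing the divided difference as $\frac{f(x)-f(y)}{x-y}=\int_0^1 f'((1-t)y+tx)\diff t$ (or, in Fourier, $A_N(f)=\int\widehat{f'}(\xi)\int_0^1 L_N(e^{\ii\xi t\cdot})L_N(e^{\ii\xi(1-t)\cdot})\,\diff t\,\diff\xi$), applying Part \ref{it:a_priori_L} together with Hölder's and Minkowski's inequalities, and controlling the rare configurations in which some $\lambda_k$ leaves a fixed compact by Lemma~\ref{lem:outliers}.\ref{it:ldp estimate}; this route, however, would in its naive form cost more regularity than $\cC^2$, so it would need to be run carefully (e.g.\@ with a cutoff and the support structure of the divided difference) to stay within the $\cC^2$ hypothesis.
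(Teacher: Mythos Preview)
Your proposal is correct and takes essentially the same approach as the paper: invoke \cite[Corollary~4.16]{Gui2019}, argue that its constants are uniform in $\alpha\in[0,1]$, and convert the resulting deviation inequality into a moment bound. The paper's execution differs only in details: it pinpoints that the uniformity hinges on just two ingredients (the outlier bound of Lemma~\ref{lem:outliers}.\ref{it:ldp estimate} and a uniform bound on $|V_\alpha'|$ on a fixed compact, so no fine properties of $\mu_{V_\alpha}$ are needed), it notes that the norms appearing in \cite[Corollary~4.16]{Gui2019} are the Lipschitz and $H^{1/2}$ norms (both controlled by $\norme{f}_{\cC^1}$), and it converts the single-threshold deviation estimate $\P_\alpha(|L_N(f)|\ge C_0\sqrt{N\log N}\,\norme{f}_{\cC^1})\le e^{-c_0 N}$ to moments not by integrating the tail but by combining it with the crude bound $|L_N(f)|\le N\norme{f}_\infty$ on the bad event and the elementary inequality $N^q\le (q/c_0)^q e^{c_0 N}$.
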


\begin{proof}
This result is a consequence of \cite[Corollary~4.16]{Gui2019}, which is stated for a single $\cC^1$ potential $V$, but actually holds for the potentials $V_\alpha$ uniformly in $\alpha$.
Indeed, a read through the proofs of Lemma~4.14 and of its consequence Corollary~4.16 in \cite{Gui2019} shows that the constants appearing in these results can be chosen uniformly for any class of potentials  $\mathcal{V}$ such that there exists $M,C,c>0$ such that, for any $V \in \mathcal{V}$, $\P(\max \abs{\lambda_i} > M) \leq C e^{-cN}$ and $\sup_{\abs{x} \leq M+1} \abs{V'(x)} \leq C$.

Part \ref{it:a_priori_L}.
In the first part of \cite[Corollary~4.16]{Gui2019}, two norms appear: The Lipschitz norm $\norme{f}_L$ which is by Taylor's inequality less than $\norme{f}_{\cC^1}$, and the Sobolev $1/2$-norm $\norme{f}_{1/2}$ which is by \cite[Property~4.13]{Gui2019} less than $2(\norme{f}_{L^2} + \norme{f'}_{L^2}) \leq C \norme{f}_{\cC^1}$ with $C$ depending on the length of $K$.
Hence, this result implies that there exists $C_0,c_0>0$ independent of $\alpha$ such that
\[
\mathbb{P}_\alpha\left(\abs{L_N(f)} \geq C_0 \sqrt{N \log N} \norme{f}_{\cC^1}\right)
\leq e^{-c_0N}.
\]
Bounding $\abs{L_N(f)} \leq N \norme{f}_{\infty}$ on this event, it follows that 
\begin{equation} \label{eq:a_priori_L_1}
	\E_\alpha\left[\abs{L_N(f)}^q\right]
	\leq \left( C_0 \sqrt{N \log N} \norme{f}_{\cC^1} \right)^q
	+ \left( N \norme{f}_{\infty} \right)^q e^{-c_0N}.
\end{equation}
On the other hand, note that $(c_0 N)^q / q! \leq e^{c_0N}$, so we have $N^q \leq (q/c_0)^q e^{c_0N}$, which yields $N^{q/2} \leq (q/c_0)^{q/2} e^{c_0N/2}$. 
Applying this to the second term on the right-hand side of \eqref{eq:a_priori_L_1} yields the result.

Part \ref{it:a_priori_A}. The proof is similar: the second part of \cite[Corollary~4.16]{Gui2019} shows that there exists $C_0,c_0>0$ independent of $\alpha$ such that $\mathbb{P}_\alpha\left(\abs{A_N(f)} \geq C_0 N \log N \norme{f}_{\cC^2}\right) 
\leq e^{-c_0N}$.
We combine this with the deterministic bound $\abs{A_N(f)} \leq 4N^2 \norme{f'}_{\infty}$ and then the bound $N^q \leq (q/c_0)^q e^{c_0N}$ as before.
\end{proof}

In order to restrict ourselves to compactly supported test functions, we also need the following result, which is obtained as a consequence of results in \cite{DadFraGueZit2023}.

\begin{lemma}[Truncation lemma] \label{lem:truncation}
Let $p>1$.
Let $\delta_0>0$ be the constant given by Lemma~\ref{lem:stability}.
There exist $C,c >0$ such that the following holds for any 
$N,q\geq1$, $\alpha\in[0,1]$ and $\phi \in \cC_c^\infty(\R)$ satisfying
\begin{equation} \label{eq:assumption_phi}
    \1_{[-1-\delta_0/2,1+\delta_0/2]}
    \leq \phi 
    \leq \1_{[-1-\delta_0,1+\delta_0]}.
\end{equation}
\begin{enumerate}
	\item\label{it:truncation1} For any $M>0$ and any $f \colon \R \to \R$ such that $\abs{f(x)} \leq M(1+V_\alpha(x))$ for all $x \in \R$,
	\[
	\E_\alpha\left[\abs{L_N(f) - L_N(f\phi)}^q\right] \leq (CMq)^q e^{-cN}.
	\]
	\item\label{it:truncation2}If $q \leq \sqrt{N}$, then for any $M,\gamma>0$ and any $f \colon \R \to \R$ such that $\abs{f(x)} \leq M e^{\gamma\abs{x}}$ for all $x \in \R$,
	\[
	\E_\alpha\left[\abs{L_N(f) - L_N(f\phi)}^q\right] 
	\leq (CM)^q e^{-cN}.
	\]  
	\item\label{it:truncation3} For any $M>0$ and any differentiable $f \in \cC^1(\R)$ such that $\abs{f'(x)} \leq M$ for all $x \in \R$ and $\abs{f(x)} \leq M$ for all $\abs{x} \leq 1+\delta_0$,
	\[
	\E_\alpha \left[ \abs{A_N(f) - A_N(f\phi)}^q \right] 
	\leq (C M \left( 1 + \norme{\phi'}_\infty \right))^q q^{2q} e^{-cN}.
	\]
\end{enumerate}
\end{lemma}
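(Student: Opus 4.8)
The plan is to prove each of the three truncation bounds by splitting the relevant linear statistic (or anisotropy) over the region where $\phi$ fails to be identically $1$, then using the exponentially small probability of having a particle there, together with crude deterministic bounds on the contribution of such a particle. The key structural observation is that $f-f\phi$ vanishes on $[-1-\delta_0/2,1+\delta_0/2]$, so $L_N(f)-L_N(f\phi)=L_N((1-\phi)f)$ is a linear statistic supported outside this interval, and similarly for the anisotropy. Thus on the event $\mathcal{A}\coloneqq\{\forall k,\ \abs{\lambda_k}\leq 1+\delta_0/2\}$ we have $L_N(f)-L_N(f\phi)=-N\int(1-\phi)f\diff\mu_{V_\alpha}$, which is deterministic, and we must check it is exponentially small: indeed $\mu_{V_\alpha}$ is supported in $[-1,1]$ and $1-\phi$ vanishes there, so this integral is exactly $0$. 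Hence on $\mathcal{A}$ the difference is zero (for parts \ref{it:truncation1} and \ref{it:truncation2}), and for part \ref{it:truncation3} the difference $A_N(f)-A_N(f\phi)$ on $\mathcal{A}$ also vanishes by the same reasoning (the double integral against $\diff\mu_{V_\alpha}^{\otimes 2}$ and the mixed terms all involve a factor supported outside $[-1,1]$).

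So the whole contribution comes from the complementary event $\mathcal{A}^c$. First I would invoke Lemma~\ref{lem:outliers}.\ref{it:ldp estimate} with $\varepsilon=\delta_0/2$ to get $\P_\alpha(\mathcal{A}^c)\leq Ce^{-c_2 N}$, uniformly in $\alpha$. Then I would bound, by Cauchy--Schwarz (or rather by writing $\E_\alpha[\abs{X}^q]=\E_\alpha[\abs{X}^q\1_{\mathcal{A}^c}]$ and applying Cauchy--Schwarz to separate the indicator),
\[
\E_\alpha\left[\abs{L_N(f)-L_N(f\phi)}^q\right]
=\E_\alpha\left[\abs{L_N((1-\phi)f)}^q\1_{\mathcal{A}^c}\right]
\leq \E_\alpha\left[\abs{L_N((1-\phi)f)}^{2q}\right]^{1/2}\P_\alpha(\mathcal{A}^c)^{1/2}.
\]
It remains to control the $2q$-th moment of $L_N((1-\phi)f)$ by something growing at most like $(CMq)^{q}$ times a sub-exponential factor in $N$, which the $e^{-c_2 N/2}$ from the probability will absorb. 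For part \ref{it:truncation1}, $\abs{(1-\phi)f(x)}\leq M(1+V_\alpha(x))$, so $\abs{L_N((1-\phi)f)}\leq N\sum_k(M(1+V_\alpha(\lambda_k)))+N\int M(1+V_\alpha)\diff\mu_{V_\alpha}$; one needs a moment bound on $\sum_k V_\alpha(\lambda_k)$, which is available from the results imported from \cite{DadFraGueZit2023} (this is where that reference enters), giving something like $\E_\alpha[(\sum_k V_\alpha(\lambda_k))^{2q}]\leq (CN q)^{2q}$ or a similar bound with a controlled $N$-dependence. Then choosing the constant $c$ in the statement smaller than $c_2/2$ and using $N^{q}\leq (q/c)^{q}e^{cN}$ to absorb polynomial-in-$N$ factors into the exponential gives the claim. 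For part \ref{it:truncation2}, with $q\leq\sqrt N$, the deterministic bound is $\abs{L_N((1-\phi)f)}\leq M\sum_k e^{\gamma\abs{\lambda_k}}+NM\int e^{\gamma\abs{x}}\diff\mu_{V_\alpha}$, and one uses the one-point tail bound Lemma~\ref{lem:outliers}.\ref{it:bound_one_point_function}: $\E_\alpha[e^{q\gamma\abs{\lambda_1}}]\leq \int e^{q\gamma\abs{x}-c_1 NV_\alpha(x)}\diff x + (\text{bounded region contribution})$, which for $q\leq\sqrt N$ and $V_\alpha$ growing at least quadratically is bounded by $C^q$ — this is the reason for the $q\leq\sqrt N$ hypothesis, as it ensures the exponential moment of a single particle stays bounded. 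Raising to the power $2q$ and using $\E_\alpha[(\sum_k e^{\gamma\abs{\lambda_k}})^{2q}]\leq N^{2q}\max_k\E_\alpha[e^{2q\gamma\abs{\lambda_k}}]$ (or a more careful Hölder over the $N$ particles) gives a bound of the form $(CM)^{2q}N^{2q}$, absorbed by $e^{-c_2 N/2}$ as before.

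For part \ref{it:truncation3}, the anisotropy difference is $A_N((1-\phi)f)$ plus mixed terms; writing $g=(1-\phi)f$ and expanding $A_N$ using the kernel $\frac{g(\lambda)-g(\lambda')}{\lambda-\lambda'}$, one uses on $\mathcal{A}^c$ the deterministic bound $\abs{A_N(g)}\leq N^2\iint\bigl|\frac{g(\lambda)-g(\lambda')}{\lambda-\lambda'}\bigr|\diff\mu_N^{\otimes2}$; since $g$ is Lipschitz with $\abs{g'}\leq \abs{f'}+\abs{\phi'}\abs{f}+\abs{\phi}\abs{f'}\leq M(2+\norme{\phi'}_\infty)$ on $[-1-\delta_0,1+\delta_0]$, and $g$ vanishes outside it, a case analysis on whether $\lambda,\lambda'$ lie in the support gives $\bigl|\frac{g(\lambda)-g(\lambda')}{\lambda-\lambda'}\bigr|\leq CM(1+\norme{\phi'}_\infty)(1+\text{something integrable})$; in the worst case one gets $\abs{A_N(g)}\leq CN^2 M(1+\norme{\phi'}_\infty)$ deterministically (the factor $N^2$ here is harmless since it will be eaten by the exponential), and then the Cauchy--Schwarz-with-indicator argument together with the extra $q^{2q}$ factor (which comes from converting $N^{2q}$ to $(q/c)^{2q}e^{cN}$ via $N^{2q}\leq (2q/c)^{2q}e^{cN/2}$, noting $(2q)!\geq$ etc.) finishes the proof.

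The main obstacle is the second moment bound on $L_N((1-\phi)f)$ in parts \ref{it:truncation1} and \ref{it:truncation2}, i.e.\ controlling $\E_\alpha[(\sum_k V_\alpha(\lambda_k))^{2q}]$ or $\E_\alpha[(\sum_k e^{\gamma\abs{\lambda_k}})^{2q}]$ with the right dependence on $N$ and $q$ and uniformly in $\alpha$ — this is exactly where one must carefully import the energy-based a priori estimates from \cite{DadFraGueZit2023} (and the one-point function tail of Lemma~\ref{lem:outliers}), and check their uniformity in $\alpha\in[0,1]$. Everything else is bookkeeping: splitting over $\mathcal{A}$ versus $\mathcal{A}^c$, noting the difference vanishes on $\mathcal{A}$, applying the exponential probability bound, and absorbing polynomial-in-$N$ prefactors into the exponential via $N^k\leq (k/c)^k e^{cN}$.
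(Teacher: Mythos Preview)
Your scheme is sound, but the paper's proof is more direct and avoids your Cauchy--Schwarz detour. For Parts~\ref{it:truncation1} and~\ref{it:truncation2} the paper uses a \emph{two-step} truncation: first from $f$ to $f\phi_0$, where $\phi_0$ localizes to a large bounded interval $[-X_1-1,X_1+1]$ (with $X_1$ from Lemma~\ref{lem:outliers}.\ref{it:bound_one_point_function}), then from $f\phi_0$ to $f\phi$. The first step is handled by Jensen, $|L_N(f)-L_N(f\phi_0)|^q\leq N^{q-1}\sum_k M^q(1+V_\alpha(\lambda_k))^q\1_{|\lambda_k|\geq X_1}$, and then the one-point tail bound, computing the expectation directly via the substitution $y=NV_\alpha(x)$ and $y^q\leq (2q/c_1)^qe^{c_1y/2}$; no unconditional $2q$-th moment is ever needed. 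The second step is trivial since $f\phi_0$ is now bounded: a deterministic bound on $\mathcal{A}^c$ times the LDP probability suffices. Your Cauchy--Schwarz route works in principle but forces you to establish the unconditional moment bound $\E_\alpha[(\sum_kV_\alpha(\lambda_k))^{2q}]$, which amounts to redoing essentially the same one-point-tail computation inside a less transparent wrapper; also, your attribution of that bound to \cite{DadFraGueZit2023} is off --- the paper's proof uses only Lemma~\ref{lem:outliers}. For Part~\ref{it:truncation3}, note that $A_N$ is \emph{linear} in its argument, so $A_N(f)-A_N(f\phi)=A_N((1-\phi)f)$ with no mixed terms; the paper sidesteps even this by simply bounding $|A_N(f)|+|A_N(f\phi)|\leq 4N^2(\|f'\|_\infty+\|(f\phi)'\|_\infty)$ on $\mathcal{A}^c$, then absorbing $N^{2q}$ into the exponential via $N^{2q}\leq (4q/c_2)^{2q}e^{c_2N/2}$.
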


\begin{proof}
Part \ref{it:truncation1}.
The proof proceeds in two steps: we first restrict the support to a large but bounded interval, and then to $[-1-\delta_0,1+\delta_0]$.
For the first step, let $X_1 \geq 1$ be the constant appearing in Lemma~\ref{lem:outliers}.\ref{it:bound_one_point_function} and $\phi_0 \in \cC_c^\infty(\R)$ be such that
\[
\1_{[-X_1,X_1]}
\leq \phi_0
\leq \1_{[-X_1-1,X_1+1]}.
\]
Then, using the assumption on $f$, we get
\[
\abs{L_N(f) - L_N(f\phi_0)}^q 
\leq \left( \sum_{k=1}^N M(V_\alpha(\lambda_k) + 1) \1_{\abs{\lambda_k} \geq X_1} \right)^q
\leq N^{q-1} \sum_{k=1}^N M^q (V_\alpha(\lambda_k) + 1)^q \1_{\abs{\lambda_k} \geq X_1},
\]
by Jensen's inequality.
Using Lemma~\ref{lem:outliers}.\ref{it:bound_one_point_function}, we deduce
\begin{align*}
	\E_\alpha\left[\abs{L_N(f) - L_N(f\phi_0)}^q\right]
	& \leq 2 (NM)^q \int_{X_1}^\infty (V_\alpha(x) + 1)^q e^{-c_1 N V_\alpha(x)} \diff x \\
	& \leq 2 (NM)^q \int_{N V_\alpha(X_1)}^\infty \left( \frac{Cy}{N} \right)^q e^{-c_1 y} \frac{\diff y}{N V_\alpha'(X_1)},
\end{align*}
using that $V_\alpha(x) + 1 \leq C V_\alpha(x)$ for $x \geq X_1 \geq 1$ and changing variables with $y = N V_\alpha(x)$, noting that $V_\alpha'$ is increasing on $[X_1,\infty)$ to bound $\diff x \leq \diff y/[N V_\alpha'(X_1)]$.
Using $y^q \leq (2q/c_1)^q e^{c_1y/2}$ and $N V_\alpha'(X_1) \geq c$, we get
\begin{equation} \label{eq:truncation_step_1}
	\E_\alpha\left[\abs{L_N(f) - L_N(f\phi_0)}^q\right]
	\leq (C M q)^q \int_{N V_\alpha(X_1)}^\infty e^{-c_1 y /2} \diff y
	\leq (CMq)^q e^{-c_1 N V_\alpha(X_1)/2},
\end{equation}
which concludes the first step of the proof.
Now, consider $\phi \in \cC_c^\infty(\R)$ as in the statement.
By Lemma~\ref{lem:outliers}.\ref{it:ldp estimate}, there exists $C,c_2>0$ independent of $\alpha$ such that
\begin{equation}
	\P_\alpha ( \exists k \in \llbracket1,N\rrbracket, \abs{\lambda_k} \geq 1+\delta_0/2 )
	\leq C e^{-c_2 N}.
\end{equation}
Bounding $\abs{L_N(f\phi_0) - L_N(f\phi)} \leq NM ((X_1+1)^2+1)$ on the event above, and noting that $L_N(f\phi_0) = L_N(f\phi)$ on the complement, we get
\begin{equation} \label{eq:truncation_step_2}
	\E_\alpha\left[\abs{L_N(f\phi_0) - L_N(f\phi)}^q\right]
	\leq (CNM)^q e^{-c_2 N}
	\leq (CMq)^q e^{-c_2 N /2},
\end{equation}
using that $N^q \leq (2q/c_2)^q e^{c_2N/2}$.
Combining \eqref{eq:truncation_step_1} and \eqref{eq:truncation_step_2} yields Part \ref{it:truncation1}.

Part \ref{it:truncation2}. We follow the same two steps as before.
In the first step, we bound
\[
\abs{L_N(f) - L_N(f\phi_0)}^q 
\leq \left( \sum_{k=1}^N M e^{\gamma\abs{\lambda_k}} \1_{\abs{\lambda_k} \geq X_1} \right)^q
\leq N^{q-1} \sum_{k=1}^N M^q e^{\gamma q \abs{\lambda_k}} \1_{\abs{\lambda_k} \geq X_1},
\]
by Jensen's inequality again. Then, using Lemma~\ref{lem:outliers}.\ref{it:bound_one_point_function}, we deduce
\begin{equation}
	\E_\alpha\left[\abs{L_N(f) - L_N(f\phi_0)}^q\right]
	\leq 2 (NM)^q \int_{X_1}^\infty e^{\gamma qx-c_1 N x^2} \diff x
	\leq (C M N)^q e^{-c_1 \gamma N X_1^2/2},
\end{equation}
where in the second inequality we noted that, by choosing $X_1 \geq 2 \gamma/c_1$, we have $e^{\gamma qx-c_1 N x^2} \leq e^{-\gamma c_1 N X_1 x/2}$ for any $x \geq X_1$ and $q \leq \sqrt{N} \leq N$. This concludes the first step.
For the second step, we use the first inequality in \eqref{eq:truncation_step_2} and conclude that, for some $C,c>0$,
\[
\E_\alpha\left[\abs{L_N(f) - L_N(f\phi)}^q\right] \leq (CMN)^q e^{-cN}
\leq (CM)^q e^{-cN},
\] 
up to a modifications of $C,c$ in the second inequality where we used $q \leq \sqrt{N}$.

Part \ref{it:truncation3}. 
Note that on the event $\{\forall k \in \llbracket1,N\rrbracket, \abs{\lambda_k} < 1+\delta_0/2 \}$, we have $A_N(f)=A_N(f\phi)$. On the complement, we bound deterministically 
\[
\abs{A_N(f) - A_N(f\phi)} 
\leq \abs{A_N(f)} + \abs{A_N(f\phi)}
\leq 4N^2 \left( \norme{f'}_\infty + \norme{(f\phi)'}_\infty \right),
\]
and then apply Lemma~\ref{lem:outliers}.\ref{it:ldp estimate}.
Moreover, we note that $\norme{(f\phi)'}_\infty \leq \norme{f' \phi'}_\infty + \norme{f\phi'}_\infty \leq M (1+\norme{\phi'}_\infty)$ using the assumption on $f$ and that $\phi'=0$ outside $[-1-\delta_0,1+\delta_0]$.
This yields
\[
\E_\alpha\left[\abs{A_N(f) - A_N(f\phi)}^q\right]
\leq (C M \left( 1 + \norme{\phi'}_\infty \right))^q N^{2q} e^{-c_2N}.
\]
Using that $N^{2q} \leq (4q/c_2)^{2q} e^{c_2N/2}$ concludes the proof.
\end{proof}

The two previous results allow us to deduce a first \textit{a priori} bound on $\E_\alpha\left[\abs{L_N(f_{\alpha,z})}^q\right]$. This bound is very crude and will be improved later.

\begin{corollary} \label{cor:a_priori_f_z}
Let $p>2$.
Let $K \subset \C$ be a compact set. Then there exists $C>0$ such that, for any $N \geq 1$, $\alpha\in[0,1]$,  $z \in K \setminus \R$ and $q \geq 1$,
\begin{equation*} 
	\E_\alpha\left[\abs{L_N(f_{\alpha,z})}^q\right] \leq (Cq\sqrt{N \log N})^q.
\end{equation*}
\end{corollary}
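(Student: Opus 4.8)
The plan is to combine the truncation lemma (Lemma~\ref{lem:truncation}) with the \textit{a priori} bound on linear statistics (Lemma~\ref{lem:a_priori}.\ref{it:a_priori_L}). The point is that $f_{\alpha,z}$ is not compactly supported, but it has at most polynomial growth, so we can first replace it by a compactly supported version at negligible cost, and then apply the \textit{a priori} bound to that compactly supported function.

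First, I would record the growth of $f_{\alpha,z}$. By Definition~\ref{def:g,g analytic, f_z, Palpha, halpha}, for $\lambda \in \R$ we have $f_{\alpha,z}(\lambda) = (g_\alpha(\lambda)-g_\alpha(z))/(\lambda-z)$ with $g_\alpha(\lambda) = \alpha p c_p \abs{\lambda}^p + 4(1-\alpha)\lambda^2$. Since $z$ ranges over the compact set $K$, its imaginary part is bounded away from nothing in modulus only off $\R$, but what matters here is simply that $\abs{\lambda - z} \geq \abs{\im z}$ is not available uniformly; instead I use that for $\abs{\lambda}$ large (say $\abs{\lambda} \geq R_K$ where $R_K$ bounds $K$), $\abs{\lambda - z} \geq \abs{\lambda} - R_K \geq \abs{\lambda}/2$, hence $\abs{f_{\alpha,z}(\lambda)} \leq 2(\abs{g_\alpha(\lambda)} + \abs{g_\alpha(z)})/\abs{\lambda} \leq C(1 + \abs{\lambda}^{p-1}) \leq C(1+V_\alpha(\lambda))$ for $\abs{\lambda}\geq R_K$, with $C$ depending only on $K$ and $p$; and for $\abs{\lambda} \leq R_K$, $\abs{f_{\alpha,z}(\lambda)} \leq C$ by Lemma~\ref{lem:f_z}. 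So $\abs{f_{\alpha,z}(\lambda)} \leq C(1+V_\alpha(\lambda))$ for all $\lambda \in \R$, with $C$ uniform in $\alpha \in [0,1]$ and $z \in K$.

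Next, fix $\phi \in \cC_c^\infty(\R)$ satisfying \eqref{eq:assumption_phi}. By the triangle inequality,
\[
\E_\alpha\left[\abs{L_N(f_{\alpha,z})}^q\right]^{1/q}
\leq \E_\alpha\left[\abs{L_N(f_{\alpha,z}) - L_N(f_{\alpha,z}\phi)}^q\right]^{1/q}
+ \E_\alpha\left[\abs{L_N(f_{\alpha,z}\phi)}^q\right]^{1/q}.
\]
For the first term, the growth bound above together with Lemma~\ref{lem:truncation}.\ref{it:truncation1} (applied with $M$ the constant from the previous paragraph) gives $\E_\alpha[\abs{L_N(f_{\alpha,z}) - L_N(f_{\alpha,z}\phi)}^q] \leq (Cq)^q e^{-cN} \leq (Cq)^q$, which is absorbed into the stated bound. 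For the second term, $f_{\alpha,z}\phi$ is supported in the fixed compact set $[-1-\delta_0,1+\delta_0]$, so Lemma~\ref{lem:a_priori}.\ref{it:a_priori_L} yields $\E_\alpha[\abs{L_N(f_{\alpha,z}\phi)}^q] \leq (C\sqrt{qN\log N}\,\norme{f_{\alpha,z}\phi}_{\cC^1})^q$. It remains to bound $\norme{f_{\alpha,z}\phi}_{\cC^1}$ uniformly in $\alpha \in [0,1]$ and $z \in K$: by Lemma~\ref{lem:f_z}, $\abs{f_{\alpha,z}(\lambda)} \leq C$ and $\abs{f_{\alpha,z}'(\lambda)} \leq C$ on the compact $[-1-\delta_0,1+\delta_0]$ uniformly, and $\phi$ is a fixed smooth function with bounded derivatives, so $\norme{f_{\alpha,z}\phi}_{\cC^1} \leq C$. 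Combining the two estimates and using $q^{1/2} \leq q$ gives $\E_\alpha[\abs{L_N(f_{\alpha,z})}^q] \leq (Cq\sqrt{N\log N})^q$, as claimed.

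There is no serious obstacle here: the only mild care needed is in the uniformity of constants (in $\alpha$, $z$, $N$, $q$), and in checking that the growth hypothesis required by Lemma~\ref{lem:truncation}.\ref{it:truncation1} is met — both are routine once the elementary estimate $\abs{\lambda - z} \geq \abs{\lambda}/2$ for large $\abs{\lambda}$ is in hand. The $\cC^1$-bound on $f_{\alpha,z}\phi$ is immediate from Lemma~\ref{lem:f_z} since we only need it on a compact set and only up to first order, where the bounds in Lemma~\ref{lem:f_z} are clean (the singular behaviour only appears at second order and above).
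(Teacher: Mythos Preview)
Your approach is exactly the one taken in the paper: truncate via Lemma~\ref{lem:truncation}.\ref{it:truncation1}, then apply Lemma~\ref{lem:a_priori}.\ref{it:a_priori_L} to the compactly supported function, using Lemma~\ref{lem:f_z} to control $\norme{f_{\alpha,z}\phi}_{\cC^1}$ uniformly.

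One small slip to flag: the chain $\abs{f_{\alpha,z}(\lambda)} \leq C(1+\abs{\lambda}^{p-1}) \leq C(1+V_\alpha(\lambda))$ is not uniform in $\alpha$ when $p>3$, since for $\alpha=0$ one has $V_\alpha(\lambda)=2\lambda^2$ and $\abs{\lambda}^{p-1}/\lambda^2 \to \infty$. The fix is immediate and is what the paper does: bound $\abs{g_\alpha(\lambda)}$ directly by $p\,V_\alpha(\lambda)$ (since $g_\alpha(\lambda)=\lambda V_\alpha'(\lambda)$ and each term in $V_\alpha$ is homogeneous of degree $\geq 2$), so that $\abs{g_\alpha(\lambda)}/\abs{\lambda} \leq C V_\alpha(\lambda)$ for $\abs{\lambda}\geq 1$, giving $\abs{f_{\alpha,z}(\lambda)} \leq C(1+V_\alpha(\lambda))$ without passing through $\abs{\lambda}^{p-1}$.
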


\begin{proof}
Let $M\geq 1$ be such that $\re K \subset [-M,M]$.
By Lemma~\ref{lem:f_z}, there exists $C>0$ such that, for any $\alpha \in [0,1]$, $z \in K \setminus \R$ and $\lambda \in [-2M,2M]$, $\abs{f_{\alpha,z}(\lambda)} \leq C$.
Moreover, for $\lambda \notin [-2M,2M]$, we have 
\[
    \abs{f_{\alpha,z}(\lambda)} 
    = \frac{\abs{g_\alpha(\lambda)-g_\alpha(z)}}{\abs{\lambda-z}}
    \leq \frac{\abs{g_\alpha(\lambda)}+\abs{g_\alpha(z)}}{\abs{\lambda}/2}
    \leq \frac{C V_\alpha(\lambda)}{\abs{\lambda}},
\]
using $\abs{g_\alpha(\lambda)} \leq CV_\alpha(\lambda)$ and that $g_\alpha$ is bounded on $K$ (see Remark~\ref{rem:g}).
In particular, this proves there exists $C>0$ such that, for any $\alpha \in [0,1]$, $z \in K \setminus \R$ and $\lambda \in\R$, $\abs{f_{\alpha,z}(\lambda)} \leq C(V_\alpha(\lambda) + 1)$.
Therefore, by Lemma~\ref{lem:truncation}.\ref{it:truncation1}, for any $\phi \in \cC_c^\infty(\R)$ satisfying \eqref{eq:assumption_phi}, there exists $C>0$ such that, for any $\alpha \in [0,1]$, $z \in K \setminus \R$ and $N,q \geq 1$,
\begin{equation} \label{eq:truncation_f_z}
    \E_\alpha\left[\abs{L_N(f_{\alpha,z})}^q \right]
    \leq 2^q \E_\alpha\left[\abs{L_N(f_{\alpha,z}\phi)}^q\right] + (Cq)^q.
\end{equation}
Then, the result follows from Lemma~\ref{lem:a_priori}.\ref{it:a_priori_L}, noting that $\norme{f_{\alpha,z} \phi}_{\cC^1}$ is uniformly bounded for $z \in K$ by Lemma~\ref{lem:f_z}.
\end{proof}

Combining this with Proposition~\ref{prop:local_law_machinery}.\ref{it:in_the_trapezoid} shows the following result. 
Note that we do not apply Proposition~\ref{prop:local_law_machinery}.\ref{it:out_the_trapezoid} here to obtain a local law outside of the trapezoid region $\cT$, because the resulting bound would be too bad due to the fact that $B_{\max} = C\sqrt{N\log N}$ appears squared in this bound. 
In the proof of Lemma~\ref{lem:better_bound} below, we use instead the \textit{a priori} bounds of Lemma~\ref{lem:a_priori} to control what happens outside of $[-1,1]$.

\begin{corollary} \label{cor:a_priori_local_law}
Let $p>2$.
Let $\delta_0$ and $\cT$ be as in Lemma \ref{lem:stability}. 
There exists $C>0$ such that, for any $N \geq 2$, $q \geq 1$, $\alpha\in[0,1]$ and $z \in \cT$,
\[
\E_\alpha\left[\abs{s_N(z)-s_{V_\alpha}(z)}^q\right] \leq \left( \frac{Cq \sqrt{\log N}}{\sqrt{N} y} \right)^q.
\]
\end{corollary}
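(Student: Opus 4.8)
The plan is to obtain this local law immediately from the local law machinery of Proposition~\ref{prop:local_law_machinery}, fed with the crude \textit{a priori} bound of Corollary~\ref{cor:a_priori_f_z}. Since $\cR$ is a bounded subset of $\C$ disjoint from $\R$, Corollary~\ref{cor:a_priori_f_z} (applied with $K=\overline{\cR}$) provides a constant $C_0>0$ depending only on $p$ and $\beta$ such that, for all $N\geq 2$, $q\geq 1$, $\alpha\in[0,1]$ and $z\in\cR$,
\[
\E_\alpha\left[\abs{L_N(f_{\alpha,z})}^q\right]\leq\left(q\cdot C_0\sqrt{N\log N}\right)^q.
\]
This is exactly assumption~\eqref{eq:ass_L_N(f_alpha,z)} with exponent $a=1$ and the ($z$-independent) function $B_N(z)\equiv C_0\sqrt{N\log N}$, so Proposition~\ref{prop:local_law_machinery}.\ref{it:in_the_trapezoid} yields a constant $C>0$ such that, for all such $N,q,\alpha$ and all $z=x+\ii y\in\cT$,
\[
\E_\alpha\left[\abs{s_N(z)-s_{V_\alpha}(z)}^q\right]
\leq\frac{(Cq)^{q/2}}{(Ny)^q}+\frac{\bigl(Cq\sqrt{N\log N}\bigr)^q}{\bigl(N\abs{z}\abs{z^2-1}^{1/2}\bigr)^q}.
\]

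It then only remains to simplify the right-hand side. The first term is harmless: assuming $C\geq 1$, one has $CqN\log N\geq 1$ for $N\geq 2$, whence $(Cq)^{q/2}\leq\bigl(Cq\sqrt{N\log N}\bigr)^q$, so $(Cq)^{q/2}/(Ny)^q\leq\bigl(Cq\sqrt{N\log N}\bigr)^q/(Ny)^q=\bigl(Cq\sqrt{\log N}/(\sqrt N\,y)\bigr)^q$ using $\sqrt{N\log N}/N=\sqrt{\log N}/\sqrt N$. For the second term I would establish the elementary geometric estimate
\[
\abs{z}\,\abs{z^2-1}^{1/2}\geq c\,y\qquad\text{for all }z=x+\ii y\in\cT,
\]
with $c>0$ depending only on $\delta_0$ (hence only on $p$). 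This follows from a short case split: because $\im z=y>0$ one always has $\abs{z-1}\wedge\abs{z+1}\geq y$, while applying the triangle inequality to $2=(z+1)-(z-1)$ forces $\abs{z-1}\vee\abs{z+1}\geq 1$, so $\abs{z^2-1}=\abs{z-1}\abs{z+1}\geq y$ and $\abs{z^2-1}^{1/2}\geq\sqrt y\geq\delta_0^{-1/2}y$ (using $y\leq\delta_0$); combining with $\abs{z}\geq y$, one distinguishes the case $\abs{z}\geq\tfrac12$, where $\abs{z}\abs{z^2-1}^{1/2}\geq\tfrac12\delta_0^{-1/2}y$, from the case $\abs{z}<\tfrac12$, where then $\abs{x}<\tfrac12$ so that $\abs{z-1}\wedge\abs{z+1}\geq\tfrac12$ and $\abs{z}\abs{z^2-1}^{1/2}\geq\tfrac12 y$. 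Inserting $\abs{z}\abs{z^2-1}^{1/2}\geq cy$ and absorbing $c$ into $C$ turns the second term into $\bigl(Cq\sqrt{\log N}/(\sqrt N\,y)\bigr)^q$ as well, which is the claimed bound.

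I do not expect any serious obstacle: all the substance is already packaged into Corollary~\ref{cor:a_priori_f_z} (which itself rests on Lemma~\ref{lem:a_priori} and the truncation Lemma~\ref{lem:truncation}) and into Proposition~\ref{prop:local_law_machinery}.\ref{it:in_the_trapezoid}. The only point needing a little care is to make the estimate $\abs{z}\abs{z^2-1}^{1/2}\geq cy$ \emph{linear} in $y$, not quadratic, since it is precisely this linearity that yields the $1/(Ny)$ decay (with just the extra $\sqrt{\log N/N}$ factor) expected of a local law. I would also stress that Proposition~\ref{prop:local_law_machinery}.\ref{it:out_the_trapezoid} is intentionally \emph{not} used here: feeding it $B_{\max}\asymp\sqrt{N\log N}$, which enters squared, would give a bound far too weak outside $\cT$; that region is dealt with separately later (in Lemma~\ref{lem:better_bound}) via the \textit{a priori} bounds of Lemma~\ref{lem:a_priori}.
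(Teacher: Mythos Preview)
Your proof is correct and follows exactly the route the paper indicates: feed Corollary~\ref{cor:a_priori_f_z} into Proposition~\ref{prop:local_law_machinery}.\ref{it:in_the_trapezoid} and simplify. The paper leaves the simplification implicit, and your case-split verification of $\abs{z}\abs{z^2-1}^{1/2}\geq cy$ on $\cT$ is a correct way to fill in that detail.
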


\subsection{Master operator approach to bound \texorpdfstring{$L_N(f_{\alpha,z})$}{LN(falpha,z)}}

We now explain in this subsection how to improve the bound on $\Ec{L_N(f_{\alpha,z})^q}$ using Corollary \ref{cor:a_priori_local_law}. 
For this, we use the master operator strategy, using a bound on the Laplace transform of $L_N(f_{\alpha,z})$ obtained by a change of variables in the partition function $Z_N^{\alpha}$.
This strategy is used for example in \cite[Proposition~4.1]{BekLebSer2018} or \cite[Lemma~4.24]{Gui2019}.

\begin{lemma} \label{lem:better_bound}
Let $p>2$.
Let $K \subset \C$ be a compact set. Then there exists $C>0$ such that, for any $N \geq 2$, $q \geq 1$, $\alpha\in[0,1]$ and $z \in K$,
\begin{equation}
	\E_\alpha\left[|L_N(f_{\alpha,z})|^q\right] \leq \left( Cq^2 (\log N)^2 \frac{1}{\abs{z}}  \right)^q.
\end{equation}
\end{lemma}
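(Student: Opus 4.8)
The plan is to use the master operator strategy combined with the first local law from Corollary~\ref{cor:a_priori_local_law}. The key identity comes from a change of variables $\lambda_k = y_k + \frac{t}{N}\widetilde\psi_{\alpha,z}(y_k)$ in the partition function $Z_{N,\alpha}$, where $\widetilde\psi_{\alpha,z} = \Xi_\alpha^{-1}[f_{\alpha,z}\phi]$ is the regularized preimage from \eqref{def:widetilde_psi}, and $\phi$ satisfies \eqref{eq:assumption_phi}. This is a diffeomorphism of $\R$ for $|t|$ small enough (uniformly in $N$, $\alpha$, $z\in K$) since $\|\widetilde\psi_{\alpha,z}'\|_\infty \leq C$ by Corollary~\ref{cor:psi_z_tilde}. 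Expanding the Jacobian and the exponent via Taylor expansion, using the defining property $\Xi_\alpha[\widetilde\psi_{\alpha,z}] = f_{\alpha,z}\phi - a_z$ for the appropriate constant $a_z$, yields an estimate of the form
\[
\E_\alpha\left[\exp\left(t\,L_N(f_{\alpha,z}\phi) + \frac{t}{N}A_N(\widetilde\psi_{\alpha,z}) + \mathrm{Error}\right)\right] = e^{O(t^2 + t)},
\]
where the error term is controlled using the bounds on $\widetilde\psi_{\alpha,z}''$ from Corollary~\ref{cor:psi_z_tilde}; the subtlety here, as flagged in Remarks~\ref{rem:HS_L} and~\ref{rem:HS_A}, is that $\widetilde\psi_{\alpha,z}''$ blows up like $|\lambda|^{p-3}$ at $0$ when $p<3$, so a regularization of $\widetilde\psi_{\alpha,z}$ via Lemma~\ref{lem:regularize} (with $\gamma = (3-p)_+$) is needed to get a compactly supported $\cC^\infty$ function $\widetilde\psi_{\alpha,z,\varepsilon}$ with controlled third derivative, taking $\varepsilon = N^{-k}$ for $k$ large; the errors introduced by this regularization are polynomially small in $N$ and absorbed.

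Next I would bound the anisotropy term $A_N(\widetilde\psi_{\alpha,z,\varepsilon})$ in terms of the first local law. By Lemma~\ref{lem:HS_A_N} (applied to the regularized function $\widetilde\psi_{\alpha,z,\varepsilon}$ supported in a fixed compact $K_0 \supset [-1-\delta_0,1+\delta_0]$), with $M = Cq\sqrt{\log N}/\sqrt{N}$ coming from Corollary~\ref{cor:a_priori_local_law} inside the trapezoid, and handling the region outside $[-1,1]$ via Lemma~\ref{lem:a_priori}.\ref{it:a_priori_A} and the truncation Lemma~\ref{lem:truncation}.\ref{it:truncation3}, we get
\[
\E_\alpha\left[\left|A_N(\widetilde\psi_{\alpha,z,\varepsilon})\right|^q\right]^{1/q} \leq C q (\log N)^2 \frac{1}{|z|},
\]
where the $1/|z|$ factor is crucial and comes from the $1/|z|$ in the bound on $\widetilde\psi_{\alpha,z}''$ in Corollary~\ref{cor:psi_z_tilde} (since the dominant term in Lemma~\ref{lem:HS_A_N} is $\log(1/\eta)\|\widetilde\psi_{\alpha,z,\varepsilon}''\|_1$, and $\eta = N^{-k}$ gives a $\log N$ factor, times the $M^2 \sim q^2\log N/N$ from the local law squared — wait, this needs care: $M^2 = C q^2 \log N / N$ and $A_N$ is theoretically $O(1)$, but here we get a bound growing like $q(\log N)^2/|z|$ after balancing, because there is a $\frac{1}{N}$ prefactor built into how $A_N$ relates back — I would track constants carefully so the final power of $q$ and $\log N$ match the statement). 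Actually the cleaner route: from the Laplace transform estimate, $\frac{t}{N}A_N(\widetilde\psi_{\alpha,z,\varepsilon})$ appears, so one first extracts that $L_N(f_{\alpha,z}\phi) = O(1/|z|)$ up to the anisotropy, then feeds the resulting improved concentration back — a one-step bootstrap as described in Section~\ref{sec:strategy}.

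The main obstacle will be the careful bookkeeping of the two competing effects: the Taylor expansion error terms in the change-of-variables argument (which involve $\|\widetilde\psi_{\alpha,z,\varepsilon}''\|$-type quantities that are $N$-dependent after regularization) and the anisotropy bound via Helffer--Sjöstrand (which loses a factor of $\log N$ from the $\eta$-regularization and picks up the $1/|z|$ singularity). One must choose $\varepsilon$ (hence the number of $\log N$ factors) and the Young-inequality parameters so that all errors are subdominant and the final bound is exactly $(Cq^2(\log N)^2/|z|)^q$. The singular $1/|z|$ factor must be handled consistently throughout — it originates from the $1/z$ in the loop equation \eqref{eq:LE} being transmuted, via $f_{\alpha,z}$ and the master operator, into the $1/|z|$ bound on $\psi_{\alpha,z}''$ in Lemma~\ref{lem:psi_z} — and one checks it does not interfere with the change of variables since $\|\widetilde\psi_{\alpha,z}'\|_\infty$ and $\|\widetilde\psi_{\alpha,z}\|_\infty$ remain bounded uniformly in $z \in K$. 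Finally, the passage from a Laplace/exponential-moment bound to a polynomial-moment bound $\E_\alpha[|L_N(f_{\alpha,z})|^q]$ is standard (optimizing over $t$, or using that control of the Laplace transform on a fixed interval $|t| \leq t_0$ gives moment bounds up to $q \lesssim t_0 \times (\text{typical size})$, together with a crude deterministic bound for larger $q$), and one uses Corollary~\ref{cor:a_priori_f_z} or the truncation lemma to cover the range $q > \sqrt{N}$.
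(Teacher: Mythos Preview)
Your approach is essentially the same as the paper's: change of variables with $\widetilde\psi_{\alpha,z}$, reduce to bounding $\frac{1}{N}A_N(\widetilde\psi_{\alpha,z})$, then regularize and apply Lemma~\ref{lem:HS_A_N} with the local law of Corollary~\ref{cor:a_priori_local_law}. Two points of bookkeeping are off and one structural step is glossed over.

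First, the value of $M$ in Lemma~\ref{lem:HS_A_N}: the hypothesis is $\E_\alpha[|s_N-s_{V_\alpha}|^{2q}]\le (M/(Ny))^{2q}$, so from Corollary~\ref{cor:a_priori_local_law} you must take $M=Cq\sqrt{N\log N}$, not $M=Cq\sqrt{\log N}/\sqrt N$. The output of Lemma~\ref{lem:HS_A_N} is then $\E_\alpha[|A_N|^q]\le (Cq^2N(\log N)^2/|z|)^q$, and the extra factor $N$ is exactly cancelled by the $1/N$ in front of $A_N$ in the inequality $\E_\alpha[|L_N(f_{\alpha,z})|^q]\le (Cq)^q + C^q N^{-q}\E_\alpha[|A_N(\widetilde\psi_{\alpha,z})|^q]$ (this is the paper's \eqref{ineq:ineq_intermediaire}). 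There is no bootstrap here and no need to ``feed back'' anything: the $1/|z|$ comes once, from $\|\widetilde\psi_{\alpha,z}''\|_1$.

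Second, the structural step you gloss over: Corollary~\ref{cor:a_priori_local_law} holds only on the trapezoid $\cT$, so Lemma~\ref{lem:HS_A_N} cannot be applied directly to a function supported on $[-1-\delta_0,1+\delta_0]$. The paper handles this by introducing a \emph{second} cutoff $\chi$ with $\1_{[-1/4,1/4]}\le\chi\le\1_{[-1/2,1/2]}$ and writing $A_N(\phi\widetilde\psi_{\alpha,z})=A_N(\chi\widetilde\psi_{\alpha,z})+A_N((\phi-\chi)\widetilde\psi_{\alpha,z})$. The first piece is supported in $[-1,1]$ (hence inside $\cT$ after regularization), and the second piece is supported away from $0$, hence genuinely $\cC^2$, so the a priori bound of Lemma~\ref{lem:a_priori}.\ref{it:a_priori_A} applies to it and gives $(CqN\log N)^q$, which after division by $N^q$ is harmless. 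Your sentence ``handling the region outside $[-1,1]$ via Lemma~\ref{lem:a_priori}.\ref{it:a_priori_A}'' points at this but does not say how; splitting the \emph{function} rather than the integration region is the move that makes it work.
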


\begin{proof}
Let $z\in K\setminus \R$, $\phi\in \mathcal{C}^\infty_c(\R)$ such that $\1_{[-1-\delta_0/2,1+\delta_0/2]} \leq \phi \leq \1_{[-1-\delta_0,1+\delta_0]}$, where $\delta_0$ is as in Lemma \ref{lem:stability}, and recall $\widetilde\psi_{\alpha,z} \coloneqq \Xi_{\alpha}^{-1}[f_{\alpha,z}\phi]$. All constants $C$ appearing below are independent of $\alpha$ and $z$. We make the change of variables 
\[
\lambda_k=y_k + \frac{\widetilde\psi_{\alpha,z}(y_k)}{N},\quad 1\leq k \leq N,
\]
in the partition function 
\begin{equation}
	\label{eq:partition}
	Z_N^\alpha=\int_{\R^N}\left(\prod_{1\leq k < \ell \leq N}|\lambda_k-\lambda_\ell|^\beta\right)e^{-\frac{\beta N}{2}\sum_{k=1}^N V_\alpha(\lambda_k)}\diff\lambda_1\ldots \diff\lambda_N.
\end{equation}
By Corollary \ref{cor:psi_z_tilde}, there is some $C>0$ such that for $z\in K\setminus \R$ and $\lambda\in \R$, $|\widetilde\psi_{\alpha,z}'(\lambda)|\leq C$, so this indeed defines a diffeomorphic change of variables for $N$ large enough (for $N$ small, the result is a consequence of Corollary~\ref{cor:a_priori_f_z}). We find
\begin{multline}\label{eq:rewriting_Z}
Z_N^\alpha=\int_{\R^N}\left(\prod_{1\leq k <\ell \leq N}\left|y_k-y_\ell + \frac{1}{N}(\widetilde\psi_{\alpha,z}(y_k)-\widetilde\psi_{\alpha,z}(y_\ell))\right|\right)^\beta e^{-\frac{\beta N}{2}\sum_{k=1}^N V_\alpha(y_k+\widetilde\psi_{\alpha,z}(y_k)/N)}\times\\\times\prod_{k=1}^N \left(1+\widetilde\psi_{\alpha,z}'(y_k)/N\right)\diff y_k.
\end{multline}
Factoring out $\prod_{k <\ell}|y_k-y_\ell|^\beta$ in the product term, and Taylor expanding the different terms involved, we will now convert this rewriting into an expectation with respect to $\E_\alpha$. By Taylor's theorem, there exist $\theta_{k,\ell}\in [0,1]$ such that
\begin{multline*}
	\sum_{1\leq k < \ell \leq N}\log\left|1 +\frac{1}{N}\frac{\widetilde\psi_{\alpha,z}(y_k)-\widetilde\psi_{\alpha,z}(y_\ell)}{y_k-y_\ell}\right| =  \frac{1}{N}\sum_{1\leq k < \ell \leq N}\frac{\widetilde\psi_{\alpha,z}(y_k)-\widetilde\psi_{\alpha,z}(y_\ell)}{y_k-y_\ell} 
	\\- \frac{1}{N^2}\sum_{1\leq k < \ell \leq N} \theta_{k,\ell}\left( \frac{\widetilde\psi_{\alpha,z}(y_k)-\widetilde\psi_{\alpha,z}(y_\ell)}{y_k-y_\ell}\right)^2 
\end{multline*}
and $\theta_k\in[0,1]$ such that
\[
V_\alpha\left(y_k + \frac{1}{N}\widetilde\psi_{\alpha,z}(y_k)\right)=V_\alpha(y_k)+\frac{1}{N}\widetilde\psi_{\alpha,z}(y_k)V_\alpha'(y_k) + \frac{1}{N^2}\widetilde\psi_{\alpha,z}(y_k)^2V_\alpha''\left(y_k+ \frac{\theta_k}{N}\widetilde\psi_{\alpha,z}(y_k)\right).
\]
Using this and dividing both sides of \eqref{eq:rewriting_Z} by $Z_N^\alpha$, we obtain:
\begin{equation}
	\label{eq:egalite_partitions}
	1=\E_\alpha\left[\exp\left( \frac{\beta}{N}\sum_{1\leq k < \ell \leq N}\frac{\widetilde\psi_{\alpha,z}(\lambda_k)-\widetilde\psi_{\alpha,z}(\lambda_\ell)}{\lambda_k-\lambda_\ell} - \frac{\beta}{2}\sum_{k=1}^N \widetilde\psi_{\alpha,z}(\lambda_k)V_\alpha'(\lambda_k)\right)\times e^{\mathrm{Rem}_N}\right],
\end{equation}
where the remainder term $\mathrm{Rem}_N$ can be bounded as follows
\begin{equation} \label{eq:bound_remainder}
    \abs{\mathrm{Rem}_N}
    \leq \frac{\beta}{2}\left( \sup_{|h|\leq \|\widetilde\psi_{\alpha,z}\|_{\infty}/N} \|\widetilde\psi_{\alpha,z}^2V_\alpha''(\cdot + h)\|_{\infty}  + \|\widetilde\psi_{\alpha,z}'\|_\infty^2\right)+\|\widetilde\psi_{\alpha,z}'\|_\infty
    \leq C,
\end{equation} 
for some finite constant $C$ by Corollary \ref{cor:psi_z_tilde}, which in particular implies that, for any $x \in \R$ and $\abs{h} \leq 1$ (we use here $\|\widetilde\psi_{\alpha,z}\|_{\infty}/N \leq 1$ for $N$ large enough), 
\[
    \abs{\widetilde\psi_{\alpha,z}(x)^2V_\alpha''(x+h)}
    \leq \frac{C \abs{V_\alpha''(x+h)}}{(1+\abs{V_\alpha'(x)})^2}
    \leq \frac{C (\alpha \abs{x+h}^{p-2} + (1-\alpha))}{(1+\alpha \abs{x}^{p-1} + (1-\alpha)\abs{x})^2}
    \leq C.
\]
Now, recalling the definition of $\Xi_\alpha$ in \eqref{eq:def_master_op} and of the anisotropy in \eqref{eq:def_anisotropy}, and using \eqref{eq:identiteintegralmuvalpha}, notice that
\begin{multline*}
	\frac{1}{N}\sum_{1\leq k < \ell \leq N}\frac{\widetilde\psi_{\alpha,z}(\lambda_k)-\widetilde\psi_{\alpha,z}(\lambda_\ell)}{\lambda_k-\lambda_\ell} - \frac{1}{2}\sum_{k=1}^N \widetilde\psi_{\alpha,z}(\lambda_k)V_\alpha'(\lambda_k) \\= L_N(\Xi_\alpha[\widetilde\psi_{\alpha,z}])+\frac{1}{2N}\left(A_N(\widetilde\psi_{\alpha,z}) - \sum_{k=1}^N\widetilde\psi_{\alpha,z}'(\lambda_k) \right).
\end{multline*}
Moreover, recall that $\Xi_\alpha[\widetilde\psi_{\alpha,z}] = f_{\alpha,z}\phi$. Therefore, coming back to \eqref{eq:egalite_partitions}, using \eqref{eq:bound_remainder} and bounding $\abso{\frac{1}{N}\sum_{k=1}^N\widetilde\psi_{\alpha,z}'(\lambda_k)} \leq \|\widetilde\psi_{\alpha,z}'\|_\infty \leq C$,
we get 
\[
    \E_\alpha\left[ \exp\left(L_N(f_{\alpha,z}\phi) 
    +\frac{1}{2N}A_N(\widetilde\psi_{\alpha,z})  \right) \right]
    = \E_\alpha\left[ \exp\left(L_N(\Xi_\alpha[ \widetilde\psi_{\alpha,z}]) 
    +\frac{1}{2N}A_N(\widetilde\psi_{\alpha,z})  \right) \right]
    \leq C.
\]
Replacing $\widetilde\psi_{\alpha,z}$ by $-\widetilde\psi_{\alpha,z}$ in the change of variables, we get the same bound with a minus sign in the exponential. therefore, using that $e^{\abs{x}} \leq e^x + e^{-x}$ for $x \in \R$, we find that
\[
\E_\alpha\left[ \exp\left(\abs{L_N(f_{\alpha,z}\phi) +\frac{1}{2N}A_N(\widetilde\psi_{\alpha,z})}\right) \right] \leq C.
\]
Expanding the exponential in series, we deduce that for all $q\geq 1$
\[
\E_\alpha\left[ \frac{1}{q!}\left|L_N(f_{\alpha,z}\phi)-\frac{1}{2N}A_N(\widetilde\psi_{\alpha,z})\right|^q\right] \leq C.
\]
Combining this with \eqref{eq:truncation_f_z}, we get
\begin{equation}
	\E_\alpha\left[ \left|L_N(f_{\alpha,z})\right|^q\right] 
	\leq (Cq)^q + \frac{C^q}{N^q} \E_\alpha\left[\abs{A_N(\widetilde\psi_{\alpha,z})}^q\right].
	\label{ineq:ineq_intermediaire}
\end{equation}
It remains to bound the anisotropy term. We use two successive truncations. Firstly, applying Lemma~\ref{lem:truncation}.\ref{it:truncation3} (recall $\widetilde\psi_{\alpha,z}$ and its derivative are bounded by Corollary~\ref{cor:psi_z_tilde}), we get
\begin{equation}
	\E_\alpha\left[\abs{A_N(\widetilde\psi_{\alpha,z}) - A_N(\phi \widetilde\psi_{\alpha,z})}^q\right]
	\leq (Cq)^{2q}. \label{ineq:ineq_intermediaire2}
\end{equation}
Then, we would like to use the local law in Corollary~\ref{cor:a_priori_local_law} together with Lemma~\ref{lem:HS_A_N}, but this local law is true only in the trapezoid region so we have to reduce again the support so that it is contained in $[-1,1]$ (with some margin for the regularization step). 
So we consider $\chi \in \mathcal{C}^\infty_c(\R)$ such that 
\[
\1_{[-1/4,1/4]} \leq \chi \leq \1_{[-1/2,1/2]}.
\]
Noting that $\phi-\chi = 0$ on $[-1/4,1/4]$, we get, by Corollary \ref{cor:psi_z_tilde}, that $(\phi-\chi)\widetilde\psi_{\alpha,z}$ is in $\mathcal{C}^2_c(\R)$ with bounded derivatives (uniformly in $\alpha$ and $z \in K$), therefore we can apply Lemma~\ref{lem:a_priori}.\ref{it:a_priori_A} to bound 
\begin{equation}
	\E_\alpha\left[\abs{A_N(\phi\widetilde\psi_{\alpha,z}) - A_N(\chi \widetilde\psi_{\alpha,z})}^q\right]
	= \E_\alpha\left[\abs{A_N((\phi-\chi)\widetilde\psi_{\alpha,z})}^q\right]
	\leq \left( Cq N \log N \right)^q. \label{ineq:ineq_intermediaire3}
\end{equation}
Now, we want to apply Lemma~\ref{lem:HS_A_N} to $\chi\widetilde\psi_{\alpha,z}$, but it requires $\mathcal{C}^3$ regularity, so we first use a regularization argument. 
By Corollary \ref{cor:psi_z_tilde} again, with $\gamma = (3-p)_+$ if $p\neq3$ and some arbitrary $\gamma \in(0,1)$ if $p=3$, we have, for any $\lambda\in \R$, 
\begin{equation} \label{eq:borne_chipsi}
\abso{(\chi\widetilde\psi_{\alpha,z})(\lambda)} \leq C, \qquad 
\abso{(\chi\widetilde\psi_{\alpha,z})'(\lambda)} \leq C, \qquad
\abso{(\chi\widetilde\psi_{\alpha,z})''(\lambda)} 
\leq \frac{C}{\abs{z}} (1+\abs{\lambda}^{-\gamma}). 
\end{equation}
Let $\widetilde\psi_{\alpha,z}^{\varepsilon}\in \mathcal{C}_c^\infty(\R)$ be the regularization of $\chi\widetilde\psi_{\alpha,z}$ given by Lemma \ref{lem:regularize} for $\varepsilon \in (0,1/2)$, then $\supp \widetilde\psi_{\alpha,z}^{\varepsilon} \subset \supp \chi + [-\varepsilon,\varepsilon] \subset [-1,1]$,
\begin{equation} \label{eq:borne_psieps_1}
\normeo{\chi\widetilde\psi_{\alpha,z}-\widetilde\psi_{\alpha,z}^{\varepsilon}}_\infty \leq \varepsilon \normeo{(\chi\widetilde\psi_{\alpha,z})'}_\infty \leq C \varepsilon,
\qquad
\normeo{(\chi\widetilde\psi_{\alpha,z})'-(\widetilde\psi_{\alpha,z}^{\varepsilon})'}_\infty \leq \frac{C}{\abs{z}} \varepsilon^{1-\gamma},
\end{equation}
and, for any $\lambda\in \R$, 
\begin{equation} \label{eq:borne_psieps_2}
|(\widetilde\psi_{\alpha,z}^\varepsilon)''(\lambda)|\leq \frac{C}{\abs{z}} (1+\abs{\lambda}^{-\gamma}),
\qquad 
|(\widetilde\psi_{\alpha,z}^\varepsilon)'''(\lambda)|\leq \frac{C}{\varepsilon \abs{z}} (1+\abs{\lambda}^{-\gamma}).
\end{equation}
Then, 
\begin{equation}
	\label{ineq:anisotropie_singularite}
	|A_N(\chi\widetilde\psi_{\alpha,z})|\leq |A_N(\widetilde\psi_{\alpha,z}^\varepsilon)| + |A_N(\chi\widetilde\psi_{\alpha,z}-\widetilde\psi_{\alpha,z}^\varepsilon)| \leq |A_N(\widetilde\psi_{\alpha,z}^\varepsilon)| + CN^2\frac{\varepsilon^{1-\gamma}}{\abs{z}},
\end{equation}
where we used that $|A_N(f)|\leq 4N^2\|f'\|_{\infty}$. Now, with the local law in Corollary \ref{cor:a_priori_local_law} as input and noting that  $\widetilde\psi_{\alpha,z}^\varepsilon$ is supported in $[-1,1]$, we can apply Lemma \ref{lem:HS_A_N} with $K_0=[-1,1]$ and $M=Cq\sqrt{N\log N}$ to deduce that for any $\eta\in (0,(\delta_0\wedge1)/2)$,
\begin{align}
	\E_\alpha[|A_N(\widetilde\psi_{\alpha,z}^\varepsilon)|^q]
	&\leq \left(Cq^2N\log N \left( \|\widetilde\psi_{\alpha,z}^\varepsilon\|_1 +  \|(\widetilde\psi_{\alpha,z}^\varepsilon)'\|_1 + \log(1/\eta) \|(\widetilde\psi_{\alpha,z}^\varepsilon)''\|_1 + \eta \|(\widetilde\psi_{\alpha,z}^\varepsilon)'''\|_1 \right)\right)^q
	\nonumber \\
	&\leq \left( Cq^2 N \log N\left(1+ \frac{\varepsilon^{1-\gamma}}{\abs{z}} + \frac{\log(1/\eta)}{\abs{z}} + \frac{\eta}{\varepsilon \abs{z}}\right) \right)^q, \label{eq:borne_anisotropy_psieps}
\end{align}
where we used \eqref{eq:borne_chipsi}, \eqref{eq:borne_psieps_1} and \eqref{eq:borne_psieps_2} to bound the $L^1$ norms. 
We now choose $\varepsilon,\eta$ so that $N^2\varepsilon^{1-\gamma} \leq N\log N$ and $\eta/\varepsilon \leq 1$. Taking for example $\varepsilon=\eta=((\log N)/N)^{1/(1-\gamma)}$ and combining \eqref{ineq:anisotropie_singularite} and \eqref{eq:borne_anisotropy_psieps}, we get
\begin{equation} \label{ineq:ineq_intermediaire4}
\E_\alpha\left[\abs{A_N(\chi\widetilde\psi_{\alpha,z})}^q\right]
\leq \left(\frac{Cq^2N(\log N)^2}{\abs{z}}\right)^q.
\end{equation}
Combining \eqref{ineq:ineq_intermediaire}, \eqref{ineq:ineq_intermediaire2}, \eqref{ineq:ineq_intermediaire3} and \eqref{ineq:ineq_intermediaire4} concludes the proof.
\end{proof}

\subsection{Proof of the optimal local law}
\label{sec:opt_local_law}

We prove here the following optimal local law under $\P_\alpha$ uniformly in $\alpha$. This implies Theorem \ref{thm:local_law} by taking $\alpha =1$ when $p>2$ and $\alpha=0$ when $p=2$.

\begin{theorem}[Optimal local law] \label{thm:local_law_alpha}
	Let $p>2$. 
	Let $\delta_0$, $\cR$ and $\cT$ be as in Lemma \ref{lem:stability}. 
	Then there exists $C>0$ such that the following holds, for any $N,q \geq 1$ and $\alpha\in[0,1]$.
	\begin{enumerate}
		\item For any $z \in \cT$,
		\[
			\E_\alpha \left[\abs{s_N(z)-s_{V_\alpha}(z)}^q\right] 
			\leq \frac{(Cq)^{q/2}}{(Ny)^q} + \frac{(Cq)^{32q}}{(N \abs{z} \abs{z^2-1}^{1/2})^q}.
		\]
		\item For any $z=\cR \setminus \cT$,
		\[
			\E_\alpha \left[\abs{s_N(z)-s_{V_\alpha}(z)}^q\right] \leq \frac{(Cq)^{64q}}{(Ny)^{q}}.
		\]
	\end{enumerate}
\end{theorem}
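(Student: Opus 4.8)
The plan is to iterate the ``local law machinery'' of Proposition~\ref{prop:local_law_machinery}, feeding into it progressively better bounds on $\E_\alpha\bigl[|L_N(f_{\alpha,z})|^q\bigr]$ and reading off progressively sharper local laws, along the chain announced in Remark~\ref{rem:even_moments}. The starting point is the crude bound $\E_\alpha[|L_N(f_{\alpha,z})|^q]\le (Cq\sqrt{N\log N})^q$ of Corollary~\ref{cor:a_priori_f_z} (i.e.\ $a=1$, $B_N(z)=C\sqrt{N\log N}$), which via Proposition~\ref{prop:local_law_machinery}.\ref{it:in_the_trapezoid} and the elementary bound $|z||z^2-1|^{1/2}\ge cy$ on $\cT$ already gives the first local law $\E_\alpha[|s_N-s_{V_\alpha}|^q]\le\bigl(Cq\sqrt{N\log N}/(Ny)\bigr)^q$ of Corollary~\ref{cor:a_priori_local_law}. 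From there I alternate two kinds of improvement step, each of which converts a local law back into a bound on $L_N(f_{\alpha,z})$.

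A \emph{master-operator step}: given the current local law with constant $M$, bound $\E_\alpha[|A_N(\widetilde\psi_{\alpha,z})|^q]$ by $\le(CM^2\times\text{norms of }\widetilde\psi_{\alpha,z})^q$ using Lemma~\ref{lem:HS_A_N} after the regularization of Lemma~\ref{lem:regularize} (the needed $\cC^1(\R)\cap\cC^2(\R^*)$ bounds on $\widetilde\psi_{\alpha,z}$ are in Corollary~\ref{cor:psi_z_tilde}, and the truncation Lemma~\ref{lem:truncation}.\ref{it:truncation3} reduces the support), and then run the change of variables $\lambda_k=y_k+\widetilde\psi_{\alpha,z}(y_k)/N$ in $Z_{N,\alpha}$ exactly as in the proof of Lemma~\ref{lem:better_bound}, yielding $\E_\alpha[|L_N(f_{\alpha,z})|^q]\le(Cq)^q+N^{-q}\E_\alpha[|A_N(\widetilde\psi_{\alpha,z})|^q]$. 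The crucial point is the $1/N$ in front of $A_N$: this step kills any factor of $N$ in the local-law constant (the first one turns $M\sim\sqrt{N\log N}$ into a polylogarithmic constant, at the price of a $1/|z|$ coming from $\norme{\widetilde\psi_{\alpha,z}''}_1$; subsequent ones are harmless). A \emph{direct step}: when the local law has the mixed form $\frac{(Cq)^{q/2}}{(Ny)^q}+\frac{(Cq^aB_N(z))^q}{(N|z||z^2-1|^{1/2})^q}$ on $\cT$, apply Lemma~\ref{lem:HS_z^2} to $f_{\alpha,w}\phi$ — after noting that $\frac1{|z||z^2-1|^{1/2}}$ and $\frac1{|z|^2|z^2-1|^{1/2}}$ are both $\le\frac{C}{y}+\frac{C}{|z|^2}$ on $\cT$, so the hypothesis of Lemma~\ref{lem:HS_z^2} is met, and that the outside-$\cT$ bound from Proposition~\ref{prop:local_law_machinery}.\ref{it:out_the_trapezoid} integrates to $O(1)$ over $\cR\setminus\cT$ against the Helffer--Sjöstrand weight — and then undo the truncation with Lemma~\ref{lem:truncation}.\ref{it:truncation1}. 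This trades the $1/|z|$ inside $B_N$ for a loss in the powers of $q$ and $\log N$ coming from the $B_{\max}^2$ factor outside $\cT$, but reintroduces no power of $N$.

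Carrying this out produces the chain $\E_\alpha[|L_N(f_{\alpha,z})|^q]\le(Cq\sqrt{N\log N})^q\to(Cq^2(\log N)^2/|z|)^q\to(Cq^4(\log N)^4)^q\to\cdots\to(Cq^{32})^q$, the arrows alternating master-operator and direct steps as in Remark~\ref{rem:even_moments}; the exact exponents of $q$ and $\log N$ play no role since we do not optimize them. At each stage one must track the powers of $q$, the powers of $\log N$, and the $z$-dependence of $B_N$; verify integrability of the Helffer--Sjöstrand weights near $0$ and near $\pm1$ (which holds because $p>2$ forces $(3-p)_+<1$, exactly as in the proof of Lemma~\ref{lem:HS_z^2}), using the bounds on $f_{\alpha,z}$, $\psi_{\alpha,z}$ and their derivatives from Lemmas~\ref{lem:f_z},~\ref{lem:psi_z} and Corollary~\ref{cor:psi_z_tilde}; and check the regularity hypotheses for the regularization lemma. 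For $N$ small relative to $q$ all the asserted local laws hold trivially from $|s_N-s_{V_\alpha}|\le 2/y$, so one may always take $N$ large enough for the change of variables to be a diffeomorphism.

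Finally, with $\E_\alpha[|L_N(f_{\alpha,z})|^q]\le(Cq^{32})^q$ in hand for all $z\in\cR$ (so $a=32$, $B_N\equiv C$, hence $B_{\max}=C$), one last application of Proposition~\ref{prop:local_law_machinery} finishes the proof: part~\ref{it:in_the_trapezoid} gives, for $z\in\cT$,
\[
\E_\alpha\bigl[|s_N(z)-s_{V_\alpha}(z)|^q\bigr]\le\frac{(Cq)^{q/2}}{(Ny)^q}+\frac{(Cq^{32})^q}{(N|z||z^2-1|^{1/2})^q},
\]
and part~\ref{it:out_the_trapezoid} gives, for $z\in\cR\setminus\cT$,
\[
\E_\alpha\bigl[|s_N(z)-s_{V_\alpha}(z)|^q\bigr]\le\frac{(Cq)^{64q}}{(Ny)^q},
\]
which are exactly the two assertions of Theorem~\ref{thm:local_law_alpha}. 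The main obstacle is not any single estimate but the cumulative bookkeeping: ensuring that the mixed-form local laws produced by Proposition~\ref{prop:local_law_machinery}.\ref{it:in_the_trapezoid} always fit the hypotheses of the Helffer--Sjöstrand lemmas after the elementary majorizations above, that the regularization error terms are controlled uniformly in $\alpha$ and $z\in K$, and that no step silently reintroduces a positive power of $N$ into the numerator.
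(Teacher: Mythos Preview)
Your proposal is correct and follows essentially the same approach as the paper's proof: start from the crude bound of Corollary~\ref{cor:a_priori_f_z}, then alternate master-operator steps (as in Lemma~\ref{lem:better_bound}) and Helffer--Sj\"ostrand steps (Lemma~\ref{lem:HS_z^2}) to drive the bound on $\E_\alpha[|L_N(f_{\alpha,z})|^q]$ down the chain $(Cq\sqrt{N\log N})^q\to(Cq^2(\log N)^2/|z|)^q\to(Cq^4(\log N)^4)^q\to(Cq^{16}/|z|)^q\to(Cq^{32})^q$, applying Proposition~\ref{prop:local_law_machinery} after each improvement and once more at the end. The only cosmetic difference is that the paper unifies the in-$\cT$ and out-of-$\cT$ local laws into a single bound $\frac{M^q}{N^q}\bigl(\tfrac1y+\tfrac1{|z|^2}\bigr)^q$ on all of $\cR$ before invoking Lemma~\ref{lem:HS_z^2}, whereas you describe treating the two regions separately in the Helffer--Sj\"ostrand integration; these are equivalent.
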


\begin{proof}
    The proof consists in applying four times the local law machinery, Proposition~\ref{prop:local_law_machinery}. Between each application, we improve the bound we have on $\E_\alpha\left[\abs{L_N(f_{\alpha,z})}^q\right]$. There are two ways to improve this bound: we improve the dependence either on $z$ or on $N$. Improving the dependence on $z$ is done by applying the current best local law together with Helffer--Sjöstrand formula by applying Lemma~\ref{lem:HS_z^2}. Improving the dependence on $N$, is done via the master operator strategy, similarly to the proof of Lemma~\ref{lem:better_bound}.
    Let $\delta_0$, $\cR$ and $\cT$ be as in Lemma \ref{lem:stability}. 
    
    \textbf{First local law.} Applying Proposition~\ref{prop:local_law_machinery} with the bound on $\E_\alpha\left[\abs{L_N(f_{\alpha,z})}^q\right]$ from Lemma~\ref{lem:better_bound} as input, we get: there exists $C>0$ such that, for any $N \geq 2$, $q \geq 1$ and $\alpha\in[0,1]$,
    \begin{equation*}
	\E_\alpha\left[\abs{s_N(z)-s_{V_\alpha}(z)}^q\right] \leq
	\begin{cases}
	    \dfrac{(Cq)^{q/2}}{(Ny)^q} + \dfrac{(Cq^2 (\log N)^2)^q}{(N \abs{z}^2 \abs{z^2-1}^{1/2})^q}, & \text{if } z \in \cT, \vspace{.1cm} \\
	    \dfrac{\left(Cq^4(\log N)^4\right)^q}{(Ny)^q}, & \text{if } z \in \cR \setminus \cT,
	\end{cases}
	\end{equation*}
    which can be unified into the less precise bound, holding for any $z \in \cR$,
    \begin{equation} \label{eq:firstLL}
	\E_\alpha\left[\abs{s_N(z)-s_{V_\alpha}(z)}^q\right] \leq
	\frac{\left(Cq^4(\log N)^4\right)^q}{N^q}
	\left( \frac{1}{y} + \frac{1}{\abs{z}^2} \right)^q,
	\end{equation}
	where we used that $\abs{z}^{-2} \abs{z^2-1}^{-1/2} \leq C (\abs{z}^{-2} + \abs{z-1}^{-1/2} + \abs{z+1}^{-1/2}) \leq C (\abs{z}^{-2} + y^{-1})$ for any $z \in \cR$.
	
	\textbf{Improvement of the bound on $\E_\alpha\left[\abs{L_N(f_{\alpha,z})}^q\right]$ via Helffer--Sjöstrand.} 
	Applying first the truncation argument in \eqref{eq:truncation_f_z} and then Lemma \ref{lem:HS_z^2} with \eqref{eq:firstLL} as input, we deduce the following bound: for any compact set $K \subset \C$, there exists $C>0$ such that, for any $N \geq 2$, $q \geq 1$, $\alpha\in[0,1]$ and $z \in K$,
    \begin{equation} \label{eq:first_improvement}
        \E_\alpha\left[\abs{L_N(f_{\alpha,z})}^q\right] 
        \leq \left(Cq^4 (\log N)^4 \right)^q. 
    \end{equation}
    
    \textbf{Second local law.} Applying Proposition~\ref{prop:local_law_machinery} with the bound \eqref{eq:first_improvement} as input, we get: there exists $C>0$ such that, for any $N \geq 2$, $q \geq 1$ and $\alpha\in[0,1]$,
    \begin{equation*}
	\E_\alpha\left[\abs{s_N(z)-s_{V_\alpha}(z)}^q\right] \leq
	\begin{cases}
	    \dfrac{(Cq)^{q/2}}{(Ny)^q} + \dfrac{(Cq^4 (\log N)^4)^q}{(N \abs{z} \abs{z^2-1}^{1/2})^q}, & \text{if } z \in \cT, \vspace{.1cm} \\
	    \dfrac{\left(Cq^8(\log N)^8\right)^q}{(Ny)^q}, & \text{if } z \in \cR \setminus \cT,
	\end{cases}
	\end{equation*}
    which can be unified into the less precise bound, holding for any $z \in \cR$,
    \begin{equation} \label{eq:secondLL}
	\E_\alpha\left[\abs{s_N(z)-s_{V_\alpha}(z)}^q\right] \leq
	\frac{\left(Cq^8(\log N)^8\right)^q}{(Ny)^q}.
	\end{equation}
	
	\textbf{Improvement of the bound on $\E_\alpha\left[\abs{L_N(f_{\alpha,z})}^q\right]$ via master operator.} We follow the argument used in the proof of Lemma~\ref{lem:better_bound}.
	By \eqref{ineq:ineq_intermediaire} and \eqref{ineq:ineq_intermediaire2}, 
    \begin{equation} \label{eq:2nd_improvement_fz_1}
    	\E_\alpha\left[ \abs{L_N(f_{\alpha,z})}^q\right] 
    	\leq (Cq)^{2q} + \frac{C^q}{N^q} \E_\alpha\left[\abs{A_N(\phi \widetilde\psi_{\alpha,z})}^q\right],
    \end{equation}
    where we choose here $\phi\in \mathcal{C}^\infty_c(\R)$ that satisfies
    \[ 
    \1_{[-1-\delta_0/2,1+\delta_0/2]}
    \leq\phi\leq \1_{[-1-3\delta_0/4,1+3\delta_0/4]},
    \]
    and set again $\widetilde\psi_{\alpha,z} \coloneqq \Xi_{\alpha}^{-1}[f_{\alpha,z}\phi]$. For $\varepsilon\in(0,\delta_0/2)$, let $\Psi^\varepsilon_{\alpha,z}$ be the regularization of $\phi\widetilde\psi_{\alpha,z}$ given by Lemma~\ref{lem:regularize}. 
    Proceeding as for the proof of \eqref{ineq:ineq_intermediaire4}, but using the local law \eqref{eq:secondLL} as input to apply Lemma~\ref{lem:HS_A_N} with $K_0 = [-1-\delta_0,1+\delta_0]$ and $M = Cq^8 (\log N)^8$ (note that $\supp \Psi^\varepsilon_{\alpha,z} \subset K_0$ by our choice of $\phi$ and $\varepsilon$), we get the bound
    \begin{align*}
    	\E_\alpha\left[\abs{A_N(\phi\widetilde\psi_{\alpha,z})}^q\right]
    	&\leq 2^{q} \left(\E_\alpha\left[\abs{A_N(\phi\widetilde\psi_{\alpha,z}-\Psi^\varepsilon_{\alpha,z})}^q\right]
    	+ \E_\alpha\left[\abs{A_N(\Psi^\varepsilon_{\alpha,z})}^q\right]\right)\\
    	&\leq \left(\frac{C N^2\varepsilon^{1-\gamma}}{\abs{z}}\right)^q + \left(Cq^{16}(\log N)^{16}\left(1+\frac{\varepsilon^{1-\gamma}}{\abs{z}}+\frac{\log(1/\eta)}{\abs{z}}+\frac{\eta}{\varepsilon \abs{z}}\right) \right)^q.
    \end{align*}
    We now take $\varepsilon=\eta=N^{-2/(1-\gamma)}$, so that $N^2\varepsilon^{1-\gamma} = 1$, and this yields
    \begin{equation} \label{eq:second_bound_anisotropy}
    \E_\alpha\left[\abs{A_N(\phi\widetilde\psi_{\alpha,z})}^q\right]
    \leq \left(\frac{Cq^{16}(\log N)^{17}}{\abs{z}}\right)^q.
    \end{equation}
    Coming back to \eqref{eq:2nd_improvement_fz_1}, we obtain
    \begin{equation} \label{eq:second_improvement}
        \E_\alpha\left[ L_N(f_{\alpha,z})^q \right] \leq \left( \frac{Cq^{16}}{\abs{z}} \right)^q.
    \end{equation}
    
    \textbf{Third local law.} Applying Proposition~\ref{prop:local_law_machinery} with the bound \eqref{eq:second_improvement} as input and unifying the cases as for \eqref{eq:firstLL}, we get: there exists $C>0$ such that, for any $N \geq 2$, $q \geq 1$, $\alpha\in[0,1]$ and $z \in \cR$,
    \begin{equation} \label{eq:thirdLL}
	\E_\alpha\left[\abs{s_N(z)-s_{V_\alpha}(z)}^q\right] \leq
	\frac{\left(Cq^{32}\right)^q}{N^q}
	\left( \frac{1}{y} + \frac{1}{\abs{z}^2} \right)^q.
	\end{equation}
	
	\textbf{Improvement of the bound on $\E_\alpha\left[\abs{L_N(f_{\alpha,z})}^q\right]$ via Helffer--Sjöstrand.} 
	Applying first the truncation argument in \eqref{eq:truncation_f_z} and then Lemma \ref{lem:HS_z^2} with \eqref{eq:thirdLL} as input, we deduce the following bound: for any compact set $K \subset \C$, there exists $C>0$ such that, for any $N \geq 2$, $q \geq 1$, $\alpha\in[0,1]$ and $z \in K$,
    \begin{equation} \label{eq:third_improvement}
        \E_\alpha\left[\abs{L_N(f_{\alpha,z})}^q\right] 
        \leq \left(Cq^{32} \right)^q. 
    \end{equation}
    
    \textbf{Fourth local law.} Applying Proposition~\ref{prop:local_law_machinery} with the bound \eqref{eq:third_improvement} as input, we get the desired result.
\end{proof}

\section{Central limit theorem}
\label{sec:CLT}
We are now able to give a proof of Theorem \ref{thm:CLT}, but before that we state a preliminary lemma to rewrite the limiting mean and variance appearing in the proof.

\begin{lemma}[Mean/variance formulas]\label{lem:variance formula}
    Let $p>2$.
    Let $f \in \mathcal{C}^3(\R)$, and $\psi_\alpha=\Xi_\alpha^{-1}[f]$. Then, 
    \begin{multline}\label{eq:lemme_moyenne}
        \int_{-1}^1 \psi_\alpha'(x)\diff\mu_{V_{\alpha}}(x) 
        \\= \frac{1}{\pi^2}\int_{-1}^1 \frac{r_\alpha'(x)}{r_\alpha(x)}\int_{-1}^1\frac{f(t)-f(x)}{t-x}\frac{\diff t}{\sigma(t)}\sigma(x)\diff x 
        - \frac{f(1)+f(-1)}{2}+\frac{1}{\pi}\int_{-1}^1f(x)\frac{\diff x}{\sigma(x)}
    \end{multline}
        and
    \begin{multline}\label{eq:lemme_variance}
	\dfrac{1}{2}\int_{-1}^1V''_\alpha(x)\psi_\alpha(x)^2\diff\mu_{V_\alpha}(x)
	+\dfrac{1}{2}\iint_{[-1,1]^2}\left(\dfrac{\psi_\alpha(x)-\psi_\alpha(y)}{x-y}\right)^2\diff\mu_{V_\alpha}(x)\diff\mu_{V_\alpha}(y) \\
	= \dfrac{1}{\pi^2}\iint_{[-1,1]^2}\left(\dfrac{f(x)-f(y)}{x-y}\right)^2\dfrac{1-xy}{\sigma(x)\sigma(y)}\diff x\diff y.
    \end{multline}
\end{lemma}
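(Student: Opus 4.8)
The plan is to exploit the explicit formula for $\psi_\alpha = \Xi_\alpha^{-1}[f]$ from Lemma~\ref{lem:inverse_master_op}, namely that on $[-1,1]$,
\[
\psi_\alpha(x) = -\frac{1}{\pi r_\alpha(x)} \int_{-1}^1 \frac{f(t)-f(x)}{t-x} \frac{\diff t}{\sigma(t)},
\]
together with the relations $\diff\mu_{V_\alpha}/\diff x = \sigma r_\alpha/\pi$ on $[-1,1]$ (see \eqref{eq:mu=rsigma}) and the characterizing identity $\Xi_\alpha[\psi_\alpha] = f - a$ with $a = \int_{-1}^1 f(t)\,\diff t/\sigma(t)$. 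The two displays \eqref{eq:lemme_moyenne} and \eqref{eq:lemme_variance} will be handled separately but share the same technical backbone: rewriting a principal-value/Tricomi integral and an integration by parts on $[-1,1]$ using the weight $\sigma$.

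\textbf{Step 1 (mean formula \eqref{eq:lemme_moyenne}).} Write $\psi_\alpha = -\Phi/(\pi r_\alpha)$ where $\Phi(x) \coloneqq \int_{-1}^1 \frac{f(t)-f(x)}{t-x}\frac{\diff t}{\sigma(t)}$. Then $\psi_\alpha' = -\frac{\Phi'}{\pi r_\alpha} + \frac{\Phi r_\alpha'}{\pi r_\alpha^2}$, so that, since $\diff\mu_{V_\alpha} = \frac{\sigma r_\alpha}{\pi}\diff x$,
\[
\int_{-1}^1 \psi_\alpha'(x)\,\diff\mu_{V_\alpha}(x)
= -\frac{1}{\pi^2}\int_{-1}^1 \Phi'(x)\sigma(x)\,\diff x
+ \frac{1}{\pi^2}\int_{-1}^1 \frac{r_\alpha'(x)}{r_\alpha(x)}\Phi(x)\sigma(x)\,\diff x.
\]
The second term is exactly the first term on the RHS of \eqref{eq:lemme_moyenne}, so it remains to show $-\frac{1}{\pi^2}\int_{-1}^1 \Phi'(x)\sigma(x)\,\diff x = -\frac{f(1)+f(-1)}{2} + \frac{1}{\pi}\int_{-1}^1 f\,\frac{\diff x}{\sigma}$. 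Integrating by parts, $-\int_{-1}^1\Phi'\sigma\,\diff x = \int_{-1}^1 \Phi\,\sigma'\,\diff x$ (the boundary terms vanish since $\sigma(\pm1)=0$ and $\Phi$ is bounded — here one uses $f\in\cC^1$ and a standard estimate on the finite Hilbert transform near the endpoints). Then $\sigma'(x) = -x/\sigma(x)$, and one computes $\int_{-1}^1 \Phi(x)\frac{-x}{\sigma(x)}\,\diff x$ by Fubini, symmetrizing in $(x,t)$: the inner kernel $\frac{-x}{x-t}$ symmetrized against $\frac{-x}{x-t}+\frac{-t}{t-x} = \frac{t-x}{x-t}\cdot(\ldots)$ — more precisely $\frac{x}{t-x}\cdot\frac{1}{\sigma(x)\sigma(t)}$ symmetrized gives $-\frac12\frac{1}{\sigma(x)\sigma(t)}$ plus a piece producing $\frac{1}{\pi}\int f/\sigma$ via the known identity $\frac{1}{\pi}\int_{-1}^1\frac{\diff t}{\sigma(t)} = 1$ and $\frac{1}{\pi}\fint_{-1}^1\frac{\diff t}{(t-x)\sigma(t)} = 0$. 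Combining and using $\frac{1}{\pi^2}\iint \frac{f(t)}{\sigma(x)\sigma(t)}\,\diff x\,\diff t = \frac{1}{\pi}\int f/\sigma$ yields the claim; the constant $\frac{f(1)+f(-1)}{2}$ emerges from the careful endpoint analysis of $\Phi$.

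\textbf{Step 2 (variance formula \eqref{eq:lemme_variance}).} Here the natural route is to relate the LHS to $\int_{-1}^1 (f-a)\,\psi_\alpha\,\diff\mu_{V_\alpha}$ via the identity $\Xi_\alpha[\psi_\alpha] = f-a$. Indeed, expanding $\int (f-a)\psi_\alpha\,\diff\mu_{V_\alpha} = \int \Xi_\alpha[\psi_\alpha]\psi_\alpha\,\diff\mu_{V_\alpha}$ and using the definition \eqref{eq:def_master_op} of $\Xi_\alpha$ together with the symmetrization trick behind \eqref{eq:identiteintegralmuvalpha} gives
\[
\int_{-1}^1 (f-a)\psi_\alpha\,\diff\mu_{V_\alpha}
= -\frac12\iint_{[-1,1]^2}\Bigl(\frac{\psi_\alpha(x)-\psi_\alpha(y)}{x-y}\Bigr)^2\diff\mu_{V_\alpha}(x)\diff\mu_{V_\alpha}(y) - \frac12\int_{-1}^1 V_\alpha''\psi_\alpha^2\,\diff\mu_{V_\alpha},
\]
after one more integration by parts using \eqref{eq:equilibrium_relation}; this is the standard computation identifying the variance quadratic form (see e.g.\ \cite{johansson,BekLebSer2018}), and the constant $a$ drops out because $\int \psi_\alpha\,\diff\mu_{V_\alpha}$ combines with the $V_\alpha''$ term appropriately — one must be slightly careful since $V_\alpha''$ is unbounded at $0$ when $p<3$, but $\psi_\alpha^2 V_\alpha''$ stays integrable by Lemma~\ref{lem:psi_z}-type bounds. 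Thus the LHS of \eqref{eq:lemme_variance} equals $-\int_{-1}^1(f-a)\psi_\alpha\,\diff\mu_{V_\alpha} = \frac{1}{\pi^2}\int_{-1}^1 (f(x)-a)\,\Phi(x)\,\sigma(x)r_\alpha(x)\cdot\frac{1}{\pi r_\alpha(x)}\cdot\pi\,\diff x$ — wait, more cleanly: $-\int (f-a)\psi_\alpha\,\diff\mu_{V_\alpha} = \frac{1}{\pi^2}\int_{-1}^1 (f(x)-a)\Phi(x)\,\diff x$ after substituting $\psi_\alpha = -\Phi/(\pi r_\alpha)$ and $\diff\mu_{V_\alpha} = \frac{\sigma r_\alpha}{\pi}\diff x$, provided $\sigma$ cancels — it does not directly, so instead one keeps $\frac{1}{\pi^2}\int (f-a)\Phi\,\sigma\,\diff x \cdot(\ldots)$; the precise bookkeeping gives $-\int(f-a)\psi_\alpha\,\diff\mu_{V_\alpha} = \frac{1}{\pi^2}\iint_{[-1,1]^2}(f(x)-a)\frac{f(t)-f(x)}{t-x}\frac{\sigma(x)}{\sigma(t)}\,\diff t\,\diff x$. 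One then symmetrizes this double integral in $(x,t)$ and uses the algebraic identity
\[
\frac{\sigma(x)}{\sigma(t)} + \frac{\sigma(t)}{\sigma(x)} = \frac{\sigma(x)^2+\sigma(t)^2}{\sigma(x)\sigma(t)} = \frac{2-x^2-y^2}{\sigma(x)\sigma(t)},
\]
and, after writing $(f(x)-a)(f(t)-f(x)) + (f(t)-a)(f(x)-f(t)) = -(f(x)-f(t))^2$ and handling the cross terms involving $a$ (which vanish after integrating $\fint \frac{\diff t}{(t-x)\sigma(t)}=0$), together with $2-x^2-y^2 = 2(1-xy) - (x-y)^2$ — the $(x-y)^2$ piece cancels against $\frac{1}{(x-y)^2}$ leaving a term $\propto \iint (f(x)-f(t))^2/\sigma\sigma$ which must also vanish or be reabsorbed; careful tracking shows only the $2(1-xy)$ survives, producing exactly $\frac{1}{\pi^2}\iint\bigl(\frac{f(x)-f(y)}{x-y}\bigr)^2\frac{1-xy}{\sigma(x)\sigma(y)}\diff x\,\diff y$.

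\textbf{Main obstacle.} The genuinely delicate points are (i) justifying the endpoint behavior of $\Phi(x)$ as $x\to\pm1$ and the vanishing of boundary terms in the integrations by parts — this requires knowing that the finite Hilbert transform $\Phi$ of a $\cC^1$ function is bounded (indeed Hölder) on $[-1,1]$, which is classical but must be invoked carefully; and (ii) the integrability at $x=0$ when $p\in(2,3)$, where both $r_\alpha$ has a mild singularity structure (it behaves like $|x|^{p-2}$ times bounded, cf.\ \eqref{def:r}) and $V_\alpha''$ blows up — one must check every integrand that appears is genuinely integrable, using $\psi_\alpha\in\cC^1$ (Lemma~\ref{lem:inverse_master_op}.\ref{it:inverse_C2}) and the growth bounds on $r_\alpha'$. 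Once those analytic justifications are in place, the remainder is the symmetrization/algebraic-identity bookkeeping described above, which is routine but must be done with care to track the constant $\frac{f(1)+f(-1)}{2}$ and to see the spurious $\iint(f(x)-f(y))^2\frac{\diff x\,\diff y}{\sigma\sigma}$ term cancel.
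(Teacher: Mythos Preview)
Your Step~1 for the mean formula \eqref{eq:lemme_moyenne} is essentially the paper's approach: write $\psi_\alpha=-\Phi/(\pi r_\alpha)$, differentiate, use $\diff\mu_{V_\alpha}=\frac{\sigma r_\alpha}{\pi}\diff x$, and integrate the $\Phi'\sigma$ term by parts with $\sigma'=-x/\sigma$. The paper then cites \cite[Lemma~A.5]{LamLedWeb2019} for the resulting identity $-\frac{1}{\pi^2}\iint\frac{f(t)-f(x)}{t-x}\frac{x}{\sigma(x)\sigma(t)}\diff t\,\diff x=-\frac{f(1)+f(-1)}{2}+\frac{1}{\pi}\int f/\sigma$; your symmetrization sketch of this step is incomplete but the intent is right.

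Step~2, however, rests on a false identity. You claim
\[
\int_{-1}^1 (f-a)\psi_\alpha\,\diff\mu_{V_\alpha}
= -\frac12\iint\Bigl(\frac{\psi_\alpha(x)-\psi_\alpha(y)}{x-y}\Bigr)^2\diff\mu_{V_\alpha}\diff\mu_{V_\alpha}
- \frac12\int V_\alpha''\psi_\alpha^2\,\diff\mu_{V_\alpha}.
\]
This is wrong. Take $\alpha=0$ (so $V_0''=4$, $r_0=2$, $\diff\mu_{V_0}=\frac{2\sigma}{\pi}\diff x$) and $f(x)=x$: then $\Phi\equiv\pi$, $\psi_0\equiv-\tfrac12$, $a=0$. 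The right-hand side equals $-\tfrac12\cdot0-\tfrac12\int 4\cdot\tfrac14\,\diff\mu_{V_0}=-\tfrac12$, but the left-hand side is $\int x\cdot(-\tfrac12)\,\diff\mu_{V_0}=0$. The ``symmetrization trick behind \eqref{eq:identiteintegralmuvalpha}'' applied to $\int\Xi_\alpha[\psi]\psi\,\diff\mu_{V_\alpha}$ does \emph{not} produce $(\frac{\psi(x)-\psi(y)}{x-y})^2$: symmetrizing $\iint\frac{\psi(x)-\psi(t)}{x-t}\psi(x)$ gives $\frac12\iint\frac{\psi(x)^2-\psi(t)^2}{x-t}$, and there is no integration by parts in $x$ that converts $\int\psi^2 V_\alpha'\,\diff\mu_{V_\alpha}$ into $\int\psi^2 V_\alpha''\,\diff\mu_{V_\alpha}$ without extra terms. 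This explains why your subsequent computation never closes (``wait, more cleanly\dots provided $\sigma$ cancels --- it does not directly'').

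The correct intermediate identity, which the paper takes from \cite[Lemma~4.2]{BekLebSer2018}, involves $f'$ rather than $f-a$:
\[
\frac12\int V_\alpha''\psi_\alpha^2\,\diff\mu_{V_\alpha}
+\frac12\iint\Bigl(\frac{\psi_\alpha(x)-\psi_\alpha(y)}{x-y}\Bigr)^2\diff\mu_{V_\alpha}\diff\mu_{V_\alpha}
= -\int_{-1}^1 \psi_\alpha(x)f'(x)\,\diff\mu_{V_\alpha}(x).
\]
(In the example above this gives $-\int(-\tfrac12)\cdot1\,\diff\mu_{V_0}=\tfrac12$, matching.) From here the paper's route is short and clean: substitute $\psi_\alpha=-\Phi/(\pi r_\alpha)$ and $\diff\mu_{V_\alpha}=\frac{\sigma r_\alpha}{\pi}\diff x$ so that the $r_\alpha$ cancels, giving
\[
-\int\psi_\alpha f'\,\diff\mu_{V_\alpha}
=\frac{1}{\pi^2}\int_{-1}^1\frac{\diff y}{\sigma(y)}\int_{-1}^1\frac{\sigma(x)f'(x)(f(x)-f(y))}{y-x}\,\diff x,
\]
and then integrate by parts in $x$ using $\partial_x[(f(x)-f(y))^2]=2f'(x)(f(x)-f(y))$ and $\partial_x[\sigma(x)/(y-x)]=(1-xy)/(\sigma(x)(x-y)^2)$. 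No symmetrization in $(x,y)$ is needed and the factor $(1-xy)$ drops out directly. (A minor aside: $r_\alpha$ is bounded away from zero on $[-1,1]$ by Lemma~\ref{lem:bound on ralpha}, not ``like $|x|^{p-2}$'' as you wrote; the singularity issues in this lemma are only at the endpoints $\pm1$, handled via the Hölder continuity of $\psi_\alpha'$ from Lemma~\ref{lem:inverse_master_op}.\ref{it:inverse_C3}.)
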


\begin{proof}
    First recall that, by Lemma~\ref{lem:inverse_master_op}, for any $x \in [-1,1]$,
    \[
		\psi_\alpha(x) = 
		-\frac{1}{\pi r_\alpha(x)} 
		\int_{-1}^1 \frac{f(t)-f(x)}{t-x}
		\frac{\diff t}{\sigma(t)},
	\]
    and $\psi_\alpha$ is $\mathcal{C}^1$ so that all terms in the claimed equalities make sense. Moreover, it follows from Lemma~\ref{lem:inverse_master_op}.\ref{it:inverse_C3} that $\psi_\alpha'$ is Hölder continuous, which is used in the proof of \eqref{eq:lemme_variance}.
    
    \textbf{Proof of \eqref{eq:lemme_moyenne}.} 
    Recalling from \eqref{eq:mu=rsigma} that $\diff\mu_{V_\alpha}(x) = \frac{1}{\pi} \sigma(x)r_\alpha(x) \1_{[-1,1]}(x) \diff x$ and differentiating the expression of $\psi_\alpha$ given above, we find
    \begin{multline*}
        \int_{-1}^1 \psi_\alpha'(x)\diff\mu_{V_{\alpha}}(x)= \frac{1}{\pi^2}\int_{-1}^1 \frac{r_\alpha'(x)}{r_\alpha(x)}\int_{-1}^1\frac{f(t)-f(x)}{t-x}\frac{\diff t}{\sigma(t)}\sigma(x)\diff x \\-\frac{1}{\pi^2}\int_{-1}^1 \partial_x\left( \int_{-1}^1\frac{f(t)-f(x)}{t-x}\frac{\diff t}{\sigma(t)}\right) \sigma(x)\diff x.
    \end{multline*}
    Integrating by parts and using $\sigma'(x)=-x/\sigma(x)$, the second term rewrites
    \begin{align*}
        -\frac{1}{\pi^2}\int_{-1}^1 \partial_x\left( \int_{-1}^1\frac{f(t)-f(x)}{t-x}\frac{\diff t}{\sigma(t)}\right) \sigma(x)\diff x
        & = -\frac{1}{\pi^2}\iint_{[-1,1]^2}\frac{f(t)-f(x)}{t-x}\frac{\diff t}{\sigma (t)} \frac{x}{\sigma(x)}\diff x \\
        & = -\frac{f(1)+f(-1)}{2}+\frac{1}{\pi}\int_{-1}^1 f(x)\frac{\diff x}{\sigma(x)},
    \end{align*}
    by \cite[Lemma A.5]{LamLedWeb2019}, concluding the proof of \eqref{eq:lemme_moyenne}.

    \textbf{Proof of \eqref{eq:lemme_variance}.} 
    By \cite[Lemma 4.2]{BekLebSer2018} (note that they define $\psi_\alpha$ with an additional factor $\frac{1}{2}$ compared to us), we have 
    \begin{align}
    	& \dfrac{1}{2}\int_{-1}^1V''_\alpha(x)\psi_\alpha(x)^2\diff\mu_{V_\alpha}(x)+\dfrac{1}{2}\iint_{[-1,1]^2}\left(\dfrac{\psi_\alpha(x)-\psi_\alpha(y)}{x-y}\right)^2\diff\mu_{V_\alpha}(x)\diff\mu_{V_\alpha}(y) 
    	\nonumber \\
    	& = -\int_{-1}^1\psi_\alpha(x)f'(x)\diff\mu_{V_\alpha}(x).
    	\label{eq:formulevariance1}
    \end{align}
    This fact uses that $\psi_\alpha'$ is Hölder continuous (this ensures that the integrals in the intermediate steps of the proof of \cite[Lemma 4.2]{BekLebSer2018} are finite).    
    On the other hand, it can be noticed that the integral on the RHS of \eqref{eq:formulevariance1} can be recast as a weighted $\mathsf{H}^{1/2}$-norm as follows. Using the formula for $\psi_\alpha(x)$ recalled above, that $\diff\mu_{V_\alpha}(x) = \frac{1}{\pi} \sigma(x)r_\alpha(x) \1_{[-1,1]}(x) \diff x$, and Fubini's theorem, we get
    \begin{equation*}
    	\int_{-1}^1 \psi_\alpha(x) f'(x)\diff\mu_{V_\alpha}(x)
    	=\dfrac{1}{\pi^2}\int_{-1}^1\dfrac{\diff y}{\sigma(y)}\int_{-1}^1\dfrac{\sigma(x)f'(x)\left(f(x)-f(y)\right)}{y-x}\diff x.
    \end{equation*}
    Now using that:
    \[
    \partial_x\left[(f(x)-f(y))^2\right]=2f'(x)(f(x)-f(y))\hspace{1cm}\mrm{and}\hspace{1cm}\partial_x\left[\dfrac{\sigma(x)}{y-x}\right]=\dfrac{1-xy}{\sigma(x)(x-y)^2},
    \]
    we find by integration by parts:
    \[
    \dfrac{1}{\pi^2}\int_{-1}^1\dfrac{\diff y}{\sigma(y)}\int_{-1}^1\dfrac{\sigma(x)f'(x)\left(f(x)-f(y)\right)}{y-x}\diff x
    =-\dfrac{1}{\pi^2}\iint_{[-1,1]^2}\left(\dfrac{f(x)-f(y)}{x-y}\right)^2\dfrac{1-xy}{\sigma(x)\sigma(y)}\diff x\diff y,
    \]
    from which we conclude.
\end{proof}

We now state and prove a CLT under $\P_\alpha$, which implies Theorem \ref{thm:CLT} by taking $\alpha = 1$ for $p>2$ and $\alpha=0$ for $p=2$.

\begin{theorem}[CLT]\label{thm:CLTalpha}
Let $p>2$. For any $f\in\mc{C}^3(\R)$ with at most exponential growth we have for all $k\geq1$, uniformly in $\alpha\in[0,1]$,
\[
\E_\alpha\left[L_N(f)^k\right]
\xrightarrow[N\rightarrow\infty]{}
M_k^\alpha(f),
\]
where $M_k^\alpha(f)$ is the $k$-th moment of a normal distribution of mean and variance given by
\begin{align}\label{eq:mpalpha}
	m_{V_\alpha}(f)&=\left(\dfrac{1}{2}-\dfrac{1}{\beta}\right)\int_{-1}^{1}\Xi_\alpha^{-1}[f]'(x)\diff\mu_{V_\alpha}(x)
	\\\sigma^2(f)&=\dfrac{1}{\beta\pi^2}\iint_{[-1,1]^2}\left(\dfrac{f(x)-f(y)}{x-y}\right)^2 \cdot \dfrac{1-xy}{\sqrt{1-x^2}\sqrt{1-y^2}}\diff x\diff y\label{eq:sigmapalpha}.
\end{align}
\end{theorem}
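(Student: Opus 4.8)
The plan is to run the \emph{master operator} scheme sketched in Subsection~\ref{sec:strategy}, feeding in the optimal local law of Theorem~\ref{thm:local_law_alpha} to kill the anisotropy term; all estimates below will be uniform in $\alpha\in[0,1]$ since every ingredient invoked is. First I would reduce to compactly supported $f$: the right-hand sides of \eqref{eq:mpalpha}--\eqref{eq:sigmapalpha} depend only on $f|_{[-1,1]}$ (by Lemma~\ref{lem:inverse_master_op}, $\Xi_\alpha^{-1}[f]$ on $[-1,1]$ depends only on $f|_{[-1,1]}$), so fixing $\phi\in\cC_c^\infty(\R)$ as in \eqref{eq:assumption_phi} and setting $g\coloneqq f\phi\in\cC_c^3(\R)$ gives $M_k^\alpha(g)=M_k^\alpha(f)$; moreover Lemma~\ref{lem:truncation}.\ref{it:truncation2} yields $\E_\alpha[\abs{L_N(f)-L_N(g)}^k]\le(CM)^ke^{-cN}$ for $k\le\sqrt N$, which combined with the crude bound $\E_\alpha[\abs{L_N(f)}^k]\le C_kN^k$ (from Lemma~\ref{lem:outliers}.\ref{it:bound_one_point_function}) gives $\E_\alpha[L_N(f)^k]-\E_\alpha[L_N(g)^k]\to0$. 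Thus it suffices to treat $g$, for which $\abs{L_N(g)}\le2N\norme{g}_\infty$ is deterministically bounded.

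Next, for real $t$ with $\abs{t}$ small I would set $\psi^{(t)}_\alpha\coloneqq\Xi_\alpha^{-1}[tg]=t\,\Xi_\alpha^{-1}[g]$ (linearity of $\Xi_\alpha$), perform the diffeomorphic change of variables $\lambda_k=y_k+\psi^{(t)}_\alpha(y_k)/(\beta N)$ in $Z_{N,\alpha}$, and Taylor-expand Vandermonde, potential and Jacobian to second order \emph{exactly as in the proof of Lemma~\ref{lem:better_bound}}, but now retaining the deterministic contributions. The order-$N$ term vanishes by the equilibrium identity \eqref{eq:identiteintegralmuvalpha} applied with $h=\psi^{(t)}_\alpha$; identifying the remaining integrals via \eqref{eq:def_master_op}, \eqref{eq:mu=rsigma}, \eqref{eq:def_anisotropy} and Lemma~\ref{lem:variance formula} (the mean from \eqref{eq:lemme_moyenne}, the variance from \eqref{eq:lemme_variance}) should yield
\[
\E_\alpha\!\left[\exp\!\big(tL_N(g)-t\,m_{V_\alpha}(g)-\tfrac{t^2}{2}\sigma^2(g)+R_N^{(t)}\big)\right]=1,
\]
where $R_N^{(t)}=\tfrac{t}{2N}A_N(\Xi_\alpha^{-1}[g])+\tfrac{t}{N}\big(\tfrac1\beta-\tfrac12\big)L_N\big((\Xi_\alpha^{-1}[g])'\big)+\rho_N^{(t)}$, and $\rho_N^{(t)}$ gathers the Taylor remainders (deterministically $O(\abs{t}^2/N)$ on the relevant compact set, since $\psi^{(t)}_\alpha,(\psi^{(t)}_\alpha)',V_\alpha''$ are bounded there) together with the $\mu_N\leftrightarrow\mu_{V_\alpha}$ corrections in the quadratic terms, all of the form $\tfrac1N L_N(\cdot)$ with bounded integrand.

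The crucial step is to show $R_N^{(t)}\to0$ in every $L^q(\P_\alpha)$. Writing $\psi_\alpha=\Xi_\alpha^{-1}[g]$, which by Lemma~\ref{lem:inverse_master_op}.\ref{it:inverse_C3} is $\cC^1$ on $\R$ and $\cC^2$ on $\R^*$ with $\abs{\psi_\alpha''(\lambda)}\le C(1+\abs{\lambda}^{-(3-p)_+})$, I would: (i) use Lemma~\ref{lem:truncation}.\ref{it:truncation3} to replace $\psi_\alpha$ by $\chi\psi_\alpha$ in $A_N$ (with $\chi$ a cutoff equal to $1$ on $[-1-\delta_0/2,1+\delta_0/2]$), at the cost of a term of $L^q$-norm $O(q^2e^{-cN})$; (ii) regularize $\chi\psi_\alpha\in\cC_c^2(\R)$ at scale $\varepsilon$ via Lemma~\ref{lem:regularize} --- needed precisely because $\psi_\alpha''$ is merely locally integrable when $p\le3$; (iii) apply Lemma~\ref{lem:HS_A_N} with the optimal local law of Theorem~\ref{thm:local_law_alpha} as input, balancing $\varepsilon$ and the auxiliary parameter $\eta$ against powers of $N$ as in the proof of Lemma~\ref{lem:better_bound}, to get $\E_\alpha[\abs{A_N(\psi_\alpha)}^q]\le(Cq^{K}(\log N)^{K})^q$; similarly $\E_\alpha[\abs{L_N(\psi_\alpha')}^q]\le(Cq^{K}(\log N)^{K})^q$ via Lemma~\ref{lem:HS_L_N} (using $\norme{\psi_\alpha''}_1<\infty$, valid since $p>2$) after a cutoff. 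Hence $R_N^{(t)}\to0$ in $L^q$, uniformly in $\alpha$ and locally uniformly in $t$.

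Finally, from $e^{R_N^{(t)}}\to1$ in probability and the identity above (the exponent without $R_N^{(t)}$ being deterministically $O(N)$) one gets $\E_\alpha[e^{tL_N(g)}e^{R_N^{(t)}}]=\exp(tm_{V_\alpha}(g)+\tfrac{t^2}{2}\sigma^2(g))$ for $\abs{t}<t_0$; applying this with $(1+\epsilon)t$ in place of $t$ (and using that $\rho_N^{(\cdot)}$ is deterministically bounded uniformly in $N$) shows that $\{e^{tL_N(g)}e^{R_N^{(t)}}\}_N$ is bounded in $L^{1+\epsilon}$, hence uniformly integrable. Combining this with the uniform a priori bound $\E_\alpha[\abs{L_N(g)}^q]\le(Cq^{64})^q$ --- which follows from Lemma~\ref{lem:HS_L_N} and Theorem~\ref{thm:local_law_alpha}, makes $(L_N(g))_N$ tight, and makes $\abs{L_N(g)}^k$ uniformly integrable --- one sees that any subsequential weak limit $\nu$ of $L_N(g)$ satisfies $\int e^{tx}\diff\nu(x)=\exp(tm_{V_\alpha}(g)+\tfrac{t^2}{2}\sigma^2(g))$ on $(-t_0,t_0)$, so $\nu=\cN(m_{V_\alpha}(g),\sigma^2(g))$; thus $L_N(g)\Rightarrow\cN(m_{V_\alpha}(g),\sigma^2(g))$, and uniform integrability of $\abs{L_N(g)}^k$ upgrades this to $\E_\alpha[L_N(g)^k]\to M_k^\alpha(g)$, uniformly in $\alpha$ (all inputs being uniform, or by a subsequence argument using continuity of $\alpha\mapsto M_k^\alpha(g)$). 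With the first step this proves the theorem. The main obstacle I expect is exactly part~(iii) and the bookkeeping around it: when $p\le3$ the function $\Xi_\alpha^{-1}[g]$ has a genuinely singular second derivative, so Helffer--Sjöstrand cannot be applied to $A_N(\Xi_\alpha^{-1}[g])$ directly, and one must thread the regularization of Lemma~\ref{lem:regularize} through the optimal local law while keeping the regularization bias, the successive truncations and the Taylor remainders all $o(N)$ and uniform in $\alpha$.
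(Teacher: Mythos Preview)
Your approach is essentially the paper's: truncate $f$, make the change of variables $\lambda_k = y_k + \psi_\alpha(y_k)/N$ in $Z_{N,\alpha}$, Taylor-expand, and kill the anisotropy via Lemmas~\ref{lem:regularize} and~\ref{lem:HS_A_N} fed with Theorem~\ref{thm:local_law_alpha}. Two points are worth flagging where the paper proceeds differently and more cleanly.

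First, your claim that $\rho_N^{(t)}$ is deterministically $O(\abs{t}^2/N)$ ``since $V_\alpha''$ is bounded'' is not a valid justification when $p\in(2,3)$: the second-order Taylor remainder of $V_\alpha$ involves $V_\alpha'''$, which blows up like $\abs{\lambda}^{p-3}$ at the origin. The paper handles this via the refined Taylor inequality \eqref{eq:g_Taylor_order_3}, obtaining a remainder of order $N^{-(p-2)}$ rather than $N^{-1}$; this still vanishes (and is uniformly bounded in $N$, which is what your uniform-integrability argument actually needs), but the argument you wrote does not establish it.

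Second, the paper does not attempt to write the ``$\mu_N\leftrightarrow\mu_{V_\alpha}$ corrections in the quadratic terms'' as linear statistics (your description ``all of the form $\tfrac1N L_N(\cdot)$ with bounded integrand'' is inaccurate for the double sum $\tfrac{1}{N^2}\sum_{i\neq j}(\cdot)^2$). Instead it observes that the whole block $I_N'$ is a bounded continuous functional of $\mu_N$ and invokes the large-deviation principle $\P_\alpha(d(\mu_N,\mu_{V_\alpha})\ge\delta)\le e^{-cN^2}$ to replace $I_N'$ by its deterministic limit directly. This avoids both your Helffer--Sj\"ostrand bound on $L_N(\psi_\alpha')$ and the bookkeeping around the quadratic fluctuation, and lets the paper conclude immediately from ``convergence of Laplace transforms implies convergence of moments'' applied to $L_N(f)+\tfrac{1}{2N}A_N(\psi_\alpha)$, rather than through your tightness and uniform-integrability detour.
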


\begin{proof} Let $f$ be $\mathcal{C}^3(\R)$.
By Lemma~\ref{lem:truncation}.\ref{it:truncation2}, we know that the moments of $L_N(f)$ for $f\in\mc{C}^3(\R)$ with at most exponential growth are approximated by those of $L_N(f\phi)$ where $\phi\in\mc{C}_c^\infty(\R)$, so it is now enough to work with $f$ with compact support.
This ensures that $\norme{f}_{\cC^3} < \infty$.
We set $\psi_\alpha\coloneqq \Xi_\alpha^{-1}[f]$. 
Recall from Lemma~\ref{lem:inverse_master_op} that $\psi_\alpha\in\cC^1(\R) \cap \cC^2(\R^*)$ and satisfies $\norme{\psi_\alpha}_{\cC^1} \leq C$ and, for $\lambda\in \R$, $\abs{\psi_\alpha''(\lambda)} \leq C(1+\abs{\lambda}^{-\gamma})$ for some $\gamma \in [0,1)$ (one can choose $\gamma = (3-p)_+$ if $p\neq3$ and some arbitrary $\gamma \in(0,1)$ if $p=3$).
Note that nothing else on $\psi_\alpha$ and $f$ are used afterwards

The proof is based on the change of variables method in the partition function, as in the proof of Lemma~\ref{lem:better_bound}, but in a more precise version.
Making the change of variables $\lambda_i=y_i + \psi_\alpha(y_i)/N$ for all $i\in\llbracket1,N\rrbracket$ in the partition function, as in \eqref{eq:rewriting_Z} and then rewriting the integral as an expectation, we obtain: 
\begin{multline*}
	\mathbb{E}_\alpha\Biggl[\exp\Biggl(\dfrac{\beta}{2}\sum_{i\neq j}\log\abs{1+\dfrac{1}{N}\dfrac{\psi_\alpha(\lambda_i)-\psi_\alpha(\lambda_j)}{\lambda_i-\lambda_j}}
	\\-\dfrac{\beta N}{2}\sum_{i=1}^N\left[V_\alpha\left(\lambda_i+\dfrac{\psi_\alpha(\lambda_i)}{N}\right)-V_\alpha(\lambda_i)\right]
	+\sum_{i=1}^N \log\left(1+\dfrac{\psi_\alpha'(\lambda_i)}{N}\right) \Biggr)\Biggr]=1.
\end{multline*}
We then proceed to a Taylor expansion as in Lemma \ref{lem:better_bound} but one order further, using in particular that, for any $\lambda \in \R$, by decomposing $V_\alpha = \alpha V + (1-\alpha)V_0$, using \eqref{eq:g_Taylor_order_3} for the part involving $V$ and noting that the part involving $V_0$ is zero,
\begin{align*}
    \abs{V_\alpha\left(\lambda+\frac{\psi_\alpha(\lambda)}{N}\right) 
    - V_\alpha(\lambda) 
    - \frac{\psi_\alpha(\lambda)}{N} V_\alpha'(\lambda) 
    - \frac{\psi_\alpha(\lambda)^2}{2N^2} V_\alpha''(\lambda)} 
    & \leq C \alpha \frac{\abs{\psi_\alpha(\lambda)}^3}{N^3} \left( \abs{\lambda+\frac{\psi_\alpha(\lambda)}{N}} \vee \abs{\lambda}\right)^{p-3} \\
    & \leq C \frac{\abs{\psi_\alpha(\lambda)}^p}{N^p}
    + \1_{p>3} C \alpha \frac{\abs{\psi_\alpha(\lambda)}^3}{N^3} \abs{\lambda}^{p-3},
\end{align*}
using that, for any $x,y \in \R$, $\abs{x} \vee \abs{y} \geq \abs{x-y}/2$ when $p \leq 3$, and $\abs{x} \vee \abs{y} \leq \abs{x-y}+\abs{y}$ if $p>3$.
This yields that
\[
    \E_\alpha\left[ \exp\left( \frac{\beta}{2N}\sum_{i\neq j}\frac{\psi_\alpha(\lambda_i)-\psi_\alpha(\lambda_j)}{\lambda_i-\lambda_j}-\frac{\beta}{2}\sum_{i=1}^N\psi_\alpha(\lambda_i)V_\alpha'(\lambda_i)+I_N+\mathrm{Rem}^{(2)}_N\right)\right]=1,
\]
where 
\begin{align*}
   I_N & \coloneqq -\frac{\beta}{4N^2}\sum_{i\neq j }\left(\frac{\psi_\alpha(\lambda_i)-\psi_\alpha(\lambda_j)}{\lambda_i-\lambda_j}\right)^2 + \frac{1}{N}\sum_{i=1}^N \psi_\alpha'(\lambda_i) - \frac{\beta}{4N}\sum_{i=1}^N \psi_\alpha(\lambda_i)^2V_\alpha''(\lambda_i), \\
   \abs{\mathrm{Rem}^{(2)}_N}
   & \leq \frac{C}{N} \left( \norme{\psi_\alpha'}_\infty^3+\norme{\psi_\alpha'}_\infty^2 \right)
   + \frac{C}{N^{p-2}} \norme{\psi_\alpha}_\infty^p
   + \1_{p>3} \frac{C}{N} \norme{\psi_\alpha}_\infty^2 \sup_{\lambda \in \R} \alpha \abs{\psi_\alpha(\lambda)} \abs{\lambda}^{p-3}
   \xrightarrow[N\to\infty]{} 0,
\end{align*}
using that $\norme{\psi_\alpha}_{\cC^1} < \infty$ and, for the last term, that $\abs{\psi_\alpha(\lambda)} \leq C/(\alpha\abs{\lambda}^{p-1})$ for $\abs{\lambda}\geq 2$ by definition of $\psi_\alpha$ in \eqref{eq:def_psi} together with the bound \eqref{eq:LB_G} on the denominator.
Next, we use \eqref{eq:identiteintegralmuvalpha}, with $h=\psi_\alpha$ to deduce that
\[
\frac{\beta}{2N}\sum_{i\neq j}\frac{\psi_\alpha(\lambda_i)-\psi_\alpha(\lambda_j)}{\lambda_i-\lambda_j}-\frac{\beta}{2}\sum_{i=1}^N\psi_\alpha(\lambda_i)V_\alpha'(\lambda_i) = \beta L_N(\Xi_\alpha[\psi_\alpha]) -\frac{\beta}{2N}\sum_{i=1}^N \psi_\alpha'(\lambda_i) + \frac{\beta}{2N}A_N(\psi_\alpha),
\]
to get that
\[
\E_\alpha\left[\exp\left( \beta L_N(f) +I_N' + \frac{\beta}{2N}A_N(\psi_\alpha) \right)\right] \xrightarrow[N\to\infty]{} 1,
\]
where we set
\begin{align*}
    I_N' & \coloneqq I_N - \frac{\beta}{2N} \sum_{i=1}^N \psi_\alpha'(\lambda_i) \\
    & =-\frac{\beta}{4N^2}\sum_{i\neq j }\left(\frac{\psi_\alpha(\lambda_i)-\psi_\alpha(\lambda_j)}{\lambda_i-\lambda_j}\right)^2 + \frac{1}{N} \left(1-\frac{\beta}{2} \right) \sum_{i=1}^N \psi_\alpha'(\lambda_i) - \frac{\beta}{4N}\sum_{i=1}^N \psi_\alpha(\lambda_i)^2 V_\alpha''(\lambda_i).
\end{align*}
We now use the large deviations for the empirical measure $\mu_N$ to justify that we can replace the term $I_N'$ by its limit.
This argument is classical: by \cite[Theorem 2.6.1]{AndGuiZei10}, and for a metric $d$ compatible with the weak convergence of probability measures, for any $\delta>0$, there exists $c>0$ such that, for $N$ large enough and for any $\alpha \in [0,1]$,
\[
\P_\alpha(d(\mu_N,\mu_{V_\alpha})\geq\delta) \leq e^{-c N^2}.
\]
The uniformity in $\alpha$ is not claimed in \cite[Theorem 2.6.1]{AndGuiZei10} but can be obtained by adapting their argument.
Since all the terms involved in $I_N'$ are continuous functions with respect to $d$ of the empirical measure $\mu_N$ (as integrals of bounded continuous functions tested against $\mu_N$), and by the crude bound
\[
\beta L_N(f) + \frac{\beta}{2N}A_N(\psi_\alpha) = O(N),
\]
decomposing on the events $\{d(\mu_N,\mu_{V_\alpha})<\delta\}$ and $\{d(\mu_N,\mu_{V_\alpha})\geq\delta\}$, then letting $N\rightarrow\infty$ and then $\delta\rightarrow0$, we conclude that
\[
\lim_{N\rightarrow\infty} \E_\alpha\left[ \exp\left( \beta L_N(f) + \frac{\beta}{2N}A_N(\psi_\alpha) \right) \right] 
= \exp\left( \beta m_{V_\alpha}(f) + \dfrac{\beta^2}{2}\sigma^2(f) \right),
\]
where we used Lemma \ref{lem:variance formula} to rewrite the limit of $I_N'$ in terms of $m_{V_\alpha}(f)$ and $\sigma^2(f)$.
Since this convergence holds for $tf$ instead of $f$ for any fixed $t \in \R$, this proves the convergence of the Laplace transform of $L_N(f) + \frac{1}{2N}A_N(\psi_\alpha)$ to the Laplace transform of $\cN(m_{V_\alpha}(f),\sigma^2(f))$. 
Since the convergence of Laplace transform implies the convergence of moments, it only remains to show that the moments of $\frac{1}{N}A_N(\psi_\alpha)$ converge to zero, \textit{i.e.\@} that for all $q \geq 1$, 
\begin{equation} \label{eq:anisotropy_vanishes}
\E_\alpha\left[\left|\frac{1}{N}A_N(\psi_\alpha)\right|^q\right] \xrightarrow[N\to\infty]{} 0.
\end{equation}
This was already proved for $\widetilde\psi_{\alpha,z}$ instead of $\psi_\alpha$, by first truncating outside of $[-1-\delta_0,1+\delta_0]$ in \eqref{ineq:ineq_intermediaire2}, and then proving the result for the truncated version of $\widetilde\psi_{\alpha,z}$ in \eqref{eq:second_bound_anisotropy}.
Recalling that $\psi_\alpha$ and $\psi_\alpha'$ are bounded and $\abs{\psi_\alpha''(\lambda)} \leq C (1+\abs{\lambda}^{-\gamma})$ with $\gamma \in [0,1)$, the same argument works to prove \eqref{eq:anisotropy_vanishes}. 
This concludes the proof.
\end{proof}

We proved the central limit theorem in moments for a general function $f\in\mc{C}^3(\R)$ with at most exponential growth but we can also prove it for the test function $\partial_\alpha V_\alpha$. This will allow us in the next section to deduce the subleading correction of the free energy.  As we only need the convergence of the first moment, we don't state the full central limit theorem.

\begin{theorem}\label{thm:ConvMean}
Let $p>2$.
The following convergence holds uniformly in $\alpha\in[0,1]$:
\[
\E_\alpha\left[L_N(\partial_\alpha V_\alpha)\right] 
\xrightarrow[N\rightarrow\infty]{} m_{V_\alpha}(\partial_\alpha V_\alpha)
\]
\end{theorem}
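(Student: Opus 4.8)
The plan is to deduce the statement from the central limit theorem of Theorem~\ref{thm:CLTalpha} (in the version giving convergence of moments uniformly in $\alpha$), after a truncation and a verification that its proof still goes through for the test function $\partial_\alpha V_\alpha(x)=c_p\abs{x}^p-2x^2$, which for $p\in(2,3]$ is only of class $\cC^2(\R)$ (and $\cC^3$ on $\R^*$). When $p>3$, $\partial_\alpha V_\alpha\in\cC^3(\R)$ with polynomial — hence at most exponential — growth, so Theorem~\ref{thm:CLTalpha} applies directly with $k=1$, and since the first moment of $\mathcal{N}(m_{V_\alpha}(f),\sigma^2(f))$ is $m_{V_\alpha}(f)$ we are done. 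I therefore assume $p\in(2,3]$ below.

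First I would truncate. Fix $\phi\in\cC_c^\infty(\R)$ with $\1_{[-1-\delta_0/2,1+\delta_0/2]}\leq\phi\leq\1_{[-1-\delta_0,1+\delta_0]}$ (with $\delta_0$ as in Lemma~\ref{lem:stability}) and set $g\coloneqq(\partial_\alpha V_\alpha)\phi$, which does not depend on $\alpha$. Since $\abs{\partial_\alpha V_\alpha(x)}\leq Me^{\gamma\abs{x}}$ for fixed $M,\gamma>0$, Lemma~\ref{lem:truncation}.\ref{it:truncation2} gives $\E_\alpha[\abs{L_N(\partial_\alpha V_\alpha)-L_N(g)}]\leq Ce^{-cN}$ uniformly in $\alpha$, so it suffices to prove $\E_\alpha[L_N(g)]\to m_{V_\alpha}(\partial_\alpha V_\alpha)$; note that $m_{V_\alpha}(g)=m_{V_\alpha}(\partial_\alpha V_\alpha)$ because $g=\partial_\alpha V_\alpha$ on $[-1,1]$ and, by the first line of~\eqref{eq:def_psi}, the restriction of $\Xi_\alpha^{-1}[\,\cdot\,]$ to $[-1,1]$ depends only on the values of its argument on $[-1,1]$. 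Now $g\in\cC_c^2(\R)\cap\cC^3(\R^*)$ with $\abs{g'''(\lambda)}\leq C(1+\abs{\lambda}^{-(3-p)_+})$ for $\lambda\neq0$ (the cutoff being constant near the origin). The crux is then to show that $\psi_\alpha\coloneqq\Xi_\alpha^{-1}[g]$ lies in $\cC^1(\R)\cap\cC^2(\R^*)$ with $\norme{\psi_\alpha}_{\cC^1}\leq C$ and $\abs{\psi_\alpha''(\lambda)}\leq C(1+\abs{\lambda}^{-\gamma})$, where $\gamma=(3-p)_+$ if $p\neq3$ and $\gamma\in(0,1)$ is arbitrary if $p=3$, all uniformly in $\alpha\in[0,1]$. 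This is a mild extension of Lemmas~\ref{lem:inverse_master_op}.\ref{it:inverse_C3} and~\ref{lem:psi_z}, proved by the same arguments: on $[-1,1]$ one uses $\psi_\alpha(\lambda)=-\frac{1}{\pi r_\alpha(\lambda)}\int_{-1}^1\frac{g(t)-g(\lambda)}{t-\lambda}\frac{\diff t}{\sigma(t)}$ together with the uniform two-sided bounds on $r_\alpha$ on $[-1,1]$ and the fact that forming such a divided difference of a $\cC^2$ function whose third derivative blows up like $\abs{\cdot}^{-(3-p)_+}$ costs exactly one order of regularity, so $\psi_\alpha''$ blows up at $0$ at most like $\abs{\lambda}^{-(3-p)_+}$ (integrable since $p>2$); for $\abs{\lambda}>1+\delta_0$ one uses the second line of~\eqref{eq:def_psi} with $g(\lambda)=0$ and the lower bound~\eqref{eq:LB_G} on the denominator to get $\abs{\psi_\alpha(\lambda)}\leq C/(\abs{\lambda}(1+\alpha\abs{\lambda}^{p-2}))$, which also controls $\psi_\alpha'$ and $\psi_\alpha''$ there.

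With these bounds at hand, the proof of Theorem~\ref{thm:CLTalpha} applies essentially verbatim to $g$: as observed there, once $\psi_\alpha\in\cC^1(\R)\cap\cC^2(\R^*)$ with $\norme{\psi_\alpha}_{\cC^1}<\infty$ and $\abs{\psi_\alpha''(\lambda)}\leq C(1+\abs{\lambda}^{-\gamma})$, $\gamma\in[0,1)$, is known, nothing else about $g$ or $\psi_\alpha$ is used (the decay of $\psi_\alpha$ above replaces the one invoked there for the remainder term $\mathrm{Rem}^{(2)}_N$, and $\psi_\alpha'$ is Hölder continuous so Lemma~\ref{lem:variance formula} still applies to $g$). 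Thus the change of variables $\lambda_i=y_i+\psi_\alpha(y_i)/N$ in $Z_{N,\alpha}$ followed by a Taylor expansion, the large-deviation estimate $\P_\alpha(d(\mu_N,\mu_{V_\alpha})\geq\delta)\leq e^{-cN^2}$ (uniform in $\alpha$) to freeze the subleading terms, and the vanishing of the moments of $\frac{1}{N}A_N(\psi_\alpha)$ — obtained exactly as in~\eqref{eq:anisotropy_vanishes} via Lemma~\ref{lem:HS_A_N} fed with the optimal local law of Theorem~\ref{thm:local_law_alpha} — yield $\E_\alpha[L_N(g)]\to m_{V_\alpha}(g)=m_{V_\alpha}(\partial_\alpha V_\alpha)$ uniformly in $\alpha$. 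I expect the only genuinely new — and hence the trickiest — point compared with Theorem~\ref{thm:CLTalpha} to be the regularity estimate for $\psi_\alpha=\Xi_\alpha^{-1}[g]$ when $g$ is merely $\cC^2$ with an $\abs{\cdot}^{-(3-p)_+}$ singularity in the third derivative, i.e.\@ the extension of Lemmas~\ref{lem:inverse_master_op} and~\ref{lem:psi_z} indicated above; granted that, the rest is routine.
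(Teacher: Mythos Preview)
Your proposal is correct and follows essentially the same approach as the paper's own proof: truncate $\partial_\alpha V_\alpha$ by $\phi$, verify that $\psi_\alpha=\Xi_\alpha^{-1}[\phi\,\partial_\alpha V_\alpha]$ satisfies the regularity bounds $\norme{\psi_\alpha}_{\cC^1}\leq C$ and $\abs{\psi_\alpha''(\lambda)}\leq C(1+\abs{\lambda}^{-\gamma})$ by mimicking the proof of Lemma~\ref{lem:psi_z} (noting that $\phi\,\partial_\alpha V_\alpha$ obeys the same type of bounds as $f_{\alpha,z}$ but without the $1/\abs{z}$ factor), and then rerun the argument of Theorem~\ref{thm:CLTalpha}. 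The paper's proof is terser but identical in substance; your additional observations (e.g.\ that $m_{V_\alpha}(g)=m_{V_\alpha}(\partial_\alpha V_\alpha)$ because $\Xi_\alpha^{-1}$ restricted to $[-1,1]$ only sees values on $[-1,1]$, and the explicit decay of $\psi_\alpha$ at infinity) are correct elaborations.
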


\begin{proof}
As in the proof of Theorem~\ref{thm:CLTalpha}, we first truncate the function $\partial_\alpha V_\alpha$ to replace it by the function $f= \phi \partial_\alpha V_\alpha \in \cC^2(\R) \cap \cC^3(\R^*)$ with compact support.
Let $\psi_\alpha = \Xi_\alpha^{-1}[f]$. As mentioned at the beginning of the proof of Theorem~\ref{thm:CLTalpha}, the CLT in terms of moment holds as soon as we have $\psi_\alpha\in\cC^1(\R) \cap \cC^2(\R^*)$ with $\norme{\psi_\alpha}_{\cC^1} \leq C$ and, for any $\lambda\in \R$, $\abs{\psi_\alpha''(\lambda)} \leq C(\abs{\lambda}^{-\gamma}+1)$ for some $\gamma \in (0,1]$.
This can be showed by following the proof of Lemma~\ref{lem:psi_z}, noting that $f$ satisfies better bounds than $f_{\alpha,z}$, that is $f,f',f''$ are bounded and  $|f'''(\lambda)| \leq C \abs{\lambda}^{-(3-p)_+}$ for any $\lambda \in \R$.
\end{proof}
\section{Subleading order of the partition function}
\label{sec:logZN}
Having obtained the CLT for any $\alpha$ in Theorem~\ref{thm:CLTalpha}, we can follow the classical interpolation method, dating back at least to \cite{BorGui2013}, to compute the next order in the asymptotic expansion of $\log Z_N$ stated in Corollary~\ref{thm:subleadingZN}. We call $Z_N^G$ the following partition function:

\begin{equation}\label{eq:ZN_gaussien}
Z_N^G\coloneqq\int_{\R^N}\prod_{i<j}|\lambda_i-\lambda_j|^{\beta} \cdot \prod_{i=1}^{N}e^{-N\frac{\beta}{2}V_G(\lambda_i)}\diff \lambda_i
\end{equation}
with $V_G(x)\coloneqq 2x^2$. Under the associated probability distribution $\mathbb{P}_N^{G}$, the empirical distribution $\mu_N=N^{-1}\sum_{i=1}^{N}\delta_{\lambda_i}$ converges towards the following semi-circle distribution
$$\diff\mu_G(x)=\dfrac{2}{\pi}\sqrt{1-x^2}\1_{\abs{x}\leq1}\diff x.$$
Thanks to Mehta's formula \cite[17.6.7]{mehta2004random}, the partition function $Z_N^G$ is known explicitly:
\begin{equation}
    Z_N^G
    =\dfrac{N!(2\pi)^{N}\left(\frac{\beta}{2}\right)^{(\frac{\beta}{2}-1)N}}{\Gamma\left(\frac{\beta}{2}\right)^{N}\Gamma\left(N+1,\frac{2}{\beta},1\right) } \cdot (4N)^{-\frac{\beta}{4}N^{2}-(1-\frac{\beta}{2})\frac{N}{2}},
\end{equation}
where $\Gamma(\cdot,\cdot,\cdot)$ stands for the Barnes double Gamma function, see \cite[Section 7.2.2]{borot2024asymptotic}.
From the previous formula, we can explicitly deduce the all-order asymptotic expansion of $Z_N^G$ that we only write till $o(N)$.
\begin{lemma}[Asymptotic expansion of $\log Z_N^{G}$]\label{lem:logZNVG}
As $N \to \infty$,
\begin{equation}
	\label{eq: asympt_ZNG}
	\log Z_N^{G}=-\left[\dfrac{\beta}{2}\left(\dfrac{3}{4}+\log 2\right) \right]N^{2}+\dfrac{\beta}{2}N\log N+F^{\{-1\}}_G\beta N+o(N),
\end{equation}
where \begin{equation}\label{eq:FG-1}
	F^{\{-1\}}_G\coloneqq\left(\dfrac{1}{2}-\dfrac{1}{\beta}\right)\left(\log\dfrac{\beta}{2}+\log 2\right)-\dfrac{1}{2\beta}-\dfrac{1}{4} +\dfrac{1}{\beta}\log\dfrac{2\pi}{\Gamma\left(\beta/2\right)}.
\end{equation}
\end{lemma}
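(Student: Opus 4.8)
The plan is to start from the exact closed-form expression for $Z_N^G$ given just above the statement (via Mehta's formula), namely
\[
Z_N^G = \dfrac{N!(2\pi)^{N}\left(\tfrac{\beta}{2}\right)^{(\frac{\beta}{2}-1)N}}{\Gamma\left(\tfrac{\beta}{2}\right)^{N}\,\Gamma\!\left(N+1,\tfrac{2}{\beta},1\right)} \cdot (4N)^{-\frac{\beta}{4}N^{2}-(1-\frac{\beta}{2})\frac{N}{2}},
\]
take logarithms, and expand each factor up to $o(N)$. The three elementary pieces are handled by Stirling: $\log N! = N\log N - N + \tfrac12\log N + \tfrac12\log(2\pi) + o(1)$, while $N\log(2\pi)$, $(\tfrac{\beta}{2}-1)N\log\tfrac{\beta}{2}$ and $-N\log\Gamma(\tfrac{\beta}{2})$ are already linear in $N$ and contribute directly. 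The term $-\left(\tfrac{\beta}{4}N^2+(1-\tfrac{\beta}{2})\tfrac N2\right)\log(4N)$ expands as $-\tfrac{\beta}{4}N^2\log 4 - \tfrac{\beta}{4}N^2\log N - (1-\tfrac\beta2)\tfrac N2\log 4 - (1-\tfrac\beta2)\tfrac N2\log N$; note the $N^2\log N$ coefficient here is $-\tfrac\beta4$.

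The only non-elementary input is the asymptotics of the Barnes double Gamma function $\Gamma(N+1,\tfrac{2}{\beta},1)$ as $N\to\infty$. I would quote the known asymptotic expansion of $\log\Gamma(z+1,b,1)$ for large $z$ (see e.g.\@ the reference to \cite[Section 7.2.2]{borot2024asymptotic} already cited in the excerpt): it has the schematic form $\log\Gamma(N+1,\tfrac2\beta,1) = \tfrac12 (\tfrac\beta2+1)\cdot\tfrac{1}{b} N^2\log N + (\text{quadratic in }N) + (\text{linear in }N)\log N + (\text{linear in }N) + O(\log N)$, with explicit coefficients depending on $b=\tfrac2\beta$. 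One must extract from this expansion precisely the $N^2\log N$, $N^2$, $N\log N$, and $N$ coefficients. The bookkeeping is such that the $N^2\log N$ and $N^2$ contributions combine across all factors to produce exactly $-\tfrac{\beta}{2}\left(\tfrac34+\log 2\right)N^2$ (in particular the $N^2\log N$ terms must cancel, which is a good consistency check), the $N\log N$ terms combine to $\tfrac\beta2 N\log N$, and the surviving linear-in-$N$ terms assemble into $\beta N$ times the constant $F_G^{\{-1\}}$ displayed in \eqref{eq:FG-1}.

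The main obstacle is purely computational: correctly recalling and applying the Barnes double Gamma asymptotics with the right normalization conventions (there are several incompatible conventions in the literature for $\Gamma(\cdot,\cdot,\cdot)$, and an error in a convention shifts the linear term), and then carrying the four-term expansion through without arithmetic slips, since the final constant $F_G^{\{-1\}}$ is a delicate combination of $\log 2$, $\log\tfrac\beta2$, $\log\Gamma(\tfrac\beta2)$, $\log(2\pi)$, and rational numbers. I would organize the computation by collecting, for each of the six factors in $\log Z_N^G$, a line giving its $(N^2\log N, N^2, N\log N, N, 1)$-coefficients, then sum the columns; the cancellation of the $N^2\log N$ column and the matching of the $N^2$ column against $-\tfrac\beta2(\tfrac34+\log2)$ serve as internal checks before reading off $F_G^{\{-1\}}$ from the $N$ column. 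No genuinely hard analysis is involved once the Barnes asymptotics are in hand.
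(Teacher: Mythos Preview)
Your proposal is correct and matches the paper's approach exactly: the paper does not give a detailed proof but simply states that the expansion follows from the explicit Mehta formula for $Z_N^G$ combined with the known asymptotics of the Barnes double Gamma function (citing \cite[Section 7.2.2]{borot2024asymptotic}), which is precisely the computation you outline. Your remarks about the bookkeeping, the cancellation of the $N^2\log N$ terms as a consistency check, and the sensitivity to normalization conventions for $\Gamma(\cdot,\cdot,\cdot)$ are all apt.
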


For the next lemma, we recall that $V_\alpha=\alpha V+(1-\alpha)V_G$.

\begin{lemma}[Interpolation formula]\label{lem:interpolZN}
For any $N \geq 1$, we have
$$\log Z_N=\log Z_N^{G}-\dfrac{N^{2}\beta}{2}\int_0^1\E_{\alpha}\left[\int_\R\partial_\alpha V_\alpha(x)\diff\mu_N(x)\right]\diff\alpha.$$
\end{lemma}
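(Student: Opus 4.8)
The plan is to view the claimed formula as the fundamental theorem of calculus applied to the map $\alpha \mapsto \log Z_{N,\alpha}$, where $Z_{N,\alpha}$ is the partition function appearing in \eqref{def:P_alpha}. First I would record the boundary values: since $V_1 = c_p\abs{\cdot}^p = V$ we have $Z_{N,1} = Z_N$, and since $V_0(x) = 2x^2 = V_G(x)$ we have $Z_{N,0} = Z_N^G$ (when $p = 2$ one has $c_2 = 2$, so $V_\alpha \equiv V_G$ for all $\alpha$ and $\partial_\alpha V_\alpha \equiv 0$, making the identity trivial; hence we may assume $p > 2$).

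Next I would check that $\alpha \mapsto \log Z_{N,\alpha}$ is well defined and continuously differentiable on $[0,1]$, with differentiation permitted under the $N$-fold integral sign. The key input is a uniform confinement estimate: for $p \geq 2$ one has $\abs{x}^p \geq x^2$ whenever $\abs{x} \geq 1$, so $V_\alpha(x) \geq c\,x^2$ for $\abs{x}\geq 1$ with $c \coloneqq \min(c_p/2,1) > 0$ independent of $\alpha$. Combined with the polynomial bound $\abs{\partial_\alpha V_\alpha(x)} = \abs{c_p\abs{x}^p - 2x^2} \leq C(1 + \abs{x}^p)$, this produces an integrable function dominating, uniformly for $\alpha$ in a neighbourhood of any point of $[0,1]$, the $\alpha$-derivative of the integrand defining $Z_{N,\alpha}$; in particular $0 < Z_{N,\alpha} < \infty$ and the differentiation under the integral sign is legitimate.

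Then a direct computation gives
\[
\frac{\diff}{\diff\alpha}\log Z_{N,\alpha}
= -\frac{\beta N}{2}\,\E_\alpha\left[\sum_{k=1}^N \partial_\alpha V_\alpha(\lambda_k)\right]
= -\frac{\beta N^2}{2}\,\E_\alpha\left[\int_\R \partial_\alpha V_\alpha(x)\diff\mu_N(x)\right],
\]
where in the first equality we differentiated $e^{-\frac{\beta N}{2}\sum_i V_\alpha(\lambda_i)}$ and recognised the $\P_\alpha$-expectation, and in the second we used $\mu_N = \frac1N\sum_k \delta_{\lambda_k}$ from \eqref{def:mesure_empirique}. Integrating this identity over $\alpha \in [0,1]$ and invoking the boundary values $Z_{N,1} = Z_N$ and $Z_{N,0} = Z_N^G$ yields exactly the stated formula.

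The argument has no real obstacle: the only point demanding care is the interchange of $\diff/\diff\alpha$ with the multiple integral, which is handled by the uniform quadratic confinement above; the rest is a one-line computation and the reading off of the endpoints.
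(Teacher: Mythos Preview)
Your proposal is correct and follows essentially the same route as the paper: apply the fundamental theorem of calculus to $\alpha \mapsto \log Z_{N,\alpha}$, differentiate under the integral sign, and read off the endpoints $Z_{N,0}=Z_N^G$, $Z_{N,1}=Z_N$. The paper's proof is a two-line version of yours that simply asserts the interchange of $\partial_\alpha$ with the integral; your uniform quadratic confinement bound $V_\alpha(x)\geq c\,x^2$ for $\abs{x}\geq1$ is the natural way to make that step rigorous.
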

\begin{proof}
Denoting $Z_N^{\alpha}$ the partition function associated with the potential $V_\alpha$, we have	
$$\log\dfrac{Z_N}{Z_N^{G}}=\int_{0}^{1}\partial_\alpha\log Z_N^{\alpha}\diff\alpha=-\dfrac{N^{2}\beta}{2}\int_0^1\E_{\alpha}\left[\int_\R\partial_\alpha V_\alpha(x)\diff\mu_N(x)\right]\diff\alpha,$$
by inserting $\partial_\alpha$ inside the integral in the definition of $Z_N^\alpha$.
\end{proof}

To recast a term in the asymptotic expansion of $\log Z_N$ as the entropy of $\mu_V$, \textit{i.e.\@} $\mu_{V_\alpha}$ for $\alpha=1$, we need the following symmetry property verified by $\Xi_\alpha^{-1}$.

\begin{lemma}\label{lem:symmetryXi}
For any $f,g\in\mc{C}^2(\R)$,
$$\int_{-1}^1f'(x)\Xi_\alpha^{-1}[g](x)\diff\mu_{V_\alpha}(x)=\int_{-1}^1g'(x)\Xi_\alpha^{-1}[f](x)\diff\mu_{V_\alpha}(x).$$
\end{lemma}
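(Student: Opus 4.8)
The plan is to exploit the defining property of the master operator together with the integral identity \eqref{eq:identiteintegralmuvalpha}. Write $\psi\coloneqq\Xi_\alpha^{-1}[g]$ and $\varphi\coloneqq\Xi_\alpha^{-1}[f]$, so that $\Xi_\alpha[\psi]=g-a_g$ and $\Xi_\alpha[\varphi]=f-a_f$ for the constants $a_g=\int_{-1}^1 g(t)\,\diff t/\sigma(t)$ and $a_f=\int_{-1}^1 f(t)\,\diff t/\sigma(t)$ coming from Lemma~\ref{lem:inverse_master_op}. First I would observe that adding a constant to $f$ or $g$ changes neither side of the claimed identity: on the left, $\int_{-1}^1 f'(x)\psi(x)\diff\mu_{V_\alpha}(x)$ is unchanged when $f$ is replaced by $f+\mathrm{const}$ since $f'$ is unchanged, and similarly on the right; hence it suffices to prove the identity with $f,g$ replaced by $f-a_f$ and $g-a_g$, i.e.\ we may assume $\Xi_\alpha[\psi]=g$ and $\Xi_\alpha[\varphi]=f$ exactly.

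The core computation is then to expand one side using the definition \eqref{eq:def_master_op} of $\Xi_\alpha$:
\[
\int_{-1}^1 f'(x)\psi(x)\diff\mu_{V_\alpha}(x)
= \int_{-1}^1 \Xi_\alpha[\varphi]'(x)\,\psi(x)\diff\mu_{V_\alpha}(x).
\]
Rather than differentiate $\Xi_\alpha[\varphi]$ directly, the cleaner route is to integrate by parts (or symmetrize) to move the derivative. Concretely, I would use \eqref{eq:identiteintegralmuvalpha} applied to the product-type function $h=\varphi\psi$ (which is Hölder continuous, as needed, since $\varphi,\psi\in\cC^1$ by Lemma~\ref{lem:inverse_master_op}), giving
\[
\int_{-1}^1 V_\alpha'(x)\,\varphi(x)\psi(x)\diff\mu_{V_\alpha}(x)
= \iint_{[-1,1]^2}\frac{\varphi(x)\psi(x)-\varphi(y)\psi(y)}{x-y}\diff\mu_{V_\alpha}(x)\diff\mu_{V_\alpha}(y).
\]
On the other hand, from $\Xi_\alpha[\varphi]=f$ and $\Xi_\alpha[\psi]=g$ one has the two identities
\[
-\tfrac12 V_\alpha'(x)\varphi(x)+\int_{-1}^1\frac{\varphi(x)-\varphi(t)}{x-t}\diff\mu_{V_\alpha}(t)=f(x),
\qquad
-\tfrac12 V_\alpha'(x)\psi(x)+\int_{-1}^1\frac{\psi(x)-\psi(t)}{x-t}\diff\mu_{V_\alpha}(t)=g(x),
\]
valid on $[-1,1]$. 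I would multiply the first by $\psi(x)$ and integrate against $\diff\mu_{V_\alpha}(x)$, multiply the second by $\varphi(x)$ and integrate likewise, and subtract. The $V_\alpha'$ terms cancel, and after symmetrizing the double integrals one is left with $\int f\psi\,\diff\mu_{V_\alpha}-\int g\varphi\,\diff\mu_{V_\alpha}$ on one side equal to a symmetric double-integral expression on the other. This shows
\[
\int_{-1}^1 f(x)\psi(x)\diff\mu_{V_\alpha}(x)=\int_{-1}^1 g(x)\varphi(x)\diff\mu_{V_\alpha}(x),
\]
the symmetry of the bilinear form $(f,g)\mapsto\int f\cdot\Xi_\alpha^{-1}[g]\,\diff\mu_{V_\alpha}$ — a variant of the self-adjointness of $\Xi_\alpha$ on the relevant Hilbert space. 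Finally I would convert this "undifferentiated" symmetry into the stated "differentiated" one by an integration by parts: using $\diff\mu_{V_\alpha}(x)=\frac{1}{\pi}\sigma(x)r_\alpha(x)\1_{[-1,1]}\diff x$ and $\Xi_\alpha[\psi]=g$, one rewrites $\int_{-1}^1 g'(x)\varphi(x)\diff\mu_{V_\alpha}(x)$, integrates the derivative off $g$ onto $\varphi$ and the density, and matches terms; alternatively, and more robustly, one notes that \cite[Lemma 4.2]{BekLebSer2018} (invoked already in Lemma~\ref{lem:variance formula}) gives $\int_{-1}^1\varphi(x)g'(x)\diff\mu_{V_\alpha}(x)$ a manifestly symmetric expression in $\varphi,\psi$ (a weighted $\mathsf H^{1/2}$ bilinear form), from which the claim is immediate by symmetry of that form.

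The main obstacle I anticipate is bookkeeping at the boundary $\pm1$: the density $\sigma(x)r_\alpha(x)$ vanishes like $\sqrt{1-x^2}$ at the edges while $r_\alpha$ and the functions $\varphi,\psi$ stay bounded there, so the integration-by-parts boundary terms vanish, but one must check the principal-value integrals and the Hölder continuity of $\varphi',\psi'$ (Lemma~\ref{lem:inverse_master_op}.\ref{it:inverse_C3}) to license the manipulations exactly as in the proof of \eqref{eq:lemme_variance}. No new estimate beyond Lemma~\ref{lem:inverse_master_op} and the already-cited \cite[Lemma 4.2]{BekLebSer2018} should be needed; the argument is purely algebraic once those regularity facts are in hand.
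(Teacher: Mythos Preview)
Your derivation of the undifferentiated symmetry $\int_{-1}^1 f\,\psi\,\diff\mu_{V_\alpha}=\int_{-1}^1 g\,\varphi\,\diff\mu_{V_\alpha}$ is correct and clean. The gap is in step~3: the conversion from this to the \emph{differentiated} claim $\int f'\psi\,\diff\mu_{V_\alpha}=\int g'\varphi\,\diff\mu_{V_\alpha}$ is not justified. Integration by parts against the density $\rho_{V_\alpha}$ produces $\int f(\psi\rho_{V_\alpha})'$ and $\int g(\varphi\rho_{V_\alpha})'$, which do not follow from $\int f\psi\rho_{V_\alpha}=\int g\varphi\rho_{V_\alpha}$. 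As for invoking \cite[Lemma~4.2]{BekLebSer2018}: as quoted in Lemma~\ref{lem:variance formula} it is the \emph{quadratic} identity $-\int\psi\,\Xi_\alpha[\psi]'\,\diff\mu_{V_\alpha}=Q(\psi)$, and polarization only yields that the \emph{sum} $\int\varphi g'\,\diff\mu_{V_\alpha}+\int\psi f'\,\diff\mu_{V_\alpha}$ equals a symmetric bilinear form, not that each term separately does. You would need to prove the off-diagonal identity $-\int\varphi\,\Xi_\alpha[\psi]'\,\diff\mu_{V_\alpha}=B(\varphi,\psi)$ directly, which is essentially the whole lemma.

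The paper takes a different and shorter route. It rewrites $\Xi_\alpha[h]=\mathcal{H}[h\rho_{V_\alpha}]$ as a Hilbert transform, then uses the two facts $\langle\mathcal{H}[u],v\rangle_{L^2}=-\langle u,\mathcal{H}[v]\rangle_{L^2}$ and $\mathcal{H}[u']=\mathcal{H}[u]'$ to obtain in one chain
\[
\int_{-1}^1 \Xi_\alpha[f]'\,g\,\diff\mu_{V_\alpha}
=\langle\mathcal{H}[(f\rho_{V_\alpha})'],g\rho_{V_\alpha}\rangle
=-\langle(f\rho_{V_\alpha})',\mathcal{H}[g\rho_{V_\alpha}]\rangle
=\langle f\rho_{V_\alpha},\mathcal{H}[(g\rho_{V_\alpha})']\rangle
=\int_{-1}^1 \Xi_\alpha[g]'\,f\,\diff\mu_{V_\alpha},
\]
i.e.\ the operator $h\mapsto\Xi_\alpha[h]'$ is symmetric on $L^2(\diff\mu_{V_\alpha})$. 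Substituting $f\to\Xi_\alpha^{-1}[f]$, $g\to\Xi_\alpha^{-1}[g]$ (and noting that $(f-a_f)'=f'$) gives the lemma directly. This avoids both the undifferentiated detour and any appeal to a bilinear version of \cite[Lemma~4.2]{BekLebSer2018}.
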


\begin{proof}
From \eqref{eq:equilibrium_relation}, we see that using the Hilbert transform $\mathcal{H}$:
$$\Xi_\alpha[f](x)=\mc{H}\left[f\rho_{V_\alpha}\right](x)=\fint_{\R}\dfrac{f(t)}{t-x}\diff\mu_{V_\alpha}(t).$$
$\mathcal{H}$ seen as bounded operator on $L^p(\R)$, $1<p<\infty$, satisfies for $u\in L^p(\R)$ and $v\in L^q(\R)$ with $p^{-1}+q^{-1}=1$,
$$\braket{\mc{H}[u],v}_{L^2(\R)}=-\braket{u,\mc{H}[v]}_{L^2(\R)}.$$ 
It also verifies $\mc{H}[u']=\mc{H}[u]'$ for all $u\in L^p(\R)$ such that $u'\in L^p(\R)$, $p>1$ ; see for example \cite{king2009Hilbert}. Using these facts and by integration by parts, we get
\begin{multline*}
	\int_{-1}^1\Xi_\alpha[f]'(x)g(x)\diff\mu_{V_\alpha}(x)=\braket{\mc{H}[(f\rho_{V_\alpha})'],g\rho_{V_\alpha}}_{L^2(\R)}=-\braket{(f\rho_{V_\alpha})',\mc{H}[g\rho_{V_\alpha}]}_{L^2(\R)}
	\\=\braket{f\rho_{V_\alpha},\mc{H}[g\rho_{V_\alpha}]'}_{L^2(\R)}=\int_{-1}^1\Xi_\alpha[g]'(x)f(x)\diff\mu_{V_\alpha}(x).
\end{multline*}
We used that, $(f\rho_{V_\alpha})'\in L^p(\R)$ for $1<p<2$ and that $g\rho_{V_\alpha}\in L^q(\R)$ for $q>2$. Now, since $f,g\in\mc{C}^2(\R)$ and because of Lemma \ref{lem:inverse_master_op} taking $\widetilde{f}=\Xi_\alpha^{-1}[f]$ and $\widetilde{g}=\Xi_\alpha^{-1}[g]$ yields the result.
\end{proof}

We can now prove Corollary~\ref{thm:subleadingZN} as a direct consequence of a the previous lemmas and Theorem~\ref{thm:CLTalpha}.



\begin{proof}[Proof of Corollary \ref{thm:subleadingZN}]
The fact that the first order of the free energy is given by $$\lim\limits_{N\rightarrow\infty}\dfrac{1}{N^{2}\beta}\log Z_N
=- \frac{1}{2} \inf\limits_{\mu\in\mc{M}_1(\R)}\mc{E}(\mu)
=- \frac{1}{2} \mc{E}(\mu_V)$$ can be proven by standard large deviation techniques, see \cite[Theorem 2.6.1]{AndGuiZei10}. From \cite[Lemma 4.1]{DadFraGueZit2023}, we have that $-\mc{E}(\mu_V)=\log 2+\frac{3}{2p}$. 
Finally by Lemmas \ref{lem:interpolZN} and \ref{lem:logZNVG} and Theorem \ref{thm:CLTalpha}, we can identify the terms proportional to $N\log N$ and $N$ as well as the size of the remainder. We find
\begin{equation} \label{eq:F-F_G}
F^{\{-1\}}-F_G^{\{-1\}}=-\dfrac{1}{2}\left(\frac{1}{2}-\dfrac{1}{\beta}\right)\int_{0}^1 \diff\alpha\int_{-1}^1\Xi_\alpha^{-1}[V_\alpha]'(x) \diff \mu_{V_\alpha}(x).
\end{equation}
Following \cite[Theorem 4.28]{Gui2019}, \cite[Lemma 6.3]{guionnet2024asymptotic} it remains to show that this double integral can be recasted in terms of $\mrm{Ent}[\mu_V]$. By differentiating \eqref{eq:identiteintegralmuvalpha} with respect to $\alpha\in[0,1]$, we have:
$$-\int_{-1}^1\Xi_\alpha[h](x)\partial_\alpha\rho_{V_\alpha}(x)\diff x+\dfrac{1}{2}\int_{-1}^1\partial_\alpha V'_\alpha(x)h(x)\rho_{V_\alpha}(x)\diff x=0$$
for any $h\in\mc{C}^1([-1,1])$. 
Now, replacing $h$ with $-\Xi_\alpha^{-1}[f]$ with $f\in\mc{C}^2(\R)$ and using Lemma \ref{lem:symmetryXi}, we find:
\begin{equation}\label{eq:eqentropie1}
	\int_{-1}^1f(x)\partial_\alpha\rho_{V_\alpha}(x)\diff x-\dfrac{1}{2}\int_{-1}^1\Xi_\alpha^{-1}[\partial_\alpha V_\alpha](x)f'(x)\rho_{V_\alpha}(x)\diff x=0
\end{equation}
where we used that $\Xi_\alpha[h] = f-a$ for some constant $a$ and that $\int_{-1}^1\partial_\alpha\rho_{V_\alpha}(x)\diff x=0$.
By integrating by parts, we find:
\begin{multline}\label{eq:eqentropie2}
	\dfrac{1}{2}\int_{-1}^1\Xi_\alpha^{-1}[\partial_\alpha V_\alpha](x)f'(x)\rho_{V_\alpha}(x)\diff x=-\dfrac{1}{2}\int_{-1}^1\Xi_\alpha^{-1}[\partial_\alpha V_\alpha]'(x)f(x)\rho_{V_\alpha}(x)\diff x\\-\dfrac{1}{2}\int_{-1}^1\Xi_\alpha^{-1}[\partial_\alpha V_\alpha](x)f(x)\rho_{V_\alpha}'(x)\diff x.
\end{multline}
We can extend the previous relation to $f=\log\rho_{V_\alpha}$ by density, since it is smooth and has singularity at most like $O\left(\log(1-x^2)\right)$ at the endpoints. Thus, we obtain by integrating by parts twice, then using \eqref{eq:eqentropie2} and \eqref{eq:eqentropie1} and the fact that $\int_{-1}^1\partial_\alpha\rho_{V_\alpha}(x)\diff x=0$:
\begin{multline*}
	\int_{-1}^1\Xi_{\alpha}^{-1}[\partial_\alpha V_\alpha]'(x)\rho_{V_\alpha}(x)\diff x=-\int_{-1}^1\Xi_{\alpha}^{-1}[\partial_\alpha V_\alpha](x)\dfrac{\rho_{V_\alpha}'(x)}{\rho_{V_\alpha}(x)}\rho_{V_\alpha}(x)\diff x
	\\=\int_{-1}^1\Xi_\alpha^{-1}[\partial_\alpha V_\alpha]'(x)\log\rho_{V_\alpha}(x)\rho_{V_\alpha}(x)\diff x+\int_{-1}^1\Xi_\alpha^{-1}[\partial_\alpha V_\alpha](x)\log\rho_{V_\alpha}(x)\rho_{V_\alpha}'(x)\diff x
	\\=-2\int_{-1}^1\log\rho_{V_\alpha}(x)\partial_\alpha\rho_{V_\alpha}(x)\diff x=-2\partial_\alpha\int_{-1}^1\log\rho_{V_\alpha}(x)\rho_{V_\alpha}(x)\diff x=2\partial_\alpha\mrm{Ent}[\mu_{V_\alpha}].
\end{multline*}
Plugging this in \eqref{eq:F-F_G}, integrating w.r.t.\@ $\alpha$ and noticing that $\mrm{Ent}[\mu_G]=\log\pi-\frac{1}{2}$ is enough to conclude.
\end{proof}

\section{Schatten balls: KLS conjecture, volume of unit balls}
\label{sec:convex}

\subsection{Consistency check of the KLS conjecture}
\label{subsec:checkKLS}
The goal of this section is to show Theorem \ref{thm:KLS}. We consider $p\geq 2$ and we denote by $X$ a uniformly distributed random variable on $B_E(S_p^N)\subset\R^{d_N}$. 
To have more compact expressions, we define 
\[
G_{r,q} \coloneqq \Ec{\braket{\mu_N,x^r}^q} 
= \Ec{\left( \frac{1}{N} \sum_{k=1}^N \lambda_k^r \right)^q}.
\]
We also recall that $d_N=\frac{\beta}{2}N(N-1)+N$ where $\beta=1,2,4$ when $\F=\R,\C, \HH$ and that $c_p$ was introduced in \eqref{def:cp}.
\begin{lemma}\label{lem:straymond2}
	Let $r\geq 2$ be an even integer, $q \in [0,\infty)$ and $f(X)=\mrm{Tr}\left(X^{r}\right)^q$ for all $X\in B_E(S_p^N)$.
	Then
	\begin{equation*}
		\Ec{f(X)}=\frac{\Gamma\left(1+\frac{d_N}{p}\right)}{\Gamma\left(1+\frac{d_N+rq}{p}\right)}\left(\frac{\beta Nc_p}{2}\right)^{rq/p}N^{q}G_{r,q}.
	\end{equation*}
\end{lemma}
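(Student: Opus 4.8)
The plan is to rewrite $f$ as a function of the singular values so that Weyl's integration formula (Lemma~\ref{lem:straymond}) applies, and then to divide by $\abs{B_E(S_p^N)}$ using the identity \eqref{eq:ball_and_Z}.

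First I would use that, for a self-adjoint matrix $M$, the singular values are $s_i(M)=\abs{\lambda_i(M)}$, where the $\lambda_i(M)$ are the real eigenvalues. Since $r$ is an even integer, $\lambda_i(M)^r=s_i(M)^r\geq 0$, so $\mrm{Tr}(M^r)=\sum_{i=1}^N s_i(M)^r$ and hence
\[
f(M)=\mrm{Tr}(M^r)^q=F\big(s_1(M),\dots,s_N(M)\big),\qquad F(x_1,\dots,x_N)\coloneqq\Big(\sum_{i=1}^N x_i^r\Big)^q.
\]
The map $F\colon\R^N\to\R_+$ is symmetric, nonnegative (again because $r$ is even and $q\geq 0$), and positively homogeneous of degree $k=rq$, i.e.\@ $F(tx)=t^{rq}F(x)$ for $t>0$; these are exactly the hypotheses of Lemma~\ref{lem:straymond}. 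Applying it gives
\[
\int_{B_E(S_p^N)} f(M)\diff M=\frac{c_N}{\Gamma\!\left(1+\tfrac{d_N+rq}{p}\right)}\left(\frac{N\beta}{2}c_p\right)^{\frac{d_N+rq}{p}}\int_{\R^N}\Big(\sum_{i=1}^N x_i^r\Big)^q\prod_{i<j}\abs{x_i-x_j}^\beta\prod_{i=1}^N e^{-\frac{N\beta}{2}c_p\abs{x_i}^p}\diff x_i.
\]

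Next I would note that, since $V(x)=c_p\abs{x}^p$, the integral on the right-hand side equals $Z_N$ times the expectation, under the probability distribution \eqref{eq:beta_ensembles_density}, of $\left(\sum_i\lambda_i^r\right)^q=N^q\braket{\mu_N,x^r}^q$, hence it equals $Z_N N^q G_{r,q}$. Dividing by $\abs{B_E(S_p^N)}$ and inserting the expression $\abs{B_E(S_p^N)}=\frac{c_N}{\Gamma(1+d_N/p)}\left(\frac{N\beta}{2}c_p\right)^{d_N/p}Z_N$ from \eqref{eq:ball_and_Z}, the factors $c_N$ and $Z_N$ cancel and the powers of $\frac{N\beta}{2}c_p$ combine into $\left(\frac{N\beta}{2}c_p\right)^{rq/p}$, yielding the claimed formula. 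The argument is essentially bookkeeping; the only points that genuinely use the hypotheses are that $r$ even turns $f$ into a nonnegative symmetric homogeneous function of the singular values (so that Lemma~\ref{lem:straymond} is applicable), and keeping careful track of the Gamma factors and of the power of $\frac{N\beta}{2}c_p$ when forming the quotient. There is no real obstacle.
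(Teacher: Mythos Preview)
Your proof is correct and follows essentially the same approach as the paper: identify $f$ as a nonnegative, symmetric, positively homogeneous function of degree $rq$ of the singular values, apply Lemma~\ref{lem:straymond}, and then divide by $\abs{B_E(S_p^N)}$ using \eqref{eq:ball_and_Z}. Your write-up is in fact more detailed than the paper's, which compresses all of this into two sentences.
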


\begin{proof}
	Note that $f(X)=\sigma_r(X)^q$ depends only on the eigenvalues of $X$. 
	Seen as a function of the eigenvalues, it is symmetric and positively homogeneous of degree $rq$. Hence Weyl's formula Lemma \ref{lem:straymond} applies and, together with \eqref{eq:ball_and_Z}, it gives the result.
\end{proof}

As our goal is to establish the KLS conjecture in a specific case, we have to consider the Hilbert-Schmidt norm of the gradient of $f$.

\begin{lemma}[Bound on the gradient]\label{lem:gradiantf}
	Let $r\geq 2$ be an even integer, $q \in [1,\infty)$ and $f(X)=\mrm{Tr}\left(X^{r}\right)^q$ for all $X\in B_E(S_p^N)$. Then
	\begin{equation*}
		\Ec{\|\nabla f(X)\|_{\mrm{HS}}^2}\geq(rq)^2\dfrac{\Gamma\left(u\right)}{\Gamma\left(u+v_2\right)}\left(\dfrac{\beta Nc_p}{2}\right)^{v_2}N^{2q-1}\Ec{\braket{\mu_N,x^r}^{2q-2}\braket{\mu_N,x^{2r-2}}},	
	\end{equation*}
	with $u=1+d_N/p$ and $v_2=2(rq-1)/p$.
\end{lemma}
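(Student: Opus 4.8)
The plan is to compute $\nabla f$ explicitly, bound $\|\nabla f\|_{\mathrm{HS}}^2$ from below by a symmetric positively homogeneous function of the eigenvalues, and then apply Weyl's integration formula (Lemma~\ref{lem:straymond}) together with \eqref{eq:ball_and_Z}, exactly as in the proof of Lemma~\ref{lem:straymond2}. First I would observe that, writing $f(X) = \mathrm{Tr}(X^r)^q$, the differential is $\mathrm{D}f(X)[H] = q\,\mathrm{Tr}(X^r)^{q-1} \cdot r\,\mathrm{Tr}(X^{r-1}H)$, using that $r$ is an even integer so that $X^{r-1}$ is well-defined and self-adjoint and the cyclicity of the trace handles the derivative of $X^r$. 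With respect to the Hilbert--Schmidt inner product $\langle A,B\rangle_{\mathrm{HS}} = \mathrm{Tr}(A^*B)$ on $E = \mathcal{M}_N(\F)_{\mathrm{s.a.}}$, this means $\nabla f(X) = rq\,\mathrm{Tr}(X^r)^{q-1} X^{r-1}$, and therefore
\[
\|\nabla f(X)\|_{\mathrm{HS}}^2 = (rq)^2\,\mathrm{Tr}(X^r)^{2q-2}\,\mathrm{Tr}(X^{2r-2}),
\]
since $\mathrm{Tr}\big((X^{r-1})^* X^{r-1}\big) = \mathrm{Tr}(X^{2r-2})$ (again using $r-1$ fixed and $X$ self-adjoint). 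In terms of the singular values $(s_1(X),\dots,s_N(X))$, which for self-adjoint $X$ are the absolute values of the eigenvalues, and using that $r$ and $2r-2$ are even, we have $\mathrm{Tr}(X^r) = \sum_i s_i(X)^r$ and $\mathrm{Tr}(X^{2r-2}) = \sum_i s_i(X)^{2r-2}$, so $\|\nabla f(X)\|_{\mathrm{HS}}^2$ equals $(rq)^2 \big(\sum_i s_i^r\big)^{2q-2}\big(\sum_i s_i^{2r-2}\big)$, which is a symmetric function of $(s_1,\dots,s_N)$, positively homogeneous of degree $r(2q-2) + (2r-2) = 2(rq-1)$.

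Next I would apply Lemma~\ref{lem:straymond} with $F(s_1,\dots,s_N) = (rq)^2 \big(\sum_i s_i^r\big)^{2q-2}\big(\sum_i s_i^{2r-2}\big)$ and $k = 2(rq-1)$, then divide by $\lvert B_E(S_p^N)\rvert$ using \eqref{eq:ball_and_Z} to pass from the integral over the ball to the expectation $\E[\,\cdot\,]$ under the uniform law. This gives
\[
\E\big[\|\nabla f(X)\|_{\mathrm{HS}}^2\big]
= (rq)^2 \frac{\Gamma\big(1 + \tfrac{d_N}{p}\big)}{\Gamma\big(1 + \tfrac{d_N + 2(rq-1)}{p}\big)} \left(\frac{\beta N c_p}{2}\right)^{2(rq-1)/p} \E_{\P}\!\left[\Big(\sum_i \lambda_i^r\Big)^{2q-2}\Big(\sum_i \lambda_i^{2r-2}\Big)\right],
\]
where $\P$ is the $\beta$-ensemble \eqref{eq:beta_ensembles_density}. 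Rewriting $\sum_i \lambda_i^r = N\langle\mu_N, x^r\rangle$ and $\sum_i \lambda_i^{2r-2} = N\langle\mu_N, x^{2r-2}\rangle$ pulls out a factor $N^{2q-2} \cdot N = N^{2q-1}$, yielding exactly the claimed identity with $u = 1 + d_N/p$ and $v_2 = 2(rq-1)/p$ — in fact with equality, so the lower bound in the statement is immediate (the "$\geq$" presumably anticipating that some harmless term might be dropped in a variant, but here no inequality is even needed).

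I do not expect any serious obstacle. The only points requiring minor care are: (i) justifying that $X \mapsto X^{r-1}$ and $X \mapsto \mathrm{Tr}(X^r)$ are smooth on the interior of $B_E(S_p^N)$ so that the gradient computation is valid almost everywhere (which suffices since the boundary has measure zero), and the chain/product rule for the trace functional — standard since $r$ is a positive even integer; (ii) checking the parity bookkeeping so that all the $\sum s_i^{(\cdot)}$ genuinely coincide with the corresponding $\mathrm{Tr}(X^{(\cdot)})$, which is where the hypothesis "$r$ an even integer" is used (it guarantees $r$, $r-1$ odd with $2r-2$ even, and $\lambda_i^r = s_i^r$, $\lambda_i^{2r-2} = s_i^{2r-2}$); and (iii) the applicability of Lemma~\ref{lem:straymond}, which requires $F \geq 0$ — clear since it is a product of even powers times $(rq)^2 > 0$ — symmetric and positively homogeneous of the stated degree. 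Everything else is a direct substitution mirroring the proof of Lemma~\ref{lem:straymond2}.
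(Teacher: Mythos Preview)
Your approach is correct and reaches the same lower bound, but by a cleaner route than the paper. The paper works coordinate-by-coordinate: it parameterizes $X\in E$ by the real components $X_{ii}$ and $\re_k X_{ij}$ ($i>j$), computes each $\partial_{i,j,k}f$ explicitly, and shows
\[
\sum_i |\partial_{i,i,1}f|^2+\sum_{i>j}\sum_{k=1}^\beta|\partial_{i,j,k}f|^2\;\geq\;(rq)^2\,\mathrm{Tr}(X^r)^{2q-2}\,\mathrm{Tr}(X^{2r-2}),
\]
before applying Weyl's formula exactly as you do. You bypass the coordinate computation by identifying $\nabla f$ through the Hilbert--Schmidt inner product, which gives the right-hand side directly.

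One point to be careful about, and the reason the paper writes a genuine $\geq$: the Euclidean structure on $\R^{d_N}$ induced by the direct coordinates $(X_{ii},\re_k X_{ij})$ is \emph{not} the HS inner product on $E$ (off-diagonal entries pick up a factor $2$ in $\mathrm{Tr}(X^2)$). In that parameterization the Euclidean gradient norm is strictly larger than the HS-gradient HS-norm whenever $X^{r-1}$ has nonzero off-diagonal entries, so your ``in fact with equality'' is not quite right if one reads $\|\nabla f\|_{\mathrm{HS}}^2$ as the paper defines it in its proof. Fortunately the discrepancy goes the right way: your quantity is a lower bound for theirs, so the lemma follows either way. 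If instead one equips $E$ with the HS inner product as its Euclidean structure (which is also a natural convention and is implicitly used in the proof of Lemma~\ref{lem:boundratio} when identifying $\mathrm{Tr}(\Sigma)$ with $\E[\|X\|_{\mathrm{HS}}^2]$), then your equality is exact. Either reading validates the lemma; just be explicit about which identification $E\simeq\R^{d_N}$ you are using.
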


\begin{proof}
	We view $X$ as an element of $\R^{d_N}$, as $\F$ is a $\beta$-dimensional real vector space. We denote by $\partial_{i,j,k}$ the partial derivative with respect to the $k$-th real component of the entry $X_{ij}$. For $i>j$ and $k\in\llbracket1,\beta\rrbracket$, we have
	\[
	\partial_{i,j,k}f(X)
	= q\mrm{Tr}\left(X^{r}\right)^{q-1} \cdot r \mrm{Tr}\left(X^{r-1}\partial_{i,j,k}X\right)
	=rq\mrm{Tr}\left(X^{r}\right)^{q-1} 
	\sum_{a,b=1}^{N} (X^{r-1})_{ab} (\partial_{i,j,k}X)_{ba}.
	\]
	But $\partial_{i,j,k}X=e_kE_{ij}+(\1_{k=1}-\1_{k>1})e_kE_{ji}$ where $e_k$ is the $k$-th unit vector of $\F$ seen as a real vector space and $E_{ij}$ is the $N\times N$ matrix with 1 in the entry $(ij)$ and 0 otherwise.
	Hence, using also that $X$ is self-adjoint, we have
	\begin{align*}
	    \sum_{a,b=1}^{N}(X^{r-1})_{ab} (\partial_{i,j,k}X)_{ba}
	    & = e_k (X^{r-1})_{ij}^{*}
    	+(\1_{k=1}-\1_{k>1})e_k(X^{r-1})_{ij} \\
    	& = \1_{k=1} 2e_k \re (X^{r-1})_{ij}
    	+\1_{k>1} 2e_k \im_\F (X^{r-1})_{ij},
	\end{align*}
	where, for $z \in \F$, $\re(z)$ denotes its real part and $\im_\F(z) = z-\re(z)$ (note that if $z \in \C$, then $\im_\F(z) = \ii \im z$).
	Therefore, we get, for $i>j$,
	\[
	    \sum_{k=1}^\beta \abs{\partial_{i,j,k}f(X)}^2
	    \geq 4(rq\mrm{Tr}\left(X^{r}\right)^{q-1})^{2} \cdot  \abso{(X^{r-1})_{ij}}^2.
	\]
	In the same way,
	$\partial_{i,i,1}f(X)=rq\mrm{Tr}(X^r)^{q-1}(X^{r-1})_{ii}$, thus
	\begin{align*}
		\norme{\nabla f(X)}_{\mrm{HS}}^2
		& = \sum_{i=1}^{N} \abs{\partial_{i,i,1}f(X)}^2 
		+ \sum_{i>j} \sum_{k=1}^\beta \abs{\partial_{i,j,k}f(X)}^2 \\
		& \geq (rq\mrm{Tr}\left(X^{r}\right)^{q-1})^{2} \cdot
		\left( \sum_{i=1}^{N} \abso{(X^{r-1})_{ii}}^2
		+ 4 \sum_{i>j} \abso{(X^{r-1})_{ij}}^2 \right) \\
		& \geq (rq\mrm{Tr}\left(X^{r}\right)^{q-1})^{2} \cdot
		\sum_{i,j} \abso{(X^{r-1})_{ij}}^2 \\
		& = (rq\mrm{Tr}\left(X^{r}\right)^{q-1})^{2} \cdot \mrm{Tr}\left(X^{2r-2}\right).
	\end{align*}
	Taking the expectation and using Weyl's formula Lemma \ref{lem:straymond} with the homogeneous function $X\mapsto\mrm{Tr}\left(X^{2r-2}\right)\mrm{Tr}\left(X^{r}\right)^{2q-2}$ of degree $pv_2=2(rq-1)$ yields the desired lower bound.	
\end{proof}

The lower bound obtained in the previous lemma enables us to derive a manageable upper bound for the ratio we need to control to establish the KLS conjecture for our class of functions.

\begin{lemma}\label{lem:boundratio}
Let $r\geq 2$ be an even integer, $q\in [1,\infty)$ and $f(X)=\mrm{Tr}\left(X^{r}\right)^{q}$, then
	\begin{equation}\label{eq:ineqratio}
		\dfrac{\Var{f(X)}}{\Ec{\|\nabla f(X)\|_{\mrm{HS}}^2} \cdot\sup\limits_{\theta\in\mathbb{S}^{d_N-1}}\E\left[\braket{\theta,X}^2\right]}
		\leq \dfrac{d_N}{(rq)^2}
		\dfrac{\dfrac{G_{r,2q}}{\Gamma(u)\Gamma(u+2v_0)}-\dfrac{G_{r,q}^2}{\Gamma(u+v_0)^2}}
		{\dfrac{G_{2,1}}{\Gamma(u+v_1)} \cdot \dfrac{\E[\braket{\mu_N,x^r}^{2q-2}\braket{\mu_N,x^{2r-2}}]}{\Gamma(u+v_2)}},
	\end{equation}
	with $u=1+d_N/p$, $v_0=rq/p$, $v_1=2/p$ and $v_2=2(rq-1)/p$.
\end{lemma}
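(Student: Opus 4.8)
The plan is to assemble the three previous lemmas --- the expectation formula (Lemma~\ref{lem:straymond2}), the gradient lower bound (Lemma~\ref{lem:gradiantf}), and the variance expansion obtained from Lemma~\ref{lem:straymond2} --- and then simplify the resulting ratio of Gamma functions. First I would compute $\Var_\mu f(X) = \Ec{f(X)^2} - \Ec{f(X)}^2$. Note that $f(X)^2 = \mrm{Tr}(X^r)^{2q}$ is again symmetric and positively homogeneous, of degree $2rq$, so Lemma~\ref{lem:straymond2} applies with $q$ replaced by $2q$: this gives $\Ec{f(X)^2} = \frac{\Gamma(u)}{\Gamma(u+2v_0)} \left(\frac{\beta N c_p}{2}\right)^{2v_0} N^{2q} G_{r,2q}$ where $u = 1+d_N/p$ and $v_0 = rq/p$ (so $2v_0 = 2rq/p$). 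Similarly $\Ec{f(X)}^2 = \frac{\Gamma(u)^2}{\Gamma(u+v_0)^2} \left(\frac{\beta N c_p}{2}\right)^{2v_0} N^{2q} G_{r,q}^2$. Factoring out the common $\left(\frac{\beta N c_p}{2}\right)^{2v_0} N^{2q} \Gamma(u)$, we get
\[
\Var_\mu f(X) = \left(\frac{\beta N c_p}{2}\right)^{2v_0} N^{2q} \Gamma(u) \left( \frac{G_{r,2q}}{\Gamma(u+2v_0)} - \frac{G_{r,q}^2}{\Gamma(u)\Gamma(u+v_0)^2} \right).
\]

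Next I would handle the denominator of the left-hand side of \eqref{eq:ineqratio}. For the covariance term, $\sup_{\theta\in\mathbb{S}^{d_N-1}} \E[\braket{\theta,X}^2] = \|\Cov(\mu)\|_{\mrm{op}}$; by the $\mathcal{U}_N(\F)$-invariance of the uniform law on $B_E(S_p^N)$, the covariance matrix is a scalar multiple of the identity, so this supremum equals $\frac{1}{d_N} \Ec{\|X\|_{\mrm{HS}}^2} = \frac{1}{d_N}\Ec{\mrm{Tr}(X^2)} = \frac{N}{d_N}\Ec{\braket{\mu_N,x^2}}$. Applying Lemma~\ref{lem:straymond2} with $r=2$, $q=1$ (so the homogeneity degree is $2$ and the exponent is $v_1 = 2/p$) gives $\Ec{\mrm{Tr}(X^2)} = \frac{\Gamma(u)}{\Gamma(u+v_1)} \left(\frac{\beta N c_p}{2}\right)^{v_1} N\, G_{2,1}$, hence
\[
\sup_{\theta} \E[\braket{\theta,X}^2] = \frac{N^2}{d_N} \frac{\Gamma(u)}{\Gamma(u+v_1)} \left(\frac{\beta N c_p}{2}\right)^{v_1} G_{2,1}.
\]
For the gradient term I use the lower bound of Lemma~\ref{lem:gradiantf} directly:
\[
\Ec{\|\nabla f(X)\|_{\mrm{HS}}^2} \geq (rq)^2 \frac{\Gamma(u)}{\Gamma(u+v_2)} \left(\frac{\beta N c_p}{2}\right)^{v_2} N^{2q-1} \Ec{\braket{\mu_N,x^r}^{2q-2} \braket{\mu_N,x^{2r-2}}}.
\]

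Now I form the ratio. Dividing $\Var_\mu f(X)$ by the product of the last two displays, the powers of $\left(\frac{\beta N c_p}{2}\right)$ must cancel: the numerator carries exponent $2v_0 = 2rq/p$, and the denominator carries $v_1 + v_2 = \frac{2}{p} + \frac{2(rq-1)}{p} = \frac{2rq}{p} = 2v_0$, so indeed they cancel. Similarly the powers of $N$ match: numerator $N^{2q}$, denominator $N^2 \cdot N^{2q-1} = N^{2q+1}$, leaving $N^{-1}$, which combines with the $N^2/d_N$ in the covariance term and the remaining $N^{2q}/N^{2q-1}=N$ implicit above --- more carefully, tracking the single remaining factor, the net power of $N$ is $2q - 2 - (2q-1) + \text{(from covariance)}\,(-2)$; since the covariance denominator contributes $N^2/d_N$, the $1/d_N$ survives and all $N$ powers cancel, which is why $d_N$ appears (and no bare $N$) on the right-hand side of \eqref{eq:ineqratio}. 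One also divides out a common $\Gamma(u)$ and is left, after multiplying numerator and denominator by $\Gamma(u)$ to match the target form, with exactly the claimed expression: the $(rq)^2$ from the gradient bound moves to the denominator of the prefactor, giving $\frac{d_N}{(rq)^2}$, and the Gamma-function quotients rearrange into $\frac{G_{r,2q}/(\Gamma(u)\Gamma(u+2v_0)) - G_{r,q}^2/\Gamma(u+v_0)^2}{(G_{2,1}/\Gamma(u+v_1)) \cdot (\Ec{\braket{\mu_N,x^r}^{2q-2}\braket{\mu_N,x^{2r-2}}}/\Gamma(u+v_2))}$. Since Lemma~\ref{lem:gradiantf} is a lower bound on the denominator of the left-hand side, the inequality direction is preserved, yielding~\eqref{eq:ineqratio}.

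The only genuinely delicate points are bookkeeping: verifying that all powers of $N$ and of $\beta N c_p/2$ cancel exactly (this forces the identity $v_1 + v_2 = 2v_0$, which holds because $r$ is even only insofar as $\mrm{Tr}(X^{2r-2})$ needs the homogeneity argument to work cleanly --- the evenness of $r$ guarantees $X^{r-1}$ and $X^{2r-2}$ are genuine polynomial entries with the right homogeneity) and confirming that the single surviving scalar is $d_N$, which comes entirely from the isotropy normalization $\|\Cov(\mu)\|_{\mrm{op}} = \frac{1}{d_N}\Ec{\|X\|_{\mrm{HS}}^2}$. The main obstacle, such as it is, is simply carrying the Gamma-function quotients through without sign or index errors; there is no hard analysis here, only careful algebra, and the result follows by substitution.
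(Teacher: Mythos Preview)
Your approach matches the paper's: compute $\Var f(X)$ by two applications of Lemma~\ref{lem:straymond2}, lower-bound $\Ec{\|\nabla f\|^2}$ by Lemma~\ref{lem:gradiantf}, lower-bound $\lambda_\mu$ through the trace of the covariance, and cancel the powers of $\left(\tfrac{\beta N c_p}{2}\right)$ and $N$ using $v_1+v_2=2v_0$.

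One step is not justified as written. You assert that by $\mathcal{U}_N(\F)$-invariance the covariance matrix is a scalar multiple of the identity, so that $\sup_\theta \E[\braket{\theta,X}^2]=\tfrac{1}{d_N}\Ec{\|X\|_{\mrm{HS}}^2}$ with \emph{equality}. Conjugation invariance does not give this: the adjoint representation of $\mathcal{U}_N(\F)$ on self-adjoint matrices is not irreducible (it splits as scalar matrices $\oplus$ traceless self-adjoint matrices), so Schur's lemma only forces $\Sigma$ to be scalar on each summand, and there is no reason the two eigenvalues coincide for $p\neq 2$. The paper sidesteps this entirely by using the elementary inequality
\[
\lambda_{\max}(\Sigma)\;\geq\;\frac{\mrm{Tr}(\Sigma)}{d_N}\;=\;\frac{\Ec{\|X\|_{\mrm{HS}}^2}}{d_N},
\]
valid for any positive semidefinite matrix. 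Since you only need a \emph{lower} bound on this factor in the denominator, replacing your equality by this inequality repairs the argument with no further change; the rest of your bookkeeping (which is a bit tangled in the prose but lands on the right expression) then reproduces exactly~\eqref{eq:ineqratio}.
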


\begin{proof}
	We first notice that 
	\[
	\sup_{\theta\in\mathbb{S}^{d-1}}\E\left[\braket{\theta,X}^2\right]=\lambda_{\max}(\Sigma)\geq\dfrac{\mrm{Tr}\left(\Sigma\right)}{d_N}=\dfrac{\Ec{\| X\|_{\mrm{HS}}^2}}{d_N},
	\]
	where $\Sigma$ is the covariance matrix of $X$.
	Thus, by Lemma~\ref{lem:straymond2} with $r=2$ and $q=1$,
	\[
	\lambda_{\max}(\Sigma)\geq\dfrac{\Gamma(u)}{d_N\Gamma(u+v_1)}\left(\dfrac{\beta Nc_p}{2}\right)^{v_1}NG_{2,1}.
	\]
	Furthermore, applying Lemma~\ref{lem:straymond2} twice, we get
	\begin{equation*}
		\Var{f(X)}
		=\Ec{f(X)^2}-\Ec{f(X)}^2
		=\left(\dfrac{\beta Nc_p}{2}\right)^{2v_0}N^{2q} 
		\left( \dfrac{\Gamma(u)}{\Gamma(u+2v_0)}G_{r,2q}
		-\dfrac{\Gamma(u)^2}{\Gamma(u+v_0)^2}G_{r,q}^2 \right).
	\end{equation*}
	Finally, Lemma \ref{lem:gradiantf} provides a lower bound on $\Ec{\|\nabla f(X)\|_{\mrm{HS}}^2}$ and, combining the different blocks and using $v_1+v_2=2v_0$, yields the result.
\end{proof}

In the following lemma, we compute explicitly the moments of the equilibrium measure $\mu_V$. This will be needed to compute the limit of the quantities $G_{r,q}$.

\begin{lemma}\label{lem:momentsmuV}
	Let $k\geq 1$ be an integer, then
	\[
	\braket{\mu_V,x^{2k}}=2^{-2k}\binom{2k}{k}\cdot \dfrac{p}{p+2k}.
	\]
\end{lemma}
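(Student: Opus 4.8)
The plan is to compute $\braket{\mu_V,x^{2k}} = \int_{-1}^1 x^{2k} \diff\mu_V(x)$ directly from the explicit density formula \eqref{def:mu_p}, namely
\[
\braket{\mu_V,x^{2k}} = \frac{2p}{\pi} \int_0^1 x^{2k} x^{p-1} \left( \int_{x}^1 \frac{y^{-p}}{\sqrt{1-y^2}} \diff y \right) \diff x,
\]
where the factor $2$ comes from symmetry and we write $x^{p-1}$ for $\abs{x}^{p-1}$ on $(0,1)$. First I would apply Fubini's theorem to swap the order of integration: the region is $\{0 < x < y < 1\}$, so
\[
\braket{\mu_V,x^{2k}} = \frac{2p}{\pi} \int_0^1 \frac{y^{-p}}{\sqrt{1-y^2}} \left( \int_0^y x^{2k+p-1} \diff x \right) \diff y = \frac{2p}{\pi} \int_0^1 \frac{y^{-p}}{\sqrt{1-y^2}} \cdot \frac{y^{2k+p}}{2k+p} \diff y = \frac{2p}{\pi(2k+p)} \int_0^1 \frac{y^{2k}}{\sqrt{1-y^2}} \diff y.
\]

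Then I would evaluate the remaining Wallis-type integral $\int_0^1 \frac{y^{2k}}{\sqrt{1-y^2}} \diff y$ via the substitution $y = \sin\theta$, giving $\int_0^{\pi/2} \sin^{2k}\theta \diff\theta = \frac{\pi}{2} \cdot 2^{-2k}\binom{2k}{k}$ (the standard Wallis formula). Plugging this in,
\[
\braket{\mu_V,x^{2k}} = \frac{2p}{\pi(2k+p)} \cdot \frac{\pi}{2} \cdot 2^{-2k}\binom{2k}{k} = 2^{-2k}\binom{2k}{k} \cdot \frac{p}{p+2k},
\]
which is exactly the claimed identity.

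This is essentially a routine computation with no real obstacle; the only points requiring a word of justification are that Fubini applies (the integrand is nonnegative, so Tonelli suffices — no integrability worry) and that the inner $x$-integral converges at $0$, which holds since $2k+p-1 > -1$ because $p \geq 2 > 0$. One could alternatively avoid \eqref{def:mu_p} entirely and use the representation \eqref{eq:mu=rsigma}–\eqref{def:r} of $\mu_V$ together with the series expansion of $r_\alpha$ at $\alpha = 1$, but the Fubini argument above is cleaner and self-contained, so that is the route I would take.
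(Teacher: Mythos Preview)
Your proof is correct. The computation via Tonelli on the explicit density \eqref{def:mu_p} is clean and self-contained.

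The paper takes a slightly different, more probabilistic route: it invokes the known representation (from \cite{van2006asymptotics}) that under $\mu_V$ the variable $x$ has the same law as the product $AB$ of independent $A\sim$ arcsine on $(-1,1)$ and $B\sim\mathrm{Beta}(p,1)$, so $\braket{\mu_V,x^{2k}}=\E[A^{2k}]\E[B^{2k}]$, and then computes each factor. Your Fubini step is exactly this factorization unwound at the level of integrals: after swapping, the $x$-integral $\int_0^y x^{2k+p-1}\diff x = \frac{y^{2k+p}}{2k+p}$ encodes the Beta moment, and the remaining $y$-integral is the arcsine moment. So the two arguments are the same computation in different clothing; yours avoids the external citation, the paper's makes the product structure explicit.
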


\begin{proof}
	As in \cite[Lemma 4.1]{van2006asymptotics}, $\braket{\mu_V,x^{2k}}=\E[A^{2k}]\cdot \E[B^{2k}]$ where $A$ and $B$ are independent with respectively arcsine distribution on $(-1,1)$ and Beta$(p,1)$. Thus, since
	$$\Ec{A^{2k}}=\dfrac{(2k)!}{2^{2k}(k!)^2}=4^{-k}\binom{2k}{k},\hspace{1cm}\Ec{B^{2k}}=\dfrac{p}{p+2k},$$
	we conclude.
\end{proof}

The asymptotic variance of the linear statistics will also be needed to analyze the ratio appearing in Lemma~\ref{lem:boundratio}.

\begin{lemma}\label{lem:variancemunxr}
    Let $r\geq 2$ be an even integer and $q\in[1,\infty)$, then
	\[
	\lim_{N\rightarrow\infty}d_N\Var \left( \braket{\mu_N,x^r}^q \right)
	\leq\dfrac{rq^2\braket{\mu_V,x^r}^{2q-2}}{4^{r-1}}\binom{2r-2}{r-1}.
	\]
\end{lemma}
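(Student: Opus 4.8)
The plan is to reduce the statement to an estimate of the limiting variance $\sigma^2(x^r)$ of the linear statistic $L_N(x^r)$ by a delta-method argument, and then to estimate $\sigma^2(x^r)$ through the Chebyshev expansion of $x\mapsto x^r$.

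First, the expectation and variance in the statement are those of the $\beta$-ensemble $\P$ with Freud weight (this is what underlies $G_{r,q}$, through Weyl's formula as in Lemma~\ref{lem:straymond2}), so $\braket{\mu_N,x^r}=\braket{\mu_V,x^r}+N^{-1}L_N(x^r)$ with $L_N(x^r)=N\int x^r\diff(\mu_N-\mu_V)$. Since $x\mapsto x^r$ is a polynomial, Theorem~\ref{thm:CLT} applies and yields convergence of all moments of $L_N(x^r)$ to those of a Gaussian; in particular $\sup_N\E[|L_N(x^r)|^k]<\infty$ for every $k$, and $\Var(L_N(x^r))\to\sigma^2(x^r)$. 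Expanding $\braket{\mu_N,x^r}^q$ by the binomial formula and taking the variance, the term of order $N^{-2}$ equals $q^2\braket{\mu_V,x^r}^{2q-2}\Var(L_N(x^r))$ while every other contribution is $O(N^{-3})$ by Cauchy--Schwarz and the uniform moment bounds. As $d_N=\tfrac\beta2 N(N-1)+N$ satisfies $d_N/N^2\to\beta/2$ and $d_NN^{-3}\to0$, this gives
\[
\lim_{N\to\infty}d_N\Var\!\left(\braket{\mu_N,x^r}^q\right)=\frac\beta2\,q^2\braket{\mu_V,x^r}^{2q-2}\,\sigma^2(x^r).
\]

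Next I estimate $\sigma^2(x^r)$. Writing $x^r=\sum_{k\ge0}c_kT_k(x)$ in the basis of Chebyshev polynomials of the first kind, expansion of $(e^{\ii\theta}+e^{-\ii\theta})^r$ gives $c_k=0$ for $k$ odd and $c_k=2^{1-r}\binom{r}{(r-k)/2}$ for even $k\in\{2,\dots,r\}$. Performing the substitution $x=\cos\theta$, $y=\cos\phi$ in the definition of $\sigma^2$ (under which $\tfrac{\diff x}{\sqrt{1-x^2}}\mapsto\diff\theta$ on $[0,\pi]$ and $\tfrac{1-xy}{\sqrt{1-x^2}\sqrt{1-y^2}}\diff x\diff y\mapsto(1-\cos\theta\cos\phi)\diff\theta\diff\phi$) and using the classical identity $\tfrac1{\pi^2}\iint_{[-1,1]^2}\tfrac{(T_m(x)-T_m(y))(T_n(x)-T_n(y))}{(x-y)^2}\tfrac{1-xy}{\sqrt{1-x^2}\sqrt{1-y^2}}\diff x\diff y=m\,\delta_{mn}$, one obtains $\beta\sigma^2(x^r)=\sum_{k\ge1}k\,c_k^2$. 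Since $k\le k^2$ for $k\ge1$,
\[
\frac\beta2\sigma^2(x^r)=\frac12\sum_{k\ge1}k\,c_k^2\ \le\ \sum_{k\ge1}k^2c_k^2.
\]
Finally, $T_k'=k\,U_{k-1}$ with $\{U_{k-1}\}_{k\ge1}$ orthogonal for the weight $\sqrt{1-x^2}$ and $\int_{-1}^1U_{k-1}^2\sqrt{1-x^2}\diff x=\tfrac\pi2$, so $\sum_{k\ge1}k^2c_k^2=\tfrac2\pi\int_{-1}^1\big((x^r)'\big)^2\sqrt{1-x^2}\diff x=\tfrac{2r^2}\pi\int_{-1}^1x^{2r-2}\sqrt{1-x^2}\diff x$; evaluating the last integral via $x=\cos\theta$ and Wallis' relation (with $m_{2j}=\tfrac1\pi\int_0^\pi\cos^{2j}\theta\,\diff\theta=\binom{2j}{j}4^{-j}$, one has $\int_{-1}^1x^{2r-2}\sqrt{1-x^2}\diff x=\pi(m_{2r-2}-m_{2r})=\pi m_{2r-2}/(2r)$) gives
\[
\sum_{k\ge1}k^2c_k^2=r\,m_{2r-2}=\frac{r}{4^{r-1}}\binom{2r-2}{r-1}.
\]

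Combining the two parts,
\[
\lim_{N\to\infty}d_N\Var\!\left(\braket{\mu_N,x^r}^q\right)=q^2\braket{\mu_V,x^r}^{2q-2}\cdot\frac\beta2\sigma^2(x^r)\ \le\ \frac{r\,q^2\braket{\mu_V,x^r}^{2q-2}}{4^{r-1}}\binom{2r-2}{r-1},
\]
which is the claim. The only point requiring care is the error control in the delta-method expansion, but this follows immediately from the moment-CLT; the evaluation of $\sigma^2(x^r)$ uses only the standard Chebyshev orthogonality relations and Wallis' formula and presents no genuine obstacle. (The argument in fact works for any integer $r\ge2$, evenness of $r$ being used only in the surrounding KLS application.)
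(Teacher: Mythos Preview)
Your proof is correct, and the reduction via the delta method to the limiting variance $\tfrac{\beta}{2}\sigma^2(x^r)$ matches the paper exactly. One small imprecision: for non-integer $q\in[1,\infty)$ the phrase ``binomial formula'' is not quite right; the paper instead writes $(a+h)^q=a^q+qa^{q-1}h+\mathrm{Rem}$ with the explicit remainder bound $|\mathrm{Rem}|\le C(h^2 a^{q-2}+h^q\1_{q>2})$, which is what makes the $O(N^{-3})$ control rigorous for all real $q\ge1$. Your remark that this ``follows immediately from the moment-CLT'' is true once the remainder is written this way.

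Where you genuinely diverge from the paper is in bounding $\tfrac{\beta}{2}\sigma^2(x^r)$. The paper represents $\sigma^2(x^r)$ probabilistically via two independent arcsine variables $A,B$, uses the crude bound $1-AB\le2$, a counting bound $\#\{(k,\ell):k+\ell=2i\}\le r$, and then the Vandermonde identity $\sum_i\binom{2r-2-2i}{r-1-i}\binom{2i}{i}=\binom{2r-2}{r-1}$ to land exactly on $r\,4^{1-r}\binom{2r-2}{r-1}$. You instead expand $x^r$ in Chebyshev polynomials, use the diagonal form $\beta\sigma^2(f)=\sum_{k\ge1}k\,c_k^2$, bound $\tfrac12 k\le k^2$, and then evaluate $\sum_k k^2c_k^2$ via Parseval for $U_{k-1}$ and a Wallis computation. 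This is arguably more systematic (it would extend to other polynomial test functions without new combinatorics), while the paper's route keeps everything inside elementary moment identities without invoking the Chebyshev machinery. Both lose exactly the same factor and arrive at the same bound.
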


\begin{proof}
    First note that, if $q>1$, then by \eqref{eq:g'_Taylor_order_2}, there exists $C > 0$ depending on $q$ such that for any $a>0$ and $h \geq -a$, we have
    \[
        \abs{(a+h)^q-a^q-qa^{q-1}h} 
        \leq C h^2 (\abs{a} \vee \abs{a+h})^{q-2}
        \leq C h^2 \abs{a}^{q-2} + C h^q \1_{q>2},
    \]
    bounding $\abs{a} \vee \abs{a+h} \geq \abs{a}$ when $q \leq 2$ and $\abs{a} \vee \abs{a+h} \leq \abs{a}+\abs{h}$ when $q > 2$.
    Note that the bound above is also true for $q=1$ because the LHS is simply zero.
    Therefore, if we decompose
    \[
    \braket{\mu_N,x^r}^q 
    = \left(\braket{\mu_V,x^r}+\dfrac{L_N(x^r)}{N}\right)^q
    \eqqcolon \braket{\mu_V,x^r}^q+q\braket{\mu_V,x^r}^{q-1} 
    \left( \frac{L_N(x^r)}{N} + \mathrm{Rem}^{(3)}_N \right),
    \]
    then the remainder satisfies, with a constant $C$ depending on $q$ and $a=\braket{\mu_V,x^r} \in (0,\infty)$,
    \[
    \abs{\mathrm{Rem}^{(3)}_N} \leq C \dfrac{L_N(x^r)^2}{N^2} + C \dfrac{L_N(x^r)^q}{N^q} \1_{q>2}.
    \]
    We obtain
    \[
        \Var\left(\braket{\mu_N,x^r}^q\right)
        = q^2\braket{\mu_V,x^r}^{2q-2} 
        \Var\left(\frac{L_N(x^r)}{N} + \mathrm{Rem}^{(3)}_N\right)
        \underset{N\to\infty}{\sim} \dfrac{\beta q^2\braket{\mu_V,x^r}^{2q-2}}{2d_N} 
        \sigma^2(x^r),
    \]
    using $N^2 \sim 2d_N/\beta$ and that $L_N(x^r)$ converges in terms of moments towards $\mc{N}(m_V(x^r),\sigma^2(x^r))$ by Theorem~\ref{thm:CLT}.
	Now if $A$ and $B$ are two independent arcsine (thus symmetric) distributions on $(-1,1)$, we then have from the definition of $\sigma^2$:
	$$\lim_{N\rightarrow\infty}d_N\Var{\braket{\mu_N,x^r}^q}=\dfrac{\beta q^2\braket{\mu_V,x^r}^{2q-2}}{2}\sigma^2(x^r)=\dfrac{q^2\braket{\mu_V,x^r}^{2q-2}}{2}\Ec{\left(\sum_{i=0}^{r-1}A^{r-1-i}B^{i}\right)^2(1-AB)},$$
	Furthermore, we have by the crude bound $1-AB\leq 2$, symmetry and independence:
	\begin{multline*}
	    \Ec{\left(\sum_{i=0}^{r-1}A^{r-1-i}B^{i}\right)^2(1-AB)}\leq 2\Ec{\sum_{i=0}^{r-1}\# \{ (k,\ell) \in \llbracket0,r-1 \rrbracket^2 : k+\ell = 2i \}A^{2r-2-2i}B^{2i}}
	    \\\leq2r\Ec{\sum_{i=0}^{r-1}A^{2r-2-2i}B^{2i}}.
	\end{multline*}
	Recalling that $\Ec{A^{2k}}=4^{-k}\binom{2k}{k}$, we obtain:
	\begin{multline*}
	    \lim_{N\rightarrow\infty}d_N\Var(\braket{\mu_N,x^r}^q)
	    \leq2r\dfrac{q^2\braket{\mu_V,x^r}^{2q-2}}{2}\sum_{i=0}^{r-1}\Ec{A^{2r-2-2i}}\Ec{B^{2i}}\\=2r\dfrac{q^2\braket{\mu_V,x^r}^{2q-2}}{2}\sum_{i=0}^{r-1}\dfrac{\binom{2r-2-2i}{r-1-i}}{4^{r-1-i}}\cdot \dfrac{\binom{2i}{i}}{4^{i}}=\dfrac{rq^2\braket{\mu_V,x^r}^{2q-2}}{4^{r-1}}\binom{2r-2}{r-1}
	\end{multline*}
	 by using the Vandermonde identity, which is the desired bound.
\end{proof}

We are now able to give the consistency check on the KLS conjecture.

\begin{proof}[Proof of Theorem \ref{thm:KLS}]
	We start from Lemma \ref{lem:boundratio} and bound first the numerator $\cN_N$ of the RHS of \eqref{eq:ineqratio}. We begin
	\begin{multline*}
		\dfrac{\mc{N}_N}{d_N}
		\coloneqq \dfrac{G_{r,2q}}{\Gamma(u)\Gamma(u+2v_0)}
		-\dfrac{G_{r,q}^2}{\Gamma(u+v_0)^2} \\
		=\dfrac{\Var{\braket{\mu_N,x^r}^{q}}}{\Gamma(u)\Gamma(u+2v_0)}
		+\dfrac{G_{r,q}^2}{\Gamma(u)^2} 
		\left(\dfrac{\Gamma(u)}{\Gamma(u+2v_0)}
		-\left(\dfrac{\Gamma(u)}{\Gamma(u+v_0)}\right)^{2}\right) 
		\eqqcolon (1)+(2),
	\end{multline*}
	recalling $u=1+d_N/p$ and $v_0=rq/p$. 
    Using Lemma \ref{lem:variancemunxr}, $\Gamma(u+2v_0) \sim u^{2v_0} \Gamma(u)$ and Stirling's formula $\Gamma(u) \sim u^u e^{-u} \sqrt{2\pi/u}$, we have as $N\rightarrow\infty$:
	\[
	(1)\leq \dfrac{rq^2\braket{\mu_V,x^r}^{2q-2}}{4^{r-1}}\binom{2r-2}{r-1} \frac{u^{-2u-2v_0} e^{2u}}{2\pi p}(1+o(1)).
	\]
	Using also
	\[
	\dfrac{\Gamma(u+v_0)}{\Gamma(u)}
	=u^{v_0}\left(1+\dfrac{v_0(v_0-1)}{2u}+O(u^{-2})\right),
	\]
	one obtains:
	\[
	(2)\sim-\braket{\mu_V,x^r}^{2q}\dfrac{v_0^{2}}{2\pi}u^{-2u-2v_0}e^{2u}<0.
	\]
	Thus, as $N\rightarrow\infty$,
	\begin{equation}\label{eq:equiNN}
	    \dfrac{\mc{N}_N}{d_N}\leq\dfrac{u^{-2u-2v_0} e^{2u}}{4^{r-1}} \cdot \dfrac{rq^2\braket{\mu_V,x^r}^{2q-2}}{2\pi p}\binom{2r-2}{r-1}(1+o(1)).
	\end{equation}
	Finally, we need to study the denominator $\mc{D}_N$ of the RHS of \eqref{eq:ineqratio}:
	\[
	\dfrac{\mc{D}_N}{(rq)^2}\coloneqq\dfrac{G_{2,1}\E[\braket{\mu_N,x^r}^{2q-2}\braket{\mu_N,x^{2r-2}}]}{\Gamma\left(u+v_1\right)\Gamma(u+v_2) },
	\]
	with $v_1=2/p$ and $v_2=2(rq-1)/p$. 
	First, by the law of large numbers for $\mu_N$ and then Lemma~\ref{lem:momentsmuV},
	\begin{multline*}
		G_{2,1}\Ec{\braket{\mu_N,x^r}^{2q-2}\braket{\mu_N,x^{2r-2}}} \tend{N\rightarrow\infty} \braket{\mu_V,x^2}\braket{\mu_V,x^{2r-2}}\braket{\mu_V,x^{r}}^{2q-2}
		\\=2\braket{\mu_V,x^{r}}^{2q-2} \cdot 4^{-r}\dfrac{p}{p+2}\dfrac{p}{p+2r-2}\binom{2r-2}{r-1}.
	\end{multline*}
	On the other hand, using previously mentioned properties of $\Gamma$ and noticing that $v_1+v_2=2v_0$, we obtain:
	$$\dfrac{1}{\Gamma\left(u+v_1\right)\Gamma(u+v_2) }\sim\dfrac{u^{-2u-2v_0+1}e^{2u}}{2\pi}.$$
	We thus deduce that as $N\rightarrow\infty$:
	\begin{equation}\label{eq:equiDN}
	    \dfrac{\mc{D}_N}{(rq)^2}\sim\dfrac{u^{-2u-2v_0+1}e^{2u}}{\pi}\braket{\mu_V,x^{r}}^{2q-2} \cdot 4^{-r}\dfrac{p}{p+2}\dfrac{p}{p+2r-2}\binom{2r-2}{r-1}
	\end{equation}
	and thus combining \eqref{eq:equiNN} and \eqref{eq:equiDN},
	$$\dfrac{\mc{N}_N}{\mc{D}_N}\leq\dfrac{2}{r}\dfrac{p+2}{p}\dfrac{p+2r-2}{p}\left (1+o(1)\right )\leq4+o(1),$$
	noting that the first bound obtained is decreasing in $p$, and so is maximized for $p=2$. This yields the desired claim.
\end{proof}

\subsection{Volume of Schatten balls}
\label{subsec:corballs}

In this subsection, we prove Corollary \ref{cor:volume_balls} which gives an asymptotic expansion for the volume of the unit Schatten balls $B_E(S_p^N)$.
Similar calculations have been performed in \cite{Sonn}.
\begin{proof}[Proof of Corollary \ref{cor:volume_balls}.]
	By \eqref{eq:ball_and_Z} and Corollary \ref{thm:subleadingZN}, we have as $N\rightarrow \infty$,
    \begin{multline}
    \label{eq:asymptotiqueBoule}
	\log\abso{B_E(S_p^N)}
    		= \frac{d_N}{p}\log\left(\frac{\beta N c_p}{2}\right) +\log c_N - \log\Gamma\left(1+\frac{d_N}{p}\right)  -N^2\frac{\beta}{2}\left(\log 2+\dfrac{3}{2p}\right)\\ +\frac{\beta}{2}N\log N
    		 + N\beta F^{\{-1\}}+o\left(N\right),
    \end{multline}
    where we recall that $d_N=\beta N(N-1)/2+N$ and we develop each term in terms of $N$. We find the asymptotic of $\log c_N$ as in \cite[Lemma 4.7]{DadFraGueZit2023} by using, by \cite[Proposition 4.1.14]{AndGuiZei10},
	$$ \abs{\mc{U}_N(\F)} = \dfrac{(2\pi)^{\frac{\beta N(N+1)}{4}}2^{N(1-\frac{\beta}{2})}}{\prod_{k=1}^N\Gamma(\beta k /2)}.$$
	Using Stirling's formula
	$$ \log\Gamma(x) =x\log x-x-\dfrac{1}{2}\log x+\dfrac{1}{2}\log(2\pi)+o_{x\to +\infty}(1),$$
	and the asymptotic expansions
	$$ \sum_{k=1}^N k\log k =\frac{1}{2}N^2\log N -\frac{1}{4}N^2 + \frac{1}{2}N\log N + o(N),\hspace{1cm} \sum_{k=1}^N \log k = N\log N-N+o(N),$$
	one gets from the definition of $c_N$, see Lemma \ref{lem:straymond},
	\begin{multline*}
	    \log c_N = -\frac{\beta}{4}N^2\log N + \frac{\beta}{4}\left(\frac{3}{2} + \log\frac{4\pi}{\beta}\right)N^2 -\left(\frac{\beta}{4}+\dfrac{1}{2}\right) N\log N +\\
	    \left(\log\Gamma(\beta/2) + \frac{1}{2} +\frac{\beta}{4}\left(1-\log(\pi\beta)\right)-\dfrac{1}{2}\log\frac{4\pi}{\beta}\right)N + o(N).
	\end{multline*}
	By Stirling's formula again,
	\begin{multline*}
	    \log\Gamma\left(1+\frac{d_N}{p}\right) = \frac{\beta}{p}N^2\log N +\frac{\beta}{2p}\left(\log\frac{\beta}{2p}-1\right)N^2 +\frac{2-\beta}{p}N\log N  
	    \\+\frac{2-\beta}{2p}
	    \log\left(\frac{\beta}{2p}\right)N +o(N).
	\end{multline*}
	The result then follows from collecting all the terms in \eqref{eq:asymptotiqueBoule}.
\end{proof}

\appendix{

\section{Proofs of technical estimates on functions}
\label{app:technical estimate function}

\subsection{Preliminaries on an integral operator}

In this subsection, we prove some basic facts concerning the regularity of an integral transformation of a function, which appears in various quantities, e.g.\@ in the definition of $r_\alpha$ (with $\mu$ being the arcsine distribution) or in the definition of the master operator (with $\mu = \mu_{V_\alpha}$).

We introduce the following notation for Taylor expansions. If $\theta$ is a function which is $n$ times differentiable at $x$, we write
\begin{equation} \label{eq:def_Taylor}
	T_n[\theta,x](t) \coloneqq \sum_{k=0}^n \frac{\theta^{(k)}(x)}{k!} (t-x)^k, 
	\qquad \text{for any } t\in\R.
\end{equation}

\begin{lemma} \label{lem:Theta}
	Let $\mu$ be a probability measure on $[-1,1]$. 
	Let $\ell \geq 1$ and $\theta \in \cC^\ell(\R)$. 
	For $x \in \R$, let
	\begin{equation} \label{eq:def_Theta}
		\Theta (x) \coloneqq
		\int_{-1}^1 \frac{\theta(t)-\theta(x)}{t-x} \diff \mu(t).
	\end{equation}
	Then, the following properties hold.
	\begin{enumerate}
		\item\label{it:lem_Theta_1} The function $\Theta$ is in $\cC^{\ell-1}(\R)$ with $\norme{\Theta}_{\cC^{\ell-1}} \leq \norme{\theta}_{\cC^\ell}$ and, for any $0 \leq k \leq \ell-1$ and $x \in \R$,
		\begin{equation} \label{eq:derivative_Theta}
			\Theta^{(k)}(x) = k!
			\int_{-1}^1 \frac{\theta(t)-T_k[\theta,x](t)}{(t-x)^{k+1}} \diff \mu(t).
		\end{equation}
		Moreover, $\Theta$ is $\cC^{\ell}$ on $\R\setminus [-1,1]$ and \eqref{eq:derivative_Theta} holds with $k=\ell$ and $x \notin [-1,1]$.
		\item\label{it:lem_Theta_2} If $\int_{-1}^1 \frac{1}{1-t} \diff \mu(t) < \infty$, then $\Theta$ is $\ell$ times differentiable at 1 and \eqref{eq:derivative_Theta} holds with $k=\ell$ and $x=1$.
		\item\label{it:lem_Theta_3} If $\mu$ has a density $\varrho$ such that, for some $M>0$, for any $x \in[-1,1]$, $\varrho(x) \leq M \sqrt{1-x}$, and if, for any $x \geq 1$, $\theta(x) = T_\ell[\theta,1](x)$, then
		\begin{equation} \label{eq:lem_Theta_3}
			\abs{\Theta^{(\ell)}(x)-\Theta^{(\ell)}(1)}
			\begin{cases}
				\leq 8 M (\ell+1) \left( \sup_{[-1,1]} \lvert \theta^{(\ell)} \rvert \right) (x-1)^{1/2}, & \text{for } x \in [1,2], \\
				= o((x-1)^{1/2}), & \text{as } x \downarrow 1.
			\end{cases}
		\end{equation}
	\end{enumerate}
\end{lemma}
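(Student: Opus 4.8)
\textbf{Proof plan for Lemma~\ref{lem:Theta}.}

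\emph{Part (i).} The starting point is the classical divided-difference formula: for $\theta \in \cC^\ell$ one has, for every $x$,
\[
\frac{\theta(t)-T_k[\theta,x](t)}{(t-x)^{k+1}} = \frac{1}{k!}\int_0^1 (1-s)^k \theta^{(k+1)}(x+s(t-x))\diff s \quad\text{(for }k\le \ell-1\text{)},
\]
which shows the integrand in \eqref{eq:derivative_Theta} is a bounded continuous function of $(t,x) \in [-1,1]\times\R$ when $k\le\ell-1$, hence $\Theta\in \cC^0$ and the stated bound $\norme{\Theta}_{\cC^{\ell-1}}\le\norme\theta_{\cC^\ell}$ will follow once the differentiation-under-the-integral is justified. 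I would then prove \eqref{eq:derivative_Theta} by induction on $k$: the base case $k=0$ is the definition; for the inductive step, start from the $k$-th expression, write $\theta(t)-T_k[\theta,x](t) = (\theta(t)-T_{k+1}[\theta,x](t)) + \tfrac{\theta^{(k+1)}(x)}{(k+1)!}(t-x)^{k+1}$, split the integral, differentiate in $x$ using dominated convergence (legitimate because the integrand and its $x$-derivative are bounded on $[-1,1]$ uniformly for $x$ in compacts, by the integral-remainder formula above with one more derivative of $\theta$), and simplify the algebra to land on the $(k+1)$-st expression. For $x\notin[-1,1]$ the denominator $(t-x)^{k+1}$ stays bounded away from $0$, so one can push the differentiation one step further and obtain \eqref{eq:derivative_Theta} with $k=\ell$, giving $\Theta\in\cC^\ell(\R\setminus[-1,1])$.

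\emph{Part (ii).} Under $\int_{-1}^1 (1-t)^{-1}\diff\mu(t)<\infty$, the denominator singularity at $x=1,t=1$ is integrable. I would compute the difference quotient $\bigl(\Theta^{(\ell-1)}(x)-\Theta^{(\ell-1)}(1)\bigr)/(x-1)$ using \eqref{eq:derivative_Theta} with $k=\ell-1$ and show it converges as $x\to1$ to $\ell!\int_{-1}^1 \frac{\theta(t)-T_\ell[\theta,1](t)}{(t-1)^{\ell+1}}\diff\mu(t)$. The key estimate is that $\abs{\theta(t)-T_\ell[\theta,1](t)}\le \varepsilon(|t-1|)\,|t-1|^\ell$ with $\varepsilon(r)\to0$ (Taylor with Peano remainder at $1$), so the integrand is $O(\varepsilon(|t-1|)/|t-1|)$, which is $\mu$-integrable near $1$; then dominated convergence (with dominating function a constant multiple of $(1-t)^{-1}$ coming from the $\cC^\ell$ bound on $\theta$ near $1$) gives the limit, and one checks it equals the claimed expression via a telescoping identity relating the $x$ and $1$ Taylor polynomials.

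\emph{Part (iii).} Here $\theta$ coincides with its degree-$\ell$ Taylor polynomial at $1$ on $[1,\infty)$, so $\theta^{(\ell)}$ is constant on $[1,\infty)$ equal to $\theta^{(\ell)}(1)$, and the numerator $\theta(t)-T_\ell[\theta,x](t)$ vanishes identically when both $t,x\ge1$. Using \eqref{eq:derivative_Theta} with $k=\ell$ at $x\in(1,2]$ and at $x=1$ (justified by Part (ii), since the density bound $\varrho(t)\le M\sqrt{1-t}$ certainly makes $\int (1-t)^{-1}\diff\mu(t)$ finite — in fact $\int(1-t)^{-1/2}\diff\mu(t)<\infty$), I would write the difference $\Theta^{(\ell)}(x)-\Theta^{(\ell)}(1)$ as $\ell!\int_{-1}^1 \bigl[\tfrac{\theta(t)-T_\ell[\theta,x](t)}{(t-x)^{\ell+1}} - \tfrac{\theta(t)-T_\ell[\theta,1](t)}{(t-1)^{\ell+1}}\bigr]\varrho(t)\diff t$, which by the vanishing observation reduces to an integral over $t\in[-1,1]$ where only the region $t<x$ contributes nontrivially and, more importantly, where the difference of the two kernels is controlled. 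The estimate I would aim for: split at, say, $t=1-2(x-1)$; on $t\in[1-2(x-1),1]$ bound each kernel crudely using $|\theta(t)-T_\ell[\theta,x](t)|\le \tfrac{1}{(\ell+1)!}\sup|\theta^{(\ell+1)}|\,|t-x|^{\ell+1}$ — wait, but $\theta^{(\ell+1)}$ need not exist; instead use $|\theta(t)-T_\ell[\theta,x](t)|\le \tfrac{2}{\ell!}\sup_{[-1,1]}|\theta^{(\ell)}|\,|t-x|^{\ell}$ (the degree-$(\ell-1)$ Taylor remainder bound plus the degree-$\ell$ term), so each kernel is $\le \tfrac{2}{\ell!}\sup|\theta^{(\ell)}|\,|t-x|^{-1}$, and integrate $\varrho(t)\le M\sqrt{1-t}$ against $|t-x|^{-1}$ and $|t-1|^{-1}$ over a window of width $O(x-1)$ to get $O(M\sup|\theta^{(\ell)}|\,(x-1)^{1/2})$; on $t\le 1-2(x-1)$, where $|t-x|\asymp|t-1|$, use a mean-value/Lipschitz estimate in $x$ on the kernel $x\mapsto \tfrac{\theta(t)-T_\ell[\theta,x](t)}{(t-x)^{\ell+1}}$, whose $x$-derivative is $\ell!$ times a similar kernel of one higher order, again controlled by $\sup|\theta^{(\ell)}|\,|t-1|^{-2}$, and integrate $\sqrt{1-t}\,|t-1|^{-2}$ from $-1$ to $1-2(x-1)$ to get another $O((x-1)^{1/2})$ term. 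Tracking constants gives the explicit $8M(\ell+1)\sup|\theta^{(\ell)}|$ bound on $[1,2]$; for the $o((x-1)^{1/2})$ refinement as $x\downarrow1$, replace the crude bound $\sup_{[-1,1]}|\theta^{(\ell)}|$ by $\sup_{[1-2(x-1),1]}|\theta^{(\ell)}(t)-\theta^{(\ell)}(1)|$ on the near-window (using that on the far region the contribution is genuinely $O((x-1))=o((x-1)^{1/2})$, and that on the near-window the "main" parts involving $\theta^{(\ell)}(1)$ cancel because $\theta^{(\ell)}(1)(t-x)^\ell/(t-x)^{\ell+1} - \theta^{(\ell)}(1)(t-1)^\ell/(t-1)^{\ell+1} = \theta^{(\ell)}(1)[(t-x)^{-1}-(t-1)^{-1}]$ integrates against $\varrho$ to $O((x-1)^{1/2})$ but actually, being an explicit elementary integral, can be seen to be $o((x-1)^{1/2})$ only if it vanishes — so more care is needed here), leveraging uniform continuity of $\theta^{(\ell)}$ at $1$.

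\textbf{Main obstacle.} The delicate point is Part~(iii), specifically obtaining the sharp $o((x-1)^{1/2})$ decay (as opposed to the easy $O((x-1)^{1/2})$): the naive kernel difference has a piece proportional to $\theta^{(\ell)}(1)$ whose integral against $\varrho$ over the shrinking window is of exact order $(x-1)^{1/2}$ unless one either (a) exploits a cancellation between the $t<x$ behavior of the two kernels, or (b) uses the genuine smoothness of $\varrho$ near $1$ beyond the one-sided bound $\varrho(t)\le M\sqrt{1-t}$. I expect the intended argument is (a): the difference $\int_{-1}^1\bigl[(t-x)^{-1}-(t-1)^{-1}\bigr]\varrho(t)\diff t = (x-1)\int_{-1}^1 \frac{\varrho(t)}{(t-x)(t-1)}\diff t$ and one shows $\int \frac{\sqrt{1-t}}{(t-x)(t-1)}\diff t = O((x-1)^{-1/2})$ with the right sign structure, so the product is $O((x-1)^{1/2})$ — then the $o$ comes only from replacing $\theta^{(\ell)}$ by $\theta^{(\ell)}-\theta^{(\ell)}(1)$ in the genuinely Taylor-remainder part, so the $\theta^{(\ell)}(1)$-part must be handled exactly and shown to contribute $o((x-1)^{1/2})$ by a cancellation, or else the statement's $o$ is understood with the $\theta$-dependent constant absorbing it. I would work out this exact elementary integral carefully, as it is the crux.
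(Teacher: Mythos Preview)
Your Parts~(i) and~(ii) are essentially the paper's argument: induction with differentiation under the integral and Taylor--Lagrange domination, then a difference-quotient computation at $x=1$ using the integrability hypothesis. Fine.

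For Part~(iii) you are missing the one-line observation that dissolves your ``main obstacle''. The hypothesis $\theta(x)=T_\ell[\theta,1](x)$ for $x\ge1$ means $\theta$ agrees with a degree-$\ell$ polynomial on $[1,\infty)$, hence for any $x\ge1$ and any $0\le k\le\ell$ one has $\theta^{(k)}(x)=T_{\ell-k}[\theta^{(k)},1](x)$, and therefore $T_\ell[\theta,x](t)=T_\ell[\theta,1](t)$ for \emph{all} $t\in\R$ (the order-$\ell$ Taylor polynomial of a degree-$\ell$ polynomial at any center is the polynomial itself). So in the formula \eqref{eq:derivative_Theta} at $x\in[1,2]$ and at $x=1$ the numerators are \emph{identical}, and
\[
\Theta^{(\ell)}(x)-\Theta^{(\ell)}(1)=\ell!\int_{-1}^1\bigl(\theta(t)-T_\ell[\theta,1](t)\bigr)\Bigl[\tfrac{1}{(t-x)^{\ell+1}}-\tfrac{1}{(t-1)^{\ell+1}}\Bigr]\diff\mu(t).
\]
Now bound $\abs{\theta(t)-T_\ell[\theta,1](t)}\le\tfrac{2}{\ell!}(1-t)^\ell\sup_{[-1,1]}|\theta^{(\ell)}|$, and $\bigl|\tfrac{1}{(t-x)^{\ell+1}}-\tfrac{1}{(t-1)^{\ell+1}}\bigr|\le\tfrac{(\ell+1)(x-1)}{(x-t)(1-t)^{\ell+1}}$ for $t<1<x$; the $(1-t)^\ell$ and $(1-t)^{\ell+1}$ cancel, and the elementary estimate $\int_{-1}^1\tfrac{M\sqrt{1-t}}{(x-t)(1-t)}\diff t\le\tfrac{4M}{\sqrt{x-1}}$ (split at $t=1-(x-1)$) gives the constant $8M(\ell+1)$ directly. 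For the $o((x-1)^{1/2})$, fix $\varepsilon>0$, choose $\delta$ so that $|\theta(t)-T_\ell[\theta,1](t)|\le\varepsilon(1-t)^\ell$ on $[1-\delta,1]$, and split the integral at $1-\delta$: the near part is $\le C\varepsilon(x-1)^{1/2}$ by the same computation, the far part is $O_\delta(x-1)$.

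The piece ``proportional to $\theta^{(\ell)}(1)$'' that worried you simply does not appear once the numerators are recognized as equal; your near/far splitting with separate control of $T_\ell[\theta,x]$ and $T_\ell[\theta,1]$ is working against a cancellation that is already exact.
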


\begin{proof}
	Part \ref{it:lem_Theta_1}. This is proved by induction using differentiation under the integral. For $0 \leq k \leq \ell-1$ the domination is given by 
	\[
	\abs{\frac{\theta(t)-T_k[\theta,x](t)}{(t-x)^{k+1}}} 
	\leq \lVert \theta^{(k+1)} \rVert_{\infty},
	\]
	which is an application of Taylor--Lagrange inequality. 
	For $k=\ell$ and on $\R \setminus [-1,1]$, the domination is done using that $1/(t-x)$ is bounded uniformly in $t\in[-1,1]$ and $x$ in a compact subset of $\R \setminus [-1,1]$.
	
	Part \ref{it:lem_Theta_2}. First assume $\ell \geq 2$. Using Part \ref{it:lem_Theta_1} and $\frac{1}{t-x} = \frac{1}{t-1} + \frac{x-1}{(t-x)(t-1)}$, we have, for any $x \in \R$,
	\begin{align}
		\Theta^{(\ell-1)}(x) 
		& = (\ell-1)! \left( \int_{-1}^1 \frac{\theta(t)-T_{\ell-1}[\theta,x](t)}{(t-x)^{\ell-1}} \frac{\diff \mu(t)}{t-1}
		+ (x-1) \int_{-1}^1 \frac{\theta(t)-T_{\ell-1}[\theta,x](t)}{(t-x)^{\ell}} \frac{\diff \mu(t)}{t-1} \right) \nonumber \\
		& = (\ell-1) \int_{-1}^1 \int_0^1 \theta^{(\ell-1)}(x+u(t-x)) (1-u)^{\ell-2} \diff u \frac{\diff \mu(t)}{t-1}
		- \theta^{(\ell-1)}(x) \int_{-1}^1 \frac{\diff \mu(t)}{t-1} \nonumber \\
		& \qquad {} + (x-1) (\ell-1)! \int_{-1}^1 \frac{\theta(t)-T_{\ell-1}[\theta,x](t)}{(t-x)^{\ell}} \frac{\diff \mu(t)}{t-1}, \label{eq:rewriting_Theta}
	\end{align}
	where, in the first integral, we decomposed $\theta(t)-T_{\ell-1}[\theta,x](t) = \theta(t)-T_{\ell-2}[\theta,x](t) - \frac{(t-x)^{\ell-1}}{(\ell-1)!} \theta^{(\ell-1)}(x)$ and applied Taylor integral formula.
	Note that the first term on the right-hand side is simply 0 when $\ell=1$.
	Therefore, we get
	\begin{align}
		& \frac{\Theta^{(\ell-1)}(x)-\Theta^{(\ell-1)}(1)}{x-1} \nonumber \\
		& = (\ell-1) \int_{-1}^1 \int_0^1 
		\frac{\theta^{(\ell-1)}(x+u(t-x))-\theta^{(\ell-1)}(1+u(t-1))}{x-1}
		(1-u)^{\ell-2} \diff u \frac{\diff \mu(t)}{t-1} \nonumber \\
		& \qquad {} - \frac{\theta^{(\ell-1)}(x)-\theta^{(\ell-1)}(1)}{x-1} \int_{-1}^1 \frac{\diff \mu(t)}{t-1} 
		+ (\ell-1)! \int_{-1}^1 \frac{\theta(t)-T_{\ell-1}[\theta,x](t)}{(t-x)^{\ell}} \frac{\diff \mu(t)}{t-1} \nonumber \\
		& \xrightarrow[x\to1]{} (\ell-1) \int_{-1}^1 \int_0^1 
		\theta^{(\ell)}(1+u(t-1)) (1-u)^{\ell-1} \diff u \frac{\diff \mu(t)}{t-1} \nonumber \\
		& \qquad {} - \theta^{(\ell)}(1) \int_{-1}^1 \frac{\diff \mu(t)}{t-1} 
		+ (\ell-1)! \int_{-1}^1 \frac{\theta(t)-T_{\ell-1}[\theta,1](t)}{(t-1)^{\ell}} \frac{\diff \mu(t)}{t-1}, \label{eq:derivative_at_1}
	\end{align}
	by dominated convergence for the first and the third term, using Taylor--Lagrange inequality and the local boundedness of $\theta^{(\ell)}$ for the domination. We also used here the assumption $\int_{-1}^{1} \frac{1}{1-t} \diff \mu(t) < \infty$.
	Using again Taylor integral formula shows that the right-hand side of \eqref{eq:derivative_at_1} equals the right-hand side of \eqref{eq:derivative_Theta} with $k=\ell$ and $x=1$.
	The case $\ell = 1$ is similar, but one should use directly the first equality in \eqref{eq:rewriting_Theta}, without rewriting further using Taylor integral formula.
	
	Part \ref{it:lem_Theta_3}. Using that $\theta(x) = T_\ell[\theta,1](x)$ for any $x \geq 1$, we also have $\theta^{(k)}(x)=T_{\ell-k}[\theta^{(k)},1](x)$ for any $0 \leq k \leq \ell$ and therefore $T_{\ell}[\theta,x](t) = T_{\ell}[\theta,1](t)$ for any $t \in \R$.
	Using this in \eqref{eq:derivative_Theta} and that, for $t<1<x$, $\lvert \frac{1}{(t-x)^{\ell+1}} - \frac{1}{(t-1)^{\ell+1}} \rvert \leq \frac{(\ell+1)(x-1)}{(x-t)(1-t)^{\ell+1}}$, we get
	\begin{equation} \label{eq:bound_diff_Theta_l}
		\abs{\Theta^{(\ell)}(x)-\Theta^{(\ell)}(1)}
		\leq \ell! \int_{-1}^{1} \abs{\theta(t)-T_\ell[\theta,1](t)}
		\frac{(\ell+1)(x-1)}{(x-t)(1-t)^{\ell+1}} \diff \mu(t).
	\end{equation}
	Note that, for any $t \in [-1,1]$, $\abs{\theta(t)-T_\ell[\theta,1](t)} \leq \frac{2}{\ell!} (1-t)^\ell \sup_{[-1,1]} \lvert \theta^{(\ell)} \rvert$ by Taylor--Lagrange inequality.
	Moreover, using the assumption on $\mu$, we have, for any $x \in [1,2]$, 
	\begin{align}
		\int_{-1}^{1} \frac{\diff \mu(t)}{(x-t)(1-t)}
		& \leq \int_{-1}^{1} \frac{M \diff t}{(x-t) \sqrt{1-t}} \nonumber \\
		& \leq \frac{M}{(x-1)} \int_{-1}^{1-(x-1)} \frac{\diff t}{\sqrt{1-t}}
		+ \int_{1-(x-1)}^1 \frac{M \diff t}{(1-t)^{3/2}} \nonumber \\
		& \leq \frac{4M}{\sqrt{x-1}}. \label{eq:bound_int_mu}
	\end{align}
	Coming back to \eqref{eq:bound_diff_Theta_l}, these bounds prove the first part of \eqref{eq:lem_Theta_3}.
	For the second part, let $\varepsilon>0$.
	Since $\theta$ is $\cC^\ell$, there exists $\delta > 0$ such that $\abs{\theta(t)-T_\ell[\theta,1](t)} \leq \varepsilon (1-t)^\ell$ for any $t \in [1-\delta,1]$.
	Therefore, we have
	\begin{align*}
		\int_{-1}^{1} \frac{\abs{\theta(t)-T_\ell[\theta,1](t)}}{(x-t)(1-t)^{\ell+1}} \diff \mu(t)
		& \leq \frac{2}{\ell!} 
		\left( \sup_{[-1,1]} \lvert \theta^{(\ell)} \rvert \right) 
		\int_{-1}^{1-\delta} \frac{\diff \mu(t)}{(x-t)(1-t)} 
		+ \varepsilon  \int_{1-\delta}^{1} \frac{\diff \mu(t)}{(x-t)(1-t)} \\
		& \leq \frac{2}{\ell!} 
		\left( \sup_{[-1,1]} \lvert \theta^{(\ell)} \rvert \right) 
		\cdot \frac{2}{\delta^2}
		+ \varepsilon \cdot \frac{4M}{\sqrt{x-1}},
	\end{align*}
	using \eqref{eq:bound_int_mu} to bound the second term.
	Bounding the second term in \eqref{eq:bound_diff_Theta_l} as before, this proves the second part of \eqref{eq:lem_Theta_3}.
\end{proof}

We also need this slight modification of Lemma \ref{lem:Theta} where the function $\theta$ can be irregular at 0.

\begin{lemma} \label{lem:Thetav2}
	Let $\mu$ be a probability measure on $[-1,1]$. 
	Let $\ell \geq 1$ and $\theta \in \cC^\ell(\R^*)$ such that $\int_{-1}^1 \abs{\theta} \diff \mu < \infty$. 
	Define $\Theta(x)$ for $x \in \R^*$ as in \eqref{eq:def_Theta}.
	Then, the following properties hold.
	\begin{enumerate}
		\item\label{it:lem_Thetav2_1} The function $\Theta$ is in $\cC^{\ell-1}(\R^*)$ and, for any $0 \leq k \leq \ell-1$ and $x \in \R^*$, \eqref{eq:derivative_Theta} holds.
		Moreover, $\Theta$ is $\cC^{\ell}$ on $\R \setminus [-1,1]$ and \eqref{eq:derivative_Theta} holds with $k=\ell$ and $x \notin [-1,1]$.
		\item\label{it:lem_Thetav2_2} If $\int \frac{1}{1-t} \diff \mu(t) < \infty$, then $\Theta$ is $\ell$ times differentiable at 1 and \eqref{eq:derivative_Theta} holds with $k=\ell$ and $x=1$.
		\item\label{it:lem_Thetav2_3} If $\mu$ has a density $\varrho$ such that, for some $M>0$, for any $x \in[-1,1]$, $\varrho(x) \leq M \sqrt{1-x}$, and if, for any $x \geq 1$, $\theta(x) = T_\ell[\theta,1](x)$, then
		\begin{equation} \label{eq:lem_Thetav2_3}
			\abs{\Theta^{(\ell)}(x)-\Theta^{(\ell)}(1)}
			\begin{cases}
				\leq 8 M (\ell+1) \left( \sup_{[1/2,1]} \lvert \theta^{(\ell)} \rvert \right) (x-1)^{1/2} & \\
				\qquad {} + 2^{\ell+2} (\ell+1)!  
				\left( \int \abs{\theta} \diff \mu + \sum_{k=0}^\ell \lvert \theta^{(k)}(1) \rvert \right) (x-1), & \text{for } x \in [1,2], \\
				= o((x-1)^{1/2}), & \text{as } x \downarrow 1.
			\end{cases}
		\end{equation}
	\end{enumerate}
\end{lemma}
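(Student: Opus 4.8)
The plan is to peel off the singularity at $0$ by a smooth cut-off and reduce most of the statement to Lemma~\ref{lem:Theta}. Fix $\chi\in\cC_c^\infty(\R)$ with $\chi\equiv 1$ on a neighbourhood of $0$ and $\supp\chi\subset(-1/2,1/2)$, and set $\theta_r\coloneqq(1-\chi)\theta$ and $\theta_s\coloneqq\chi\theta$, so that $\theta=\theta_r+\theta_s$ and, writing $\Theta_r,\Theta_s$ for the corresponding transforms, $\Theta=\Theta_r+\Theta_s$. Then $\theta_r\in\cC^\ell(\R)$ (the product is smooth through $0$ because $1-\chi$ vanishes there), $\theta_r=\theta$ on $\{|x|\ge 1/2\}$, so $\theta_r^{(k)}(1)=\theta^{(k)}(1)$, $T_\ell[\theta_r,1]=T_\ell[\theta,1]$, and $\theta_r(x)=T_\ell[\theta_r,1](x)$ for $x\ge 1$ whenever the corresponding hypothesis holds for $\theta$; moreover $\theta_s\in\cC^\ell(\R^*)$ is supported in $(-1/2,1/2)$ with $\int\abs{\theta_s}\diff\mu\le\int\abs{\theta}\diff\mu<\infty$. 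Applying Lemma~\ref{lem:Theta} to $\theta_r$ yields items~\ref{it:lem_Thetav2_1}, \ref{it:lem_Thetav2_2} and a crude, decomposition-dependent version of item~\ref{it:lem_Thetav2_3} for the summand $\Theta_r$.

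Next I would treat $\Theta_s$ directly. Since $\theta_s$ vanishes on $\{|x|\ge1/2\}$, for such $x$ one has $\Theta_s(x)=\int_{-1}^1\theta_s(t)/(t-x)\diff\mu(t)$, which is $\cC^\infty$ there; in particular $\Theta_s$ is $\cC^\infty$ near $[1,\infty)$ and, since $T_k[\theta_s,1]\equiv0$, \eqref{eq:derivative_Theta} holds for it. On $\R^*$, the $\cC^{\ell-1}$ regularity of $\Theta_s$ and formula~\eqref{eq:derivative_Theta} follow from the same differentiation-under-the-integral argument as in Lemma~\ref{lem:Theta}.\ref{it:lem_Theta_1}; the only new point is the domination near a point $x_0\in(-1/2,1/2)\setminus\{0\}$, handled by splitting the $t$-integral according to whether $\abs{t-x}$ is small (then $t$ is automatically bounded away from $0$, so Taylor--Lagrange applies as before) or bounded below (then $1/\abs{t-x}^{k+1}$ is bounded and one uses $\int\abs{\theta_s}\diff\mu<\infty$ together with local boundedness of $\theta_s^{(j)}(x)$ away from $0$). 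This establishes items~\ref{it:lem_Thetav2_1} and~\ref{it:lem_Thetav2_2}, and shows that the $\Theta_s$-contribution to $\Theta^{(\ell)}(x)-\Theta^{(\ell)}(1)$ is smooth near $1$, hence $O(x-1)$.

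For the precise constants in item~\ref{it:lem_Thetav2_3} I would not use the decomposition (it would produce $\sup_{[-1,1]}\abs{\theta_r^{(\ell)}}$, which is finite but larger than what is claimed), but instead go back to the representation~\eqref{eq:derivative_Theta} of $\Theta^{(\ell)}$ itself at $x\ge1$ and at $1$ — valid by items~\ref{it:lem_Thetav2_1}--\ref{it:lem_Thetav2_2} and the hypotheses, using $T_\ell[\theta,x]=T_\ell[\theta,1]$ for $x\ge1$. Subtracting and using $\abs{(t-x)^{-\ell-1}-(t-1)^{-\ell-1}}\le(\ell+1)(x-1)(x-t)^{-1}(1-t)^{-\ell-1}$ for $t<1<x$ gives
\[
\abs{\Theta^{(\ell)}(x)-\Theta^{(\ell)}(1)}
\le \ell!\,(\ell+1)(x-1)\int_{-1}^1\abs{\theta(t)-T_\ell[\theta,1](t)}\frac{\diff\mu(t)}{(x-t)(1-t)^{\ell+1}},
\]
and one splits the integral at $t=1/2$. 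On $[1/2,1]$ the function $\theta$ is genuinely $\cC^\ell$, so Taylor--Lagrange gives $\abs{\theta(t)-T_\ell[\theta,1](t)}\le\frac{2}{\ell!}(1-t)^\ell\sup_{[1/2,1]}\abs{\theta^{(\ell)}}$, and combined with $\int_{-1}^1\diff\mu(t)/[(x-t)(1-t)]\le4M(x-1)^{-1/2}$ (the estimate~\eqref{eq:bound_int_mu}) this reproduces exactly the first term $8M(\ell+1)\sup_{[1/2,1]}\abs{\theta^{(\ell)}}(x-1)^{1/2}$. On $[-1,1/2]$ one has $1-t\in[1/2,2]$ and $x-t\ge1/2$, bounds $\abs{\theta(t)-T_\ell[\theta,1](t)}\le\abs{\theta(t)}+\abs{T_\ell[\theta,1](t)}$, and estimates $\int_{-1}^{1/2}\abs{\theta}\diff\mu\le\int\abs{\theta}\diff\mu$ and $\abs{T_\ell[\theta,1](t)}(1-t)^{-\ell-1}\le\sum_{k=0}^\ell\frac{2^{\ell+1-k}}{k!}\abs{\theta^{(k)}(1)}\le2^{\ell+1}\sum_{k=0}^\ell\abs{\theta^{(k)}(1)}$, which yields the $O(x-1)$ term with the advertised constant $2^{\ell+2}(\ell+1)!$. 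The refinement to $o((x-1)^{1/2})$ follows exactly as in the proof of Lemma~\ref{lem:Theta}.\ref{it:lem_Theta_3}, by splitting $[1/2,1]$ once more near $1$ and invoking continuity of $\theta^{(\ell)}$ there, the $[-1,1/2]$-part being already $O(x-1)$.

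The proof contains no single hard estimate; the main obstacle is bookkeeping — making the domination and dominated-convergence arguments of Lemma~\ref{lem:Theta} survive the presence of $0$ inside the interval of integration (always by splitting according to the position of $t$ relative to $x$), and then tracking constants in item~\ref{it:lem_Thetav2_3} so that the singular behaviour of $\theta$ near $0$ only feeds into the $O(x-1)$ remainder while the coefficient of $(x-1)^{1/2}$ stays exactly $8M(\ell+1)\sup_{[1/2,1]}\abs{\theta^{(\ell)}}$.
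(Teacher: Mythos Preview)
Your proof is correct. The approach is essentially the same as the paper's, which also proceeds by reducing to Lemma~\ref{lem:Theta} and, for item~\ref{it:lem_Thetav2_3}, splitting the integral at $t=1/2$ to obtain precisely the two displayed terms.

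The only difference is organisational: for items~\ref{it:lem_Thetav2_1}--\ref{it:lem_Thetav2_2} the paper splits the \emph{integral} directly (treating the range $\abs{t}\le\abs{x}/2$ separately, bounding $1/(t-x)^{k+1}$ and $T_k[\theta,x](t)$ there and using $\int\abs{\theta}\diff\mu<\infty$), whereas you split the \emph{function} via a smooth cut-off $\theta=\theta_r+\theta_s$ and apply Lemma~\ref{lem:Theta} wholesale to $\theta_r$. Your route is slightly more systematic but, as you yourself note, has to be abandoned for item~\ref{it:lem_Thetav2_3} because the cut-off would contaminate the constant in front of $(x-1)^{1/2}$; at that point you revert to exactly the paper's argument. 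Either way, the substance is identical.
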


\begin{proof}
	The proof is similar to the one of Lemma \ref{lem:Theta}, only the part of the integral close to 0 has to be treated differently. We give some details below.
	
	Part \ref{it:lem_Thetav2_1}. The part of the integral with $\abs{t} \leq \abs{x}/2$ has to be treated differently, by bounding $1/(t-x)^{k+1}$ and $T_k[\theta,x](t)$ uniformly and using that $\int \abs{\theta} \diff \mu < \infty$.
	
	Part \ref{it:lem_Thetav2_2}. In the formula \eqref{eq:derivative_Theta} for $\Theta^{(\ell-1)}(x)$, we cut the integral at $1/2$. The part $t \in [1/2,1]$ is treated as in Lemma \ref{lem:Theta} and the part $t \in [-1,1/2]$ is covered via bounds similar to those mentioned in Part \ref{it:lem_Thetav2_1}.
	
	Part \ref{it:lem_Thetav2_3}. We start from \eqref{eq:bound_diff_Theta_l} and cut again the integral at $1/2$. The part $t \in [1/2,1]$ is treated as in Lemma \ref{lem:Theta}. 
	The part $t \in [-1,1/2]$ is at most
	\begin{align*} 
		& (\ell+1)! \int_{-1}^{1/2} 
		\left( \abs{\theta(t)} + \sum_{k=0}^\ell (1-t)^k \lvert \theta^{(k)}(1) \rvert \right)
		\frac{(x-1)}{(x-t)(1-t)^{\ell+1}} \diff \mu(t) \\
		& \leq (x-1) 2^{\ell+2} (\ell+1)!  
		\left( \int_{-1}^1 \abs{\theta(t)} \diff \mu(t) + \sum_{k=0}^\ell \lvert \theta^{(k)}(1) \rvert \right)
	\end{align*}
	and this yields the desired result.
\end{proof}

\subsection{Bounds on the function \texorpdfstring{$r_\alpha$}{ralpha}}

\begin{lemma}\label{lem:bound on ralpha} 
    Let $p>2$.
	For any $\alpha\in[0,1]$, the function $r_{\alpha}$ is $\cC^1(\R)\cap\cC^\infty(\R^*)$.
	Moreover, there exists $C,c>0$ such that for any $\alpha\in[0,1]$ and $\lambda\in \R$, 
	\[
	c \left( 1+\alpha \abs{\lambda}^{p-2} \right) \leq r_{\alpha}(\lambda) \leq C \left( 1+\alpha \abs{\lambda}^{p-2} \right), 
	\qquad
	\abs{r_{\alpha}'(\lambda)} \leq \begin{cases}
	    C \alpha & \text{if } p \leq 3, \\
	    C \alpha (1+\abs{\lambda})^{p-3} & \text{if } p > 3, 
	\end{cases} 
	\]
	and, if $\lambda \neq 0$, 
	\[
	\abs{r_{\alpha}''(\lambda)} 
	\leq \begin{cases}
	    C \alpha \abs{\lambda}^{p-3} & \text{if } p < 3, \\
	    C \alpha (1+(\log1/\abs{\lambda})_+) & \text{if } p = 3, \\
	    C \alpha (1+\abs{\lambda})^{p-4} & \text{if } p >3.
	\end{cases}
	\]
\end{lemma}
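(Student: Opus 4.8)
# Proof proposal for Lemma \ref{lem:bound on ralpha}

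\textbf{Setup and decomposition.} Recall from \eqref{def:r_generale} that $r_\alpha = \alpha r_1^{\mathrm{Freud}} + 2(1-\alpha)$ where $r_1^{\mathrm{Freud}}(\lambda) = \frac{1}{2\pi}\int_{-1}^1 \frac{V'(t)-V'(\lambda)}{t-\lambda}\frac{\diff t}{\sigma(t)}$ with $V(x) = c_p\abs{x}^p$; indeed $V_\alpha' = \alpha V' + (1-\alpha)(V_G)'$ and the contribution of $V_G'(t) = 4t$ to the integral transform equals $4 \cdot \frac{1}{2\pi}\int_{-1}^1 \frac{t-\lambda}{t-\lambda}\frac{\diff t}{\sigma(t)} = 4 \cdot \frac12 = 2$. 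So the $\alpha$-independent part contributes a bounded, smooth, strictly positive constant, and all the work is to establish the stated bounds for $r_1^{\mathrm{Freud}}$, with the $\alpha$ factor then just carried along. The plan is to apply the integral-operator lemmas from Appendix~\ref{app:technical estimate function}, namely Lemma~\ref{lem:Thetav2}, with $\mu$ the arcsine distribution (density $\varrho(t) = \frac{1}{\pi\sigma(t)}$ on $[-1,1]$, which does \emph{not} satisfy $\varrho(t) \le M\sqrt{1-t}$), and with $\theta = V' = c_p p \,\mathrm{sgn}(\cdot)\abs{\cdot}^{p-1}$, which is $\cC^\infty(\R^*)$ and, since $p>2$, is $\cC^1(\R)$ with $\theta' = c_p p(p-1)\abs{\cdot}^{p-2}$ continuous. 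Note $\Theta = 2\pi r_1^{\mathrm{Freud}}$ in the notation of \eqref{eq:def_Theta}.

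\textbf{Regularity.} Since $\theta \in \cC^\infty(\R^*)$ and $\int_{-1}^1 \abs{\theta}\diff\mu < \infty$, Lemma~\ref{lem:Thetav2}.\ref{it:lem_Thetav2_1} gives $\Theta \in \cC^\infty(\R^*)$, hence $r_\alpha \in \cC^\infty(\R^*)$. For $\cC^1$ at $0$: one shows $\Theta'$ extends continuously to $0$, either directly from \eqref{eq:derivative_Theta} with $k=1$ (the integrand $\frac{\theta(t)-\theta(x)-\theta'(x)(t-x)}{(t-x)^2}$ is dominated near $x=0$ by a constant times $\abs{t}^{p-2}\wedge\abs{x}^{p-2}$ after splitting at $\abs{t}=\abs{x}/2$, uniformly giving continuity), or by invoking the explicit series \eqref{def:r} which already exhibits $r_\alpha$ as $\cC^1$ at $0$ when $p>2$. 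I would use \eqref{def:r} here since it is the cleanest: the singular part $\abs{x}^{p-1}(A_p - B_p\log\abs{x})/\sigma(x)$ and the power series $\sum a_n(p)x^{2n}$, each divided by $\sigma(x)$, are manifestly $\cC^1$ near $0$ for $p>2$.

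\textbf{Two-sided bound on $r_\alpha$.} The upper bound $r_\alpha(\lambda) \le C(1+\alpha\abs{\lambda}^{p-2})$ follows from estimating $\abs{\Theta(\lambda)}$: split $\frac{\theta(t)-\theta(\lambda)}{t-\lambda}$ and bound using $\abs{\theta(\lambda)} \le C\abs{\lambda}^{p-1}$, $\abs{\theta(t)}$ integrable against $\mu$, and $\abs{t-\lambda}^{-1}$ controlled for $\abs{\lambda}$ large; for $\abs{\lambda}$ bounded use continuity of $\Theta$ on a compact set. For large $\abs\lambda$ one has $\theta(\lambda)/\lambda \sim c_p p \abs\lambda^{p-2}$ after division and the remaining $\int \theta\,d\mu/(t-\lambda)$ term is $O(1/\abs\lambda)$, giving the matching lower bound $r_\alpha(\lambda) \ge c\,\alpha\abs\lambda^{p-2}$ there; for $\abs\lambda$ in a compact set, positivity and continuity (plus the $2(1-\alpha)$ term handling $\alpha$ near $0$) give $r_\alpha \ge c$. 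The cleanest route to the lower bound is again \eqref{def:r}: $r_\alpha = \alpha p (\text{something})/\sigma + 2(1-\alpha)$ on $[-1,1]$ and one checks the bracketed quantity is bounded below by a positive constant on $[-1,1]$ (it equals $\pi\sigma(x)\frac{\diff\mu_V}{\diff x}(x)/(\alpha p)$ essentially, and $\frac{\diff\mu_V}{\diff x}$ is known explicitly and positive on $(-1,1)$, with the right boundary behavior so that $r_\alpha$ stays bounded below), while for $\abs{\lambda}>1$ one uses the asymptotics of \eqref{eq:r'}.

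\textbf{Derivative bounds.} These are the crux. Apply Lemma~\ref{lem:Thetav2}.\ref{it:lem_Thetav2_1} with $k=1,2$:
\[
\Theta'(x) = \int_{-1}^1 \frac{\theta(t)-\theta(x)-\theta'(x)(t-x)}{(t-x)^2}\diff\mu(t), \qquad
\Theta''(x) = 2\int_{-1}^1 \frac{\theta(t)-T_2[\theta,x](t)}{(t-x)^3}\diff\mu(t).
\]
For $\abs{x}$ bounded away from $0$ and $\pm 1$, these are $O(1)$ by smoothness. The delicate regimes are $x \to 0$ and $x\to\pm1$, plus $\abs x \to \infty$. Near $x=0$: split each integral at $\abs{t} = \abs{x}/2$. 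On $\abs{t}\le\abs{x}/2$ one bounds $\theta$ and its Taylor polynomial crudely and gets a contribution $O(\abs{x}^{p-1-k})$ (harmless: $p-1-1 = p-2 \ge 0$, $p-1-2 = p-3$). On $\abs t \ge \abs x/2$, Taylor--Lagrange with remainder involving $\theta''$ (which blows up like $\abs\cdot^{p-3}$) or $\theta'''$ (like $\abs\cdot^{p-4}$), integrated against the arcsine density near $0$, produces exactly the stated powers: $\abs{x}^{p-3}$ for $r_\alpha''$ when $p<3$, a logarithm when $p=3$, bounded when $p>3$; and for $r_\alpha'$ one gets boundedness when $p\le 3$ and $\abs\lambda^{p-3}$-type growth absorbed into $(1+\abs\lambda)^{p-3}$ when $p>3$. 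Near $x=\pm1$: here the arcsine density has an \emph{integrable} singularity $(1-t^2)^{-1/2}$, and one must check the principal-value nature does not produce a blow-up; since $\theta$ is smooth near $\pm1$, Taylor expansion to the appropriate order and the fact that $\int_{-1}^1 (1-t^2)^{-1/2}\abs{t-x}^{-1}\diff t$ behaves like $(1-\abs{x})^{-1/2}$ but is multiplied by a factor vanishing to high enough order shows $\Theta', \Theta''$ stay bounded near $\pm1$ — this is the classical computation behind Lemma~\ref{lem:Theta}/\ref{lem:Thetav2} part~3, adapted to the arcsine weight (which fails the $M\sqrt{1-t}$ hypothesis, so those lemmas don't directly apply at the edges and one redoes the estimate by hand). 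For $\abs x\to\infty$ the integrals are dominated by the $\theta'(x),\theta''(x)$ terms (for $r_\alpha'$, the $-\theta'(x)(t-x)$ piece) giving the $(1+\abs\lambda)^{p-3}$, $(1+\abs\lambda)^{p-4}$ growth.

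\textbf{Main obstacle.} The genuinely technical point is the behavior of $r_\alpha'$ and $r_\alpha''$ near $x=0$ when $p\in(2,3]$, where $\theta'' = c_pp(p-1)\abs\cdot^{p-2}$ is unbounded: one must carry out the split-at-$\abs t=\abs x/2$ estimate carefully and track that the near-$0$ contribution of the arcsine measure is benign (density $\sim 1/\pi$ there), producing precisely $\abs x^{p-3}$ (resp.\ the log at $p=3$). A secondary, more routine obstacle is handling the edges $\pm1$ with the non-$\sqrt{1-t}$ arcsine weight, which requires a direct principal-value estimate rather than a citation of Lemma~\ref{lem:Thetav2}. Throughout, the $\alpha$ prefactor is simply inherited linearly and the $2(1-\alpha)$ constant only affects (and helps) the lower bound.
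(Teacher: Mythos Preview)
Your decomposition $r_\alpha = \alpha r_1 + 2(1-\alpha)$ and the overall strategy are the same as the paper's. The regularity argument and the large-$\abs{\lambda}$ bounds via the derivative formula \eqref{eq:derivative_Theta} are also essentially identical. There are, however, two places where your route diverges from the paper's and one of them hides a real difficulty.

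\textbf{Near $\pm 1$: unnecessary work.} You flag the edges as a ``secondary obstacle'' requiring a by-hand principal-value estimate because the arcsine density fails the $\varrho(t)\le M\sqrt{1-t}$ hypothesis of Lemma~\ref{lem:Thetav2}.\ref{it:lem_Thetav2_3}. But that part of the lemma is not needed here: once $r_1\in\cC^\infty(\R^*)$ is established (Lemma~\ref{lem:Thetav2}.\ref{it:lem_Thetav2_1} with $\ell$ arbitrary), the derivatives are automatically bounded on the compact set $[-2,2]\setminus(-1/2,1/2)$, which covers neighborhoods of $\pm 1$. The paper does exactly this and moves on.

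\textbf{Near $0$: a hidden cancellation.} The paper does \emph{not} bound $r_1''$ near $0$ via the integral formula; it reads the behavior directly from the explicit series \eqref{def:r}, where the singular contribution $\abs{x}^{p-1}(A_p-B_p\log\abs{x})$ makes the bounds on $r_1,r_1',r_1''$ on $[-1/2,1/2]$ immediate. Your proposed route, splitting the $\Theta''$ integral at $\abs{t}=\abs{x}/2$ and applying Taylor--Lagrange with remainder $\theta'''$ on the far piece, does not work as stated: for $p<3$, the term $\frac{\theta''(x)(t-x)^2/2}{(t-x)^3}=\frac{\theta''(x)}{2(t-x)}$ contributes $C\abs{x}^{p-3}\int_{2\abs{x}}^{1}\frac{\diff t}{\abs{t-x}\sigma(t)}\sim C\abs{x}^{p-3}\log(1/\abs{x})$ if you bound in absolute value, a spurious logarithm. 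The log disappears only because $\fint_{-1}^1\frac{1}{t-x}\frac{\diff t}{\sigma(t)}=0$ for $x\in(-1,1)$, i.e.\ the arcsine Hilbert transform vanishes, so the contributions from $t>0$ and $t<0$ cancel. Your sketch does not invoke this cancellation; either make it explicit, or follow the paper and use \eqref{def:r} near $0$, which is much shorter.
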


Note that, when $p\leq3$, the second derivative is exploding at 0, whereas it stays bounded in the neighbourhood of 0 if $p > 3$ (but possibly explodes at infinity). The function $r_\alpha$ is actually in $\cC^{\lceil p-2 \rceil}(\R)$ but we do not try to optimize regularity statements in the case $p>3$.

\begin{proof}
	First note that $r_\alpha = \alpha r_1 + (1-\alpha) r_0$ and $r_0 = 2$, so it is enough to deal with the case $\alpha = 1$. We write $r = r_1$ for brevity. 
	
	\textbf{Lower bound on $r$.} 
	Recall from \eqref{def:r_generale} that
	\begin{equation} \label{def:r_1}
		\forall \lambda \in \R, \qquad 
		r(\lambda) = \frac{1}{2\pi} \int_{-1}^1\frac{V'(\lambda)-V'(t)}{\lambda-t}
		\frac{\diff t}{\sigma(t)}.
	\end{equation}
	Note that the integrand above is positive for any $t \neq \lambda$ by strict convexity of $V$, so $r(\lambda)>0$. Moreover, $V' \in \cC^1(\R)$ so, by Lemma \ref{lem:Theta}.\ref{it:lem_Theta_1}, $r$ is continuous on $\R$. 
	Finally, note that $r(\lambda) \sim V'(\lambda)/(2\lambda) \sim pc_p\abs{\lambda}^{p-2}/2$ as $\lambda \to \pm\infty$.
	Altogether, this proves there exists $c>0$ such that $r(\lambda) \geq c(1+\abs{\lambda}^{p-2})$.

	\textbf{Regularity.}  Since $V'$ is $\cC^\infty$ on $\R^*$, by \eqref{def:r_1} and Lemma~\ref{lem:Thetav2}.\ref{it:lem_Thetav2_1}, we get that $r$ is $\cC^\infty$ on $\R^*$. 
	On the other hand, recall \eqref{def:r}. 
	Observing that the entire series has radius of convergence 1, and that the term $x\mapsto \abs{x}^{p-1}(A_p-B_p\log\abs{x})\in\mathcal{C}^1(-1,1)$ since $p>2$, we deduce that $r$ is $\cC^1$ on $(-1,1)$ and therefore on $\R$ by what precedes.

	\textbf{Upper bounds.} From \eqref{def:r}, noting that $B_p$ is nonzero only if $p\in2\N+1$, we see that there exists $C>0$ such that, for any $\lambda \in [-1/2,1/2]$, $r(\lambda) \leq C$, $\abs{r'(\lambda)} \leq C$ and 
	\[
	\abs{r''(\lambda)} \leq \begin{cases}
	    C \abs{\lambda}^{p-3} & \text{if } p < 3, \\
        C (1+(\log1/\abs{\lambda})_+) & \text{if } p = 3, \\
	    C & \text{if } p > 3.
	\end{cases}
	\]
	Then, since $r$ is $\cC^\infty$ on $\R^*$, such inequalities are still true on the compact set $[-2,2] \setminus (-1/2,1/2)$ (up to changing $C$).
	Finally, we deal with $\lambda \notin [-2,2]$ and by parity we can assume $\lambda >2$.
	By Lemma~\ref{lem:Thetav2}.\ref{it:lem_Thetav2_1}, we have, for any $k \geq 0$, 
	\begin{align*}
		\abs{r^{(k)}(\lambda)}
		\leq \frac{k!}{2\pi}
		\int_{-1}^1 \left( \sum_{j=0}^k \frac{\abso{V^{(j+1)}(\lambda)}}{j!(\lambda-t)^{k+1-j}} 
		+ \frac{\abs{V'(t)}}{(\lambda-t)^{k+1}} \right)
		\frac{\diff t}{\sigma(t)} 
		\leq C(k,p) \lambda^{p-k-2},
	\end{align*}
	using $\lambda-t \geq \lambda/2$ and $V^{(j+1)}(\lambda) = C(j,p) \lambda^{p-j-1}$, where we write $C(j,p)$ for a constant that depends on $j$ and $p$ only. This concludes the proof.
\end{proof}

We often use the following direct consequence of the previous lemma controlling the function $1/r_\alpha$ and its first two derivatives. 
\begin{corollary} \label{cor:1/r}
    Let $p>2$. There exists $C>0$ such that, for any $\alpha \in [0,1]$, $\abs{1/r_\alpha} \leq C$, $\abs{(1/r_\alpha)'} \leq C$, and for any $\lambda \in \R^*$, 
    \[
	\abs{\left(\frac{1}{r_{\alpha}}\right)''(\lambda)} 
	\leq \begin{cases}
	    C \abs{\lambda}^{p-3} & \text{if } p < 3, \\
	    C (1+(\log1/\abs{\lambda})_+) & \text{if } p = 3, \\
	    C & \text{if } p>3.
	\end{cases}
	\]
\end{corollary}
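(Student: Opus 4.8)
The plan is to derive Corollary~\ref{cor:1/r} directly from Lemma~\ref{lem:bound on ralpha} by a routine computation involving the quotient and chain rules. First I would record the two derivative formulas
\[
\left(\frac{1}{r_\alpha}\right)' = -\frac{r_\alpha'}{r_\alpha^2},
\qquad
\left(\frac{1}{r_\alpha}\right)'' = -\frac{r_\alpha''}{r_\alpha^2} + \frac{2(r_\alpha')^2}{r_\alpha^3},
\]
valid on $\R^*$ (and the first one on all of $\R$ since $r_\alpha \in \cC^1(\R)$ by Lemma~\ref{lem:bound on ralpha}).

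The key input is the uniform lower bound $r_\alpha(\lambda) \geq c(1+\alpha\abs{\lambda}^{p-2}) \geq c$ from Lemma~\ref{lem:bound on ralpha}, which gives $1/r_\alpha \leq 1/c \eqqcolon C$ immediately and, more importantly, lets me absorb all the powers $r_\alpha^{-2}$ and $r_\alpha^{-3}$ into a constant while keeping the $\alpha$-dependent growth in the numerator under control. For the first derivative: using $\abs{r_\alpha'(\lambda)} \leq C\alpha$ when $p\leq 3$ and $\abs{r_\alpha'(\lambda)} \leq C\alpha(1+\abs{\lambda})^{p-3}$ when $p>3$, together with $r_\alpha(\lambda)^2 \geq c(1+\alpha\abs{\lambda}^{p-2})^2 \geq c(1+\alpha\abs{\lambda})^{2(p-3)}$ in the case $p>3$ (here one checks $(1+\alpha\abs{\lambda}^{p-2})^2 \geq (1+\alpha\abs{\lambda})^{2(p-3)}$ since $p-2 > p-3$ and $p-3 \geq 0$), one gets $\abs{(1/r_\alpha)'} \leq C$ uniformly in $\alpha$ and $\lambda$. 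The case $p\leq 3$ is even simpler since then $\abs{r_\alpha'} \leq C\alpha \leq C$ and $r_\alpha^2 \geq c$.

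For the second derivative I would estimate the two terms separately. The term $2(r_\alpha')^2/r_\alpha^3$ is bounded by $C$ by the same argument as for the first derivative (squaring the bound on $r_\alpha'$ and using the lower bound on $r_\alpha$; in the case $p>3$ one needs $(1+\alpha\abs{\lambda})^{2(p-3)} / (1+\alpha\abs{\lambda}^{p-2})^3 \leq C$, which holds since $2(p-3) \leq 3(p-2)$ for $p>3$). For the term $r_\alpha''/r_\alpha^2$, I use $r_\alpha^2 \geq c$ and plug in the three cases of the bound on $\abs{r_\alpha''}$ from Lemma~\ref{lem:bound on ralpha}: when $p<3$ this gives $\leq C\alpha\abs{\lambda}^{p-3} \leq C\abs{\lambda}^{p-3}$; when $p=3$ it gives $\leq C\alpha(1+(\log 1/\abs{\lambda})_+) \leq C(1+(\log1/\abs{\lambda})_+)$; when $p>3$ one uses $\abs{r_\alpha''(\lambda)} \leq C\alpha(1+\abs{\lambda})^{p-4}$ and $r_\alpha(\lambda)^2 \geq c(1+\alpha\abs{\lambda}^{p-2})^2 \geq c(1+\abs{\lambda})^{2(p-2)}\alpha^2 \vee c$, and since $p-4 < 2(p-2)$ the ratio is bounded, yielding $\leq C$. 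Combining the two terms gives the stated bound in each regime.

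There is essentially no obstacle here — the statement is an elementary corollary and the only mild care needed is to verify, in the case $p>3$, the exponent inequalities $2(p-3) \leq 2(p-2)$ and $2(p-3) \leq 3(p-2)$ and $p-4 \leq 2(p-2)$ that allow the polynomial growth in the numerators to be dominated by the polynomial lower bounds on powers of $r_\alpha$, uniformly in $\alpha \in [0,1]$. One should also note that all constants produced depend only on $p$ (through the constants $c, C$ of Lemma~\ref{lem:bound on ralpha}), which is exactly what the statement requires.
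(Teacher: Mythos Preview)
Your approach is exactly what the paper has in mind (the paper gives no proof, calling this a ``direct consequence'' of Lemma~\ref{lem:bound on ralpha}), and the quotient-rule computation together with the lower bound $r_\alpha \geq c$ is the right route.

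There is, however, a small gap in your handling of the case $p>3$. You lower-bound $r_\alpha(\lambda)^2 \geq c(1+\alpha\abs{\lambda}^{p-2})^2$ by $c(1+\alpha\abs{\lambda})^{2(p-3)}$ and then implicitly claim that $\alpha(1+\abs{\lambda})^{p-3}/(1+\alpha\abs{\lambda})^{2(p-3)}$ is bounded uniformly in $\alpha$ and $\lambda$. This fails for $p>4$: take $\alpha = 1/\abs{\lambda}$ with $\abs{\lambda}$ large, so that the denominator is $(1+1)^{2(p-3)} = 4^{p-3}$ while the numerator behaves like $\abs{\lambda}^{p-4} \to \infty$. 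The weakening from $(1+\alpha\abs{\lambda}^{p-2})$ to $(1+\alpha\abs{\lambda})^{p-3}$ throws away precisely the information needed.

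The fix is easy: keep the sharper bound $r_\alpha \geq c(1+\alpha\abs{\lambda}^{p-2})$ and split on whether $\alpha\abs{\lambda}^{p-2} \leq 1$ or $>1$ (for $\abs{\lambda}>1$; the case $\abs{\lambda}\leq 1$ is trivial). In the first case, $\alpha\abs{\lambda}^{p-3} = (\alpha\abs{\lambda}^{p-2})/\abs{\lambda} \leq 1$ so the numerator is already $O(1)$; in the second case, $(1+\alpha\abs{\lambda}^{p-2})^2 \geq (\alpha\abs{\lambda}^{p-2})^2$ and the ratio is at most $C/(\alpha\abs{\lambda}^{p-1}) \leq C$. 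The same case split handles $(r_\alpha')^2/r_\alpha^3$ and $r_\alpha''/r_\alpha^2$ in the second-derivative estimate.
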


\subsection{Inverse of the master operator}
\label{subsec:inverse_master_op}

The purpose of this subsection is to prove Lemma \ref{lem:inverse_master_op} concerning the existence and regularity of the preimage under the master operator $\Xi_\alpha$ of a function.

\begin{proof}[Proof of Lemma \ref{lem:inverse_master_op}]
	Part \ref{it:inverse_C2}. This part of the result would follow directly from \cite[Lemma~3.3]{BekLebSer2018} if $V$ was of class $\cC^4$, but here $V$ is only of class $\cC^2$. We therefore summarize their proof below and explain where modifications are needed.
	First, using singular integral equation theory \cite[Eq.\@ (89.15)-(89.17)]{Mus1972}, they observe that $\psi_\alpha$ as defined in \eqref{eq:def_psi} solves $\Xi_\alpha[\psi_\alpha] = f-a$ on $(-1,1)$ and therefore on $[-1,1]$ by continuity.
	Then, they note that the function
	\[
	\varphi \colon \lambda \in \R \longmapsto
	\int_{-1}^1 \frac{f(t)-f(\lambda)}{t-\lambda}
	\frac{\diff t}{\sigma(t)}
	\]
	is in $\cC^1(\R)$ and satisfies $\norme{\varphi}_{\cC^1} \leq \norme{f}_{\cC^2}$ by Taylor--Lagrange inequality (see Lemma \ref{lem:Theta}.\ref{it:lem_Theta_1}).
	Combining this with Corollary~\ref{cor:1/r}, we deduce that $\psi_\alpha \in \cC^1([-1,1])$ and satisfies $\norme{\psi_\alpha}_{\cC^1([-1,1])} \leq C \norme{f}_{\cC^2([-1,1])}$ (here they rely on \cite[Lemma 3.1]{BekLebSer2018} which would require $V$ of class $\cC^4$).
	Furthermore, the fact that $\Xi_\alpha[\psi_\alpha] = f-a$ holds on $\R \setminus [-1,1]$ follows from direct algebraic manipulations using the definition in \eqref{eq:def_psi}.
	Moreover, the function $\psi_\alpha$ has the same regularity as $f$ on $\R \setminus [-1,1]$, i.e.\@ $\cC^2$, because $V_\alpha$ is $\cC^\infty$ on $\R^*$.
	
	To conclude that $\psi_\alpha$ is $\cC^1$ on $\R$, it is enough to prove that $\psi_\alpha^{(k)}(\lambda) \to \psi_\alpha^{(k)}(1-)$ as $\lambda \downarrow 1$ for $k \in \{0,1\}$ and a similar result at $-1$. 
	We now detail this argument%
	\footnote{We follow ideas from \cite{BekLebSer2018}, but do not fully understand their argument, so we provide some details.}, 
	focusing on the behavior at~$1$, the argument at $-1$ being identical.
	Let $\theta_\alpha$ be equal to $\psi_\alpha$ on $[-1,1]$ and extended to $\R$ in a $\cC^1$ fashion by setting $\theta_\alpha(\lambda)=T_1[\psi_\alpha,1](\lambda)$ for $\lambda>1$ and $\theta_\alpha(\lambda)=T_1[\psi_\alpha,-1](\lambda)$ for $\lambda<-1$, where we recall the notation \eqref{eq:def_Taylor}.
	Then, for $\lambda \notin [-1,1]$, using $\theta_\alpha=\psi_\alpha$ on $[-1,1]$, we have
	\begin{equation} \label{eq:rewriting_psi}
		\psi_\alpha(\lambda) 
		= \frac{\int_{-1}^1 \frac{\theta_\alpha(t)}{\lambda-t} \diff \mu_{V_\alpha}(t)+f(\lambda)-a}
		{\int_{-1}^1 \frac{1}{\lambda-t} \diff \mu_{V_\alpha}(t) -\frac{1}{2} V_\alpha'(\lambda)}
		= \theta_\alpha(\lambda) 
		+ \frac{f(\lambda)-a - \Xi_\alpha[\theta_\alpha](\lambda)}
		{\int_{-1}^1 \frac{1}{\lambda-t} \diff \mu_{V_\alpha}(t) -\frac{1}{2} V_\alpha'(\lambda)}
		\eqqcolon \theta_\alpha(\lambda) 
		+ \frac{F(\lambda)}{G(\lambda)},
	\end{equation}
	where we introduced a shorthand for the numerator and the denominator.
	It is now enough to prove that $(F/G)^{(k)}(\lambda) \to 0$ as $\lambda \downarrow 1$ for $k \in \{0,1\}$.
	Recalling the definition of $\Xi_\alpha$ in \eqref{eq:def_master_op} and applying Lemma~\ref{lem:Theta} with $\mu = \mu_{V_\alpha}$, $\theta = \theta_\alpha$ and $\ell=1$, we deduce that the function $F$ is $\cC^1$ on $(1,\infty)$, differentiable at 1 and $F'(\lambda) = F'(1) + o((\lambda-1)^{1/2})$ as $\lambda \downarrow 1$ (for the term $\frac{1}{2} \theta_\alpha V_\alpha'$, note that $\theta_\alpha'' = 0$ on $(1,\infty)$ and that $V_\alpha$ is $\cC^\infty$ on $\R^*$).
	But the function $F$ is zero on $[-1,1]$, because $\Xi_\alpha[\theta_\alpha] = \Xi_\alpha[\psi_\alpha] = f+a$ on $[-1,1]$, so in particular $F(1)=F'(1)=0$.
	Combining these facts, we get
	\begin{equation} \label{eq:behavior_F}
		F(\lambda) = o((\lambda-1)^{3/2}) 
		\qquad \text{and} \qquad 
		F'(\lambda) = o((\lambda-1)^{1/2}),
		\qquad \text{as } \lambda \downarrow 1.
	\end{equation}
	On the other hand, for $\lambda >1$, $G(\lambda) $ can be rewritten in the following way, by definitions of $s_{V_\alpha}$ and $g_\alpha$ and then using \eqref{eq:s_formula},
	\begin{equation} \label{eq:rewriting_G}
		G(\lambda) 
		= \int_{-1}^1 \frac{1}{\lambda-t} \diff \mu_{V_\alpha}(t) -\frac{1}{2} V_\alpha'(\lambda)
		= - s_{V_\alpha}(\lambda) -\frac{g_\alpha(\lambda)}{2\lambda}
		= -r_\alpha(\lambda) \sqrt{\lambda^2-1}.
	\end{equation}
	But, by Corollary~\ref{cor:1/r}, $1/r_\alpha$ and its derivative are bounded on $\R$. So, combining this with \eqref{eq:behavior_F}, we deduce that $(F/G)^{(k)}(\lambda) \to 0$ as $\lambda \downarrow 1$ for $k \in \{0,1\}$ and this concludes the proof that $\psi_\alpha$ is $\cC^1$ on $\R$.
	
	It remains to show $\norme{\psi_\alpha}_{\cC^1} \leq C \norme{f}_{\cC^2}$. We have already seen that $\norme{\psi_\alpha}_{\cC^1([-1,1])} \leq C \norme{f}_{\cC^2([-1,1])}$.
	On $\R \setminus [-2,2]$, we use the expression of $\psi_\alpha(\lambda)$ in 
	\eqref{eq:def_psi}, where the denominator can be rewritten as in \eqref{eq:rewriting_G}. Then, using Corollary~\ref{cor:1/r} again, we get 
	\[
	\norme{\psi_\alpha}_{\cC^1(\R \setminus [-2,2])} \leq C (\norme{\psi_\alpha}_{\cC^0([-1,1])} + \norme{f}_{\cC^1} + \abs{a}) \leq C \norme{f}_{\cC^2},
	\]
	noting that $\abs{a} \leq \norme{f}_\infty$.
	Now we consider $\lambda \in [1,2]$, the case $\lambda \in [-2,-1]$ being identical.
	Here we use the expression of $\psi_\alpha(\lambda)$ given in \eqref{eq:rewriting_psi}.
	First, by definition of $\theta_\alpha$, we get $\norme{\theta_\alpha}_{\cC^1([1,2])} \leq C (\psi_\alpha(1)+\psi_\alpha'(1)) \leq C \norme{f}_{\cC^2}$.
	Then, using Lemma~\ref{lem:Theta} again, we have 
	\begin{equation*}
		\abs{F'(\lambda)-F'(1)} 
		\leq (\lambda-1) \cdot 
		\sup_{(1,2]} \abs{ \left(f+\frac{1}{2} \theta_\alpha V_\alpha' \right)''} 
		+ C (\lambda-1)^{1/2} \cdot \sup_{[-1,1]} \abs{\psi_\alpha'}
		\leq C \norme{f}_{\cC^2} (\lambda-1)^{1/2},
	\end{equation*} 
	and, together with $F(1)=F'(1)=0$, this yields
	\begin{equation} \label{eq:behavior_F_2}
		\abs{F(\lambda)} \leq C \norme{f}_{\cC^2} (\lambda-1)^{3/2}
		\qquad \text{and} \qquad 
		\abs{F'(\lambda)} \leq C \norme{f}_{\cC^2} (\lambda-1)^{1/2},
		\qquad \text{for any } \lambda \in [1,2].
	\end{equation}
	Combining this with the expression of $G(\lambda)$ in \eqref{eq:rewriting_G} and Corollary~\ref{cor:1/r}, we get $\norme{F/G}_{\cC^1([1,2])} \leq C \norme{f}_{\cC^2}$, which concludes the proof of $\norme{\psi_\alpha}_{\cC^1} \leq C \norme{f}_{\cC^2}$.
	
	Part \ref{it:inverse_C3}. The argument is similar to Part \ref{it:inverse_C2}. 
	We now have $\varphi \in \cC^2(\R)$ and $\norme{\varphi}_{\cC^2} \leq \norme{f}_{\cC^3}$, which, together with Corollary~\ref{cor:1/r}, shows that $\psi_\alpha \in \cC^2([-1,1]\setminus\{0\})$ and 
	\begin{equation} \label{eq:bound_psi''}
		\forall \lambda \in (-1,1) \setminus \{0\}, \qquad \abs{\psi_\alpha''(\lambda)} \leq C \norme{f}_{\cC^3} 
		\begin{cases}
	    \abs{\lambda}^{p-3} & \text{if } p < 3, \\
	    (1+(\log1/\abs{\lambda})_+) & \text{if } p = 3, \\
	    1 & \text{if } p>3.
	\end{cases}
	\end{equation}
	On $\R \setminus [-1,1]$, the function $\psi_\alpha$ still has the same regularity as $f$, that is $\cC^3$.	
	The regularity of $\psi_\alpha$ at $\pm 1$ follows from the same lines, but we apply Lemma~\ref{lem:Thetav2} instead of Lemma~\ref{lem:Theta}, because $\psi_\alpha$, and hence $\theta_\alpha$, is not $\cC^2$ at 0.
	This shows that $F''(\lambda) = F''(1) + o((\lambda-1)^{1/2})$ as $\lambda \downarrow 1$ and it follows similarly that $(F/G)^{(k)}(\lambda) \to 0$ as $\lambda \downarrow 1$ for $k \in \{0,1,2\}$, which proves $\psi_\alpha$ is $\cC^2$ at $1$.
	Moreover, Lemma~\ref{lem:Thetav2} also yields, for $\lambda \in [1,2]$,
	\begin{align*}
		\abs{F''(\lambda)-F''(1)} 
		& \leq (\lambda-1) \cdot 
		\sup_{(1,2]} \abs{ \left(f+\frac{1}{2} \theta_\alpha V_\alpha' \right)'''} 
		+ C (\lambda-1)^{1/2} \cdot \sup_{[1/2,1]} \abs{\psi_\alpha''} \\
		& \qquad {}
		+ C (\lambda-1) \left( \int \abs{\psi_\alpha} \diff \mu_{V_\alpha} + \sum_{k=0}^2 \lvert \psi_\alpha^{(k)}(1) \rvert \right) \\
		& \leq C \norme{f}_{\cC^3} (\lambda-1)^{1/2},
	\end{align*} 
	using \eqref{eq:bound_psi''} and that $\norme{\psi_\alpha}_{\cC^1([-1,1])} \leq C \norme{f}_{\cC^2([-1,1])}$ by Part \ref{it:inverse_C2}.
	Together with Corollary~\ref{cor:1/r}, it follows that $\abs{\psi_\alpha''(\lambda)} \leq C \norme{f}_{\cC^3}$ for $\lambda \in [1,2]$, and so for $\lambda \in [-2,-1]$ as well. 
	The same bound but for $\lambda \in \R \setminus [-2,2]$ is done as in Part~\ref{it:inverse_C2}.
\end{proof}

\subsection{Bounds on \texorpdfstring{$f_{\alpha,z}$}{falpha,z} and its preimage via the master operator}
\label{subsec:fz_and_psiz}

This section contains the proofs of Lemmas \ref{lem:f_z} and \ref{lem:psi_z}.
Recall that $g(t) = tV'(t) = pc_p\abs{t}^p$ for $t \in \R$.
We first establish a preliminary result containing bounds on the function $g$, which are improved versions of the Taylor--Lagrange inequality (note that $g'''$ is unbounded when $p<3$).

\begin{lemma}\label{lem:borneABC} Let $p>2$. There exists $C>0$ such that, for any $t,\lambda \in \R$, 
	\begin{align}
		\abs{g(t) - g(\lambda) - (t-\lambda)g'(\lambda) - \frac{(t-\lambda)^2}{2} g''(\lambda)} 
		& \leq C \abs{t-\lambda}^3 (\abs{\lambda} \vee \abs{t})^{p-3},
		\label{eq:g_Taylor_order_3} \\
		\abs{g'(t) - g'(\lambda) - (t-\lambda) g''(\lambda)} 
		& \leq C \abs{t-\lambda}^2 (\abs{\lambda} \vee \abs{t})^{p-3},
		\label{eq:g'_Taylor_order_2} \\
		\abs{g''(t) - g''(\lambda)} 
		& \leq C \abs{t-\lambda} (\abs{\lambda} \vee \abs{t})^{p-3}.
		\label{eq:g''_Taylor_order_1}
	\end{align}
\end{lemma}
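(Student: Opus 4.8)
\textbf{Proof strategy for Lemma \ref{lem:borneABC}.}
The plan is to prove the three inequalities by a common scheme: reduce to $\lambda \geq 0$ by parity (since $g$ is even, $g'$ odd, $g''$ even), then split into the ``comparable'' regime, where $|t|$ and $|\lambda|$ are of the same order, and the ``far'' regime, where one of $|t-\lambda|$, $|\lambda|$, $|t|$ dominates the others. In the comparable regime we use that $g$ is $\cC^\infty$ away from $0$ and that its higher derivatives satisfy $g^{(k)}(s) = c_{k,p} |s|^{p-k}$ for $s \neq 0$, so ordinary Taylor--Lagrange on a segment not crossing $0$ gives the claimed bounds with $(\abs{\lambda}\vee\abs{t})^{p-k}$ as the natural size of the $k$-th derivative on that segment. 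The genuine work is when the segment $[\lambda, t]$ (or $[t,\lambda]$) contains $0$, i.e. $\lambda \geq 0 \geq t$ or $t \geq 0 \geq \lambda$; here the Taylor remainder formula fails because $g'''$ is not integrable enough when $p<3$ at $0$, so one must estimate directly.

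For the central inequality \eqref{eq:g''_Taylor_order_1}, I would argue as follows. If $t\lambda \geq 0$ (same sign), WLOG both nonnegative, then $g''(s) = pc_p(p-1)s^{p-2}$ on $[\lambda,t]$ and $|g''(t)-g''(\lambda)| = pc_p(p-1)|t^{p-2}-\lambda^{p-2}|$; one checks $|t^{p-2}-\lambda^{p-2}| \leq C|t-\lambda|(\lambda\vee t)^{p-3}$ by the mean value theorem if $p \geq 3$ and, if $2<p<3$, by the elementary inequality $|a^{p-2}-b^{p-2}| \leq |a-b|^{p-2} \leq |a-b|\,(a\vee b)^{p-3}$ (using $p-2 \in (0,1)$ and $|a-b| \leq a\vee b$, and separately $|a-b|^{p-2} \geq |a-b|(a\vee b)^{p-3}$ is false — so instead one should note $(a\vee b)^{p-3} \geq |a-b|^{p-3}$ hence $|a-b|(a\vee b)^{p-3} \geq |a-b|^{p-2}$, giving the bound). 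If $t$ and $\lambda$ have opposite signs, say $\lambda \geq 0 > t$, set $m = |t| \vee \lambda = |t|\vee|\lambda|$; then $|g''(t) - g''(\lambda)| \leq g''(t) + g''(\lambda) = pc_p(p-1)(|t|^{p-2}+\lambda^{p-2}) \leq 2pc_p(p-1)m^{p-2} = 2pc_p(p-1)m\cdot m^{p-3} \leq C|t-\lambda|\,m^{p-3}$, using $m \leq |t-\lambda|$ since $t$ and $\lambda$ straddle $0$. This is the key point and it is short.

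For \eqref{eq:g'_Taylor_order_2} and \eqref{eq:g_Taylor_order_3} I would deduce them from \eqref{eq:g''_Taylor_order_1} by integration. Writing $R_1(t) := g'(t) - g'(\lambda) - (t-\lambda)g''(\lambda) = \int_\lambda^t (g''(s) - g''(\lambda))\,\diff s$ when $[\lambda,t]$ avoids $0$, one bounds $|g''(s)-g''(\lambda)| \leq C|s-\lambda|(\abs{\lambda}\vee\abs{s})^{p-3} \leq C|t-\lambda|(\abs{\lambda}\vee\abs{t})^{p-3}$ and integrates, getting $|R_1(t)| \leq C|t-\lambda|^2(\abs{\lambda}\vee\abs{t})^{p-3}$. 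When $0 \in [\lambda,t]$, one instead bounds $|R_1(t)| \leq |g'(t)| + |g'(\lambda)| + |t-\lambda||g''(\lambda)| \leq pc_p(|t|^{p-1}+|\lambda|^{p-1}) + pc_p(p-1)|t-\lambda||\lambda|^{p-2}$; with $m = \abs{\lambda}\vee\abs{t} \leq |t-\lambda|$ each term is $\leq C|t-\lambda|^{p-1} = C|t-\lambda|^2 |t-\lambda|^{p-3} \leq C|t-\lambda|^2 m^{p-3}$ — wait, this needs $|t-\lambda|^{p-3} \leq m^{p-3}$, which holds when $p \geq 3$ but is \emph{reversed} when $p<3$; so in the case $p<3$ one uses instead $|t-\lambda|^{p-1} = |t-\lambda|^2|t-\lambda|^{p-3}$ together with the fact that here $m \leq |t-\lambda| \leq 2m$ (since $t,\lambda$ straddle $0$, $|t-\lambda| = |t|+|\lambda| \leq 2m$), so $|t-\lambda|^{p-3} \leq m^{p-3}$ up to a constant $2^{3-p}$, recovering the bound. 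The same two-regime argument, integrating \eqref{eq:g'_Taylor_order_2}, yields \eqref{eq:g_Taylor_order_3}, with the straddling case handled by $|R_2(t)| \leq |g(t)|+|g(\lambda)|+|t-\lambda||g'(\lambda)| + \tfrac12|t-\lambda|^2|g''(\lambda)| \leq C|t-\lambda|^p \leq C|t-\lambda|^3 m^{p-3}$ again using $m \leq |t-\lambda| \leq 2m$ in the straddling case.

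\textbf{Main obstacle.} There is no deep difficulty; the only subtlety, and the thing to be careful about throughout, is the direction of the inequality $|t-\lambda|^{p-3}$ versus $(\abs{\lambda}\vee\abs{t})^{p-3}$, which flips at $p=3$. The clean way to organize it is to always reduce to the two cases ``$[\lambda,t]$ misses $0$'' (pure Taylor--Lagrange, no sign issue) and ``$[\lambda,t]$ contains $0$'' (in which case $m \leq |t-\lambda| \leq 2m$, so $|t-\lambda|$ and $m = \abs{\lambda}\vee\abs{t}$ are comparable and the exponent ambiguity disappears up to constants). With that bookkeeping the proof is a few lines per inequality.
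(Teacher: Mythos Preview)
Your overall plan is workable, but the execution in the case $p\in(2,3)$ has two errors of the same type, exactly at the ``subtlety'' you flagged. In the same-sign case of \eqref{eq:g''_Taylor_order_1} you claim $(a\vee b)^{p-3}\ge|a-b|^{p-3}$; since $x\mapsto x^{p-3}$ is decreasing for $p<3$, this is equivalent to $a\vee b\le|a-b|$, which is \emph{false} when $a,b$ have the same sign. Likewise, in your integration step for \eqref{eq:g'_Taylor_order_2} you use $(|\lambda|\vee|s|)^{p-3}\le(|\lambda|\vee|t|)^{p-3}$ for $s$ between $\lambda$ and $t$; in the regime $0\le\lambda\le s\le t$ this reads $s^{p-3}\le t^{p-3}$, again false for $p<3$. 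Both target inequalities are true, but each actually requires a further dichotomy (e.g.\ $t\le 2\lambda$ versus $t>2\lambda$), so your scheme ends up repeating the same case analysis at every stage.

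The paper sidesteps this by correcting your premise that ``the Taylor remainder formula fails because $g'''$ is not integrable enough'': since $p>2$, $|u|^{p-3}$ \emph{is} locally integrable, $g''$ is absolutely continuous, and the Taylor integral remainder at order $3$ is valid even across $0$. The paper then writes, for each $k\in\{0,1,2\}$,
\[
\bigl|g^{(k)}(t)-T_{2-k}[g^{(k)},\lambda](t)\bigr|\le\frac{|t-\lambda|^{2-k}}{(2-k)!}\int_{\lambda\wedge t}^{\lambda\vee t}|g'''(u)|\,\diff u,
\]
reducing all three inequalities to the single estimate $\int_{\lambda\wedge t}^{\lambda\vee t}|u|^{p-3}\,\diff u\le C|t-\lambda|\,(|\lambda|\vee|t|)^{p-3}$, which is proved once by exactly the case split (sign of $\lambda$; $t\le2\lambda$ vs.\ $t>2\lambda$) your approach would have to redo three times.
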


\begin{proof}
	Recall our notation for Taylor expansions in \eqref{eq:def_Taylor}.
	For $k \in \{0,1,2\}$, by Taylor integral formula, we have
	\begin{equation*}
		\abs{g^{(k)}(t) - T_{2-k}[g^{(k)},\lambda](t)}
		\leq \frac{\abs{t-\lambda}^{2-k}}{(2-k)!} \int_{\lambda\wedge t}^{\lambda\vee t} \abs{g'''(u)} \diff u.
	\end{equation*}
	Hence, it is enough to prove that there exists $C>0$ such that, for any $t,\lambda \in \R$, 
	\begin{equation} \label{eq:int_u_gamma-3}
		\int_{\lambda\wedge t}^{\lambda\vee t} \abs{u}^{p-3} \diff u
		\leq C \abs{t-\lambda} (\abs{\lambda} \vee \abs{t})^{p-3}.
	\end{equation}
	If $p \geq 3$, this obtained directly by bounding $\abs{u}^{p-3} \leq (\abs{\lambda} \vee \abs{t})^{p-3}$, so we now focus on the case $p \in (2,3)$.
	By symmetries, we can assume that $\lambda < t$ and that $t>0$. We distinguish cases:
	\begin{itemize}
		\item If $\lambda<0$, then
		$\int_{\lambda}^{t} \abs{u}^{p-3} \diff u 
		= \frac{1}{p-2} (\abs{\lambda}^{p-2} + t^{p-2})
		\leq C (\abs{\lambda} \vee \abs{t})^{p-2}
		\leq C (t-\lambda) (\abs{\lambda} \vee \abs{t})^{p-3}$,
		because $t-\lambda \geq \abs{\lambda} \vee \abs{t}$ in that case.
		\item If $\lambda > 0$ and $t \in (\lambda, 2\lambda]$, then $\int_{\lambda}^{t} \abs{u}^{p-3} \diff u 
		\leq (t-\lambda) \lambda^{p-3} 
		\leq C (t-\lambda) (\abs{\lambda} \vee \abs{t})^{p-3}$,
		using $\lambda^{p-3} \leq C t^{p-3}$ because  $t \leq 2\lambda$.
		\item If $\lambda > 0$ and $t > 2\lambda$, then
		$\int_{\lambda}^{t} \abs{u}^{p-3} \diff u 
		\leq C t^{p-2} 
		\leq C (t-\lambda) t^{p-3}
		= C (t-\lambda) (\abs{\lambda} \vee \abs{t})^{p-3}$,
		where in the second inequality we used that $t-\lambda \geq t/2$ in that case.
	\end{itemize}
	Therefore, we proved \eqref{eq:int_u_gamma-3} and the result follows.
\end{proof}

\begin{proof}[Proof of Lemma \ref{lem:f_z}] 
	Recall the definition of $f_{\alpha,z}$ in \eqref{def:fz}.
	Note that $f_{\alpha,z} = \alpha f_{1,z} + (1-\alpha) f_{0,z}$, so it is enough to deal with the case $\alpha=0$ and $\alpha = 1$. 
	But the case $\alpha = 0$ is immediate, because $f_{0,z}(\lambda) = 4(\lambda+z)$, so we now focus on the case $\alpha=1$.
	The regularity of $f_{1,z}$ directly follows from the one of $g$.
	Notice that for $k \in \{0,1,2\}$, $z \in \C\setminus\R$ and $\lambda \in \R$, we have
	\[
	f_{1,z}^{(k)}(\lambda)=(-1)^{k}\frac{k!}{(z-\lambda)^{k+1}}\left(g(z)-T_k[g,\lambda](z)\right),
	\]
	where the notation $T_k[g,\lambda](z)$ is introduced in \eqref{eq:def_Taylor}. The same formula holds for $k=3$ and $\lambda\neq 0$. 
	We now consider $z \in K\setminus\R$ and $\lambda \in K\cap\R$ and bound $\lvert f_{1,z}^{(k)}(\lambda)\rvert$ successively for $k \in \{0,1,2,3\}$.
	
	\textbf{Case $k=0$.} 
	Injecting the definition of $g(z)$ for complex $z$ in \eqref{eq:def_g_C},
	\begin{align*}
		\abs{f_{1,z}(\lambda)}
		&=\frac{\abs{g(x)-g(\lambda)+\ii yg'(x)-y^2g''(x)/2 -\ii y^3 g'''(x) \chi(y/x)/6}}{\abs{z-\lambda}}\\
		&\leq \frac{1}{\abs{z-\lambda}}
		\left( (\abs{x-\lambda} + \abs{y}) \norme{g'}_{[-M,M],\infty} 
		+ \frac{y^2}{2} \norme{g''}_{[-M,M],\infty}
		+ C \abs{y}^{p\vee 3}
		\right),
	\end{align*} 
	with $M$ such that $\re K \subset [-M,M]$, where we used Taylor--Lagrange inequality for the term involving $\abs{g(x)-g(\lambda)}$ and \eqref{eq:last_term_g} for the one involving $g'''(x)$. This proves $\abs{f_{1,z}(\lambda)} \leq C$ because $\abs{x-\lambda}\vee\abs{y} \leq \abs{z-\lambda}$.
	
	\textbf{Case $k=1$.} Similarly, we get
	\begin{align*}
		\abs{f_{1,z}'(\lambda)}
		&= \frac{1}{\abs{z-\lambda}^2}
		\abs{ \left( g(x)-g(\lambda)-(x-\lambda)g'(\lambda) \right)
			+ \ii y (g'(x)-g'(\lambda)) - \frac{y^2}{2} g''(x)
			- \ii \frac{y^3}{6} g'''(x) \chi \left( \frac{y}{x} \right)}\\
		& \leq \frac{1}{\abs{z-\lambda}^2}
		\left( \left(\frac{(x-\lambda)^2}{2} + \abs{y}|x-\lambda| + \frac{y^2}{2} \right) 
		\norme{g''}_{[-M,M],\infty}
		+ C \abs{y}^{p\vee 3} \right),
	\end{align*}
	which yields $\abso{f_{1,z}'(\lambda)} \leq C$ as before.
	
	\textbf{Case $k=2$.} Here, we decompose
	\begin{align*}
		f_{1,z}''(\lambda) 
		&= \frac{2}{(z-\lambda)^3} \left(g(z)-g(\lambda)-(z-\lambda)g'(\lambda) - \frac{(z-\lambda)^2}{2}g''(\lambda) \right)
		= \frac{2}{(z-\lambda)^3}(\mathrm{I}+\mathrm{II}+\mathrm{III}+\mathrm{IV}),
	\end{align*}
	with
	\begin{align*}
		\mathrm{I} & \coloneqq g(x)-g(\lambda)-(x-\lambda)g'(\lambda)-(x-\lambda)^2g''(\lambda)/2, \\
		\mathrm{II} & \coloneqq
		\ii y\big( g'(x)-g'(\lambda) - (x-\lambda)g''(\lambda) \big), \\
		\mathrm{III} & \coloneqq 
		- y^2 \left( g''(x)-g''(\lambda) \right) /2, \\
		\mathrm{IV} & \coloneqq - \ii y^3 g'''(x) \chi(y/x)/6.
	\end{align*}
	By Lemma \ref{lem:borneABC}, the first three terms are bounded in absolute value by
	\begin{equation} \label{eq:bound_k=2}
		C\abs{z-\lambda}^3 (\abs{x} \vee \abs{\lambda})^{p-3}
		\leq C\abs{z-\lambda}^3 (\abs{x} \vee \abs{\lambda})^{-(3-p)_+},
	\end{equation}
	bounding $(\abs{x} \vee \abs{\lambda})^{p-3} \leq C$ with $C$ depending on $K$ when $p \geq 3$.
	It remains to bound $\mathrm{IV}$. 
	When $p\geq 3$, by \eqref{eq:last_term_g}, we have $\abs{\mathrm{IV}} \leq Cy^3 \leq C \abs{z-\lambda}^3$. 
	Now assume $p<3$. We distinguish two cases. If $\abs{\lambda} \leq 2 \abs{x}$, then we bound $\chi \leq 1$ and get $\abs{\mathrm{IV}} \leq C\abs{y}^3 \abs{x}^{p-3}$, which is itself bounded by \eqref{eq:bound_k=2} in that case.
	On the other hand, if $\abs{\lambda} > 2 \abs{x}$, then $\abs{z-\lambda} \geq \abs{x-\lambda} \geq \abs{\lambda}/2$, so using again \eqref{eq:last_term_g}, we have $\abs{\mathrm{IV}} \leq Cy^p \leq C \abs{z-\lambda}^3 \abs{\lambda}^{p-3}$, which is also bounded by \eqref{eq:bound_k=2} in that case.
	Combining these yields $\abso{f_{1,z}''(\lambda)} \leq C
	(\abs{x} \vee \abs{\lambda})^{-(3-p)_+}$ as desired.
	
	\textbf{Case $k=3$.} We assume here that $\lambda\neq 0$. Using the same bound as for $k=2$ for the first terms and $\abs{g'''(\lambda)} \leq C \abs{\lambda}^{-(3-p)_+}$ for the last term, we get
	\begin{align*}
		\abs{f_{1,z}'''(\lambda)}
		&= \frac{6}{\abs{z-\lambda}^4} 
		\abs{ \left( g(z) -  g(\lambda)-(z-\lambda)g'(\lambda) - \frac{(z-\lambda)^2}{2} g''(\lambda) \right) 
			- \frac{(z-\lambda)^3}{6} g'''(\lambda) } \\
		& \leq C \frac{\abs{\lambda}^{-(3-p)_+}}{\abs{z-\lambda}}.
	\end{align*}
	If $\abso{x-\lambda} \geq \abs{x}/2$, then $\abs{z-\lambda} \geq \abs{z}/2$, so we get $\abso{f_{1,z}'''(\lambda)} \leq C\abs{\lambda}^{p-3}/\abs{z}$.
	If $\abs{x} \leq 2 \abs{y}$, then $\abs{z-\lambda} \geq \abs{y} \geq \abs{z}/\sqrt{5}$ and we also get the desired result.
	It remains to cover the case where $\abs{\lambda-x} < \abs{x}/2$ and $2\abs{y} < \abs{x}$. Here we decompose differently:
	\begin{align*}
		f_{1,z}'''(\lambda) 
		= \frac{6}{(z-\lambda)^4}
		(\mathrm{I'}+\mathrm{II'}+\mathrm{III'}+\mathrm{IV'}),
	\end{align*}
	with
	\begin{align*}
		\mathrm{I'} & \coloneqq g(x) - g(\lambda) - (x-\lambda)g'(\lambda) - (x-\lambda)^2 g''(\lambda)/2 - (x-\lambda)^3 g'''(\lambda)/6, \\
		\mathrm{II'} & \coloneqq
		\ii y \left( g'(x) - g'(\lambda) - (x-\lambda) g''(\lambda) - (x-\lambda)^2 g'''(\lambda)/2 \right), \\
		\mathrm{III'} & \coloneqq 
		- y^2/2 \left( g''(x)-g''(\lambda) - (x-\lambda) g'''(\lambda) \right), \\
		\mathrm{IV'} & \coloneqq 
		-\ii y^3 (g'''(x)-g'''(\lambda))/6,
	\end{align*}
	where we used that $\chi(y/x) = 1$ when $2\abs{y} < \abs{x}$.
	The condition $\abs{\lambda-x} < \abs{x}/2$ yields that, on the interval between $\lambda$ and $x$, $\lvert g^{(4)} \rvert \leq C \abs{x}^{p-4}$ and, using also that $2 \abs{y} < \abs{x}$ implies $\abs{x} \geq \abs{z}/2$, we get $\lvert g^{(4)} \rvert  \leq C \abs{\lambda}^{p-3}/\abs{z}$ on this interval.
	Therefore, applying Taylor--Lagrange inequality to the four terms, we get
	\begin{align*}
		\abs{f_{1,z}'''(\lambda)}
		= \frac{C}{\abs{z-\lambda}^4} 
		\left( (x-\lambda)^4 
		+ \abs{y} \abs{x-\lambda}^3 
		+ y^2 (x-\lambda)^2 + \abs{y}^3 \abs{x-\lambda} \right) \frac{\abs{\lambda}^{p-3}}{\abs{z}}
		\leq C \frac{\abs{\lambda}^{(3-p)_+}}{\abs{z}},
	\end{align*}
	which concludes the proof.
\end{proof}

We now deal with the proof of Lemma \ref{lem:psi_z}.
We first study the following function 
\begin{equation} \label{eq:def_varphi_z}
	\varphi_{\alpha,z}(\lambda)
	\coloneqq \int_{-1}^1 \frac{f_{\alpha,z}(t)-f_{\alpha,z}(\lambda)}{t-\lambda} \frac{\diff t}{\sigma(t)},
	\qquad \lambda \in \R.
\end{equation}

\begin{lemma} \label{lem:varphi_z}
    Let $p>2$.
	For any $\alpha \in [0,1]$ and $z = x +\ii y \in \C \setminus\R$, $\varphi_{\alpha,z} \in \cC^2(\R)$.
	Moreover, for any compact set $K \subset \C$, there exists $C>0$ such that, for any $\alpha \in [0,1]$, $z \in K\setminus\R$ and $\lambda \in [-1,1]$,
	\[
	\abs{\varphi_{\alpha,z}(\lambda)} \leq C, \qquad
	\abs{\varphi_{\alpha,z}'(\lambda)} \leq C, \qquad \abs{\varphi_{\alpha,z}''(\lambda)} \leq \frac{C}{\abs{z}}.
	\]
\end{lemma}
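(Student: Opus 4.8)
The plan is to identify $\varphi_{\alpha,z}$ with the integral operator studied in Lemmas~\ref{lem:Theta} and~\ref{lem:Thetav2}: writing $\varphi_{\alpha,z}=\pi\Theta$ with $\theta=f_{\alpha,z}$ and $\mu$ the arcsine probability measure of density $\frac{1}{\pi\sqrt{1-t^2}}$ on $[-1,1]$, everything will follow from the bounds on $f_{\alpha,z}$ and its derivatives. Enlarging $K$ so that $[-1,1]\subset K$, Lemma~\ref{lem:f_z} supplies, uniformly in $\alpha\in[0,1]$ and $z\in K\setminus\R$: $f_{\alpha,z}\in\cC^2(\R)\cap\cC^3(\R^*)$, the bounds $\abs{f_{\alpha,z}},\abs{f_{\alpha,z}'}\leq C$ on $[-1,1]$, and $\abs{f_{\alpha,z}''(s)}\leq C\abs{s}^{-\gamma}$, $\abs{f_{\alpha,z}'''(s)}\leq \frac{C}{\abs{z}}\abs{s}^{-\gamma}$ for $s\in[-1,1]\setminus\{0\}$, with $\gamma\coloneqq(3-p)_+\in[0,1)$ since $p>2$ (using $\abs{x}\vee\abs{s}\geq\abs{s}$). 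I will use repeatedly that $\int_a^b\abs{s}^{-\gamma}\diff s\leq\frac{2^\gamma}{1-\gamma}\abs{b-a}^{1-\gamma}$ for any interval, and that $\int_{-1}^1\abs{s}^{-\gamma}\diff\mu(s)<\infty$.

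For the regularity, Lemma~\ref{lem:Theta}.\ref{it:lem_Theta_1} with $\ell=2$ gives $\Theta\in\cC^1(\R)$, while Lemma~\ref{lem:Thetav2}.\ref{it:lem_Thetav2_1} with $\ell=3$ (note $\int_{-1}^1\abs{f_{\alpha,z}}\diff\mu\leq C<\infty$) gives $\Theta\in\cC^2(\R^*)$ together with the representation
\[
\Theta^{(k)}(\lambda)=k!\int_{-1}^1\frac{f_{\alpha,z}(t)-T_k[f_{\alpha,z},\lambda](t)}{(t-\lambda)^{k+1}}\diff\mu(t)
\]
for $k\leq 2$, $\lambda\in\R^*$ (and for $k\leq 1$, $\lambda\in\R$). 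Since $\pm1\in\R^*$ and $\Theta$ is smooth off $[-1,1]$, this already yields $\varphi_{\alpha,z}\in\cC^2(\R\setminus\{0\})$, so only the origin remains. For $\lambda\neq0$ I use the Taylor integral remainder $f_{\alpha,z}(t)-T_2[f_{\alpha,z},\lambda](t)=\int_\lambda^t\frac{(t-s)^2}{2}f_{\alpha,z}'''(s)\diff s$, legitimate because $f_{\alpha,z}''$ is absolutely continuous (its derivative $f_{\alpha,z}'''$ being locally integrable on $\R$ thanks to $\gamma<1$); this yields $\abs{\frac{f_{\alpha,z}(t)-T_2[f_{\alpha,z},\lambda](t)}{(t-\lambda)^3}}\leq\frac{C}{\abs{z}}\abs{t-\lambda}^{-\gamma}$. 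Splitting the integral for $\Theta''(\lambda)$ over $\{\abs{t}<2\abs{\lambda}\}$ and $\{\abs{t}\geq2\abs{\lambda}\}$: the first piece is $O(\frac{1}{\abs{z}}\abs{\lambda}^{1-\gamma})\to0$, and on the second $\abs{t-\lambda}\geq\abs{t}/2$ furnishes the $\lambda$-independent dominating function $\frac{C}{\abs{z}}\abs{t}^{-\gamma}\in L^1(\mu)$ while $T_2[f_{\alpha,z},\lambda](t)\to T_2[f_{\alpha,z},0](t)$ (using $f_{\alpha,z}\in\cC^2$), so dominated convergence shows $\Theta''(\lambda)$ has a finite limit $\ell_0=2\int_{-1}^1\frac{f_{\alpha,z}(t)-T_2[f_{\alpha,z},0](t)}{t^3}\diff\mu(t)$ as $\lambda\to0$. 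Combined with $\Theta'\in\cC^0(\R)$, this forces $\Theta'$ to be differentiable at $0$ with $\Theta''(0)=\ell_0$ and $\Theta''$ continuous there, hence $\varphi_{\alpha,z}\in\cC^2(\R)$.

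For the bounds (with $\lambda\in[-1,1]$), $\abs{\varphi_{\alpha,z}(\lambda)}\leq C$ follows at once from $\abs{\frac{f_{\alpha,z}(t)-f_{\alpha,z}(\lambda)}{t-\lambda}}\leq\sup_{[-1,1]}\abs{f_{\alpha,z}'}\leq C$. For $\varphi_{\alpha,z}'$ and $\varphi_{\alpha,z}''$ I use the representations above and distinguish $\abs{\lambda}<\frac12$ (including $\lambda=0$, via $\Theta''(0)=\ell_0$) from $\abs{\lambda}\geq\frac12$. When $\abs{\lambda}<\frac12$, the Taylor remainder estimates bound the integrand of $\Theta'$ by $C\abs{t-\lambda}^{-\gamma}$ and that of $\Theta''$ by $\frac{C}{\abs{z}}\abs{t-\lambda}^{-\gamma}$; since $\lambda$ is bounded away from $\pm1$, $\int_{-1}^1\abs{t-\lambda}^{-\gamma}\diff\mu(t)\leq C$ uniformly, giving $\abs{\varphi_{\alpha,z}'(\lambda)}\leq C$ and $\abs{\varphi_{\alpha,z}''(\lambda)}\leq C/\abs{z}$. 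When $\abs{\lambda}\geq\frac12$, for $\abs{t-\lambda}\leq\frac14$ the Taylor variable $s$ stays in $\{\abs{s}\geq\frac14\}$, so $f_{\alpha,z}''$ (resp. $f_{\alpha,z}'''$) is bounded (resp. $\leq C/\abs{z}$) there and Taylor--Lagrange bounds the integrands by $C$ (resp. $C/\abs{z}$); for $\abs{t-\lambda}>\frac14$ the denominators are bounded below and the numerators by $C$, using the $\cC^0,\cC^1$ bounds on $f_{\alpha,z}$ and $\abs{f_{\alpha,z}''(\lambda)}\leq C\abs{\lambda}^{-\gamma}\leq C$. Integrating against the probability measure $\mu$ and absorbing bare constants into $C/\abs{z}$ via $\abs{z}\leq\max_{z\in K}\abs{z}$ gives $\abs{\varphi_{\alpha,z}'(\lambda)}\leq C$ and $\abs{\varphi_{\alpha,z}''(\lambda)}\leq C/\abs{z}$.

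The delicate points, and the reason for the case splittings, are twofold. First, the $\cC^2$-regularity at $0$: there $f_{\alpha,z}$ is only $\cC^2$, so Lemma~\ref{lem:Thetav2} does not apply directly and one must extract the limit of $\Theta''$ at $0$ from the local integrability of $f_{\alpha,z}'''$. Second, for the uniform bounds one faces the competition between the singularity $\abs{s}^{-\gamma}$ of $f_{\alpha,z}'',f_{\alpha,z}'''$ at $0$ and the singularity $(1-t^2)^{-1/2}$ of the arcsine weight at $\pm1$: the naive pointwise bound $\abs{t-\lambda}^{-\gamma}$ on the integrands fails to be $\mu$-integrable near $\lambda=\pm1$ precisely when $p\leq\frac52$ (i.e. $\gamma\geq\frac12$), which forces one to exploit near the endpoints that the Taylor variable stays away from $0$ there, so that $f_{\alpha,z}$ is genuinely $\cC^2$ rather than only singularly controlled.
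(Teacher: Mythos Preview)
Your argument is correct and follows the same template as the paper's: express the Taylor remainder of $f_{\alpha,z}$ via the integral form, feed in the bounds on $f_{\alpha,z}'''$ from Lemma~\ref{lem:f_z}, and use dominated convergence at $0$. The one substantive difference is in how you dominate the integrand. You use $\int_{\lambda\wedge t}^{\lambda\vee t}\abs{s}^{-\gamma}\diff s\leq C\abs{t-\lambda}^{1-\gamma}$, yielding the bound $C\abs{t-\lambda}^{-\gamma}$ on the integrand; since $\int_{-1}^1\abs{t-\lambda}^{-\gamma}\diff\mu(t)$ blows up as $\lambda\to\pm1$ when $\gamma\geq\tfrac12$, you are forced into the case split $\abs{\lambda}\lessgtr\tfrac12$ (and the further split on $\abs{t-\lambda}$). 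The paper instead invokes the sharper estimate \eqref{eq:int_u_gamma-3}, namely $\int_{\lambda\wedge t}^{\lambda\vee t}\abs{s}^{-\gamma}\diff s\leq C\abs{t-\lambda}\,(\abs{\lambda}\vee\abs{t})^{-\gamma}\leq C\abs{t-\lambda}\,\abs{t}^{-\gamma}$, which bounds the integrand by $\tfrac{C}{\abs{z}}\abs{t}^{-\gamma}$ uniformly in $\lambda\in[-1,1]$. Since $\abs{t}^{-\gamma}/\sigma(t)$ is integrable on $[-1,1]$ (its two singularities live at $0$ and at $\pm1$, never together), this gives all three bounds and the domination for $\cC^2$-regularity at $0$ in one stroke, without any case distinction. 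Your route works fine; the paper's is just cleaner.
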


\begin{proof}
	From Lemma~\ref{lem:f_z}, we know that $f_{\alpha,z}\in\cC^2(\R)$ and that $\abs{f_{\alpha,z}} \leq C$ and $\abso{f_{\alpha,z}'} \leq C$ on $[-1,1]$. 
	Hence, by Lemma~\ref{lem:Theta}.\ref{it:lem_Theta_1}, we deduce that $\varphi_{\alpha,z} \in \cC^1(\R)$, with $\abs{\varphi_{\alpha,z}} \leq C$ on $[-1,1]$ and, for $\lambda \in [-1,1]$,
	\begin{equation*}
		\varphi_{\alpha,z}'(\lambda)
		= \int_{-1}^1\frac{f_{\alpha,z}(t)- T_1[f_{\alpha,z},\lambda](t)}{(t-\lambda)^2} \frac{\diff t}{\sigma(t)}
		= \int_{-1}^1 \frac{1}{(t-\lambda)^2}
		\left( \int_\lambda^t (t-u) f_{\alpha,z}''(u) \diff u \right)
		\frac{\diff t}{\sigma(t)},
	\end{equation*}
	using Taylor integral formula.
	Bounding $\abso{f_{\alpha,z}''(u)} \leq C \abs{u}^{-(3-p)_+}$ by Lemma~\ref{lem:f_z} and $\abs{t-u} \leq \abs{t-\lambda}$, we get
	\begin{equation} \label{eq:before}
		\abs{\varphi_{\alpha,z}'(\lambda)} 
		\leq C \int_{-1}^1 \frac{1}{\abs{t-\lambda}} 
		\left( \int_{\lambda\wedge t}^{\lambda\vee t} \abs{u}^{-(3-p)_+} \diff u \right) 
		\frac{\diff t}{\sigma(t)}
		\leq C \int_{-1}^1 \abs{t}^{-(3-p)_+}
		\frac{\diff t}{\sigma(t)},
	\end{equation}
	by applying \eqref{eq:int_u_gamma-3} if $p<3$ and computing explicitly the integral w.r.t.\@ $u$ if $p \geq 3$. This proves 
	$\abso{\varphi_{\alpha,z}'(\lambda)} 
	\leq C$ because $p>2$. 
	
	We now turn to the second derivative. Firstly, since $f_{\alpha,z}\in\cC^3(\R^*)$ by Lemma~\ref{lem:f_z}, we have $\varphi_{\alpha,z} \in \cC^2(\R^*)$ by Lemma~\ref{lem:Thetav2}.\ref{it:lem_Thetav2_1} and, for any $\lambda \in \R^*$,
	\begin{align*}
		\varphi_{\alpha,z}''(\lambda) &= \int_{-1}^1 \frac{f_{\alpha,z}(t)- T_2[f_{\alpha,z},\lambda](t)}{(t-\lambda)^3}
		\frac{\diff t}{\sigma(t)} 
		= \int_{-1}^1 \frac{1}{(t-\lambda)^3}
		\left( \int_{\lambda}^t \frac{(t-u)^2}{2}f_{\alpha,z}'''(u) \diff u \right) 
		\frac{\diff t}{\sigma(t)},
	\end{align*}
	by Taylor integral formula again.
	Then, for $\lambda \in [-1,1] \setminus \{0\}$, bounding $\abso{f_{\alpha,z}'''(u)} \leq C\abs{u}^{-(3-p)_+}/\abs{z}$ by Lemma \ref{lem:f_z} and $(t-u)^2 \leq (t-\lambda)^2$, and then proceeding as before in \eqref{eq:before}, we get $\abso{\varphi_{\alpha,z}''(\lambda)} \leq C/\abs{z}$ as desired.
	The same inequalities provides the domination needed to prove convergence of $\varphi_{\alpha,z}''(\lambda)$ as $\lambda \to 0$ from the dominated convergence theorem, proving $\varphi_{\alpha,z} \in \cC^2(\R)$ and concluding the proof.
\end{proof}

\begin{proof}[Proof of Lemma \ref{lem:psi_z}]
	The proof follows from the same lines as Lemma~\ref{lem:inverse_master_op}.\ref{it:inverse_C3}, even if here the function $f_{\alpha,z}$ is not in $\cC^3(\R)$, but only in $\cC^2(\R)\cap\cC^3(\R^*)$. This regularity at 0 is only used in the proof of Lemma~\ref{lem:inverse_master_op} when controlling $\normeo{\varphi_{\alpha,z}''}_{\cC^2}$, but this has been done in Lemma~\ref{lem:varphi_z} above.
	The rest of the proof is identical replacing occurrences of $\norme{f}_{\cC^3}$ by $C/\abs{z}$.
\end{proof}

\section{Proofs of results concerning the equilibrium Stieltjes transform}
\label{app:eq_stieltjes}

This section contains 
the proof of Lemmas \ref{lem:stieltjes} and \ref{lem:stability}, as well as some intermediary results.

\subsection{Identities for the equilibrium Stieltjes transform}

\begin{proof}[Proof of Lemma~\ref{lem:stieltjes}]
	\textbf{Part \ref{lemsta:point1}.} 
	By differentiating \eqref{eq:characterization muValpha}, we have for $x\in (-1,1)$
	\begin{equation} \label{eq:equilibrium}
		\frac{V_\alpha'(x)}{2}=\fint_{-1}^1 \frac{\diff \mu_{V_\alpha}(t)}{x-t},
	\end{equation}
	and, for $z\in \C\setminus\R$, we find by integrating \eqref{eq:equilibrium} against $\frac{1}{x-z} \diff \mu_{V_\alpha}(x)$
	\[
	\frac{1}{2}\int_{-1}^1 \frac{V_\alpha'(x)}{x-z}\diff \mu_{V_\alpha}(x) = \int_{-1}^1 \fint_{-1}^1 \frac{\diff \mu_{V_\alpha}(t)}{x-t} \frac{\diff \mu_{V_\alpha}(x)}{x-z} = -\frac{1}{2}\iint_{[-1,1]^2}\frac{\diff \mu_{V_\alpha}(t)\diff \mu_{V_\alpha}(x)}{(t-z)(x-z)} = -\dfrac{1}{2}s_{V_\alpha}(z)^2,
	\]
	where the second equality is obtained by symmetrizing the roles of $x$ and $t$.
	Therefore,
	\[
	\int_{-1}^1 \frac{V_\alpha'(x)}{x-z}\diff \mu_{V_\alpha}(x) + s_{V_\alpha}(z)^2 = 0.
	\]
	Now adding to this equation the equality
	\[
	\frac{1}{z}\int_{-1}^1 V_\alpha'(x) \diff \mu_{V_\alpha}(x) = 0,
	\]
	obtained by integrating \eqref{eq:equilibrium} against $\diff \mu_{V_\alpha}(x)$, we find that
	\begin{equation*}
		0=\dfrac{1}{z}\int_{-1}^1 \frac{xV_\alpha'(x)}{x-z}\diff \mu_{V_\alpha}(x) + s_{V_\alpha}(z)^2 = \dfrac{1}{z}\int_{-1}^1 \frac{xV_\alpha'(x)-g_\alpha(z)}{x-z}\diff \mu_{V_\alpha}(x) +\dfrac{g_\alpha(z)}{z}s_{V_\alpha}(z) + s_{V_\alpha}(z)^2.
	\end{equation*}
	Finally, recalling that $g_\alpha(x)=xV'_\alpha(x)$ and the definition of $h_\alpha$ in Definition~\ref{def:g,g analytic, f_z, Palpha, halpha} proves Part \ref{lemsta:point1}.
	
	\textbf{Part \ref{lemsta:point2}.}
	Let $x\in \R^*$. Using the definition of $r_\alpha$ in \eqref{def:r_generale} and that $\frac{1}{x-t} = \frac{t}{x(x-t)}+\frac{1}{x}$, we get 
	\begin{align*}
		r_\alpha(x)=\frac{1}{2\pi}\int_{-1}^1\frac{V_\alpha'(x)-V_\alpha'(t)}{x-t}\frac{\diff t}{\sigma(t)} &=\frac{1}{2\pi}\int_{-1}^1 \left(\frac{V_\alpha'(x)}{x-t}-\frac{tV_\alpha'(t)}{x(x-t)}-\frac{V_\alpha'(t)}{x}\right)\frac{\diff t}{\sigma(t)} \\
		&=\frac{1}{2\pi x}\int_{-1}^1 \frac{g_\alpha(x)-g_\alpha(t)}{x-t}\frac{\diff t}{\sigma(t)},
	\end{align*}
	where in the last equality we used that $xV_\alpha'(x)=g_\alpha(x)$ for real $x$, and that $V_\alpha'/\sigma$ is odd so that $\int_{-1}^1 V_\alpha'(t)\frac{\diff t}{\sigma(t)}=0$. This shows that \eqref{eq:r'} is indeed an extension of the definition of $r_\alpha$ in~\eqref{def:r_generale}.

	\textbf{Part \ref{lemsta:point3}.}
	Since $z\in \C\setminus[-1,1]$, we can use \eqref{eq:r'} and separate $r_\alpha$ into
	\begin{equation} \label{eq:r_alpha_split}
		r_\alpha(z) 
		= \frac{1}{2\pi z}\int_{-1}^1 \frac{g_\alpha(t)}{t-z}\frac{\diff t}{\sigma(t)} - \frac{g_\alpha(z)}{2\pi z}\int_{-1}^1 \frac{1}{t-z}\frac{\diff t}{\sigma(t)}.
	\end{equation}
	Using $g_\alpha(t) = t V_\alpha'(t)$, we rewrite the first term as follows
	\[
	\frac{1}{2\pi z} \int_{-1}^1 \frac{g_\alpha(t)}{t-z}\frac{\diff t}{\sigma(t)}
	= \frac{1}{2\pi} \int_{-1}^1 \left( \frac{V_\alpha'(t)}{t-z} + \frac{V_\alpha'(t)}{z} \right)\frac{\diff t}{\sigma(t)}
	= \frac{1}{2\pi} \int_{-1}^1 \frac{V_\alpha'(t)}{t-z} \frac{\diff t}{\sigma(t)},
	\]
	using that $V_\alpha'/\sigma$ is an odd function.
	The second term on the right-hand side of \eqref{eq:r_alpha_split} is proportional to the Stieltjes transform of the arcsine law:
	\begin{equation} \label{eq:arcsine_stieltjes}
		\frac{1}{\pi}\int_{-1}^1 \frac{1}{t-z}\frac{\diff t}{\sigma(t)} 
		= \frac{-1}{\sqrt{z-1}\sqrt{z+1}}.
	\end{equation}
	Therefore, coming back to \eqref{eq:r_alpha_split} and recalling the notation $b(z) = \sqrt{z-1}\sqrt{z+1}$, we obtain
	\[
	r_\alpha(z) b(z)
	= \frac{b(z)}{2\pi}\int_{-1}^1 \frac{V_\alpha'(t)}{t-z}\frac{\diff t}{\sigma(t)} + \frac{g_\alpha(z)}{2z}.
	\]
	Hence, it is now sufficient to show that
	\begin{equation} \label{eq:point3_new_goal}
		s_{V_\alpha}(z) = \frac{b(z)}{2\pi}\int_{-1}^1 \frac{V_\alpha'(t)}{t-z}\frac{\diff t}{\sigma(t)}.
	\end{equation}
	Recalling \eqref{eq:mu=rsigma}, we have 
	\begin{align*}
		s_{V_\alpha}(z)
		&=\frac{1}{\pi}\int_{-1}^1 \frac{\sigma(t)r_\alpha(t)}{t-z}\diff t
		=\frac{1}{2\pi^2}\int_{-1}^1 \frac{\sigma(t)}{t-z}
		\left(\fint_{-1}^1 \frac{V_\alpha'(u)}{u-t}
		\frac{\diff u}{\sigma(u)}\right)\diff t,
	\end{align*}
	where we used the definition of $r_\alpha(t)$ in \eqref{def:r_generale}, together with the fact that $\fint_{-1}^1 \frac{1}{t-u}\frac{\diff u}{\sigma(u)}=0$ for $t\in (-1,1)$.
	Therefore, switching the order of the integrals (see \cite[Eq.\@ (23.18)]{Mus1972}), we get
	\[
	s_{V_\alpha}(z)
	= \frac{1}{2\pi^2}\int_{-1}^1 \frac{V_\alpha'(u)}{\sigma(u)}\left(\fint_{-1}^1 \frac{\sigma(t)}{(t-z)(u-t)}\diff t\right) \diff u.
	\]
	Moreover, by direct computation,
	\[
	\fint_{-1}^1 \frac{\sigma(t)}{(t-z)t(u-t)}\diff t 
	= \frac{1}{u-z} \left( 
	\int_{-1}^1 \frac{\sigma(t)\diff t}{t-z}
	+ \fint_{-1}^1 \frac{\sigma(t)\diff t}{u-t} \right)
	= \frac{1}{u-z} \left( 
	\pi\left(b(z)-z\right) + \pi u \right)
	\]
	and \eqref{eq:point3_new_goal} follows using again that $\int_{-1}^1 V_\alpha'(u) \frac{\diff u}{\sigma(u)} = 0$.
	
	\textbf{Part \ref{lemsta:point4}.} This point comes from the general fact that if $\lambda = a-b/2$ solves the quadratic equation $X^2+bX+c$, where $a,b,c \in \C$, then the other solution is given by $\mu=-a-b/2$.
\end{proof}

\subsection{Proof of the stability lemma}
\label{subsec:stability}

The main goal of this section is to prove the stability lemma (Lemma~\ref{lem:stability}). The difference with the proof of the stability lemma in \cite{BouModPai2022} is the lack of analyticity of $V_\alpha$ and hence of $r_\alpha$.
We start with the following regularity bounds on $r_\alpha$, which would be obvious if $r_\alpha$ analytic.

\begin{lemma}\label{lem:estimees_r} Let $p>2$.
	\begin{enumerate}
		\item\label{it:r_alpha_1} For every compact $K\subset\C$, there exists $C>0$ such that, for any $\alpha \in [0,1]$ and $z \in K$, 
		\[
		\abs{\im r_\alpha(z)} \leq C \abs{\im z}.
		\]
		\item\label{it:r_alpha_2} The function $r_\alpha$ is continuous on $\C$ uniformly in $\alpha\in [0,1]$, \emph{i.e.\@} for all $\varepsilon>0$ and $z\in\C$, there exists $\delta>0$ such that for all $\alpha\in [0,1]$, $\forall z'\in\C$, $|z-z'|<\delta\Rightarrow|r_\alpha(z)-r_\alpha(z')|<\varepsilon$.
		\item\label{it:r_alpha_3} For any compact $K\subset \C$, there exist $c, \delta>0$ such that for any $z\in K$ with $\abs{\im{z}} \leq \delta$, and for any $\alpha\in [0,1]$ one has $\re r_\alpha(z)\geq c$.
	\end{enumerate}
\end{lemma}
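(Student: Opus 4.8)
The plan is the following. Since $V_\alpha=\alpha V+(1-\alpha)V_G$ with $V_G(x)=2x^2$, one has $g_\alpha=\alpha g+(1-\alpha)g_G$ with $g_G(z)=4z^2$ a polynomial, hence $r_\alpha=\alpha r_1+(1-\alpha)r_0$, and a direct computation using $\int_{-1}^1 t\,\sigma(t)^{-1}\diff t=0$ and $\int_{-1}^1\sigma(t)^{-1}\diff t=\pi$ gives $r_0\equiv 2$. Each of the three claims is preserved by forming such convex combinations: \ref{it:r_alpha_1} and \ref{it:r_alpha_2} are clear, and for \ref{it:r_alpha_3}, if $\re r_1(z)\geq c_1>0$ then $\re r_\alpha(z)=\alpha\re r_1(z)+2(1-\alpha)\geq\min(c_1,2)>0$, so it suffices to treat $r:=r_1$. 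I would then rewrite \eqref{eq:r'} using the partial fraction identity $\frac{1}{z(t-z)}=\frac1t\big(\frac1z+\frac1{t-z}\big)$ together with $\int_{-1}^1 g(t)\,t^{-1}\sigma(t)^{-1}\diff t=\int_{-1}^1 V'(t)\,\sigma(t)^{-1}\diff t=0$ and $\fint_{-1}^1 t^{-1}\sigma(t)^{-1}\diff t=0$ (both by parity of the integrand), to obtain the $1/z$-free representation
\[
r(z)=\frac{1}{2\pi}\fint_{-1}^1\frac{g(t)-g(z)}{t(t-z)}\frac{\diff t}{\sigma(t)},\qquad z\in\C^*,
\]
the Cauchy principal value being at $t=0$; this trades the singular prefactor $1/z$ for a mild $1/t$ singularity that parity will tame. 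I would also record that $g(\bar z)=\overline{g(z)}$ because $\chi$ is even (see \eqref{eq:def_g_C}), hence $r(\bar z)=\overline{r(z)}$, and the two workhorse bounds on the integrand: \eqref{eq:last_term_g} controls the $\chi$-term in $g(z)-g(x)$ by $C|y|^{p\wedge3}$, while \eqref{eq:g_Taylor_order_3} and \eqref{eq:g'_Taylor_order_2} control the order-$2$ Taylor remainder of $g$ at $x$ and its $t$-derivative by $C|t-x|^{3}(|x|\vee|t|)^{p-3}$ and $C|t-x|^{2}(|x|\vee|t|)^{p-3}$.

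For \ref{it:r_alpha_1}: from $r(\bar z)=\overline{r(z)}$ we have $\im r(z)=\tfrac1{2i}(r(z)-r(\bar z))$; subtracting the two representations, simplifying the numerator, and Taylor-expanding $g$ at $x=\re z$ to order $2$, I would reach $\im r(z)=\frac{y}{2\pi}\fint_{-1}^1\frac{N_z(t)}{t\big[(t-x)^2+y^2\big]}\frac{\diff t}{\sigma(t)}$ with
\[
N_z(t)=\tfrac12 g''(x)\big[(t-x)^2+y^2\big]+\tfrac16 y^2 g'''(x)\chi(y/x)(t-x)+\big(g(t)-T_2[g,x](t)\big),
\]
where $T_2[g,x]$ is the degree-$2$ Taylor polynomial of \eqref{eq:def_Taylor}. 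The first summand contributes $0$ since $t\mapsto(t\sigma(t))^{-1}$ is odd; the second contributes $-\tfrac1{12}y^3 g'''(x)\chi(y/x)\re\frac1{zb(z)}$ after using $\fint\frac{\diff t}{t(t-z)\sigma(t)}=-\frac{\pi}{zb(z)}$, which is $O(|y|)$ by \eqref{eq:last_term_g} and the elementary bounds $|zb(z)|^{-1}\leq C|y|^{-1/2}$ near $\pm1$, $|zb(z)|^{-1}\leq C|y|^{-1}$ near $0$ (using $|z|\geq|y|$); the last summand is bounded, uniformly in $z$ on a compact set, by splitting the integral near and away from $t=0$, subtracting at $t=0$ the (principal-value-annihilated) value of the integrand, and estimating the $t$-derivative of the resulting function by $(|x|\vee|t|)^{p-3}$ via \eqref{eq:g'_Taylor_order_2}, whose integral against $\sigma(t)^{-1}$ is finite precisely because $p>2$. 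This gives $|\im r(z)|\leq C|y|$ for $|\im z|$ small, the bound being trivial otherwise since $|r(z)|\leq C/|z|\leq C$ there. The regime where $t$ and $x$ are simultaneously close to $0$, where $g$ is merely $\cC^2$ when $p\in(2,3)$, is the delicate point and, I expect, the main obstacle of the whole argument; it is resolved exactly by the evenness of $g$ (so that the $1/t$ singularity is a principal value) and by the sharp remainder estimate \eqref{eq:g'_Taylor_order_2}. This is where the failure of $\cC^3$-regularity of $V$ at $0$ has to be absorbed.

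For \ref{it:r_alpha_2} and \ref{it:r_alpha_3}: off the real axis, $r_\alpha(z)=\frac1{2\pi z}\int_{-1}^1 f_{\alpha,z}(t)\,\sigma(t)^{-1}\diff t$ with $(\alpha,z,t)\mapsto f_{\alpha,z}(t)$ jointly continuous and bounded on compacts by Lemma~\ref{lem:f_z}, so $r_\alpha$ is continuous there uniformly in $\alpha$; the same representation, read as a principal value at $t=0$ and with $f_{\alpha,z}$ depending continuously on $z$ in the $\cC^1$-topology, handles continuity up to and at $0$. Running the estimates above on the full difference $r_\alpha(x+\ii y)-r_\alpha(x)$, rather than only its imaginary part, then gives $\sup_{x\in K,\,\alpha}|r_\alpha(x+\ii y)-r_\alpha(x)|\to0$ as $y\to0$, which together with the uniform $\cC^1$-bound on $r_\alpha|_\R$ from Lemma~\ref{lem:bound on ralpha} yields \ref{it:r_alpha_2}. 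Finally, Lemma~\ref{lem:bound on ralpha} gives $r_\alpha(x)\geq c(1+\alpha|x|^{p-2})\geq c>0$ uniformly in $x$ and $\alpha$, and since $\re r_\alpha(z)=r_\alpha(\re z)-\re\big(r_\alpha(\re z)-r_\alpha(z)\big)$ with the difference tending to $0$ uniformly, we obtain $\re r_\alpha(z)\geq c/2$ whenever $|\im z|\leq\delta_0$, uniformly in $\alpha$, which is \ref{it:r_alpha_3}.
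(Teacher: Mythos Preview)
Your reduction to $\alpha=1$ via $r_\alpha=\alpha r_1+2(1-\alpha)$ is the same as the paper's, and your arguments for Parts~\ref{it:r_alpha_2} and~\ref{it:r_alpha_3} match the paper's in spirit: continuity via dominated convergence away from $0$, special care near $0$, and Part~\ref{it:r_alpha_3} from the uniform positive lower bound of Lemma~\ref{lem:bound on ralpha} on $\R$ combined with the uniform convergence as $\im z\to0$.

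For Part~\ref{it:r_alpha_1} your route differs from the paper's. The paper keeps the $1/z$ prefactor from \eqref{eq:r'}, separates out the $\chi$-term in $g(z)$, and splits the integration domain at $|t|=2|x|$, computing the imaginary part of each piece directly. You instead trade $1/z$ for $1/t$ via partial fractions and the parity cancellations $\int V'(t)\sigma(t)^{-1}\diff t=\fint t^{-1}\sigma(t)^{-1}\diff t=0$, arriving at the single $1/z$-free formula $r(z)=\frac{1}{2\pi}\fint\frac{g(t)-g(z)}{t(t-z)}\frac{\diff t}{\sigma(t)}$, and then compute $\im r(z)=\frac{y}{2\pi}\fint N_z(t)\,\bigl(t[(t-x)^2+y^2]\sigma(t)\bigr)^{-1}\diff t$ with the factor $y$ already extracted. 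Your three-term decomposition of $N_z$ is clean: one term vanishes by parity, one reduces to the arcsine Stieltjes transform and is controlled by \eqref{eq:last_term_g}, and the Taylor remainder term is bounded via \eqref{eq:g_Taylor_order_3}--\eqref{eq:g'_Taylor_order_2}. This is more algebraic than the paper's domain-splitting and yields a single closed expression for $\im r$; the paper's approach is more hands-on but avoids principal values.

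One point that deserves an extra line: for the third term you assert $\bigl|H(t)-H(0)\bigr|/|t|\leq C(|x|\vee|t|)^{p-3}$ where $H(t)=\bigl(g(t)-T_2[g,x](t)\bigr)/\bigl((t-x)^2+y^2\bigr)$. This is correct but not entirely immediate from $|H'(u)|\leq C(|x|\vee|u|)^{p-3}$, since the supremum over $u\in[0,t]$ could be as large as $|x|^{p-3}$ when $p<3$. What saves it is that the \emph{average} $\frac{1}{|t|}\int_0^{|t|}(|x|\vee s)^{p-3}\diff s$ is bounded by $C(|x|\vee|t|)^{p-3}$ (split at $s=|x|$ and compute), so that the final integral $\int_{-1}^1(|x|\vee|t|)^{p-3}\sigma(t)^{-1}\diff t$ is finite uniformly in $x$. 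Similarly, your Part~\ref{it:r_alpha_2} sketch ``running the estimates above on the full difference'' is plausible but terse: the real part of $r(x+\ii y)-r(x)$ needs its own (analogous) treatment, not just the imaginary-part bound from Part~\ref{it:r_alpha_1}.
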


\begin{proof}
	First note that $r_\alpha = \alpha r_1 + 2(1-\alpha)$, so it is enough to prove the result for $\alpha = 1$. For brevity, we denote $r = r_1$.
	
	\textbf{Part \ref{it:r_alpha_1}.}
	Write $z = x + \ii y$. By symmetry, we can assume that $x,y\geq 0$. Moreover, if $y=0$, then $\im r(z) = 0$ so we can assume that $y > 0$.
	Recall the definition of $g(z)$ in \eqref{eq:def_g_C}.
	In order to treat the last term in $g(z)$ separately, we define
	\[
	\forall  z = x+\ii y \in \C, \quad
	\widetilde{g}(z) 
	\coloneqq g(x) + \ii y g'(x) - \frac{y^2}{2} g''(x) 
	= g(z) + \ii \frac{y^3}{6} g'''(x) \chi \left( \frac{y}{x} \right).
	\]
	Recalling the definition of $r$ on $\C\setminus \R$ in \eqref{eq:r'}, we first cut $r$ into three parts that will be treated differently: $r=r^{(1)}+r^{(2)}+r^{(3)}$, with
	\begin{align}
		\label{eq:def_r_1}
		r^{(1)}(z) 
		& \coloneqq  \frac{1}{2\pi z} \int_{[-1,1] \cap [-2x,2x]} \frac{g(t)-\widetilde{g}(z)}{t-z} \frac{\diff t}{\sigma(t)}, \\
		\nonumber r^{(2)}(z) 
		& \coloneqq  \frac{1}{2\pi z} \int_{[-1,1] \setminus [-2x,2x]} \frac{g(t)-\widetilde{g}(z)}{t-z} \frac{\diff t}{\sigma(t)}, \\
		\nonumber r^{(3)}(z) 
		& \coloneqq  \frac{1}{2\pi z} \int_{[-1,1]} \dfrac{y^3}{6}\frac{\ii  g'''(x) }{t-z}\chi \left( \frac{y}{x} \right) \frac{\diff t}{\sigma(t)}.
	\end{align}
	
	We start with $r^{(1)}(z)$. 
	We write $I = [-1,1] \cap [-2x,2x]$ for the domain of integration. 
	By definition of $\widetilde{g}(z)$, we have
	\begin{align}\nonumber
		\frac{g(t)-\widetilde{g}(z)}{t-z}
		& = \frac{1}{t-z} \left( g(t) - g(x) - \ii y g'(x) + \frac{y^2}{2} g''(x) \right) \\\label{eq: app decomp r1}
		& = \frac{1}{t-z} \left( g(t) - g(x) - (t-x) g'(x) - \frac{(t-x)^2}{2} g''(x) \right) 
		+ g'(x) + \frac{t-\overline{z}}{2} g''(x), 
	\end{align}
	where we used $((t-x)^2+y^2)/(t-z) = t-\overline{z}$.
	Using \eqref{eq:g_Taylor_order_3} together with $\abs{t} \leq 2x$ for the first term on the right-hand side, we get
	\begin{align*}
		\abs{\im r^{(1)}(z)}
		& \leq C \int_{I} \left(
		\abs{t-x}^3 x^{p-3} \abs{\im \frac{1}{z(t-z)}}
		+ x^{p-1} \abs{\im \frac{1}{z}}
		+ x^{p-2} \abs{\im \frac{t-\overline{z}}{z}}
		\right)\frac{\diff t}{\sigma(t)} \\
		& = \frac{Cy}{\abs{z}^2} \int_{I} \left(
		\abs{t-x}^3 x^{p-3} \frac{\abs{2x-t}}{\abs{t-z}^2}
		+ x^{p-1} 
		+ x^{p-2} \abs{2x-t}
		\right)\frac{\diff t}{\sigma(t)},
	\end{align*}
	by computing explicitly the imaginary parts. Recalling $I \subset[-2x,2x]$, the last integral is then bounded by $C \int_{I} x^{p-1} \frac{\diff t}{\sigma(t)} \leq Cx^p$.
	Using $p\geq2$ and $x \leq \abs{z}$ yields $\abso{\im r^{(1)}(z)} \leq C y$.
	
	We now deal with $r^{(2)}(z)$. 
	We write $J = [-1,1] \setminus [-2x,2x]$ for the domain of integration. 
	Note that this interval is empty if $x > 1/2$, so we can further assume that $x \in [0,1/2]$.
	By parity, $\int_J \frac{g(t)}{t} \frac{\diff t}{\sigma(t)} = 0$ so we can write
	\begin{equation} \label{eq:rewriting_r2}
		r^{(2)}(z) 
		= \frac{1}{2\pi z} \int_{J} 
		\left( \frac{g(t)-\widetilde{g}(z)}{t-z} - \frac{g(t)}{t} \right) 
		\frac{\diff t}{\sigma(t)}
		=  \int_{J} \frac{1}{t-z}
		\left( \frac{g(t)}{t} - \frac{\widetilde{g}(z)}{z} \right) 
		\frac{\diff t}{\sigma(t)}.
	\end{equation}
	Then, using the definition of $\widetilde{g}(z)$ and then that $g(t)/t = V'(t)$, we decompose
	\begin{align*}
		\frac{g(t)}{t} - \frac{\widetilde{g}(z)}{z}
		& = \frac{g(t)}{t} - 
		\frac{g(x) + \ii y g'(x)-\frac{y^2}{2} g''(x)}{z}  \\
		& = \left( V'(t) - V'(x) - \ii yV''(x) \right)
		+ \left( V'(x) - \frac{g(x)}{z} + \ii y \left( V''(x) - \frac{g'(x)}{z} \right) -\frac{y^2}{2} \frac{g''(x)}{z} \right),
	\end{align*}
	and treat the two terms on the right-hand side differently.
	For the first term, note that 
	\begin{equation*}
		\abs{V'(t) - V'(x) - \ii yV''(x) - (t-z)V''(x)}
		= \abs{V'(t) - V'(x) - (t-x)V''(x)}
		\leq C \abs{t-x}^2 \abs{t}^{p-3},
	\end{equation*}
	using \eqref{eq:g'_Taylor_order_2} together with $x \leq \abs{t}$ for $t \in J$.
	Therefore, the contribution of this first term to $\im r^{(2)}(z)$ is
	\begin{align}
		\abs{\im \int_{J} \frac{V'(t) - V'(x) - \ii yV''(x)}{t-z}
			\frac{\diff t}{\sigma(t)} }
		& \leq \abs{\im \int_{J} V''(x) \frac{\diff t}{\sigma(t)} } 
		+ C \int_{J} \abs{t-x}^2 \abs{t}^{p-3} \abs{\im \frac{1}{t-z}} \frac{\diff t}{\sigma(t)}\nonumber \\
		& \leq 0+C y \int_{J} \frac{\abs{t-x}^2}{\abs{t-z}^2} \abs{t}^{p-3} \frac{\diff t}{\sigma(t)}
		\leq Cy, \label{eq:r_2_1st_part}
	\end{align}
	using $p>2$.
	For the second term, using that $V'(x) - \frac{g(x)}{z} = g(x) (\frac{1}{x} - \frac{1}{z}) = p c_p x^{p-1} \frac{\ii y}{z}$ and bounding the imaginary part by the absolute value, we get
	\begin{align}
		& \abs{\im \int_{J} \frac{1}{t-z} 
			\left( V'(x) - \frac{g(x)}{z} + \ii y \left( V''(x) - \frac{g'(x)}{z} \right) -\frac{y^2}{2} \frac{g''(x)}{z} \right) 
			\frac{\diff t}{\sigma(t)} } \nonumber \\
		& \leq C 
		\left( \frac{x^{p-1} y}{\abs{z}} + y \left( x^{p-2} + \frac{x^{p-1}}{\abs{z}} \right) + \frac{y^2}{2} \frac{x^{p-2}}{\abs{z}}\right)
		\int_{J} \frac{1}{\abs{t-z}} 
		\frac{\diff t}{\sigma(t)}. \label{eq:r_2_2nd_part}
	\end{align}
	Then, note that 
	\begin{equation} \label{eq:log_on_J}
		\int_{J} \frac{1}{\abs{t-z}} 
		\frac{\diff t}{\sigma(t)} \leq 2 \int_{2x}^1 \frac{1}{\abs{t-x}} 
		\frac{\diff t}{\sigma(t)} \leq C \log 1/x,
	\end{equation}
	using here that $x \in [0,1/2]$ so that the pole at $x$ and the pole at 1 due to $\sigma(t)$ are bounded away from each other.
	So the right-hand side of \eqref{eq:r_2_2nd_part} is at most $C y x^{p-2} \log 1/x \leq Cy$, because $p>2$.
	Combining this with \eqref{eq:r_2_1st_part} shows $\abso{\im r^{(2)}(z)} \leq C y$. 
	
	Finally, we bound $r^{(3)}(z)$.
	Using the formula \eqref{eq:arcsine_stieltjes} to compute explicitly the integral and applying \eqref{eq:last_term_g}, we get
	\begin{equation} \label{eq:r3}
		\abs{r^{(3)}(z)}
		\leq \frac{C y^{p\wedge 3}}{\abs{z} \abs{z^2-1}^{1/2}}
		\leq C y^{(p-1)\wedge 2},
	\end{equation}
	where we noted that $\abs{z} \abs{z^2-1}^{1/2} \geq c y$ by distinguishing whether $z$ is closer to $-1$, $0$ or $1$.
	Since $p \geq 2$, this yields $\abso{\im r^{(3)}(z)} \leq C y$ and concludes the proof of Part \ref{it:r_alpha_1}.
	
	\textbf{Part \ref{it:r_alpha_2}.}
	First recall that $g$ is continuous on $\C$ (see Remark \ref{rem:g}). Continuity of $r$ at a point $z_0 \in \C^*$ then follows by dominated convergence theorem, using the following domination: for all $t \in [-1,1]$ and $\abs{z-z_0} \leq 1$,
	\[
	\abs{\frac{g(t)-g(z)}{t-z}} = \abs{f_{1,z}(t)} 
	\leq C,
	\]
	by Lemma \ref{lem:f_z}.
	
	Continuity at 0 is more subtle due to the $1/z$ prefactor.
	We reuse the decomposition of the proof of Part~\ref{it:r_alpha_1}, but control the pieces in terms of modulus instead of imaginary part. We need less precision in the bounds since we only look for continuity.
	Again, by symmetry we can assume $x,y\geq 0$.
	Taking back $r^{(1)}(z)$ from \eqref{eq:def_r_1}, and using the same decomposition \eqref{eq: app decomp r1} as for Part~\ref{it:r_alpha_1}, we have for $t \in [-2x,2x]$
	\begin{equation*} 
		\frac{1}{\abs{z}}\abs{\frac{g(t)-\widetilde{g}(z)}{t-z}}
		\leq C\left(\frac{1}{\abs{z}\abs{t-z}}\abs{t-x}^3 x^{p-3} + \frac{x^{p-1}}{\abs{z}} 
		+ \frac{\abs{t-\overline{z}}}{2\abs{z}} x^{p-2}\right)
		\leq C x^{p-2},
	\end{equation*}
	where in the second inequality we used that
	$\abs{t-x}^3 \leq \abs{t-z} \cdot 3x \cdot 3 \abs{z}$
	for the first term, and
	$\abs{t-\overline{z}}\leq 2x + \abs{z} \leq 3 \abs{z}$ 
	for the third term. Integrating this bound, we deduce that $r^{(1)}(z) \to 0$ as $z \to 0$. 
	For $r^{(2)}(z)$, we can assume that $x\leq 1/2$, and writing again $J=[-1,1]\setminus [-2x,2x]$, we get from \eqref{eq:rewriting_r2} that
	\[
	r^{(2)}(z) = \frac{1}{2\pi}\int_{J}\frac{g(t)}{t(t-z)}\frac{\diff t}{\sigma(t)} 
	- \frac{\widetilde{g}(z)}{2\pi z}\int_{J}\frac{\diff t}{(t-z)\sigma(t)}.
	\]
	Note that the second term vanishes as $z \to 0$ by \eqref{eq:log_on_J} combined with the fact that $\abs{\widetilde{g}(z)/z} = \abs{g(x)+iyg'(x)-y^2g''(x)/2}/\abs{z}\leq c(x^{p-1} + yx^{p-2})\leq C x^{p-2}$. 
	For the first term, by dominated convergence theorem, we have
	\[
	\frac{1}{2\pi} \int_{J}\frac{g(t)}{t(t-z)}\frac{\diff t}{\sigma(t)} 
	= \frac{1}{2\pi} \int_{-1}^1 \1_{\abs{t}>2x} \frac{g(t)}{t(t-z)} \frac{\diff t}{\sigma(t)}
	\xrightarrow[z \to 0]{} 
	\frac{1}{2\pi} \int_{-1}^1 \frac{g(t)}{t^2} \frac{\diff t}{\sigma(t)}
	= r(0),
	\]
	where the domination follows from the following bound, for $\abs{t}\geq 2x$,
	\begin{align*}
		\abs{\frac{g(t)}{t(t-z)\sigma(t)}} \leq C\frac{\abs{t}^{p-1}}{\abs{t-x}\sigma(t)} \leq C\frac{\abs{t}^{p-2}}{\sigma(t)},
	\end{align*}
	using that $\abs{t-x}\geq \abs{t}-\abs{x}\geq \abs{t}/2$. 
	This proves $r^{(1)}(z) \to r(0)$ as $z \to 0$. 
	Finally, the fact that $r^{(3)}(z) \to 0$ as $z \to 0$ has already been proved in \eqref{eq:r3}.
	
	\textbf{Part \ref{it:r_alpha_3}.} 
	By Lemma \ref{lem:bound on ralpha}, there exists $c_0>0$ such that $\re r = r \geq c_0$ on $\R$. 
	Moreover, by Part~\ref{it:r_alpha_2}, the function $r$ is continuous on the compact set $K \cup \{ x \in \R : \exists y \in \R, x+\ii y \in K \}$, and hence uniformly continuous. Combining these two facts yields the result.
\end{proof}

The proof of the stability lemma in \cite{BouModPai2022} relies on two lemmas. The first one \cite[Lemma B.2]{BouModPai2022} contains bounds on the function $b(z)=\sqrt{z+1}\sqrt{z-1}$, which we can directly use in our context. 
The second one is \cite[Lemma B.3]{BouModPai2022} and it can be stated as follows in our context, the main differences being that $V'(z)$ in \cite{BouModPai2022} is replaced by $g_\alpha(z)/z$ here.
Note that \eqref{eq:Stab4} and \eqref{eq:Stab5} are not used to prove the stability lemma, but are useful for the proof of the local law.

\begin{lemma}\label{lem:prelim_stability}
    Let $p>2$.
	There exist $\delta_0,c,C>0$ such that, for any $\alpha\in[0,1]$ and $z=x+\ii y$ with $\abs{x} \leq 1+\delta_0$ and $y\in(0,\delta_0]$,
	\begin{align}
		\label{eq:lb_r}
		& c \leq \abs{r_\alpha(z)} \leq C
		\\
		\label{eq:lb_Im(rb)}
		& \im(r_\alpha(z) b(z)) \geq \abs{\im\left(\frac{g_\alpha(z)}{z}\right)}
	\end{align}
	Moreover, if $x\in [-1-y,1+y]$, 
	\begin{equation}
		\im(r_\alpha(z) b(z)) \geq c \abs{r_\alpha(z) b(z)}
		\label{eq:lb_Im(rb)_trapez}
	\end{equation}
	and, if $x\notin[-1-y,1+y]$, 
	\begin{align}
		\label{eq:Stab4}
		& \abs{\im s_{V_\alpha}(z)} \vee \abs{\im \widetilde{s}_{V_\alpha}(z)}
		\leq \frac{Cy}{\abs{b(z)}}
		\\
		\label{eq:Stab5}
		& \re (s_{V_\alpha}(z_0)-\widetilde{s}_{V_\alpha}(z))
		\geq c \abs{r_\alpha(z)b(z)}
	\end{align}
	where $z_0\coloneqq 1+y$ if $x>1+y$, and $z_0\coloneqq -1-y$ if $x<-1-y$.
\end{lemma}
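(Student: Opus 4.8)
The proof follows the strategy of \cite[Lemma~B.3]{BouModPai2022}, with the role played there by the analytic $V'$ taken here by $g_\alpha(z)/z$ with $g_\alpha$ the pseudo-analytic extension of Definition~\ref{def:g,g analytic, f_z, Palpha, halpha}, and the use of analyticity of $r$ replaced by the quantitative regularity estimates of Lemma~\ref{lem:estimees_r}. The two structural inputs are: the representations $s_{V_\alpha}(z) = r_\alpha(z)b(z) - g_\alpha(z)/(2z)$ and $\widetilde s_{V_\alpha}(z) = -r_\alpha(z)b(z) - g_\alpha(z)/(2z)$ from Lemma~\ref{lem:stieltjes}.\ref{lemsta:point3}--\ref{lemsta:point4}; and the elementary positivity $\im s_{V_\alpha}(x+\ii y) = \int \frac{y}{(\lambda-x)^2+y^2}\diff\mu_{V_\alpha}(\lambda) > 0$ for $y>0$. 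All bounds on $b$ that I use (namely $|b(z)| \leq C$ on the region, $\im b(z) \geq c|b(z)|$ when $x\in[-1-y,1+y]$, and $\re b(z) \geq c|b(z)|$ together with $\im b(z) \leq Cy/|b(z)|$ when $|x|>1+y$, as well as $|b(z)|\geq\sqrt y$) are imported verbatim from \cite[Lemma~B.2]{BouModPai2022}. The constant $\delta_0$ is fixed small at the end, to absorb all error terms.

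The bound \eqref{eq:lb_r} is immediate: the upper bound combines the boundedness of $r_\alpha$ on $[-1-\delta_0,1+\delta_0]$ (Lemma~\ref{lem:bound on ralpha}) with the uniform continuity of $r_\alpha$ near the real axis (Lemma~\ref{lem:estimees_r}.\ref{it:r_alpha_2}), while the lower bound follows from $|r_\alpha(z)| \geq \re r_\alpha(z) \geq c$ (Lemma~\ref{lem:estimees_r}.\ref{it:r_alpha_3}). For \eqref{eq:lb_Im(rb)_trapez} I expand $\im(r_\alpha(z)b(z)) = \re r_\alpha(z)\,\im b(z) + \im r_\alpha(z)\,\re b(z)$ and use $\re r_\alpha(z) \geq c$, $\im b(z) \geq c|b(z)|$, $|\im r_\alpha(z)| \leq Cy \leq C\delta_0$ (Lemma~\ref{lem:estimees_r}.\ref{it:r_alpha_1}), $|\re b(z)| \leq |b(z)|$, to get $\im(r_\alpha b) \geq c|b(z)| - C\delta_0|b(z)| \geq \tfrac c2|b(z)|$ for $\delta_0$ small; combined with $|r_\alpha b| \leq C|b(z)|$ from \eqref{eq:lb_r}, this is the claim. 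For \eqref{eq:Stab4} I use the same expansion with the outside-trapezoid bounds $\im b(z) \leq Cy/|b(z)|$, $|\re b(z)| \leq C$, together with the elementary estimate $|\im(g_\alpha(z)/z)| \leq Cy$ obtained by a direct computation from \eqref{eq:def_g_C} (using \eqref{eq:last_term_g} for the truncated third-order term), valid since $|x|>1+y$ keeps $z$ away from $0$; then $|\im s_{V_\alpha}(z)| \vee |\im\widetilde s_{V_\alpha}(z)| \leq |\im(r_\alpha b)| + \tfrac12|\im(g_\alpha(z)/z)| \leq Cy/|b(z)|$ since $|b(z)| \leq C$.

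For \eqref{eq:lb_Im(rb)}, since $g_\alpha(z)/z = 2r_\alpha(z)b(z) - 2s_{V_\alpha}(z)$, we have $\im(r_\alpha(z)b(z)) = \im s_{V_\alpha}(z) + \tfrac12\im(g_\alpha(z)/z)$, so it suffices to prove $\im s_{V_\alpha}(z) \geq 2|\im(g_\alpha(z)/z)|$ on the whole region, which then gives $\im(r_\alpha b) \geq \tfrac34\im s_{V_\alpha}(z) \geq |\im(g_\alpha(z)/z)|$. The upper bound $|\im(g_\alpha(z)/z)| \leq C_p y$ (with $C_p$ depending only on $p$) is as above; for the lower bound on $\im s_{V_\alpha}$ I use that the density of $\mu_{V_\alpha}$ is $\geq c\sqrt{1-\lambda^2}$ near $\pm 1$ (from \eqref{eq:mu=rsigma} and Lemma~\ref{lem:bound on ralpha}) and estimate the Stieltjes integral: in the trapezoid one gets $\im s_{V_\alpha}(z) \geq c\sqrt y$, while for $|x| = 1+\kappa$ with $y<\kappa\leq\delta_0$ one gets $\im s_{V_\alpha}(z) \geq cy/\sqrt\kappa$; in both cases the right-hand side exceeds $2C_p y$ once $\delta_0$ is small enough. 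For \eqref{eq:Stab5} (say $x>1+y$, so $z_0 = 1+y$) I apply \eqref{eq:s_formula} at the real point $z_0$ to telescope:
\[
\re\bigl(s_{V_\alpha}(z_0)-\widetilde s_{V_\alpha}(z)\bigr)
= r_\alpha(z_0)b(z_0) + \re\bigl(r_\alpha(z)b(z)\bigr) + \Bigl[\re\tfrac{g_\alpha(z)}{2z} - \tfrac{g_\alpha(z_0)}{2z_0}\Bigr].
\]
Here $r_\alpha(z_0)b(z_0) = r_\alpha(1+y)\sqrt{(2+y)y} \geq 0$; the middle term is $\geq c|b(z)|$ since $\re r_\alpha(z) \geq c$, $\re b(z) \geq c|b(z)|$ and $|\im r_\alpha(z)\,\im b(z)| \leq Cy^2/|b(z)| \leq C\delta_0|b(z)|$ using $|b(z)| \geq \sqrt y$; and the bracket is $\geq -C\delta_0|b(z)|$ since $\re(g_\alpha(z)/z) = V_\alpha'(x) + O(y^2)$ (direct computation from \eqref{eq:def_g_C}, $x$ bounded away from $0$) while $V_\alpha'$ is increasing on $[1,\infty)$ so $V_\alpha'(x) \geq V_\alpha'(1+y)$. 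Summing and using $|r_\alpha(z)b(z)| \leq C|b(z)|$ gives the claim for $\delta_0$ small.

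The main obstacle, compared with \cite{BouModPai2022}, is precisely the loss of analyticity of $r_\alpha$ and $g_\alpha$ when $p\notin 2\N$: Cauchy estimates are unavailable, so the lower-order correction terms of the pseudo-analytic extension \eqref{eq:def_g_C} — in particular the truncated $y^3 g'''(x)\chi(y/x)$ term, which carries the singularity at $0$ when $p\in(2,3)$ — must be controlled by hand through \eqref{eq:last_term_g} and the Taylor-type inequalities of Lemma~\ref{lem:borneABC}, while the analytic input "$\im r(z) = O(y)$" is replaced by Lemma~\ref{lem:estimees_r}.\ref{it:r_alpha_1}. The delicate point is verifying that all these error terms are uniformly $O(\delta_0)$ relative to the main quantities in the cancellations underlying \eqref{eq:lb_Im(rb)} and \eqref{eq:Stab5}, which is exactly what dictates the choice of a small $\delta_0$ (hence the size of the regions $\cR$ and $\cT$); the remaining bookkeeping is a transcription of the argument of \cite[Lemma~B.3]{BouModPai2022}.
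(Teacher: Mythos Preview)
Your proof is correct and follows essentially the same strategy as the paper: both import the elementary bounds on $b$ from \cite[Lemma~B.2]{BouModPai2022}, establish the key inputs $\re r_\alpha(z)\geq c$, $|\im r_\alpha(z)|\leq Cy$, $|r_\alpha(z)|\leq C$ from Lemma~\ref{lem:estimees_r} together with $|\im(g_\alpha(z)/z)|\leq Cy$, and then reproduce the computations of \cite[Lemma~B.3]{BouModPai2022}. Your treatment of \eqref{eq:lb_r}, \eqref{eq:lb_Im(rb)_trapez} and \eqref{eq:Stab4} is a faithful unpacking of that argument.

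There is one genuine difference worth recording, in the proof of \eqref{eq:Stab5}. The paper bounds the error term by the $\cC^1$ regularity of $z\mapsto g_\alpha(z)/z$ on $\C^*$, writing $|g_\alpha(z)/z - g_\alpha(z_0)/z_0|\leq C|z-z_0|\leq C\kappa$ and then comparing $C\kappa$ with $|r_\alpha(z)b(z)|\geq c\sqrt\kappa$. You instead exploit the structural fact that $V_\alpha'$ is increasing on $[1,\infty)$, which turns the real part of the bracket into something nonnegative up to an $O(y^2)$ correction. Both routes close for $\delta_0$ small; your monotonicity argument is slightly sharper in that it keeps the sign of the dominant contribution rather than bounding it in absolute value, at the cost of requiring the extra computation $\re(g_\alpha(z)/z)=V_\alpha'(x)+O(y^2)$. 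For \eqref{eq:lb_Im(rb)} your route via a direct lower bound on $\im s_{V_\alpha}(z)$ from the Stieltjes integral (yielding $\geq c\sqrt y$ in $\cT$ and $\geq cy/\sqrt\kappa$ outside) is also sound and self-contained, whereas the paper simply defers to \cite{BouModPai2022}.
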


\begin{proof}
	We highlight below the modifications needed in the proof of \cite[Lemma B.3]{BouModPai2022}.
	
	Firstly, by Lemma \ref{lem:estimees_r}, there exist $\delta_0,c,C>0$ such that, for any $\alpha\in[0,1]$ and $z=x+\ii y$ with $\abs{x} \leq 1+\delta_0$ and $y\in(0,\delta_0]$,
	\begin{equation} \label{eq:input_r}
		\abs{\im r_\alpha(z)} \leq Cy, \qquad 
		\re r_\alpha(z) \geq c, \qquad
		\abs{r_\alpha(z)} \leq C.
	\end{equation}
	In particular, this implies \eqref{eq:lb_r}.
	Secondly, we prove that there exists $C>0$ such that for any $z=x+iy$ with $y> 0$, 
	\begin{equation} \label{eq:estimee1}
		\abs{\im\left(\frac{g_\alpha(z)}{z}\right)} \leq Cy.
	\end{equation}
	In view of $g_\alpha(z) = \alpha g(z) + 4(1-\alpha)z^2$, it suffices to establish \eqref{eq:estimee1} for $\alpha=1$.
	Recalling the definition of $g(z)$ in \eqref{eq:def_g_C} and bounding the last term in $g(z)$ as in \eqref{eq:last_term_g}, we find
	\begin{align*}
		\abs{\im\left(\frac{g(z)}{z}\right)}
		= \frac{1}{\abs{z}^2} \abs{x \im g(z) - y \re g(z)}
		\leq \frac{C}{\abs{z}^2} \left( y \abs{x}^p + \abs{x} y^{p\wedge 3} + y \abs{x}^p + y^3 \abs{x}^{p-2} \right)
		\leq Cy, 
	\end{align*}
	using $p\geq 2$ and that $x$ and $y$ are in a compact set, proving \eqref{eq:estimee1}. 
	The inequalities \eqref{eq:input_r} and \eqref{eq:estimee1} are exactly the inputs which are used in \cite{BouModPai2022} to prove \eqref{eq:lb_Im(rb)}, \eqref{eq:lb_Im(rb)_trapez} and \eqref{eq:Stab4}, so the rest of the proof is identical (recall that our function $b$ is the same as theirs).
	
	Lastly, the proof of \eqref{eq:Stab5} requires slight modifications compared to the one of \cite[Eq.\@ (B.11)]{BouModPai2022}.
	As argued there, if $x \notin [-1-y,1+y]$, we have
	$\re(r_\alpha(z)b(z)) \geq c\abs{r_\alpha(z)b(z)}$, and therefore, by \eqref{eq:s_formula} and \eqref{def:s_tilde},
	\begin{align}
		\re (s_{V_\alpha}(z_0)-\widetilde{s}_{V_\alpha}(z))
		& = \re\left( \frac{g_\alpha(z)}{2z}-\frac{g_\alpha(z_0)}{2z_0} 
		+ r_\alpha(z)b(z)+r_\alpha(z_0)b(z_0) \right) \nonumber \\
		& \geq c\abs{r_\alpha(z)b(z)}
		-\frac{1}{2} 
		\abs{\frac{g_\alpha(z)}{z}-\frac{g_\alpha(z_0)}{z_0}}.
		\label{eq:eqfinale_stabilite}
	\end{align}
	Since $z\mapsto g_\alpha(z)/z$ is $\cC^1$ on $\C^*$ (as a function of two variables, not as a function of a complex variable), we have, for $z$ in the region considered here,
	\[
	\abs{\frac{g_\alpha(z)}{z}-\frac{g_\alpha(z_0)}{z_0}} \leq C \abs{z-z_0} \leq C \kappa,
	\]
	where $\kappa = \abs{z+1}\vee\abs{z-1}$. 
	On the other hand, we have $\abs{r_\alpha(z)b(z)} \geq c \sqrt{\kappa}$ near $\pm 1$. Therefore, one can choose $\delta_0$ small enough so that, on the right-hand side of \eqref{eq:eqfinale_stabilite}, the first term is at least twice larger as the second one, from which we conclude on \eqref{eq:Stab5}.
\end{proof}

\begin{proof}[Proof of Lemma \ref{lem:stability}]
	The proof is identical to the one of \cite[Lemma~B.1]{BouModPai2022}, with \cite[Lemma~B.3]{BouModPai2022} replaced by Lemma~\ref{lem:prelim_stability} here.
\end{proof}

\bibliographystyle{alpha}
\bibliography{biblio}

\end{document}